\documentclass[12pt, reqno]{amsart}
\usepackage[utf8]{inputenc}
\usepackage{url}
\usepackage{amsmath,color}
\usepackage{amsfonts}
\usepackage{amssymb}
\usepackage{amsthm}
\usepackage{multirow}
\usepackage{graphicx} 
\usepackage{lipsum}

\oddsidemargin = -0in \evensidemargin = 0in \textwidth =6.5in
\textheight=9in \topmargin=0in

 \interfootnotelinepenalty=10000 %% Completely prevent breaking of footnotes

\newtheorem{theorem}{Theorem}
\newtheorem{remark}[theorem]{Remark}
\newtheorem{lemma}[theorem]{Lemma}
\newtheorem{corollary}[theorem]{Corollary}
\newtheorem{proposition}[theorem]{Proposition}

\newcommand{\tmop}[1]{\ensuremath{\operatorname{#1}}}
\numberwithin{equation}{section}
\numberwithin{theorem}{section}

\numberwithin{equation}{section}

\def\sumplus{\sideset{}{^+}\sum}
\def\sumstar{\sideset{}{^*}\sum}

\def\sumprime{\sideset{}{'}\sum}
\def\pamod{\!\!\!\!\pmod}

\newcommand{\fixmehide}[1]{}
\newcommand{\fixmelater}[1]{}
\newcommand{\fixmedone}[1]{}
\newcommand{\fixmehidden}[1]{}

\title{Weighted central limit theorems for central values of $L$-functions}

\author{Hung M. Bui, Natalie Evans, Stephen Lester and Kyle Pratt}
\address{Department of Mathematics, University of Manchester, Manchester M13 9PL, UK}
\email{hung.bui@manchester.ac.uk}
\address{Department of Mathematics, King’s College London, London WC2R 2LS, UK}
\email{natalie.evans@kcl.ac.uk}
\email{steve.lester@kcl.ac.uk}
\address{All Souls College, Oxford OX1 4AL, UK}
\email{kyle.pratt@maths.ox.ac.uk}

\subjclass[2010]{11F41, 11F66, 11F11, 11M06 \\ \indent \textit{Keywords and phrases}: central limit theorem, mollifier, central values, simultaneous non-vanishing}

\allowdisplaybreaks

\begin{document}

\maketitle

\begin{abstract}
We establish a central limit theorem for the central values of Dirichlet $L$-functions with respect to a weighted measure on the set of primitive characters modulo $q$ as $q \rightarrow \infty$. Under the Generalized Riemann Hypothesis (GRH), we also prove a weighted central limit theorem for the joint distribution of the central $L$-values corresponding to twists of two distinct primitive Hecke eigenforms. As applications, we obtain (under GRH) positive proportions of twists for which the central $L$-values simultaneously grow or shrink with $q$ as well as a positive proportion of twists for which linear combinations of the central $L$-values are nonzero.
\end{abstract}

\section{Introduction}

Understanding the behavior of central $L$-values is an important topic of study in number theory, with profound connections to problems in arithmetic as well as other areas of mathematics. Since central $L$-values are often difficult to study individually, a fruitful approach is to embed the $L$-values within a wider family and examine their statistical properties. A fundamental example of a family of $L$-functions is those attached to primitive Dirichlet characters modulo $q$, and one may ask how
the central values of these $L$-functions are distributed when varying over such characters as $q \rightarrow \infty$. 

Selberg's central limit theorem \cite{Selberg-contributions, Selberg-old-new} for the Riemann zeta function states that
\[
\frac{1}{T} \tmop{meas}\bigg\{ t \in [T,2T] : \frac{\log |\zeta(\tfrac12+it)|}{ \sqrt{\tfrac12\log \log T}} \in (a,b) \bigg\}=\frac{1}{\sqrt{2\pi}} \int_a^b e^{-u^2/2} \, du+O\left( \frac{(\log \log \log T)^2}{\sqrt{\log \log T}}\right)
\]
as $T \rightarrow \infty$ and is emblematic of what one might expect to be true for a family of $L$-functions. Similarly, a folklore conjecture predicts that as $\chi$ ranges over primitive Dirichlet characters modulo $q$, the value $\log |L(\tfrac12, \chi)|$ has a Gaussian limiting distribution with mean $0$ and variance $\tfrac12 \log \log q$ as $q \rightarrow \infty$ (see \cite{KS00, selberg-dir} for related discussions). Proving such a result remains completely out of reach, as it would imply $100\%$ of these central $L$-values are non-zero, which is a well-known open conjecture. The problem becomes even more difficult when considering central values of higher degree $L$-functions such as $L$-functions associated to twists of automorphic forms.

In this article we overcome the barrier of the vanishing of the central value by introducing a weight which accounts for when this value is zero. Our main results establish central limit theorems with respect to this weighted measure on the set of primitive characters modulo $q$ for the central values of Dirichlet $L$-functions as $q \rightarrow \infty$, as well as for the joint distribution of the central $L$-values corresponding to twists of two distinct primitive Hecke eigenforms as $q \rightarrow \infty$. The latter result is conditional on the assumption of GRH.

\subsection{Main Results}
Let $\varphi^{\star}(q)$ denote the number of primitive characters modulo $q$; we always work with prime $q$ for technical simplicity. Throughout, we write $\sum_{\chi \pmod q}^*$ to indicate that the summation is restricted to primitive characters, $\chi\ne\chi_0$. Given a complex-valued function $F$ on the set of primitive characters modulo $q$, we define 
\[
\varphi_F^{\star}(q)=\sumstar_{\chi \pamod q} F(\chi).
\]
Let $\mu_F$ be the complex measure on the set of primitive characters modulo $q$ given by
\[
\mu_F(S)= \frac{1}{\varphi_F^{\star}(q)}\sumstar_{\chi \in S} F(\chi), \qquad S \subset \{ \chi \pamod q\}.
\]
For example, if $F=1$ then $\varphi_F^\star(q)=\varphi^{\star}(q)$ and $\mu_F$ is the usual counting measure. 
To account for the vanishing of the central $L$-value we will choose our weight $F$ so that $F(\chi)=0$ whenever the central value vanishes.
Moreover, 
to capture the typical behavior of the $L$-function we would like that $F \approx 1$ as to not bias our measure. 
Our approach takes $F$ to be the central $L$-value multiplied by a mollifier, which dampens the extreme behavior of the central $L$-values.

Let us now introduce our mollifier. The precise definition is technical, but mainly the technicalities arise to ensure the mollifier behaves, on average, like an Euler product.
Let $\lambda(n)=(-1)^{\Omega(n)}$ be the Liouville function, where $\Omega(n)=\sum_{p^a || n } a$ and $p^a||n$ means that $p^a|n$ and $p^{a+1} \nmid n$.
Define the multiplicative function $\nu(n)$ by $\nu(p^a)=\frac{1}{a!}$. Also, let $\eta>0$ be a sufficiently small constant. For each $0\leq j\leq J$ let
$
\theta_j= \eta \frac{e^j}{(\log \log q)^{5}},  \ell_j=2 \lfloor  \theta_j^{-3/4} \rfloor , 
$
where $J$ is chosen so that $ \eta \le \theta_J \le e \eta $ (so $J \asymp \log \log \log q$). Let
$y=q^{\theta_0}$ and  $x=q^{\theta_J}$.
Set $I_0=(c_0, y]$, where $c_0$ is fixed and sufficiently large, and for $1\leq j\leq J$ let $I_j=(q^{\theta_{j-1}}, q^{\theta_j}]$. 
We then define
\begin{equation*} 
\mathcal M_j(\chi)=\sum_{\substack{p|n \Rightarrow p \in I_j \\ \Omega(n) \le \ell_j}}\frac{ \lambda(n)\nu(n) \chi(n)}{\sqrt{n}},
\qquad 
\mathcal M(\chi)=\prod_{j=0}^{J} \mathcal M_j(\chi).
\end{equation*}
The Dirichlet polynomial $\mathcal{M}(\chi)$ will be our mollifier.

We investigate the distribution of $\log |L(\tfrac12,\chi)|$ as $\chi$ varies over primitive characters modulo $q$ with respect to $\mu_{\mathcal W}$, where $\mathcal W(\chi)=L(\tfrac12,\chi)\mathcal M(\chi)$. The weight function $\mathcal W(\chi)$ can be interpreted as a truncated Hadamard product over the low-lying zeros of $L(s,\chi)$ with ordinates $\le 1/\log x$ in magnitude times $L(1,\chi^2)^{1/2}$, which we will justify later; so while the weight knows about the central value its knowledge should typically be restricted to a bounded number of low-lying zeros, such as a possible zero at the central point.
Additionally, we will see that $\varphi_{\mathcal W}^{\star}(q)\asymp q$, and will also prove the following proposition, which shows that our weight is typically not very large.
\begin{proposition}\label{upperboundsecondDirichlet}
Uniformly for $\alpha,\beta \in \mathbb C$ with $|\alpha|,|\beta|\ll (\log q)^{-1}$ we have that
\[
\sideset{}{^*}\sum_{\chi \pamod q}L(\tfrac12+\alpha, \chi)L(\tfrac12+\beta,\overline{\chi})|\mathcal{M}(\chi)|^2\ll q.
\]

\end{proposition}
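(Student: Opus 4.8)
The plan is to bound the left side by a mollified second moment of a single $L$-value. After estimating it by $\sum_{\chi}^{*}|L(\tfrac12+\alpha,\chi)|\,|L(\tfrac12+\beta,\overline\chi)|\,|\mathcal M(\chi)|^2$, I would invoke the approximate functional equation for each factor, writing for instance
\[
L(\tfrac12+\alpha,\chi)=\sum_{m}\frac{\chi(m)}{m^{1/2+\alpha}}\,V_\alpha^{+}\!\Big(\tfrac{m\sqrt\pi}{\sqrt q}\Big)+\epsilon_\alpha(\chi)\sum_{m}\frac{\overline\chi(m)}{m^{1/2-\alpha}}\,V_\alpha^{-}\!\Big(\tfrac{m\sqrt\pi}{\sqrt q}\Big),
\]
where $V_\alpha^{\pm}$ are smooth of rapid decay, so the $m$-sums have effective length $q^{1/2+\epsilon}$, and $|\epsilon_\alpha(\chi)|\asymp1$ uniformly for $|\alpha|\ll(\log q)^{-1}$. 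Bounding $|L(\tfrac12+\alpha,\chi)|$ by the sum of the absolute values of the two Dirichlet polynomials on the right (and likewise for $L(\tfrac12+\beta,\overline\chi)$), multiplying out, applying Cauchy--Schwarz in $\chi$, and using $|\mathcal M(\overline\chi)|=|\mathcal M(\chi)|$ together with the bijection $\chi\mapsto\overline\chi$ on primitive characters, the Proposition reduces to the single bound
\[
\sum_{\chi}^{*}\bigl|D(\chi)\mathcal M(\chi)\bigr|^2\ll q,
\]
for Dirichlet polynomials $D(\chi)=\sum_{m}c(m)\,\chi^{\pm1}(m)\,m^{-1/2\mp\gamma}$ with $|c(m)|\ll1$, $m\le q^{1/2+\epsilon}$, and $|\gamma|\ll(\log q)^{-1}$.

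The next point is that $\mathcal M$ is a \emph{short} Dirichlet polynomial. From $\ell_j=2\lfloor\theta_j^{-3/4}\rfloor$ and the geometric spacing of the $\theta_j$ one has $\sum_{j=0}^{J}\theta_j\ell_j\le2\sum_{j=0}^{J}\theta_j^{1/4}\ll\eta^{1/4}$, so $\mathcal M$ is supported on integers $\le q^{C\eta^{1/4}}$, which is $\le q^{1/2-\delta}$ once $\eta$ is sufficiently small. Consequently every argument of a character occurring in $|D(\chi)\mathcal M(\chi)|^2$ is $<q$, so expanding the square, applying orthogonality of the characters modulo the prime $q$, and dropping the non-positive contribution of the principal character, all cross terms vanish and
\[
\sum_{\chi}^{*}|D(\chi)\mathcal M(\chi)|^2\le\varphi(q)\sum_{\ell}\frac{|\beta(\ell)|^2}{\ell},
\]
where $\beta$ is the Dirichlet convolution of $\{c(m)m^{-\gamma}\}_{m\le q^{1/2+\epsilon}}$ with the coefficients of $\mathcal M$. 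It thus remains to show $\sum_{\ell}|\beta(\ell)|^2/\ell\ll1$; this is the assertion that the mollifier cancels the $\log q$ of the second moment, and it uses the coefficients $\lambda(n)\nu(n)$ in an essential way.

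To prove the last bound I would open the weights $V$ by their Mellin transforms and express $\sum_{\ell}|\beta(\ell)|^2/\ell$ as a double contour integral whose integrand is, up to a harmless analytic factor, $\zeta\bigl(1+s_1+s_2+\gamma+\overline\gamma\bigr)$ times an Euler product $\mathcal C(s_1,s_2)=\prod_{p\le x}\mathcal C_p(s_1,s_2)$ assembled from the mollifier coefficients. Moving the contours past the pole of $\zeta$ produces a main term proportional to $\mathcal C(0,0)\log q$. Because $\sum_{a\ge0}x(p^a)t^a$ mimics $e^{-t}$, the partial sums $\sum_{a\le k}x(p^a)$ equal $1$ at $k=0$ and tend to $e^{-1}$, which forces $\mathcal C_p(0,0)=1-p^{-1}+O(p^{-2})$; hence $\mathcal C(0,0)\asymp\prod_{p\le x}(1-p^{-1})\asymp(\log x)^{-1}\asymp(\eta\log q)^{-1}$, the main term is $O_\eta(1)$, and since $\eta$ is a fixed constant this yields $\sum_{\ell}|\beta(\ell)|^2/\ell\ll1$.

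I expect the main obstacle to be making this last step rigorous: establishing $\mathcal C(0,0)\asymp(\eta\log q)^{-1}$ and controlling the leftover contour integral. The delicate feature is that $\mathcal C(s_1,s_2)$ is absolutely convergent, and genuinely $O_\eta(1)$, only in a thin region $\operatorname{Re}(s_i)\gtrsim-c/\log x$ about the imaginary axes --- for $\operatorname{Re}(s_i)<0$ the prime sums $\sum_{p\le x}p^{s_i}$ can attain a power of $x$ --- so the contours may be shifted only a distance $\asymp1/\log x$, and one must check that the rapid decay of the Mellin kernels still renders the shifted integral negligible. One also has to verify that the truncation $\Omega(n)\le\ell_j$ and the lower cutoff $c_0$ perturb each local factor $\mathcal C_p$ by only $O(p^{-2})$ uniformly, which is where $\ell_j$ being large relative to $\theta_j^{-1/2}$ and $c_0$ being sufficiently large enter. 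The approximate functional equation, the triangle-inequality/Cauchy--Schwarz reduction, the orthogonality step, and the uniformity in $\alpha,\beta$ (entering only through $m^{\gamma}=1+O((\log q)^{-1}\log m)$ and $|\epsilon_\gamma(\chi)|\asymp1$) are all routine.
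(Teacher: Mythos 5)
Your argument is correct in outline, but it takes a genuinely different route from the paper. The paper never passes through the approximate functional equation: it quotes the asymptotic twisted second moment of Iwaniec--Sarnak/Bui--Pratt--Robles--Zaharescu (Lemma \ref{secondDirichlet}), whose power-saving error term already absorbs all off-diagonal terms for twists of length $q^{2\vartheta}$, evaluates the resulting main term $M(\alpha,\beta)$ as an Euler product equal to $\asymp(\log q)^{-1}$ so as to cancel $\zeta(1+\alpha+\beta)\asymp\log q$, and invokes the maximum modulus principle to reach the delicate range $|\alpha+\beta|=o((\log q)^{-1})$ where the two main terms nearly cancel. You instead take absolute values at the outset, which removes the pole issue entirely (no maximum modulus principle needed, and general $\alpha,\beta$ reduce to the nonnegative case $\beta=\overline\alpha$), and then the AFE plus Cauchy--Schwarz reduces everything to $\sum_\chi^*|D(\chi)\mathcal M(\chi)|^2$ with $D\mathcal M$ of length $q^{1/2+\epsilon+1/1000}<q$, so orthogonality gives a pure diagonal and no off-diagonal input is required at all. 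The trade-off: your route is essentially self-contained and even tolerates mollifiers up to length $q^{1/2-\epsilon}$, but it only yields an upper bound with a worse constant, whereas the paper's route would give an asymptotic; both hinge on the same final Euler-product fact, namely that the local factors at the mollified primes $c_0<p\le x$ are $1+O(p^{-2})$ rather than $1+p^{-1}+O(p^{-2})$, saving a factor $\log x\asymp\eta\log q$ and leaving $O_\eta(1)$. One small imprecision in your sketch: the truncation $\Omega(n)\le\ell_j$ is a global constraint on the $I_j$-part of $n$, not a per-prime perturbation of $\mathcal C_p$; as in the paper's \eqref{errorsecond} it is removed by the Rankin device $\mathbf 1_{\Omega>\ell_j}\le 2^{\Omega-\ell_j}$ at the cost of $2^{-\ell_j}(\log q)^{O(1)}$, which is negligible since $\ell_j\gg(\log\log q)^{15/4}$ --- but you flag this step as needing care, and it is resolvable, so there is no gap.
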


 Our first main result establishes a central limit theorem for the logarithm of the central values of Dirichlet $L$-functions with respect to $\mu_{\mathcal W}$. 

\begin{theorem}\label{thm:CLTDirichlet} Let $a,b \in \mathbb R$ with $a<b$. We have as $q \rightarrow \infty$ that
\begin{align*}
\mu_{\mathcal W}\bigg( \bigg\{ \chi \pamod q, \chi \neq \chi_0:  \frac{\log |L(\tfrac12,\chi)|}{\sqrt{\tfrac12 \log \log q}} \in (a,b) \bigg\}  \bigg) =\frac{1}{\sqrt{2\pi}} \int_{a}^b e^{-u^2/2} \, du +o(1).
\end{align*}
\end{theorem}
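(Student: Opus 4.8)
The plan is to prove Theorem~\ref{thm:CLTDirichlet} by the method of moments. Since $\mu_{\mathcal W}$ has uniformly bounded total variation --- by Cauchy--Schwarz and Proposition~\ref{upperboundsecondDirichlet} with $\alpha=\beta=0$ one has $\sumstar_{\chi \pamod q}|\mathcal W(\chi)| \ll \big(\varphi^\star(q)\big)^{1/2}\big(\sumstar_{\chi}|\mathcal W(\chi)|^2\big)^{1/2}\ll q$, while $\varphi_{\mathcal W}^\star(q)\asymp q$ --- and since the relevant sets are invariant under $\chi\mapsto\overline\chi$ (so that $\mu_{\mathcal W}$ is real there), it suffices to show that for every fixed integer $k\ge0$,
\[
\frac{1}{\varphi_{\mathcal W}^\star(q)}\sumstar_{\chi \pamod q}\big(\log|L(\tfrac12,\chi)|\big)^k\,\mathcal W(\chi) = \Big(\tfrac12\log\log q\Big)^{k/2}\big(\mu_k+o(1)\big),
\]
where $\mu_k=(k-1)!!$ for $k$ even and $\mu_k=0$ for $k$ odd, together with the bound $\ll\varphi_{\mathcal W}^\star(q)(\log\log q)^{k/2}$ for the same sum with $\mathcal W$ replaced by $|\mathcal W|$; the latter gives uniform sub-Gaussian tail control for the total-variation measures, so that convergence of moments to those of the (moment-determinate) Gaussian implies the stated convergence.

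The first ingredient is a twisted first-moment asymptotic: for Dirichlet polynomials $B$ in $\chi$ and $\overline\chi$ of length at most $q^{\delta}$, with $\delta>0$ a fixed sufficiently small constant, we will establish --- by inserting the approximate functional equation for $L(\tfrac12,\chi)$ (a Dirichlet polynomial of length $\approx q^{1/2}$), opening $\mathcal M(\chi)$, and applying orthogonality of the characters modulo the prime $q$, with both terms of the functional equation contributing to the main term --- an asymptotic of the shape
\[
\sumstar_{\chi\pamod q}L(\tfrac12,\chi)\mathcal M(\chi)B(\chi)=\varphi_{\mathcal W}^\star(q)\big(\mathcal L(B)+o(1)\big),
\]
where $\mathcal L$ is an explicit linear functional; taking $B=1$ recovers $\varphi_{\mathcal W}^\star(q)\asymp q$. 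The admissible exponent $\delta$ is governed by the requirement that the product of the length $q^{1/2}$ coming from $L(\tfrac12,\chi)$, the length of $\mathcal M$, and the length of $B$ stays below $q^{1-\varepsilon}$; here the multi-range construction $\mathcal M=\prod_{j=0}^J\mathcal M_j$, with each $\mathcal M_j$ supported on $\Omega(n)\le\ell_j=2\lfloor\theta_j^{-3/4}\rfloor$, is used to see that $\mathcal M$ has length only $q^{O(\eta^{1/4})}$, so $\delta$ may be kept a fixed positive constant once $\eta$ is small. This is the technical engine of the proof, and the Euler-product-like design of $\mathcal M$ (the Liouville weights $\lambda$ and the factors $\nu(p^a)=1/a!$, which make each $\mathcal M_j$ a truncation of $\exp(-\sum_{p\in I_j}\chi(p)/\sqrt p)$) is precisely what renders $\mathcal L$ tractable: since $L(\tfrac12,\chi)\mathcal M(\chi)$ behaves on average like a bounded Euler product close to $1$, $\mathcal L$ essentially reads off the constant term of $B$, up to arithmetic factors which, for the test polynomials used below, affect the leading term only by a factor $1+o(1)$.

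The second step, which I expect to be the main obstacle, is to replace $\log|L(\tfrac12,\chi)|$ by a short Dirichlet polynomial on $\mu_{\mathcal W}$-average. Fixing $k$, put $z=q^{1/(k+1)^{2}}$ (so $z^k\le q^{1/4}\le q^\delta$) and
\[
\mathcal P_z(\chi)=\mathrm{Re}\sum_{p\le z}\frac{\chi(p)}{\sqrt p}\;+\;\tfrac12\,\mathrm{Re}\sum_{p\le z^{1/2}}\frac{\chi(p^2)}{p}.
\]
One starts from an unconditional inequality of Radziwi\l\l--Soundararajan type, bounding $\log|L(\tfrac12,\chi)|$ from above by $\mathrm{Re}\,\mathcal P_x(\chi)$ plus a sum over zeros of $L(s,\chi)$ within $O(1/\log x)$ of $\tfrac12$; a zero-density estimate controls the zeros at moderate height, and truncating from $x$ down to $z$ costs only a polynomial's worth of mean square, $\asymp\log\log x-\log\log z=O(\log k)$. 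This yields $\log|L(\tfrac12,\chi)|=\mathrm{Re}\,\mathcal P_z(\chi)+E(\chi)$ with $\sumstar_{\chi}|E(\chi)|^{2k}=o\big(q(\log\log q)^{k}\big)$ \emph{off} an exceptional set of $\chi$ for which $|L(\tfrac12,\chi)|$ is atypically small or $L(s,\chi)$ has a zero within $1/\log x$ of $\tfrac12$. On the exceptional set $\log|L|$ cannot be controlled pointwise --- indeed $L(\tfrac12,\chi)$ may vanish --- but here the weight earns its keep: when $|L(\tfrac12,\chi)|$ is small, the factor $\mathcal W(\chi)=L(\tfrac12,\chi)\mathcal M(\chi)$ is itself small, and since $t^{1/2}|\log t|^{k}$ is bounded on $(0,1]$, a mean-value bound for $\mathcal M$ together with Proposition~\ref{upperboundsecondDirichlet} shows this contribution is negligible; when $\chi$ has a zero within $1/\log x$ of $\tfrac12$, the interpretation of $\mathcal W$ as a truncated Hadamard product over exactly those zeros forces $|\mathcal W(\chi)|$ to be correspondingly small, so again the contribution is negligible. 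Combining these bounds with H\"older's inequality and Proposition~\ref{upperboundsecondDirichlet} gives
\[
\frac{1}{\varphi_{\mathcal W}^\star(q)}\sumstar_{\chi\pamod q}\big(\log|L(\tfrac12,\chi)|\big)^k\mathcal W(\chi)=\frac{1}{\varphi_{\mathcal W}^\star(q)}\sumstar_{\chi\pamod q}\mathcal P_z(\chi)^k\,\mathcal W(\chi)+o\big((\log\log q)^{k/2}\big),
\]
and this reduction --- in particular the care needed so that the weight suppresses the vanishing and the low-lying zeros without itself ever becoming large --- is exactly where working with $\mu_{\mathcal W}$ rather than the counting measure is essential.

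It remains to evaluate the right-hand side. Expanding $\mathcal P_z(\chi)^k$ into a Dirichlet polynomial in $\chi$ and $\overline\chi$ of length $\le z^k\le q^\delta$ and applying the twisted first-moment asymptotic of Step~1, the combinatorics of pairing the $k$ prime variables produces, for $k$ even, a main term $(k-1)!!\big(\tfrac12\sum_{p\le z}p^{-1}\big)^{k/2}$, with the off-diagonal pairings and the prime-square correction contributing lower order, and with the mollifier leaving this term undistorted thanks to its Euler-product-like normalization; since $\sum_{p\le z}p^{-1}=\log\log q+O(\log k)$ for $z=q^{1/(k+1)^2}$, this equals $(k-1)!!\big(\tfrac12\log\log q\big)^{k/2}(1+o(1))$, while for $k$ odd every term is off-diagonal and the total is $o\big((\log\log q)^{k/2}\big)$. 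The companion bound with $\mathcal W$ replaced by $|\mathcal W|$ follows more cheaply from Cauchy--Schwarz, Proposition~\ref{upperboundsecondDirichlet}, and the unmollified moment estimate $\sumstar_{\chi}|\mathcal P_z(\chi)|^{2k}\ll q(\log\log q)^{k}$. Together with the reduction of Step~2 this establishes the required moment asymptotics and completes the proof.
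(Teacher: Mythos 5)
Your overall architecture (reduce to weighted moments of $\log|L(\tfrac12,\chi)|$, use the twisted first moment to evaluate weighted moments of a prime sum, and use the weight to absorb the characters where $L(\tfrac12,\chi)$ is small) is in the right spirit, and your Step 3 is essentially the same diagonal computation the paper performs for the characteristic function. But Step 2 — the replacement of $\log|L(\tfrac12,\chi)|$ by $\mathrm{Re}\,\mathcal P_z(\chi)$ — contains the genuine gap, and it is exactly the obstruction the paper is designed to circumvent. You propose to control the discrepancy via an explicit-formula inequality plus a zero-density estimate, discarding an exceptional set of $\chi$ having a zero within $O(1/\log x)$ of $\tfrac12$, and you dispose of that exceptional set by appealing to ``the interpretation of $\mathcal W$ as a truncated Hadamard product over exactly those zeros.'' That interpretation is not a theorem you can invoke: the paper establishes it only outside a set of $\gg q e^{-1/(3\eta)}$ characters (a positive proportion for fixed $\eta$), and on that residual set there is no pointwise relation between $|\mathcal W(\chi)|$ and the low-lying zeros. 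Moreover, a character can have a zero at distance $\asymp e^{-\Psi}/\log q$ from $\tfrac12$ with $\sqrt{\log\log q}\ll\Psi\ll\log q$ while $|L(\tfrac12,\chi)|$ remains of order one; unconditionally nothing bounds the number of such characters (this is precisely the spacing/Density-Conjecture input Hough needed), so neither ``$|L|$ small $\Rightarrow$ weight small'' nor any unconditional zero count covers them. Note also that the average number of zeros within $1/\log x$ of the real point is $\asymp 1/\eta$ per character, so your ``exceptional set'' is in fact essentially all of the characters.

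The missing idea is that the mollifier itself, not the explicit formula, supplies the two-sided approximation. The paper restricts to the set $\mathcal S$ of $\chi$ with $\Lambda^{-1}\le|L(\tfrac12,\chi)\mathcal M(\chi)|\le\Lambda$ (with $\Lambda=\log\log q$), $\prod_{j\ge1}|\mathcal M_j(\chi)|\le(\log\log q)^B$, and $|P(\chi)|\le\log\log\log q\sqrt{\tfrac12\log\log q}$; on $\mathcal S$ one has the identity-level statement $\log|L(\tfrac12,\chi)|=-\log|\mathcal M_0(\chi)|+O(\log\log\log q)=\mathrm{Re}\,P(\chi)+O(\log\log\log q)$, because a truncated exponential equals the exponential when its argument is bounded. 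No zeros appear anywhere. The complement is handled by the dichotomy you partially found: where $|L\mathcal M|<\Lambda^{-1}$ the weighted mass is trivially $<\varphi(q)/\Lambda$, and the other failures have small counting measure by Chebyshev together with Proposition \ref{upperboundsecondDirichlet}, hence small weighted measure by Cauchy--Schwarz. Two smaller points: (i) for a complex measure, moment convergence plus sub-Gaussian bounds for the total variation does not by itself yield convergence on sharp intervals — you also need $|\mu_{\mathcal W}|$ of short intervals around the endpoints $a,b$ to be small, which the paper gets by comparing with the unweighted distribution of the Dirichlet polynomial $\mathcal P$ (not of $\log|L|$) via Cauchy--Schwarz and a Fej\'er-kernel bound; and (ii) the approximating polynomial must be taken of length $y=q^{\eta/(\log\log q)^5}$, not a fixed power of $q$, if you want the truncated-exponential identity for $\mathcal M_0$ to be usable.
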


The proof we give yields an upper bound $O_\varepsilon((\log \log q)^{-1/4+\varepsilon})$ on the rate of convergence, and it is possible to improve this to $O_\varepsilon((\log \log q)^{-1/2+\varepsilon})$. There is also some flexibility in the choice of the weighted measure. For example, the analogue of Theorem \ref{thm:CLTDirichlet} with $\mathcal W(\chi)$ replaced by $L(\tfrac12,\chi) \mathcal M_0(\chi)$ holds (however the conclusion of the analogue of Proposition \ref{upperboundsecondDirichlet} no longer holds). Using results on the fourth moment of Dirichlet $L$-functions \cite{young, Hou16, zacharias}, it would be possible to prove an analogue of this result with $|\mathcal W(\chi)|^2$ in place of $\mathcal W(\chi)$.

Our next result establishes a weighted central limit theorem for the joint distribution of central $L$-values of twists of two distinct automorphic forms.
Let $f$ and $g$ be fixed, distinct weight $\kappa$ newforms on $\Gamma_0(N)$ and write
\begin{align*}
L(s,f)&=\sum_{n=1}^{\infty}\frac{\lambda_f(n)}{n^s}=\prod_{p}\left(1-\frac{\alpha_{f,1}(p)}{p^s} \right)^{-1}\left(1-\frac{\alpha_{f,2}(p)}{p^s} \right)^{-1}, \qquad  \tmop{Re}(s)>1,
\end{align*}
where $\lambda_f(n)$ denotes the $n$th Hecke eigenvalue of $f$ and $\alpha_{f,1}(p),\alpha_{f,2}(p)$ are the Satake parameters of $f$; that is, they are complex numbers which satisfy $\alpha_{f,1}(p)\alpha_{f,2}(p)=1$ and $\alpha_{f,1}(p)+\alpha_{f,2}(p)=\lambda_f(p)$.
Also define $L_{\infty}(s,f)=(2\pi)^{-s} \Gamma(s+\frac{\kappa-1}{2})$ and $\Lambda(s,f)=N^{s/2}L_{\infty}(s,f)L(s,f)$.
The functional equation is
\[
\Lambda(s,f)=\varepsilon(f) \Lambda(1-s,f),
\]
where $\varepsilon(f) \in \{\pm 1\}$ is the root number. 

As before, we choose our weight to be the product of the central $L$-values and a mollifier. Define the completely multiplicative functions $w_j(n)$ and $a_{f,j}(n)$ by
\begin{equation} \label{eq:adef}
\begin{split}
w_j(p)=\frac{1}{p^{\frac{1}{\theta_j \log q}}} \left(1-\frac{\log p}{\theta_j \log q} \right), \qquad a_{f,j}(p)&=\lambda_f(p) w_j(p).
\end{split}
\end{equation}
Let
\begin{equation}\label{eq:mollifierdef}
M_{f,j}(\chi)=\sum_{\substack{p|n \Rightarrow p \in I_j \\ \Omega(n) \le \ell_j}}\frac{  a_{f,J}(n) \lambda(n)\nu(n) \chi(n)}{\sqrt{n}}, 
M_f(\chi)=\prod_{j=0}^{J} M_{f,j}(\chi), \text{ and }   M(\chi)=M_f(\chi)M_g(\overline \chi),
\end{equation}
where $M_g$ is defined completely analogously to $M_f$. We take our weight to be
$
W(\chi)=L(\tfrac12, f \otimes \chi) L(\tfrac12, g \otimes \overline \chi) M(\chi)$.
We prove in Section \ref{sec:random} that $\varphi_W^{\star}(q) \asymp q$. Additionally, we can bound the moments of $W(\chi)$ under GRH.

\begin{proposition} \label{prop:mollified}
Assume GRH.
Let $k >0$ and suppose $\eta\lceil k \rceil$ is sufficiently small. Then we have
\begin{equation}\label{eq:momentbound}
\sumstar_{\chi \pamod q} |W( \chi)|^{2k} \ll_{f,g,k} q.
\end{equation}
\end{proposition}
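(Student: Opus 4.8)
The plan is to adapt the conditional upper-bound method for moments of $L$-functions, due to Soundararajan and refined by Harper, to the present mollified family; the mollifier $M_f$ has been designed precisely so that this works. Assuming GRH for $L(s,f\otimes\chi)$, one has for any $2\le x\le q$ a pointwise estimate of the shape
\[
\log|L(\tfrac12,f\otimes\chi)|\ \le\ \mathrm{Re}\sum_{p\le x}\frac{\lambda_f(p)\chi(p)}{p^{1/2+1/\log x}}\cdot\frac{\log(x/p)}{\log x}\ +\ R_f(\chi)\ +\ O\!\left(\frac{\log q}{\log x}\right),
\]
where $R_f(\chi)=\tfrac12\mathrm{Re}\sum_{p\le\sqrt x}\frac{(\lambda_f(p)^2-2)\chi(p)^2}{p^{1+2/\log x}}\frac{\log(x/p^2)}{\log x}$ is the prime-square term and higher prime powers are absorbed into the error; and similarly for $g$ with $\overline\chi$ in place of $\chi$. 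I would take $x=q^{\theta_J}$, so the error is $O(1/\eta)$, a constant. Crucially, the weight $w_J(p)$ of \eqref{eq:adef} equals $p^{-1/\log x}\tfrac{\log(x/p)}{\log x}$, so the leading sum above is exactly $\mathrm{Re}\sum_{p\le x}\tfrac{\lambda_f(p)w_J(p)\chi(p)}{\sqrt p}=\mathrm{Re}\sum_{j=0}^{J}P_{f,j}(\chi)+O(1)$, where $P_{f,j}(\chi):=\sum_{p\in I_j}\tfrac{\lambda_f(p)w_J(p)\chi(p)}{\sqrt p}$. Moreover, expanding the sum in \eqref{eq:mollifierdef} by the number of prime factors shows $M_{f,j}(\chi)=\sum_{m\le\ell_j}\tfrac{(-P_{f,j}(\chi))^{m}}{m!}$: it is the truncation of $\exp(-P_{f,j}(\chi))$ to degree $\ell_j$.

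If $M_f(\chi)$ were exactly the Euler product $\prod_j\exp(-P_{f,j}(\chi))$, and $M_g$ likewise, then the leading prime sums in the GRH bounds for $|L(\tfrac12,f\otimes\chi)|^{2k}$ and $|L(\tfrac12,g\otimes\overline\chi)|^{2k}$ would be cancelled on the nose by $|M_f(\chi)|^{2k}=\exp(-2k\,\mathrm{Re}\sum_jP_{f,j}(\chi))$ and $|M_g(\overline\chi)|^{2k}$, leaving $|W(\chi)|^{2k}\le\exp\big(kR_f(\chi)+kR_g(\overline\chi)\big)\,e^{O(k/\eta)}$. The exponent $R_f(\chi)+R_g(\overline\chi)$ is only $O(\log\log q)$ pointwise, and since its terms are supported on squares of primes with $p^2\le x$, a standard orthogonality argument (as in Soundararajan's treatment of prime powers, using that $q$ is prime) shows $\sumstar_{\chi}\exp(kR_f(\chi)+kR_g(\overline\chi))\ll_{f,g,k}q$ — the relevant variance being $\ll\sum_p p^{-2}\ll 1$. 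Since $\varphi^{\star}(q)\asymp q$, this would give \eqref{eq:momentbound} in the idealized case.

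The real work is removing the truncation. Writing $M_{f,j}(\chi)=e^{-P_{f,j}(\chi)}\big(1-e^{P_{f,j}(\chi)}E_{f,j}(\chi)\big)$ with $E_{f,j}(\chi)=\sum_{m>\ell_j}\tfrac{(-P_{f,j}(\chi))^m}{m!}$ the tail of the exponential, the correction is $\ll (e|P_{f,j}(\chi)|/\ell_j)^{\ell_j}$, which is negligible exactly when $|P_{f,j}(\chi)|\ll\ell_j$; the choice $\ell_j\asymp\theta_j^{-3/4}$, with exponent strictly between $\tfrac12$ and $1$, is made to reconcile this with the large-deviation bound below. I would split the sum over $\chi$ according to whether $|P_{f,j}(\chi)|\le\ell_j/(10e)$ for every $j$, and likewise for each $P_{g,j}(\overline\chi)$, or not. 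On the good set, $\prod_j|e^{P_{f,j}(\chi)}M_{f,j}(\chi)|^{2k}\prod_j|e^{P_{g,j}(\overline\chi)}M_{g,j}(\overline\chi)|^{2k}=1+o(1)$, so its contribution is bounded as in the previous paragraph. On the complement one uses that, by orthogonality, $\sumstar_{\chi}|P_{f,j}(\chi)|^{2s}\ll s!\,\mathcal V_j^{\,s}$ for $s\le\lfloor 1/(2\theta_j)\rfloor$ (so that products of $s$ primes from $I_j$, squared, stay below $q$), where $\mathcal V_j=\sum_{p\in I_j}\lambda_f(p)^2w_J(p)^2/p\asymp 1$ for $j\ge1$ and $\asymp\log\log q$ for $j=0$; taking $s$ of order $1/\theta_j$ shows $\{\chi:|P_{f,j}(\chi)|>\ell_j/(10e)\}$ has measure $\ll q\exp(-c\,\theta_j^{-1}\log(1/\theta_j))$. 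A dyadic decomposition over the size of each exceptional $|P_{f,j}(\chi)|$, with the crude bounds $|M_{f,j}(\chi)|\ll(C|P_{f,j}(\chi)|/\ell_j)^{\ell_j}$, $\exp(2k\,\mathrm{Re}\,P_{f,j}(\chi))\le e^{2k|P_{f,j}(\chi)|}$, and the trivial GRH bound $|L(\tfrac12,f\otimes\chi)L(\tfrac12,g\otimes\overline\chi)|^{2k}\le\exp(O_f(k)\tfrac{\log q}{\log\log q})$, then bounds the exceptional contribution by $\ll q$, provided $\eta$ is small enough relative to $k$ — this is exactly the hypothesis that $\eta\lceil k\rceil$ be sufficiently small.

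I expect this last step to be the main obstacle: one must check, uniformly over $0\le j\le J$ and for both $f$ and $g$, that the pointwise bounds on the exceptional sets — which grow like a power of $q$ — are beaten by the (very small, though only constant in $q$) measures of those sets, the balance being exactly what fixes the exponents in $\theta_j$ and $\ell_j$. A secondary point needing care is to record the GRH upper bound in precisely the form above, accounting for the conductor $Nq^2$ of $f\otimes\chi$ and the prime dividing $N$, and to make rigorous the claim that the prime-power terms contribute only $O_{f,g,k}(1)$ on average.
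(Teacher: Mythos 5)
The overall philosophy of your proposal matches the paper: both use the Soundararajan/Harper GRH inequality, exploit the fact that $M_{f,j}(\chi)$ is the degree-$\ell_j$ truncation of $\exp(-P_{I_j}(\chi;a_{f,J}))$, split into a ``good'' set where each $|P_{I_j}|$ is controlled and an exceptional set, and appeal to large-deviation moment bounds. Your identification of $w_J(p)=p^{-1/\log x}\tfrac{\log(x/p)}{\log x}$, the cancellation of the leading prime sum, and the prime-square contribution $\sum_p\tfrac{(\lambda_f(p)^2-2)\chi(p)^2}{p}$ (the paper routes this through $L(1,\operatorname{Sym}^2 f\otimes\chi^2)/L(1,\chi^2)$ rather than a direct variance computation, but either should work) are all correct.

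There is, however, a genuine gap in your treatment of the exceptional set, and it is fatal as written. You propose to multiply the ``trivial GRH bound'' $|L(\tfrac12,f\otimes\chi)L(\tfrac12,g\otimes\overline\chi)|^{2k}\le\exp(O_{f,g}(k)\tfrac{\log q}{\log\log q})$ by the small measure of the set where some $|P_{I_j}(\chi)|>\ell_j/(10e)$. But the measure of that set for the innermost interval $j$ is only of size $q\exp(-c\,\theta_j^{-1}\log(1/\theta_j))$, which for $j=0$ is $q\exp(-c(\log\log q)^5\log\log\log q/\eta)$ — polylog-small, not power-of-$q$-small — whereas the trivial GRH pointwise bound is $\exp(c'\log q/\log\log q)$. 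The product is enormous: $\log q/\log\log q$ dwarfs $(\log\log q)^5\log\log\log q$, so shrinking $\eta$ does not help. The arithmetic you flag as ``the main obstacle'' does not in fact close.

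What rescues the argument — and is the key structural idea you're missing — is the \emph{adaptive} choice of the length $Y$ in Chandee's inequality, stratified by the \emph{first} interval $I_{j+1}$ where the prime sum goes wrong. For such $\chi$ one applies the GRH bound with $Y=q^{\theta_j}$ (only as long as the last controlled scale), so the error term becomes $\exp(O(k/\theta_j))$ rather than $\exp(O(\log q/\log\log q))$. This \emph{is} beaten by the exceptional measure $\exp(-c\,\theta_j^{-1}\log(1/\theta_j))$ since $\log(1/\theta_j)\to\infty$, and the product telescopes to $O(q)$ after summing over $j$. In the paper this is exactly Lemma~\ref{lem:Lbd}: case (iii) with $Y=q^{\theta_j}$ produces the factor $\exp(6k/\theta_j)$, the controlled intervals give $D_j(\chi;k)$, and the bad interval is dominated via the Chebyshev factor $(e^2k\operatorname{Re}P_{I_{j+1}}/\ell_{j+1})^{t_{j+1}}$; the sum is then closed by Lemmas~\ref{lem:diagonal}, \ref{lem:mollifiedeuler} and \ref{lem:Expectations}. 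The parameters $\ell_j\asymp\theta_j^{-3/4}$, $t_j\asymp\theta_j^{-1}$ are tuned to make precisely this telescoping work, and a dyadic split alone, with fixed $x=q^{\theta_J}$ throughout, cannot replicate it. One further minor point: the paper first reduces $|W(\chi)|^{2k}$ to $|L(\tfrac12,f\otimes\chi)M_f(\chi)|^{2k}$ by Cauchy--Schwarz (Proposition~\ref{prop:mollifiedf}), which decouples $f$ and $g$ and simplifies bookkeeping; your direct attack on the product is feasible but more cumbersome.
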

Here we assume that GRH holds for $L(s,f\otimes \chi),L(s,g\otimes \chi), L(s,\tmop{Sym}^2 f \otimes \chi) $, $L(s,\tmop{Sym}^2 g \otimes \chi)$ and $L(s,\chi)$ for all characters $\chi$ modulo $q$.
 We now state our second main result.

\begin{theorem} \label{thm:cltjoint}
Assume GRH. Let $\varepsilon>0$ and suppose that $\eta=\eta(\varepsilon)$ is sufficiently small. Then for any intervals $I_1, I_2 \subset \mathbb R$ we have that
\[
\begin{split}
&\mu_W\bigg(\bigg\{ \chi \pamod q, \chi\neq \chi_0 : \bigg(\frac{\log |L(\tfrac12,f \otimes \chi)|}{\sqrt{\frac12 \log \log q}},\frac{\log |L(\tfrac12, g\otimes \chi)|}{\sqrt{\frac12 \log \log q}}\bigg) \in I_1 \times I_2 \bigg\}\bigg)
\\
&\qquad \qquad \qquad \qquad \qquad = \frac{1}{2\pi} \int_{I_1 \times I_2} e^{-\frac12(u^2+v^2)} \, du \, dv+O_{\varepsilon,f,g}\left(  (\log \log q)^{-1/2+\varepsilon}\right).
\end{split}
\]
\end{theorem}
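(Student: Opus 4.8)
The plan is to prove Theorem~\ref{thm:cltjoint} via the method of moments, computing the joint moments of $\log|L(\tfrac12,f\otimes\chi)|$ and $\log|L(\tfrac12,g\otimes\chi)|$ against the weighted measure $\mu_W$ and matching them to the moments of a pair of independent Gaussians. The central analytic input is a smoothed explicit-formula expansion: under GRH one writes $\log|L(\tfrac12,f\otimes\chi)|$ as a short Dirichlet sum over primes $p\le x$ of $\lambda_f(p)\chi(p)p^{-1/2}$ (plus prime-power and tail contributions that are lower order), essentially as $\mathrm{Re}\,\mathcal{P}_f(\chi)$ where $\mathcal{P}_f(\chi)=\sum_{p\le x}\frac{a_{f,J}(p)\lambda(p)\chi(p)}{\sqrt p}$ up to negligible errors; the weight functions $w_j$ and the mollifier $M_f$ are designed precisely so that the logarithm of the $L$-value and the logarithm of the mollifier combine into a tractable expression. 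So the first step is to establish, conditionally on GRH, that on a set of $\mu_W$-measure $1-o(1)$ one has $\log|L(\tfrac12,f\otimes\chi)|=\mathrm{Re}\sum_{p}\frac{\lambda_f(p)\chi(p)}{\sqrt p}v(p)+O((\log\log q)^{1/2-\delta})$ for a suitable smooth cutoff $v$ supported on $p\le x$, and similarly for $g$ with $\overline{\chi}$.

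Next I would reduce everything to an \emph{unweighted} moment computation by absorbing the weight. Write $W(\chi)=L(\tfrac12,f\otimes\chi)L(\tfrac12,g\otimes\overline\chi)M(\chi)$; the key point, to be proven in the section on the ``random model'' (Section~\ref{sec:random}), is that $\frac{1}{\varphi_W^\star(q)}\sumstar_\chi W(\chi)\,(\text{test function})$ behaves like an expectation over a random Euler product, because $M_f(\chi)$ on average cancels the polar/large part of $L(\tfrac12,f\otimes\chi)$ and leaves a well-behaved object. Concretely one expands each $M_{f,j}$, uses orthogonality of characters modulo the prime $q$ to detect $n\equiv m$, and — since all Dirichlet polynomials in sight have length a small power of $q$ — the diagonal $n=m$ dominates, giving a main term that is a product over primes $p\le x$ of explicit local averages. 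One then shows these local averages reproduce independent mean-zero, variance-$\tfrac12\log\log q$ Gaussians in the two variables $\mathrm{Re}\,\mathcal{P}_f$ and $\mathrm{Re}\,\mathcal{P}_g$: the cross terms between the $f$-primes and the $g$-primes vanish on the diagonal because $\chi(p)$ and $\overline{\chi}(p')$ pair up only when $p=p'$, and the resulting cross-contribution $\sum_p \lambda_f(p)\lambda_g(p)/p$ converges (as $f\ne g$, by Rankin--Selberg), hence is $O(1)$ and does not contribute to the variance — this is what yields the \emph{independence} of the two limiting Gaussians.

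The moment computation itself then runs as follows: fix nonnegative integers $r,s$, and compute $\mathbb{E}_{\mu_W}\big[(\mathrm{Re}\,\mathcal{P}_f)^r(\mathrm{Re}\,\mathcal{P}_g)^s\big]$ by multiplying out, reducing $\mathrm{Re}$ to $\tfrac12(\mathcal{P}+\overline{\mathcal{P}})$, and using the weighted orthogonality above together with Proposition~\ref{prop:mollified} to control the error terms (the $2k$-th moment bound $\sumstar|W(\chi)|^{2k}\ll q$ is exactly what lets one truncate $\log|L|$ and bound the contribution of the ``bad'' characters where the short-sum approximation fails, via Hölder). The diagonal terms organize themselves, by the usual pairing/Wick-calculus argument, into $(r-1)!!\,(s-1)!!\,(\tfrac12\log\log q)^{(r+s)/2}$ when $r,s$ are both even and into a lower-order quantity otherwise, matching the Gaussian moments; off-diagonal terms contribute $O((\log\log q)^{(r+s)/2-1+\varepsilon})$ or smaller. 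Having matched all joint moments, one invokes the standard moment-to-distribution argument (the bivariate Gaussian is determined by its moments) together with a quantitative Berry--Esseen-type estimate to extract the error term $O_{\varepsilon,f,g}((\log\log q)^{-1/2+\varepsilon})$; the $\varepsilon$ and the smallness of $\eta=\eta(\varepsilon)$ enter because the number of moments one can afford and the allowable mollifier length $x=q^{\theta_J}$ are linked through $\eta$.

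The main obstacle, and the place where GRH is genuinely used, is Step~1 together with the error control in the moment computation: without GRH one cannot represent $\log|L(\tfrac12,f\otimes\chi)|$ by a short prime sum with an acceptable error, and one cannot rule out a thin set of characters where $L$ is anomalously large or small and which, when weighted by $W(\chi)$, could swamp the main term. Technically, the hardest estimates will be (i) showing the mollified moments in Proposition~\ref{prop:mollified} are $\ll q$ for $2k$-th moments with $k$ as large as the number of moments required — this needs GRH for the various symmetric-square and Rankin--Selberg twists to handle prime-power terms and to get the Euler-product heuristic to hold — and (ii) verifying that the twisted second moment $\sumstar_\chi L(\tfrac12,f\otimes\chi)L(\tfrac12,g\otimes\overline\chi)M(\chi)\overline{B(\chi)}$ for a Dirichlet polynomial $B$ of length $q^{o(1)}$ is governed by its diagonal, which is where the careful bookkeeping of $\theta_j,\ell_j,I_j$ pays off in keeping every polynomial short enough that no off-diagonal main term appears.
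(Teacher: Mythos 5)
Your outline gets several structural features right (reduction to the prime sums $P_f,P_g$, the random/diagonal evaluation with independence coming from $\sum_p\lambda_f(p)\lambda_g(p)/p=O(1)$ via Rankin--Selberg, and the use of Proposition \ref{prop:mollified} with H\"older to discard bad characters), but there are two genuine gaps. First, your Step 1 rests on an explicit-formula expansion: ``under GRH one writes $\log|L(\tfrac12,f\otimes\chi)|$ as a short Dirichlet sum over primes \dots plus prime-power and tail contributions that are lower order.'' This is precisely the step that fails: the explicit formula carries a contribution from the zeros, and a zero at or extremely near $s=\tfrac12$ (which GRH does not exclude) makes $\log|L(\tfrac12,f\otimes\chi)|$ arbitrarily negative while the prime sum stays bounded. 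This is the obstruction that forced Hough to assume a zero-spacing hypothesis beyond GRH, as the introduction discusses. The paper's mechanism is different and does not invoke the explicit formula at all: by Chebyshev applied to the mollified second moment $\sumstar_\chi|L_f(\chi)M_f(\chi)|^2\ll q$, for all but $O(q/\Lambda^2)$ characters one has $\Lambda^{-1}\le|L_f(\chi)M_f(\chi)|\le\Lambda$ with $\Lambda=\log\log q$, whence $\log|L_f(\chi)|=-\log|M_{f,0}(\chi)|+O(\log\log\log q)=\mathrm{Re}(P_f(\chi))+O(\log\log\log q)$ because the truncated exponential defining $M_{f,0}$ coincides with $\exp(-\sum_{p\le y}\lambda_f(p)w_J(p)\chi(p)/\sqrt p)$ on the set where $|P_f(\chi)|$ is controlled (Lemma \ref{lem:cheby} and \eqref{eq:clean}). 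The weight's vanishing at zeros of the $L$-function is what makes this legitimate; an unconditional or GRH explicit-formula expansion of $\log|L|$ itself is not available.

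Second, the final step ``having matched all joint moments, one invokes the standard moment-to-distribution argument (the bivariate Gaussian is determined by its moments)'' is not valid here, because $\mu_W$ is a \emph{complex} measure: $W(\chi)$ is complex-valued, and moment-determinacy and Berry--Esseen theory apply to probability measures. One cannot conclude anything about $\mu_W(\{\chi:(\mathcal L_f,\mathcal L_g)\in I_1\times I_2\})$ from moment matching alone. The paper bridges this by an explicitly quantitative Fourier route: the moments $\sumstar_\chi W(\chi)P_f(\chi)^kP_g(\chi)^{j-k}$ for $j\le S\asymp Z^2\log\log y$ are resummed into the characteristic function $\Phi_q(u,v)$ (Lemma \ref{lem:matchrandom} and Proposition \ref{prop:random}, giving \eqref{thm:mainresult2}), and then indicator functions of intervals are replaced by Beurling--Selberg approximants $F_{I,\Delta}$ with $\widehat{F_{I,\Delta}}$ supported in $[-\Delta,\Delta]$, $\Delta=\sqrt{\log\log q/\log\log\log q}$, so that Fourier inversion plus the unweighted bound of Lemma \ref{lem:clteasy} yields the stated rate $(\log\log q)^{-1/2+\varepsilon}$. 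If you want to keep a moments-first presentation you must still pass through the characteristic function (or an equivalent quantitative smoothing) rather than appeal to determinacy; as written, the proposal does not close this step.
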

 In the proof of Theorem \ref{thm:cltjoint}, the only place GRH is required is to bound the moments of $W(\chi)$. If we instead assume the conclusion of Proposition \ref{prop:mollified} with $k=1$ holds, then we can obtain (with some additional work) the same conclusion as in Theorem \ref{thm:cltjoint}, except with an error term of size $O_{f,g}(\frac{\sqrt{\log \log \log q}}{(\log \log q)^{1/4}})$.

\subsection{Applications} Since our weight does not bias our measure to a great extent we can gain insight into the typical behavior of the central $L$-values. Building on methods of Rohrlich \cite{Rohrlich1,Rohrlich2}, Chinta \cite{Chinta} has shown that $L(\tfrac12,f\otimes \chi) \neq 0$ for $100\%$ of primitive characters $\chi \pmod q$. Our first application goes beyond non-vanishing of twists, and shows that a positive proportion of $\chi$ give rise to simultaneous values that either grow or shrink with $q$.

\begin{corollary} \label{cor:simul large vals}
Assume GRH. Let $c>0$ be fixed. There are $\gg_{f,g,c} q$ characters $\chi \pmod{q}$ such that, for each such $\chi$, we simultaneously have
\begin{align*}
|L(\tfrac12, f \otimes \chi)| > \exp(c \sqrt{\log\log q})
\quad \text{ and } \quad
|L(\tfrac12, g \otimes \chi)|>\exp(c \sqrt{\log\log q}).
\end{align*}
Additionally, there are $\gg_{f,g,c} q$ characters $\chi \pmod{q}$ such that, for each such $\chi$, we simultaneously have
\begin{align*}
0<|L(\tfrac12, f \otimes \chi)| < \exp(-c \sqrt{\log\log q})
\quad \text{ and } \quad
0<|L(\tfrac12, g \otimes \chi)|<\exp(-c \sqrt{\log\log q}).
\end{align*}
\end{corollary}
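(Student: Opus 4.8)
The plan is to derive the corollary from Theorem \ref{thm:cltjoint} by a second-moment argument that compares the weighted measure $\mu_W$ to the ordinary counting measure. First I would observe that Theorem \ref{thm:cltjoint} gives a positive lower bound for $\mu_W$ of each of the two target events: for the first event take, say, the intervals $I_1 = I_2 = (2c/\sqrt{\log\log q} \cdot \sqrt{\tfrac12\log\log q}, \infty)$ rescaled appropriately — more precisely, since $\exp(c\sqrt{\log\log q})$ corresponds to $\log|L(\tfrac12,f\otimes\chi)|/\sqrt{\tfrac12\log\log q} > c\sqrt{2}$, one fixes the boxes $I_1 \times I_2 = (c\sqrt 2, \infty)\times(c\sqrt 2,\infty)$ for the "large" case and $(-\infty,-c\sqrt 2)\times(-\infty,-c\sqrt 2)$ for the "small" case, and in both cases the Gaussian main term is a positive constant $\delta = \delta(c) > 0$, so for $q$ large $\mu_W(\mathcal S) \ge \delta/2$, where $\mathcal S$ denotes the relevant set of characters. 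Note that the "small" case automatically includes the condition $L(\tfrac12,f\otimes\chi)\neq 0$, since $\mu_W$ is supported on characters with $W(\chi)\neq 0$, hence with both central values nonzero.

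The key step is to convert this lower bound on $\mu_W(\mathcal S)$ into a lower bound on the \emph{number} of characters in $\mathcal S$. By definition $\mu_W(\mathcal S) = \frac{1}{\varphi_W^\star(q)}\sum_{\chi\in\mathcal S}^* W(\chi)$, and since $\varphi_W^\star(q) \asymp q$, we get $\bigl|\sum_{\chi\in\mathcal S}^* W(\chi)\bigr| \gg_{f,g,c} q$. Applying Cauchy--Schwarz,
\[
q \ll_{f,g,c} \Bigl| \sumstar_{\chi\in\mathcal S} W(\chi) \Bigr| \le \Bigl( \#\mathcal S \Bigr)^{1/2} \Bigl( \sumstar_{\chi \pamod q} |W(\chi)|^2 \Bigr)^{1/2}.
\]
By Proposition \ref{prop:mollified} with $k = 1$ (which requires $\eta$ small, consistent with the hypotheses of Theorem \ref{thm:cltjoint} after possibly shrinking $\eta$), the second factor is $\ll_{f,g} q^{1/2}$. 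Rearranging yields $\#\mathcal S \gg_{f,g,c} q$, which is exactly the claimed bound. One does the same for both events.

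I do not expect a serious obstacle here: both inputs — the positive-proportion lower bound from Theorem \ref{thm:cltjoint} and the sharp second-moment bound from Proposition \ref{prop:mollified} — are already available, and the Cauchy--Schwarz step is routine. The only points requiring a little care are bookkeeping ones: checking that the choice of $\eta$ can be made simultaneously small enough for Theorem \ref{thm:cltjoint} (with the $\varepsilon$ corresponding to, say, $\varepsilon = 1/4$, so that the error term $o(1)$ is beaten by the constant main term) and for Proposition \ref{prop:mollified} with $k=1$; noting that membership in the support of $\mu_W$ forces $L(\tfrac12,f\otimes\chi)L(\tfrac12,g\otimes\chi)\neq 0$, which gives the strict positivity $0 < |L(\tfrac12,f\otimes\chi)|$ in the second assertion for free; and recording that the implied constants depend only on $f,g,c$. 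If one wished to avoid GRH, one could instead invoke the unconditional conclusion of Proposition \ref{prop:mollified} with $k=1$ together with the weaker form of Theorem \ref{thm:cltjoint} mentioned in the remark following it, at the cost of the weaker (but still $o(1)$) error term, and the same argument goes through.
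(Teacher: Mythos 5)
Your argument is correct and matches what the paper intends: the paper states only that the corollary "follows from combining Proposition~\ref{prop:mollified} and Theorem~\ref{thm:cltjoint}," and your Cauchy--Schwarz conversion from a lower bound on $\mu_W(\mathcal{S})$ to a lower bound on $\#\mathcal{S}$ is exactly the step being elided. One small tightening worth recording: rather than appealing informally to $\mu_W$ being ``supported'' on $\{W(\chi)\ne 0\}$, apply Cauchy--Schwarz directly to $\mathcal{S}'=\mathcal{S}\cap\{\chi:W(\chi)\ne 0\}$ (which has the same weighted sum as $\mathcal{S}$), so that $\#\mathcal{S}'\gg q$ and every $\chi\in\mathcal{S}'$ automatically has both central values nonzero, which also sidesteps the need to assign a meaning to $\log|L(\tfrac12,f\otimes\chi)|$ when the central value vanishes.
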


Corollary \ref{cor:simul large vals} follows from combining Proposition \ref{prop:mollified} and Theorem \ref{thm:cltjoint} and shows that the twists $|L(\tfrac12, f \otimes \chi)|$ and $|L(\tfrac12, g \otimes \chi)|$ simultaneously obtain somewhat large values for a positive proportion of twists, and it is best possible. It is possible to obtain larger central values, but these large values do not appear for a positive proportion of twists. For instance, there are values of $|L(\tfrac12, f \otimes \chi)|$ as large as 
$
\exp (c\, \sqrt{\frac{\log q}{\log \log q}}),$
which can be proved via the resonance method (see \cite[Theorem 1.11]{BFK}), and it is even possible for the angle of the central value to be constrained. Blomer et. al. also used the resonance method \cite[Theorem 1.12]{BFK} to show that there exist non-trivial
characters $\chi \pmod q$ such that
\begin{align*}
|L(\tfrac12, f \otimes \chi)L(\tfrac12, g \otimes \chi)| > \exp \left(c \sqrt{\frac{\log q}{\log \log q}} \right),
\end{align*}
but again these large values do not appear for a positive proportion of twists.

Famously, a conjecture of Lehmer predicts the Ramanujan $\tau$-function never vanishes and more generally one may wonder how often the Fourier coefficients of automorphic forms vanish (for non co-compact spaces). For fundamental Fourier coefficients of half-integral weight cusp forms this question is directly related to understanding nonvanishing of linear combinations of central $L$-values by Waldspurger's Theorem\footnote{Here one expands the cusp form in terms of a Hecke basis and applies Waldspurger's Theorem for each Hecke eigenform. It is an open problem, even under GRH, to establish a positive proportion of nonvanishing for fundamental Fourier coefficients of half-integral weight forms, whereas this is known under GRH for Hecke eigenforms of level $4$  (see \cite{LR21}).}. Furthermore, fundamental Fourier coefficients of Siegel cusp forms can also be expressed in terms of linear combinations of central $L$-values by a classical construction of Eichler and Zagier in certain cases (see \cite[Lemma 5.1]{JLS}) and more generally by the refined Gan--Gross--Prasad Conjecture\footnote{A proof of the conjecture has been announced by Furusawa and Morimoto (RIMS conference ``Analytic, geometric and $p$-adic aspects of automorphic forms and $L$-functions", January 2020)} (see \cite[Theorem 1.13]{DPSS15}).
Motivated by these relationships, it is natural to wonder how often linear combinations of central $L$-values vanish. 

Since combining Theorem \ref{thm:cltjoint} and Proposition \ref{prop:mollified} yields a positive proportion of characters for which one central $L$-value is large while the other is small, under GRH we obtain a positive proportion of twists for which $aL(\tfrac12,f\otimes \chi)+bL(\tfrac12,g\otimes \chi) \neq 0$, for $a,b \in \mathbb C$. 
Moreover, this argument gives the following result.

\begin{corollary} \label{cor:nonvanishing}
Assume GRH. Then there exist $\gg_{f,g} q$ characters $ \chi \pmod q$ such that for any $\{ a_\chi\}_{\chi \pmod q}, \{b_\chi\}_{\chi \pmod q} \subset \mathbb C$ with $|a_{\chi}| , |b_{\chi} |\asymp 1$ and any fixed $r_1,r_2>0$  we have 
\[a_{\chi} |L(\tfrac12,f \otimes \chi)|^{r_1}+b_{\chi} |L(\tfrac12, g \otimes \chi)|^{r_2} \neq 0.\]
\end{corollary}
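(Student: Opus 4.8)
The plan is to deduce Corollary~\ref{cor:nonvanishing} from Theorem~\ref{thm:cltjoint} and Proposition~\ref{prop:mollified} by showing that for a positive proportion of $\chi$, one of the two central values is large (on the scale $\exp(\pm c\sqrt{\log\log q})$) while the other is small, so that no fixed-magnitude combination $a_\chi|L(\tfrac12,f\otimes\chi)|^{r_1}+b_\chi|L(\tfrac12,g\otimes\chi)|^{r_2}$ can vanish. First I would fix $c>0$ large enough (depending on $r_1,r_2$) and use Theorem~\ref{thm:cltjoint} with $I_1=(c,\infty)$ and $I_2=(-\infty,-c)$ to conclude that
\[
\mu_W\Big(\Big\{\chi\ne\chi_0: \log|L(\tfrac12,f\otimes\chi)|>c\sqrt{\tfrac12\log\log q},\ \log|L(\tfrac12,g\otimes\chi)|<-c\sqrt{\tfrac12\log\log q}\Big\}\Big)
\]
tends to a positive constant $\delta=\delta(c)>0$ as $q\to\infty$. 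Call this set of characters $\mathcal{S}=\mathcal{S}(q)$; note that on $\mathcal{S}$ both central values are nonzero, in particular $W(\chi)\ne 0$ there.

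The second step is to convert this statement about the weighted measure $\mu_W$ into a statement about the ordinary counting measure, i.e.\ to show $|\mathcal{S}|\gg_{f,g} q$. Since $\mu_W(\mathcal{S})=\frac{1}{\varphi_W^\star(q)}\sum_{\chi\in\mathcal{S}}^* W(\chi)$ and $\varphi_W^\star(q)\asymp q$, it suffices to bound $\sum_{\chi\in\mathcal{S}}^*|W(\chi)|$ from above by $O(q)$; then $\delta\,\varphi_W^\star(q)\le \sum_{\chi\in\mathcal S}^*|W(\chi)|\le (|\mathcal S|)^{1/2}\big(\sum_{\chi}^*|W(\chi)|^2\big)^{1/2}$ by Cauchy--Schwarz, and invoking Proposition~\ref{prop:mollified} with $k=1$ (which gives $\sum_\chi^*|W(\chi)|^2\ll_{f,g} q$, provided $\eta$ is small enough, as we are assuming) yields $|\mathcal S|\gg_{f,g} q$. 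Here it is important that the implied constant in $|\mathcal S|\gg q$ depends only on $f,g$ (through $c$, which is itself chosen in terms of $r_1,r_2$ but those are fixed in the corollary); one should double check the order of quantifiers, but since $r_1,r_2$ are fixed in advance the dependence is harmless, and in fact a single choice of $c$ works simultaneously for all $a_\chi,b_\chi$ of size $\asymp 1$.

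Finally, on the set $\mathcal{S}$ I would verify the nonvanishing directly: for $\chi\in\mathcal S$ we have $|L(\tfrac12,f\otimes\chi)|^{r_1}>\exp(r_1 c\sqrt{\tfrac12\log\log q})$ and $0<|L(\tfrac12,g\otimes\chi)|^{r_2}<\exp(-r_2 c\sqrt{\tfrac12\log\log q})$, so for $q$ large (depending on $f,g$ via the absolute constants hidden in $|a_\chi|,|b_\chi|\asymp 1$) the first term strictly dominates in absolute value:
\[
|a_\chi|\,|L(\tfrac12,f\otimes\chi)|^{r_1} \;>\; |b_\chi|\,|L(\tfrac12,g\otimes\chi)|^{r_2},
\]
whence $a_\chi|L(\tfrac12,f\otimes\chi)|^{r_1}+b_\chi|L(\tfrac12,g\otimes\chi)|^{r_2}\ne 0$. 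The main obstacle is really the second step — transferring from the weighted measure to the counting measure — and the key input there is the second moment bound of Proposition~\ref{prop:mollified} (with $k=1$), which is exactly what makes the weight $W(\chi)$ ``not too large'' and hence keeps a positive proportion of the mass of $\mu_W$ supported on a set of size $\gg q$; everything else is bookkeeping with fixed constants. One small point to be careful about is uniformity: since $a_\chi,b_\chi$ are allowed to vary with $\chi$ (subject only to $|a_\chi|,|b_\chi|\asymp 1$), the choice of $c$ and the resulting set $\mathcal S$ must be made \emph{before} the $a_\chi,b_\chi$, which is fine because the domination in the last display only uses the size bounds $|a_\chi|,|b_\chi|\asymp 1$ and not their precise values.
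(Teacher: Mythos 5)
Your proposal is correct and follows essentially the same route the paper intends (the paper only sketches this corollary, attributing it to the combination of Theorem \ref{thm:cltjoint} with $I_1,I_2$ chosen so that one value is large and the other small, followed by the Cauchy--Schwarz transfer from $\mu_W$ to the counting measure via Proposition \ref{prop:mollified} with $k=1$). The only cosmetic remark is that $c$ need not depend on $r_1,r_2$ at all, since $\exp(\pm r c\sqrt{\log\log q})$ tends to $\infty$ or $0$ for every fixed $r>0$; and if one wants to avoid unbounded intervals one can take $I_1=(c,c+1)$, $I_2=(-c-1,-c)$, which changes nothing.
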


Our last application shows that there exists a sparse set of characters $\chi \pmod q$ for which the the central $L$-values are relatively close together in magnitude.
\begin{corollary} \label{cor:ratios} Assume GRH. Let $\varepsilon>0$. Then for $(\log \log q)^{\varepsilon} \le \Lambda \le \sqrt{\log \log q}$ we have

\scalebox{0.9}{
$
\# \bigg\{ \chi \pmod q, \chi \neq \chi_0 : L(\tfrac12,g \otimes \chi) \neq 0, \, e^{-\Lambda}  \le \bigg| \displaystyle \frac{L(\frac12,f\otimes \chi)}{L(\frac12, g \otimes \chi)} \bigg| \le e^{\Lambda} \bigg\} \gg_{f,g,\varepsilon} q\,  \bigg( \frac{\Lambda}{(\log \log q)^{1/2}} \bigg)^{1+\varepsilon}.
$}
\end{corollary}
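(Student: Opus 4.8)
The plan is to rewrite the condition defining the set in terms of the normalised logarithms
\[
X=X(\chi)=\frac{\log|L(\tfrac12,f\otimes\chi)|}{\sqrt{\tfrac12\log\log q}},\qquad Y=Y(\chi)=\frac{\log|L(\tfrac12,g\otimes\chi)|}{\sqrt{\tfrac12\log\log q}},
\]
to estimate the $\mu_W$-measure of the resulting region by Theorem \ref{thm:cltjoint}, and then to pass from this weighted measure to a genuine count using Proposition \ref{prop:mollified} together with H\"older's inequality. Set $\delta=\Lambda/\sqrt{\log\log q}$, so that $(\log\log q)^{\varepsilon-1/2}\le\delta\le1$. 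The inequality $e^{-\Lambda}\le|L(\tfrac12,f\otimes\chi)/L(\tfrac12,g\otimes\chi)|\le e^{\Lambda}$ is equivalent to $|\log|L(\tfrac12,f\otimes\chi)|-\log|L(\tfrac12,g\otimes\chi)||\le\Lambda$, that is, to $|X(\chi)-Y(\chi)|\le\sqrt2\,\delta$; moreover any such $\chi$ automatically has $L(\tfrac12,g\otimes\chi)\ne0$, since otherwise $Y(\chi)=-\infty$. Thus, writing $S$ for the set counted in the corollary, $S=\{\chi\pamod q,\ \chi\ne\chi_0:\ |X(\chi)-Y(\chi)|\le\sqrt2\,\delta\}$.

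The first, and main, step is to show that $\operatorname{Re}\mu_W(S)\gg\delta$. Heuristically this is exactly the content of Theorem \ref{thm:cltjoint}: under $\mu_W$ the pair $(X,Y)$ is distributed approximately as a standard bivariate Gaussian, so $X-Y$ is approximately $\mathcal N(0,2)$ and one expects $\mu_W(S)\approx\mathbb P(|\mathcal N(0,2)|\le\sqrt2\,\delta)\asymp\delta$. To make this rigorous one covers the bulk of the strip $\{|u-v|\le\sqrt2\,\delta\}$ by a disjoint union $E$ of $\asymp\delta^{-1}$ axis-parallel squares of side $\delta$ centred along the diagonal, so that $\{\chi:(X(\chi),Y(\chi))\in E\}\subseteq S$ and the Gaussian mass of $E$ is $\gg\delta$, applies Theorem \ref{thm:cltjoint} to each square, and adds up. The Gaussian main terms sum to $\gg\delta$; the accumulated error is $O_{f,g}\big(\delta^{-1}(\log\log q)^{-1/2+\varepsilon'}\big)$ when Theorem \ref{thm:cltjoint} is used with error exponent $\varepsilon'$, and since $\delta\ge(\log\log q)^{\varepsilon-1/2}$ this is $o(\delta)$ as soon as $\varepsilon'<2\varepsilon-\tfrac12$, which handles the whole range once $\varepsilon>\tfrac14$. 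For the remaining values of $\varepsilon$ one works with the full strip directly and uses the fact that the method of moments underlying the proof of Theorem \ref{thm:cltjoint} actually delivers a Berry--Esseen estimate for the single variable $X-Y$, with error $(\log\log q)^{-1/2+o(1)}$, whence $\operatorname{Re}\mu_W(S)=\mathbb P(|\mathcal N(0,2)|\le\sqrt2\,\delta)+o(\delta)\gg\delta$ for $q$ large in terms of $\varepsilon,f,g$.

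It remains to convert $\operatorname{Re}\mu_W(S)\gg\delta$ into a lower bound for $\#S$ (the same argument applies verbatim to $\{\chi:(X,Y)\in E\}$ in place of $S$, which is all one needs in the covering version). Since $\varphi_W^{\star}(q)$ is real --- by the functional equations one has $\overline{W(\chi)}=W(\overline\chi)$, whence $\varphi_W^{\star}(q)=\overline{\varphi_W^{\star}(q)}$ --- and $\varphi_W^{\star}(q)\asymp q$, we have
\[
\delta\ll\operatorname{Re}\mu_W(S)=\frac{1}{\varphi_W^{\star}(q)}\operatorname{Re}\sum_{\chi\in S}W(\chi)\ll\frac1q\sum_{\chi\in S}|W(\chi)|.
\]
Fix an integer $k=k(\varepsilon)$ with $k\ge\tfrac1{2\varepsilon}+\tfrac12$, and take $\eta=\eta(\varepsilon)$ small enough that $\eta\lceil k\rceil$ is small, so that Proposition \ref{prop:mollified} (as well as Theorem \ref{thm:cltjoint}) applies with this $k$. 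H\"older's inequality and Proposition \ref{prop:mollified} give
\[
\sum_{\chi\in S}|W(\chi)|\le\Big(\sumstar_{\chi\pamod q}|W(\chi)|^{2k}\Big)^{1/(2k)}(\#S)^{1-1/(2k)}\ll_{f,g,k}q^{1/(2k)}(\#S)^{1-1/(2k)},
\]
so that $(\#S)^{1-1/(2k)}\gg_{f,g,\varepsilon}q^{1-1/(2k)}\delta$, i.e. $\#S\gg_{f,g,\varepsilon}q\,\delta^{2k/(2k-1)}$. Since $\delta\le1$ and $\tfrac{2k}{2k-1}=1+\tfrac1{2k-1}\le1+\varepsilon$, this gives $\#S\gg_{f,g,\varepsilon}q\,\delta^{1+\varepsilon}=q\big(\Lambda/(\log\log q)^{1/2}\big)^{1+\varepsilon}$, as required. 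The reduction to the strip condition and this final H\"older step (which loses only the harmless factor $\delta^{\varepsilon}$ once $k$ is chosen in terms of $\varepsilon$) are routine; the crux is the lower bound $\operatorname{Re}\mu_W(S)\gg\delta$, where the strip has $\mu_W$-measure only $\asymp\delta$ while every axis-parallel rectangle inside it has $\mu_W$-measure $\lesssim\delta^2$, so that Theorem \ref{thm:cltjoint} must be applied $\asymp\delta^{-1}$ times and its error controlled down to $o(\delta)$ --- which for $\Lambda$ near the bottom $(\log\log q)^{\varepsilon}$ of its range requires the quantitative central limit theorem in Berry--Esseen form for $X-Y$ rather than merely in the rectangle-by-rectangle form stated above.
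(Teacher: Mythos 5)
Your proposal is correct and, in its final form, coincides with the paper's argument: the paper likewise derives a one-dimensional Berry--Esseen type estimate for the normalised difference $\frac{\log|L(1/2,f\otimes\chi)|-\log|L(1/2,g\otimes\chi)|}{\sqrt{\frac12\log\log q}}$ (obtained by running the proof of Theorem~\ref{thm:cltjoint} with the characteristic function $\Phi_q(u,-u)=e^{-4\pi^2u^2}+o(1)$), giving $\frac1{\varphi^\star_W(q)}\sum^*_{\chi}W(\chi)\mathbf 1_{[-\omega,\omega]}(\cdot)=\frac1{2\sqrt\pi}\int_{-\omega}^{\omega}e^{-u^2/4}\,du+O_\varepsilon((\log\log q)^{-1/2+\varepsilon})\gg\omega$, and then converts this into a count exactly as you do, via H\"older with Proposition~\ref{prop:mollified}. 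The only difference is that your initial ``covering by squares'' application of Theorem~\ref{thm:cltjoint} as a black box is, as you correctly diagnose, insufficient below $\varepsilon=1/4$, and you then fall back to precisely the one-dimensional CLT for $X-Y$ that the paper uses from the start; the argument is therefore slightly more circuitous than necessary but contains no gap.
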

We expect that the above bound should be optimal up to the factor of $\Lambda^{\varepsilon}(\log \log q)^{-\varepsilon/2}$.  It would be interesting to determine the smallest $\Lambda$ for which the set above is nonempty and by analogy with the conjecture that the set $\{\zeta(\tfrac12+it)\}_{t \in \mathbb R}$ is dense in $\mathbb C$ one might wonder whether this persists even for $\Lambda=o(1)$, thereby balancing the size of the central $L$-values and potentially allowing for linear combinations to vanish.

Finally let us mention that if in addition to GRH we also assume the Ramanujan-Petersson Conjecture our arguments carry over with only a few modifications needed to the case of $\tmop{GL}(2)$ Maass cusp forms. Let $\phi_1,\phi_2$ be distinct Maass newforms of level $N$. Assuming GRH for $L(s,\phi_1\otimes \chi),L(s,\phi_2\otimes \chi), L(s,\tmop{Sym}^2 \phi_1 \otimes \chi) $, $L(s,\tmop{Sym}^2 \phi_2 \otimes \chi)$, $L(s,\chi)$ for all characters $\chi$ modulo $q$ and $|\lambda_{\phi_j}(p)|\le 2$ for $j=1,2$ 
we obtain analogues of all of the above corollaries for  central values of $L$-functions attached to twists of $\phi_1,\phi_2$. In particular, this shows that under these hypotheses
that $L(\tfrac12, \phi_1 \otimes \chi) \neq 0$ and $L(\tfrac12,\phi_2 \otimes \chi) \neq 0$ simultaneously for $\gg_{\phi_1,\phi_2} q$ characters $\chi \pmod q$.

\subsection{Discussion of past work} 
Selberg's \cite{Selberg-old-new} work extends in great generality. For instance, he was able to prove a central limit theorem for the joint distribution of $\log L(\tfrac12+it,\chi_1), \ldots, \log L(\tfrac12+it,\chi_N)$ with $t \in [T,2T]$ and $\chi_1,\ldots,\chi_N$ distinct primitive characters $\pmod q$, where the limit is taken as $T \rightarrow \infty$. Furthermore, Bombieri and Hejhal \cite{Bombieri-Hejhal} showed that this method extends to higher degree $L$-functions under plausible hypotheses, such as a sufficiently strong zero density estimate. 

Hough \cite{Hough} adapted Selberg's method to study the distribution of central values of families of $L$-functions (see also \cite{darbar-lumley}).  For the family of quadratic Dirichlet characters, he established a one-sided central limit theorem; that is, he bounded the proportion of fundamental discriminants, in magnitude $<X$, for which the logarithm of the normalized central $L$-value is larger than a given  $V>0$ by $\le (1+o(1)) \frac{1}{\sqrt{2\pi}} \int_{V}^{\infty} e^{-u^2/2} \, du$ as $X \rightarrow \infty$. Similar to Selberg's work, a major ingredient in Hough's argument is an analogous zero density estimate. Accounting for the possible vanishing of the central $L$-value remains, however, a major obstacle. Hough was able to prove a central limit theorem for these central $L$-values under a certain spacing hypothesis on the distribution of the low-lying zeros of the $L$-functions, which follows from GRH and the Density Conjecture (see \cite{ILS}).
 Further progress towards proving a central limit theorem for central $L$-values using Selberg's approach appears dependent upon improving our knowledge on the low-lying zeros of the family.  For example, even though Chinta's result \cite{Chinta} shows that $L(\tfrac12,f \otimes \chi) \neq 0$ for $100\%$ of characters $\chi$ modulo $q$ it does not provide sufficient bounds to control the effect of possible extremely low-lying zeros on the distribution of the central $L$-values.
 
 In breakthrough work, Radziwi{\l}{\l} and Soundararajan \cite{Radziwill-Soundararajan} found a new method to study central $L$-values of families. They constructed a mollifier which has roughly the shape of an Euler product, yet still controls the extreme values at the central point. Using this approach, they proved a one-sided central limit theorem for central $L$-values for the family of quadratic twists of an elliptic curve. In comparison to Hough's work, Radziwi{\l}{\l} and Soundararajan required only a first moment, whereas Hough used a second moment.   Their work has sparked many recent innovations such as a new proof of Selberg's central limit theorem \cite{RS17}, bounds for moments of $L$-functions \cite{heap-radziwill-soundararajan, heap-soundararajan, BFK21} with applications to non-vanishing at the central point \cite{DFL20,LR21}, and progress towards the Fyodorov-Hiary-Keating Conjecture \cite{FHK12,FK14,Naj18,ABBRS19,Har19,ABR20}. Radziwi{\l}{\l} and Soundararajan \cite{Radziwill-Soundararajan-rh-notes} have also made further progress towards establishing a central limit theorem for central $L$-values. For quadratic Dirichlet characters, they showed that the proportion of fundamental discriminants $< X$ in absolute values for which the logarithm of the normalized central $L$-values lies in an interval $I$ is $\ge \tfrac78(1+o(1))\frac{1}{\sqrt{2\pi}} \int_{I} e^{-u^2/2} \, du$.
 A remarkable feature of their work is that the leading constant in the lower bound
matches the best known proportion of non-vanishing \cite{SoundNonvanishing}. Recently, Fazzari \cite{Fazzari1,Fazzari2} has proved several weighted central limit theorems for the Riemann zeta function. Under RH, he proved central limit theorems for $\log |\zeta(\tfrac12+it)|$ with respect to the measures on $[T,2T]$ given by $|\zeta(\tfrac12+it)|^{2k} \, dt$  for $k=1,2$, and for $k \ge 3$ under RH and an additional assumption on moments of the zeta function. In contrast to the prior work of Soundararajan and Radziwi{\l}{\l} \cite{Radziwill-Soundararajan-rh-notes}, the main objective in this paper is establishing an asymptotic for the weighted distribution of the central values whereas they focus on removing the weight and optimizing the constant in the lower bound by using a more refined mollifier. One additional difference in our work is that by assuming GRH we only require a first moment, however this is at the cost of obtaining a rather small constant in the lower bound.

\subsection{Outline of the proof}

We discuss the proof of Theorem \ref{thm:cltjoint}, since the proof of Theorem \ref{thm:CLTDirichlet} is easier. 
In both cases the main inputs into our argument are estimates for twisted first moment and upper bound for a mollified second moment. The proof of Theorem \ref{thm:cltjoint} also uses some well-known analytic properties of Rankin-Selberg $L$-functions to handle the joint distribution.

Recall that for intervals $I_1, I_2 $ we wish to prove an asymptotic formula for
\begin{align*}
\sumstar_{\chi \pamod q} W(\chi)  \mathbf 1_{I_1 \times I_2} \bigg(\frac{\log |L(\tfrac12, f \otimes \chi)|}{\sqrt{\tfrac12 \log \log q}}, \frac{\log |L(\tfrac12, g \otimes \chi)|}{\sqrt{\tfrac12 \log \log q}}\bigg),
\end{align*}
where $\mathbf 1_S$ denotes the indicator function of the set $S$. The primary complications arise from the presence of $\log |L(\tfrac12, f\otimes \chi)|$ and $\log |L(\tfrac12, g \otimes \chi)|$; these are difficult to control due to the possible existence of very low-lying zeros, which we cannot rule out. We would like to replace the logarithms of the $L$-functions by more tractable expressions. The basic strategy we follow is due to Radziwi{\l\l} and Soundararajan \cite{RS17}, who were the first to use this kind of Euler product-like mollifier at the central point. Since $M(\chi)=M_f(\chi)M_g(\chi)$ is a mollifier we expect
\begin{align*}
 L(\tfrac12, f \otimes \chi)  M_f(\chi) \approx 1 \qquad \text{ and } \qquad W(\chi)= L(\tfrac12, f \otimes \chi)  L(\tfrac12,g \otimes \overline \chi) M(\chi)\approx 1
\end{align*}
for most characters $\chi$. The mollifier is constructed so that
\begin{align*}
M_f(\chi) \approx \prod_{p\leq x} \left(1 - \frac{\lambda_f(p)\chi(p)}{p^{1/2}} \right),
\end{align*}
for most characters $\chi$, where $x$ is a small power of $q$. Taking these two approximations together implies
\begin{align*}
\log |L(\tfrac12, f \otimes \chi)| \approx \tmop{Re} \bigg(\sum_{p\leq x}\frac{\lambda_f(p)}{p^{1/2}}\chi(p)\bigg) ,
\end{align*}
and similarly for $\log |L(\tfrac12, g \otimes \chi)|$, so that the logarithms of the $L$-functions may be approximated by finite sums over primes. Using the structure of the mollifier in this way, we reduce the proof of Theorem \ref{thm:cltjoint} to computing 
\begin{align*}
\sumstar_{\chi \pamod q} W(\chi)  \mathbf 1_{I_1 \times I_2} \bigg(\frac{\tmop{Re} \big(\sum_{p \le x}  \frac{\lambda_f(p)}{p^{1/2}}\chi(p)\big)} {\sqrt{\tfrac12 \log \log q}} \ ,\frac{\tmop{Re}\big(\sum_{p \le x}  \frac{\lambda_g(p)}{p^{1/2}} \chi(p)\big)} {\sqrt{\tfrac12 \log \log q}}  \bigg).
\end{align*}
This in turn leads to expressions that are in terms of a twisted first moment of $L(\tfrac12,f \otimes \chi)L(\tfrac12, g\otimes \overline \chi)$. Such a twisted first moment has been computed in work of Blomer et. al. \cite{BFK} and we apply this result. 

There are a number of difficulties involved in making the above strategy rigorous and quantitatively strong. There are two principal issues. 

First, after applying the formula for the twisted first moment we are left with an unwieldy expression for the main term. To evaluate this expression we introduce a random $L$-function in which the Dirichlet characters $\chi(n)$ are modelled by random variables $X(n)$, and then match our expression with the random analogue. Here
$
X(n)= \prod_{p^a || n} X(p)^a,
$
and $\{X(p)\}_p$ is a sequence of i.i.d. uniformly distributed random variables on the unit circle. Comparison with a random model allows us to sidestep a number of technical points that would otherwise require involved effort to resolve. In particular, using the independence of the random variables $\{ X(p)\}_p$ reduces many of the computations to ``local'' ones evaluated at each prime. 

Second, we need to restrict ourselves to ``typical'' sets $\mathcal{S}$ of characters modulo $q$, where the complement $\mathcal{S}^c$ has size $O(q/\log \log q)$, say. One such set, for example, is the set of characters with $|W(\chi)| \le \log \log q$. In order to control the error involved in restricting to such sets we require an estimate for
\begin{align*}
\sum_{\chi \in \mathcal{S}^c} |W(\chi)|.
\end{align*}
Using Cauchy-Schwarz's inequality we obtain
\begin{align*}
\sum_{\chi \in \mathcal{S}^c} |W(\chi)| &\leq (\# \mathcal{S}^c)^{1/2} \bigg(\ \sumstar_{\chi \pamod q} |W(\chi)|^2  \bigg)^{1/2}.
\end{align*}
The saving comes from the small size of $\mathcal{S}^c$, so it suffices to show that
\begin{align*}
\sumstar_{\chi \pamod q} |W(\chi)|^2 \ll q.
\end{align*}
Establishing this result is roughly on the level of difficulty of proving an upper bound of the correct order of magnitude for a mollified eighth moment of Dirichlet $L$-functions. Such a feat is well beyond the range of unconditional techniques, and it is here that we require GRH in order to make progress. Our arguments follow along the lines of \cite{LR21}, which builds on the key works of Soundararajan \cite{S}, Radziwi{\l}{\l} and Soundararajan \cite{Radziwill-Soundararajan}, and Harper \cite{Har19}.

It is desirable to prove weighted central limit theorems with nonnegative weights instead of our complex-valued weights $W(\chi)$. For instance, if $W(\chi)$ were nonnegative, then the measure $\mu_W(S)$ would be a genuine probability measure. As mentioned above, it would be possible to carry out such a program in the case of Dirichlet $L$-functions, in which case one has access to unconditional results on the twisted fourth moment. Assuming GRH, it is also possible to achieve this for twists of holomorphic newforms, where GRH is required to obtain an upper bound for the mollified fourth moment of $L(\tfrac12,f\otimes \chi)$. But we do not know how to do this for the twists $L(\frac{1}{2},f \otimes \chi)L(\tfrac12,g \otimes \overline \chi)$, since this seems to require asymptotic evaluation of a second moment, which is well beyond what is currently possible.

If we were working with a nonnegative weight $W(\chi) \ge 0$ we could control $\mu_W(S^c)$ in many places using Chebyshev's inequality and a twisted first moment, and one could prove a weighted central limit theorem for the approximating Dirichlet polynomial. However, to pass to the $L$-function we need to bound the \textit{weighted} measure of the set of characters with $W(\chi) \ge \log \log q$, and this requires input beyond a first moment of $W(\chi)$. This is because if the bulk of the contribution to the first moment of $W(\chi)$ were to come from the set of characters with $W(\chi) \ge \log \log q$ (which cannot be ruled out by a first moment alone) then the weighted measure of this set would be $\asymp \varphi_W^{\star}(q)$.

\subsection{The structure of $\mathcal W(\chi)$}
As we mentioned earlier our mollifier is constructed so that it mimics an Euler product and it is not too hard to prove that for all primitive characters $\chi \pmod q$ outside a set of size $\ll q e^{-1/(3\eta)}$ that
\[
\mathcal M(\chi)=\exp\bigg(-\sum_{c_0< p \le x}\frac{\chi(p)}{\sqrt{p}}\bigg)(1+O(e^{-(3\eta)^{-3/4}})).
\]
By the explicit formula, we can transform the sum over primes into a sum over zeros of $L(s,\chi)$. Using the formula above along with the hybrid Euler-Hadamard product for $L(s,\chi)$ \cite[Theorem 1]{bui-keating} it is not hard to see that outside a set of $\ll q e^{-1/(3\eta)}$ non-principal characters $\chi \pmod q$ with $\chi^2 \neq \chi_0$ we have
\[
|\mathcal W(\chi)| \asymp |L(1,\chi^2)|^{1/2} \exp\bigg(-\tmop{Re} \sum_{\rho_{\chi}} U((\tfrac12-\rho_{\chi}) \log x) \bigg),
\]
where the sum is over the nontrivial zeros of $L(s,\chi)$, $U(z)=\int_0^{\infty} u(t) E_1(z\log t) \, dt$, $E_1(z)=\int_{z}^{\infty} e^{-w} \frac{dw}{w}$ and $u(t)$ is an arbitrary nonnegative Schwartz function with compact support on $[e^{1-1/x},e]$ with unit mass.
Here the term $L(1,\chi^2)^{1/2}$ arises from the contribution of the squares of primes in \cite[Theorem 1]{bui-keating}. Following the discussion in \cite{gonek-keating-hughes} one can interpret the second factor on the right hand side as a truncated Hadamard product over zeros with ordinates $\le 1/\log x$ in magnitude.
Additionally, since $x$ is a power of $q$, in view of the Density Conjecture and the rapid decay of $U$ we expect that typically the sum above will be effectively restricted to a bounded number of zeros none of which is exceptionally close to the central point, so that for most $\chi$ we expect that $|\mathcal W(\chi)| \asymp |L(1,\chi^2)|$.

\subsection{Organization of the paper.}
In Section \ref{sec:twisted} we match the characteristic function of \\ $\big(\tmop{Re}\big( \sum_{p \le x}  \frac{\lambda_f(p)\chi(p)}{p^{1/2}}\big) \ ,\tmop{Re}\big(\sum_{p \le x}  \frac{\lambda_g(p)\chi(p)}{p^{1/2}}\big)\big)$ with that of a random model, the latter of which is evaluated in Section \ref{sec:random}. The proofs of Theorem \ref{thm:cltjoint} and Corollary \ref{cor:ratios} are given in Section \ref{sec:proofcltjoint}. Proposition \ref{prop:mollified} is proved in Section \ref{upperbounds}. We prove Theorem \ref{thm:CLTDirichlet} in Section \ref{sec:twistedD} and Proposition \ref{upperboundsecondDirichlet} in Section \ref{sec:upperboundsD}.

\section{Joint distribution - Reduction to a random model} \label{sec:twisted}

We first recall the twisted first moment of the product of twists of automorphic $L$-functions \cite[Theorem 5.1]{BFK}.  Write
\[
L_f(\chi)=L(\tfrac12, f\otimes \chi), \qquad L(\chi)=L_f(\chi)L_g(\overline \chi).
\]

\begin{lemma}\label{BFKlemma}
Suppose $1 \le n_1, n_2 \le L$ and $(n_1n_2/(n_1,n_2)^2,N)=1$. Then 
\[
\begin{split}
&\frac{1}{\varphi^{\star}(q)}\ \sumstar_{\chi \pamod q} L(\chi) \chi(n_1) \overline{\chi(n_2)} =\frac12\sum_{m_1n_1=m_2n_2} \frac{\lambda_f(m_1)\lambda_g(m_2)}{\sqrt{m_1m_2}} V\left( \frac{m_1m_2}{q^2N^2} \right)\\
&\qquad\qquad+\frac{\varepsilon(f)\varepsilon(g)}{2}\sum_{m_1n_2=m_2n_1} \frac{\lambda_f(m_1)\lambda_g(m_2)}{\sqrt{m_1m_2}} V\left( \frac{m_1m_2}{q^2N^2} \right)+O_\varepsilon\left(L^{3/2} q^{-1/144+\varepsilon}\right),
\end{split}
\]
where
\[
V(\xi)= \frac{1}{2\pi i} \int_{(2)} \frac{L_{\infty}(s+\tfrac12,f)L_{\infty}(s+\tfrac12,g)}{L_{\infty}(\tfrac12,f)L_{\infty}(\tfrac12,g)} \frac{(\cos( \frac{\pi s}{12}))^{-48}}{s} \xi^{-s} \, ds .
\]
\end{lemma}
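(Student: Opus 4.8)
Lemma~\ref{BFKlemma} is \cite[Theorem 5.1]{BFK}, which we simply quote; here I sketch the argument. The quantity $L(\chi)=L(\tfrac12,f\otimes\chi)L(\tfrac12,g\otimes\overline\chi)$ is the central value of a degree-four $L$-function, self-dual up to the root number $\varepsilon(f\otimes\chi)\varepsilon(g\otimes\overline\chi)$, and the plan is the standard one for a twisted first moment: shorten $L(\chi)$ by an approximate functional equation, apply orthogonality of the primitive characters modulo the prime $q$, extract the diagonal, and estimate the off-diagonal.

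First I would apply a balanced approximate functional equation to $L(s,f\otimes\chi)L(s,g\otimes\overline\chi)$ at $s=\tfrac12$. Its completed $L$-function has conductor $\asymp q^4$, so the expansion takes the shape
\[
L(\chi)=\sum_{m_1,m_2}\frac{\lambda_f(m_1)\lambda_g(m_2)}{\sqrt{m_1m_2}}\,\chi(m_1)\overline{\chi(m_2)}\,V\!\left(\frac{m_1m_2}{q^2N^2}\right)+(\text{dual sum}),
\]
with $V$ as in the statement; here the test function $G(s)=(\cos(\pi s/12))^{-48}$ is even with $G(0)=1$ and has poles of order $48=12\cdot 4$ placed so as to dominate the four archimedean $\Gamma$-factors, which forces rapid decay of $V$ and hence an effective range $m_1m_2\ll q^{2+\varepsilon}$. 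The essential structural point is that the root number of the product is exactly $\varepsilon(f)\varepsilon(g)$, with no residual Gauss sum: the two factors contribute $\varepsilon(f)\chi(N)\tau(\chi)^2/q$ and $\varepsilon(g)\overline{\chi}(N)\tau(\overline{\chi})^2/q$, and since $\tau(\chi)\tau(\overline{\chi})=\chi(-1)q$ one has $\tau(\chi)^2\tau(\overline{\chi})^2=q^2$ and $\chi(N)\overline{\chi}(N)=1$; thus the dual sum is the same expression with $\chi$ and $\overline{\chi}$ interchanged, multiplied by $\varepsilon(f)\varepsilon(g)$.

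Next I would insert the twist $\chi(n_1)\overline{\chi(n_2)}$ and sum over primitive $\chi\pmod q$. As $q$ is prime, terms with $q\mid m_1m_2$ drop out and $\sumstar_{\chi\pmod q}\chi(a)\overline{\chi(b)}=(q-1)\mathbf 1_{a\equiv b\,(q)}-1$ whenever $(ab,q)=1$, so after dividing by $\varphi^{\star}(q)$ the main sum contributes $\sum_{m_1n_1\equiv m_2n_2\,(q)}\tfrac{\lambda_f(m_1)\lambda_g(m_2)}{\sqrt{m_1m_2}}V(\cdot)$ plus a principal-character term $-\tfrac{1}{\varphi^{\star}(q)}\sum_{m_1,m_2}\tfrac{\lambda_f(m_1)\lambda_g(m_2)}{\sqrt{m_1m_2}}V(\cdot)$ that is $O_{f,g}(q^{-1})$ (shift the contour in the Mellin representation of $V$ back past $s=0$ and use $L(\tfrac12,f)L(\tfrac12,g)\ll_{f,g}1$). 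Within the congruence sum the exact diagonal $m_1n_1=m_2n_2$ produces the first main term of the statement --- the hypothesis $(n_1n_2/(n_1,n_2)^2,N)=1$ being used here to keep the Hecke relations among $\lambda_f(m_1),\lambda_g(m_2)$ clean and to avoid the primes dividing $N$ --- and the dual sum likewise yields the $\varepsilon(f)\varepsilon(g)$-term with $m_1n_2=m_2n_1$.

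The main obstacle, which is where essentially all of the work in \cite{BFK} goes, is the off-diagonal: the terms with $m_1n_1\equiv m_2n_2\pmod q$ but $m_1n_1\neq m_2n_2$, together with the analogous ones from the dual sum. These are shifted convolution sums $\sum_{m_1n_1-m_2n_2=hq}\lambda_f(m_1)\lambda_g(m_2)(\cdots)$ over nonzero shifts $hq$ with $|h|\ll Lq^{1+\varepsilon}$, and since $f$ and $g$ are cuspidal they contribute only to the error term. I would treat them by opening the shift with the Duke--Friedlander--Iwaniec $\delta$-symbol method, applying Voronoi summation in each of the $m_1$- and $m_2$-variables, and bounding the resulting exponential sums (equivalently, via the spectral theory of the Rankin--Selberg object together with Ramanujan-on-average); the power saving $q^{-1/144+\varepsilon}$ and the polynomial factor $L^{3/2}$ in the error term come out of this analysis. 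Since only the stated estimate is needed as a black box, in the present paper it suffices to cite \cite[Theorem 5.1]{BFK}.
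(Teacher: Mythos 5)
Your proposal matches the paper exactly: the paper does not prove this lemma but simply quotes it from \cite[Theorem 5.1]{BFK}, and you correctly identify this and cite the same source. The sketch you add of the approximate functional equation, root-number computation, orthogonality, and off-diagonal analysis is a reasonable summary of the BFK argument but is not part of the paper's treatment, which takes the result as a black box.
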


\begin{remark}
\emph{
The function $V(\xi)$ is approximately 1 for small values of $\xi$, and decays like $\xi^{-3}$ as $\xi \rightarrow \infty$ (see \cite[Lemma 2.19]{BFK}).
}
\end{remark}

\subsection{The random model}\label{subsectionrandom}
Let $\{X(p)\}_p$ be i.i.d. uniformly distributed random variables on the unit circle. Define
\[
X(n)= \prod_{p^a || n} X(p)^a.
\]
Observe that if $m,n \le q$ then by orthogonality of characters
\begin{equation} \label{eq:trivial}
\frac{1}{\varphi(q)}\sum_{\chi \pamod q} \chi(m) \overline {\chi(n)}=\mathbb E\Big(X(m) \overline {X(n)}\Big).
\end{equation}
Let
\[
L(X)=\frac{1}{2} \Big( L^{f,g}(X)+\varepsilon(f)\varepsilon(g)\, L^{g,f}(X) \Big),
\]
where
\begin{align*}
L^{f,g}(X)&=\sum_{m_1,m_2 \ge 1} \frac{\lambda_f(m_1)\lambda_g(m_2)}{\sqrt{m_1m_2}} X(m_1) \overline{X(m_2)}\,V\left(\frac{m_1m_2}{q^2N^2} \right).
\end{align*}
Then by Lemma \ref{BFKlemma}, \eqref{eq:trivial}, and bounding the contribution from $\chi= \chi_0$ trivially, we get
\begin{align} \label{eq:twisted}
&\frac{1}{\varphi^{\star}(q)}\ \sumstar_{\chi \pamod q} L(\chi)\chi(n_1) \overline{\chi(n_2)} =\mathbb E \left( 
L(X) X(n_1) \overline{X(n_2)}
\right) +O_\varepsilon\left(L^{3/2} q^{-1/144+\varepsilon} \right).
\end{align}

Let
\[
P_f(\chi)= \tmop{Re} \bigg(\sum_{c_0<p \le y}\frac{\lambda_f(p)w_J(p)}{\sqrt{p}}\chi(p)\bigg)
\quad\text{ and }\quad
P_f(X)= \tmop{Re}\bigg( \sum_{c_0<p \le y}\frac{\lambda_f(p)w_J(p)}{\sqrt{p}}X(p)\bigg).
\]
We also define
\[
M_{f,j}(X)=\sum_{\substack{p|n \Rightarrow p \in I_j \\ \Omega(n) \le \ell_j}}\frac{  a_{f,J}(n) \lambda(n)\nu(n) X(n)}{\sqrt{n}}, 
\ M_f(X)=\prod_{j=0}^{J} M_{f,j}(X), \   M(X)=M_f(X)M_g(\overline X)
\]
and $W(X)=L(X)M(X)$.
\begin{lemma} \label{lem:moments}
Let $j,k$ be nonnegative integers.
Suppose $y^{j}, y^{k} \le q^{1/1000}$. Then there exists $\delta>0$ such that
\begin{equation} \label{eq:moments1}
\frac{1}{\varphi^{\star}(q)}\ \sumstar_{\chi \pamod q} W(\chi) P_f(\chi)^{j}P_g(\chi)^{k}=\mathbb E \Big( W(X) P_f(X)^{j}P_g(X)^k \Big)+O \left( q^{-\delta}\right).
\end{equation}
Additionally, for $y^k \le q^{1/3}$ we have
\begin{equation} \label{eq:moments2}
\frac{1}{\varphi(q)} \sum_{\chi \pamod q}  P_f(\chi)^{2k} = \mathbb E\Big(  P_f(X)^{2k}\Big) \le k! \bigg(\sum_{c_0<p \le y} \frac{\lambda_f(p)^2}{p} \bigg)^k.
\end{equation}
\end{lemma}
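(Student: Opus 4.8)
The plan is to deduce both identities from the twisted first moment \eqref{eq:twisted} together with elementary character orthogonality, after writing the relevant quantities as short Dirichlet polynomials; for \eqref{eq:moments1} the only genuinely delicate point is quantifying ``short''.

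For \eqref{eq:moments1} I would first note that, since $c_0$ is sufficiently large, every prime exceeding $c_0$ is coprime to $N$, so the coprimality hypothesis of Lemma \ref{BFKlemma} is automatically met. Multiplying out the Dirichlet polynomials and using that $\chi$ is completely multiplicative, one writes
\[
M(\chi)\,P_f(\chi)^{j}P_g(\chi)^{k}=\sum_{n_1,n_2}\frac{b(n_1,n_2)}{\sqrt{n_1n_2}}\,\chi(n_1)\overline{\chi(n_2)},
\]
a finite sum in which $b(n_1,n_2)\neq 0$ forces every prime factor of $n_1n_2$ to lie in $(c_0,x]$ and $n_1,n_2\le N_0:=q^{\sum_{j=0}^{J}\theta_j\ell_j}\,y^{j+k}$. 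Because the conditions $\Omega(\,\cdot\,)\le\ell_j$ in the definition of $M$ bound the number of prime factors each factor contributes, and because $\theta_j\ell_j\le 2\theta_j^{1/4}$ with $\sum_{j=0}^{J}\theta_j^{1/4}\ll\theta_J^{1/4}\ll\eta^{1/4}$ (the $\theta_j$ grow geometrically), one has $N_0\le q^{C\eta^{1/4}}y^{j+k}\le q^{C\eta^{1/4}+1/500}$ using $y^{j},y^{k}\le q^{1/1000}$; in particular $N_0$ is a small power of $q$ once $\eta$ is small. Applying \eqref{eq:twisted} with $L=N_0$ to each term and summing gives
\[
\frac{1}{\varphi^{\star}(q)}\sumstar_{\chi \bmod q}W(\chi)P_f(\chi)^{j}P_g(\chi)^{k}=\sum_{n_1,n_2}\frac{b(n_1,n_2)}{\sqrt{n_1n_2}}\Big(\mathbb E\big(L(X)X(n_1)\overline{X(n_2)}\big)+O_\varepsilon\big(N_0^{3/2}q^{-1/144+\varepsilon}\big)\Big),
\]
and since the Dirichlet-polynomial identity above holds verbatim with $\chi$ replaced by the completely multiplicative random variable $X$, the main term here is $\mathbb E\big(L(X)M(X)P_f(X)^{j}P_g(X)^{k}\big)=\mathbb E\big(W(X)P_f(X)^{j}P_g(X)^{k}\big)$ by linearity of expectation (the sum is finite and $L(X)\in L^{1}$).

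What remains, and what I expect to be the main obstacle, is showing that the accumulated error is $O(q^{-\delta})$. Taking $|b(n_1,n_2)|$ inside and using Deligne's bound $|\lambda_f(p)|,|\lambda_g(p)|\le 2$ together with $|w_J(p)|\le1$ for $p\le x$, one bounds $\sum_{n_1,n_2}|b(n_1,n_2)|/\sqrt{n_1n_2}$ by
\[
\Bigg(\prod_{j=0}^{J}\sum_{\substack{p\mid n\Rightarrow p\in I_j\\ \Omega(n)\le\ell_j}}\frac{d(n)\nu(n)}{\sqrt n}\Bigg)^{2}\Bigg(\sum_{c_0<p\le y}\frac{2}{\sqrt p}\Bigg)^{j+k},
\]
where $d$ is the divisor function. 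The second factor is at most $y^{(j+k)/2}\le q^{1/1000}$. For the first, split the inner sum according to $\Omega(n)=r\le\ell_j$: every such $n$ has all prime factors $>q^{\theta_{j-1}}$ ($>c_0$ when $j=0$), so there are at most $q^{r\theta_j+o(1)}$ of them, each of size $\ge q^{r\theta_{j-1}}$, giving an inner sum $\le q^{c\theta_j\ell_j+o(1)}\le q^{2c\theta_j^{1/4}+o(1)}$ and hence a first factor $\le q^{C'\eta^{1/4}+o(1)}$. Without the truncation $\Omega(n)\le\ell_j$ the inner sum would instead be of size $\exp(q^{\theta_j/2+o(1)})$, so this truncation is essential. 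Multiplying through, the accumulated error is $\ll_\varepsilon q^{C''\eta^{1/4}+4/1000-1/144+o(1)+\varepsilon}$, which is $O(q^{-\delta})$ for some $\delta>0$ once $\eta$ and $\varepsilon$ are small enough, since $4/1000<1/144$. The difficulty is entirely this bookkeeping: certifying that $N_0$ and the $\ell^{1}$-mass of $M(\chi)P_f(\chi)^jP_g(\chi)^k$ are small enough powers of $q$ to absorb the error term of Lemma \ref{BFKlemma}.

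For \eqref{eq:moments2}, expanding $P_f(\chi)^{2k}$ yields a Dirichlet polynomial supported on $n_1n_2\le y^{2k}$; since $y^{k}\le q^{1/3}$ we have $n_1,n_2\le q^{2/3}<q$, each coprime to the prime $q$, so character orthogonality gives $\frac1{\varphi(q)}\sum_{\chi\bmod q}\chi(n_1)\overline{\chi(n_2)}=\mathbf 1_{n_1=n_2}=\mathbb E(X(n_1)\overline{X(n_2)})$, and summing yields the exact identity $\frac1{\varphi(q)}\sum_{\chi\bmod q}P_f(\chi)^{2k}=\mathbb E(P_f(X)^{2k})$. For the inequality, since $\lambda_f(p)$ and $w_J(p)$ are real we have $P_f(X)=\sum_{c_0<p\le y}c_p\cos\theta_p$ with $c_p=\lambda_f(p)w_J(p)/\sqrt p$ and $\{\theta_p\}$ i.i.d.\ uniform on $[0,2\pi)$; grouping the $2k$-fold expansion of $\mathbb E(P_f(X)^{2k})$ by prime multiplicities (only even ones survive), using $\mathbb E(\cos^{2t}\theta)=\binom{2t}{t}4^{-t}\le(2t)!/(4^{t}t!)$ and the multinomial theorem gives
\[
\mathbb E\big(P_f(X)^{2k}\big)\le(2k)!\sum_{\sum_p t_p=k}\prod_p\frac{(c_p^2/4)^{t_p}}{t_p!}=\frac{(2k)!}{4^{k}k!}\Big(\sum_{c_0<p\le y}c_p^2\Big)^{k}\le k!\Big(\sum_{c_0<p\le y}\frac{\lambda_f(p)^2}{p}\Big)^{k},
\]
using $\binom{2k}{k}\le4^{k}$ and $w_J(p)^2\le1$, which is \eqref{eq:moments2}.
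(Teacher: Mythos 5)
Your proof is correct and follows essentially the same strategy as the paper. For \eqref{eq:moments1} the paper also opens everything up as a Dirichlet polynomial, applies \eqref{eq:twisted} term by term, and absorbs the error; the paper's version is terser (it simply notes that $M(\chi)$ has length $q^{1/1000}$ with bounded coefficients ``since $\eta$ is sufficiently small'' and that $P_f^j, P_g^k$ have length $\le y^j,y^k$), whereas you carry out the $\sum_j \theta_j\ell_j \ll \eta^{1/4}$ and $\ell^1$-mass bookkeeping explicitly — this is the same argument, just with the constants made visible. For \eqref{eq:moments2} your route is a touch different in flavor: the paper bounds $P_f(X)^{2k}$ by $|Q_f(X)|^{2k}$ via $|\tmop{Re}(z)|\le |z|$, expands, and then uses $\sum_{p_1\cdots p_k=n}1=k!\nu(n)$ to pass to a sum over integers $n$ with $\Omega(n)=k$; you instead write $P_f(X)=\sum_p c_p\cos\theta_p$, invoke $\mathbb E(\cos^{2t}\theta)=\binom{2t}{t}4^{-t}$, and resum via the multinomial theorem. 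Both are elementary, land on the same bound $k!\big(\sum_p \lambda_f(p)^2/p\big)^k$, and neither offers a real advantage over the other.

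One small slip worth fixing: you bound $|a_{f,J}(n)|$ by $d(n)$, but $a_{f,J}$ is completely multiplicative, so the natural bound via Deligne is $|a_{f,J}(n)|\le 2^{\Omega(n)}$. The statement is still harmless because $2^{\Omega(n)}\nu(n)\le d(n)$ (since $2^a/a!\le a+1$), but it would be cleaner to keep $2^{\Omega(n)}$ throughout, which is in fact what makes the product $\ell^1$-mass estimate $q^{O(\eta^{1/4}+o(1))}$ go through once the truncation $\Omega(n)\le\ell_j$ is imposed — precisely the point you correctly emphasize.
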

\begin{proof}
Each of $P_f(\chi), P_g(\chi)$ is a Dirichlet polynomial of length $y$ having coefficients $\leq 1$ in magnitude (since $c_0$ is sufficiently large). Therefore, we derive the bounds $|P_f(\chi)^j| \leq y^j$ and $|P_g(\chi)^k|\leq y^k$. Similarly, since $\eta>0$ is sufficiently small, $M(\chi)$ is a Dirichlet polynomial of length $q^{1/1000}$ with bounded coefficients.
Using \eqref{eq:trivial} and \eqref{eq:twisted} we obtain \eqref{eq:moments1} with an error term of size $\ll q^{-\delta}$.

For \eqref{eq:moments2}, first observe that since $y^k \le q^{1/3}$ we have by \eqref{eq:trivial} that
\[
\frac{1}{\varphi(q)}  \sum_{\chi \pamod q}  P_f(\chi)^{2k}=\mathbb E\Big(  P_f(X)^{2k}\Big).
\]
Let $a(p)=\lambda_f(p)w_J(p)$ and $a(n)=\prod_{p^r || n} a(p)^r$.
Using the fact that $|\tmop{Re}(z)| \le |z|$ it follows that
\begin{align*}
\mathbb E\Big(  P_f(X)^{2k}\Big)\le \sum_{\substack{c_0<p_1, \ldots, p_k \le y \\ c_0<q_1, \ldots, q_k \le y}} \frac{a(p_1) \cdots a(p_k)a(q_1) \cdots a(q_k)}{\sqrt{p_1 \cdots p_k q_1 \cdots q_k}} \mathbb E \left( X(p_1) \cdots X(p_k) \overline{ X(q_1) \cdots X(q_k)} \right).
\end{align*}
Observe for $n$ with $\Omega(n)=k$ we have that $\sum_{p_1\cdots p_k=n} 1=k! \nu(n)$. Consequently, the sum on the right hand side above equals
\[
(k!)^2 \sum_{\substack{p|n \Rightarrow c_0<p \le y \\ \Omega(n)=k}} \frac{a(n)^2\nu(n)^2}{n}  \le (k!)^2 \sum_{\substack{p|n \Rightarrow c_0<p \le y \\ \Omega(n)=k}} \frac{a(n)^2\nu(n)}{n}  \le k! \bigg( \sum_{c_0<p \le y} \frac{\lambda_f(p)^2}{p} \bigg)^k.
\]
Combining the two estimates above completes the proof of \eqref{eq:moments2}.
\end{proof}

Before proceeding to the next lemma let us introduce some further notation.  Recall that $\nu(n)$ denotes the multiplicative function with $\nu(p^a)=\frac{1}{a!}$. We write $\nu_{j}(n)=(\nu \ast \cdots \ast \nu)(n)$ for the $j$-fold convolution of $
\nu$. Observe that $\nu_j(p^a)=\frac{j^a}{a!}$, which follows from a simple induction argument. Also, for a positive integer $\ell$ let
\begin{equation} \label{eq:nuconvdef}
\nu_{j;\ell}(n)=\sum_{\substack{n_1 \cdots n_j=n \\ \Omega(n_1), \ldots, \Omega(n_j) \le \ell}} \nu(n_1) \cdots \nu(n_j).
\end{equation}
\begin{lemma} \label{lem:mollifiermomentbd}
Let $k \in \mathbb N$ be fixed. We have for each $0\leq j\leq J$ that
\[
\mathbb E\Big(|M_{f,j}(X)|^{2k}\Big) =\prod_{p \in I_j}\left( 1+O\left(\frac{1}{p}\right)\right).
\]
\end{lemma}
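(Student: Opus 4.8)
The plan is to expand the $2k$-th absolute moment of $M_{f,j}(X)$ directly, use the independence and uniform distribution of the $X(p)$ to pick out the diagonal terms, and then estimate the resulting arithmetic sum. Write $M_{f,j}(X) = \sum_{n \in \mathcal{A}_j} c(n) X(n)$, where $\mathcal{A}_j = \{ n : p \mid n \Rightarrow p \in I_j, \ \Omega(n) \le \ell_j\}$ and $c(n) = a_{f,J}(n) \lambda(n) \nu(n)/\sqrt{n}$. Then
\[
\mathbb E\Big(|M_{f,j}(X)|^{2k}\Big) = \sum_{\substack{m_1,\dots,m_k \in \mathcal{A}_j \\ n_1,\dots,n_k \in \mathcal{A}_j}} c(m_1)\cdots c(m_k)\overline{c(n_1)\cdots c(n_k)}\ \mathbb E\Big( X(m_1\cdots m_k)\overline{X(n_1\cdots n_k)}\Big),
\]
and since $\mathbb E(X(a)\overline{X(b)}) = \mathbf 1_{a=b}$, only tuples with $m_1\cdots m_k = n_1\cdots n_k$ survive. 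First I would isolate the ``fully diagonal'' contribution, where the multiset $\{m_1,\dots,m_k\}$ equals $\{n_1,\dots,n_k\}$: grouping by how many of the $m_i$ equal a given value, and using that $a_{f,J}(p)$, hence $c(n)$, satisfies $|c(n)| \le \nu(n)/\sqrt n$ (note $|w_J(p)| \le 1$ and $|\lambda_f(p)| \le 2$, but in $\mathcal{A}_j$ the primes are large so $c$ is small), the fully diagonal term is $\prod_{p \in I_j}\big(\sum_{a \le \ell_j/k}\, \text{(something)} \big)$, whose leading piece is $\prod_{p \in I_j}(1 + O(1/p))$ since $\sum_{p \in I_j} 1/p = O(1)$ (the interval $I_j$ is short on a log-log scale).

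The main work is controlling the \emph{off-diagonal} solutions of $m_1\cdots m_k = n_1\cdots n_k$ that are not term-by-term matchings — for instance $m_1 m_2 = n_1 n_2$ with $\{m_1,m_2\}\ne\{n_1,n_2\}$ — and showing their total contribution is absorbed into the $O(1/p)$ per-prime error. The standard device here is that any solution of $m_1\cdots m_k = n_1\cdots n_k = N$ corresponds to a way of distributing the prime factors of $N$, so the count is governed by the convolution functions $\nu_{j;\ell}$ introduced just before the lemma; concretely, one bounds the whole sum above by
\[
\sum_{\substack{p \mid N \Rightarrow p \in I_j \\ \Omega(N) \le k\ell_j}} \frac{\widetilde\nu_k(N)\, \nu_k(N)}{N}
\]
for suitable multiplicative weights, where $\widetilde\nu_k, \nu_k$ count the number of ordered factorizations into $k$ parts (each of bounded $\Omega$). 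This is a multiplicative sum, so it factors as $\prod_{p\in I_j}\big(\sum_{a \ge 0} (\text{local factor at } p^a)\big)$; the $a=0$ term is $1$, the $a=1$ term is $O(1/p)$, and the tail $a \ge 2$ is $O(1/p^2)$, uniformly — the point being that $k$ is fixed so all the combinatorial constants ($k$, $\ell_j$-dependence, factorials $\nu_k(p^a) = k^a/a!$) are harmless. Taking the product over $p \in I_j$ and using $\prod_{p \in I_j}(1 + O(1/p^2)) = 1 + O(\sum_{p\in I_j} p^{-2})$ then collapses everything into $\prod_{p \in I_j}(1 + O(1/p))$, where the implied constant depends only on $k$.

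The step I expect to be the main obstacle is bookkeeping the off-diagonal terms cleanly enough that the per-prime error genuinely comes out as $O(1/p)$ rather than, say, $O(1/p)$ with a constant that secretly grows with $\ell_j$ (which would be fatal, since $\ell_j \asymp \theta_j^{-3/4} \to \infty$). The resolution is that the off-diagonal configurations always require at least two of the prime-power exponents to be ``split'' non-trivially, which forces an extra factor of $1/p$ (or more) compared to the diagonal, and the number of ways this can happen at a single prime $p^a$ is bounded in terms of $a$ and $k$ only — so summing the local series in $a$ converges to $1 + O_k(1/p)$ regardless of $\ell_j$. Once the local analysis is set up this way, the global product is immediate. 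I would also remark that the same computation with $M_{f,j}(\chi)$ in place of $M_{f,j}(X)$ follows from \eqref{eq:trivial} up to a negligible error, since the relevant moduli $m_1\cdots m_k, n_1\cdots n_k$ are of size at most $q^{k\theta_j} = o(q)$ for $\eta$ small.
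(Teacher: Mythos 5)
Your argument is correct and is essentially the paper's: both reduce $\mathbb E\big(|M_{f,j}(X)|^{2k}\big)$ by orthogonality of the $X(n)$ to the multiplicative sum $\sum_{n}|a_{f,J}(n)\nu_{k;\ell_j}(n)|^2/n$ and then use $\nu_{k;\ell_j}(n)\le\nu_k(n)$ with $\nu_k(p^a)=k^a/a!$ to obtain an Euler product whose local factors are $1+O_k(1/p)$. The paper sidesteps your diagonal/off-diagonal bookkeeping entirely by first writing $M_{f,j}(X)^k$ as a single Dirichlet polynomial with coefficients $a_{f,J}(n)\lambda(n)\nu_{k;\ell_j}(n)/\sqrt n$, so that the expectation of the modulus squared is exactly the sum of squared coefficients and every solution of $m_1\cdots m_k=n_1\cdots n_k$ is automatically accounted for (note also two harmless slips in your write-up: $|c(n)|\le 2^{\Omega(n)}\nu(n)/\sqrt n$ rather than $\nu(n)/\sqrt n$, and $\sum_{p\in I_0}1/p$ is of size $\log\log q$, not $O(1)$, which is why the lemma's conclusion is stated as a product rather than as $1+O(1)$).
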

\begin{proof}
We have 
\[
M_{f,j}(X)^k=\sum_{p|n \Rightarrow p \in I_j} \frac{a_{f,J}(n)\lambda(n) \nu_{k,\ell_j}(n) X(n)}{\sqrt{n}}.
\]
Consequently, since $\nu_{k,\ell_j}(n) \le \nu_{k}(n)$, using the fact that $\nu_k(p^a)=k^a/a!$ we obtain 
\begin{equation} \label{eq:mfmoment}
\mathbb E\Big(|M_{f,j}(X)|^{2k}\Big)=\sum_{p|n \Rightarrow p \in I_j} \frac{|a_{f,J}(n) \nu_{k,\ell_j}(n)|^2}{n} \le \prod_{p \in I_j} \left(\sum_{a\geq0}  \frac{ \lambda_f(p)^{2a} w_J(p)^{2a} k^{2a}}{(a!)^2 p^a}\right),
\end{equation}
and the lemma follows.
\end{proof}

\begin{lemma}\label{lem:momentbd}
We have
\begin{equation*}
\mathbb E\Big( |L(X)|^4 \Big) \ll (\log q)^{O(1)}.
\end{equation*}
\end{lemma}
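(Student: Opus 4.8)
The plan is to reduce to an upper bound for $\mathbb E(|L^{f,g}(X)|^4)$, write $L^{f,g}(X)$ as a contour integral of a product of two random Euler products, and then apply Minkowski's and Cauchy--Schwarz's inequalities together with a bound for the eighth moment of a random Euler product on a line just to the right of the central one.

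\emph{Reduction and an integral representation.} Since $L(X)=\tfrac12\big(L^{f,g}(X)+\varepsilon(f)\varepsilon(g)L^{g,f}(X)\big)$ with $|\varepsilon(f)\varepsilon(g)|=1$, the convexity of $t\mapsto t^4$ gives $|L(X)|^4\le\tfrac12\big(|L^{f,g}(X)|^4+|L^{g,f}(X)|^4\big)$, so by symmetry in $f,g$ it suffices to bound $\mathbb E(|L^{f,g}(X)|^4)$. As $V(\xi)\ll\xi^{-3}$, restricting the $m_1,m_2$-sums in $L^{f,g}(X)$ to $q^{10}$-smooth integers changes it by a deterministic quantity of size $O(q^{-1})$; write $L^{f,g}_P(X)$ for the truncation, $P=q^{10}$. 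Unfolding the Mellin integral defining $V$ and using multiplicativity of $\lambda_f$, for $\tmop{Re}(s)=2$ we obtain
\[
L^{f,g}_P(X)=\frac{1}{2\pi i}\int_{(2)}\widetilde G(s)\,(q^2N^2)^{s}\,L_P(\tfrac12+s,f\otimes X)\,L_P(\tfrac12+s,g\otimes\overline X)\,ds,
\]
where $\widetilde G(s)$ is the factor in the Mellin representation of $V$ (so $V(\xi)=\frac{1}{2\pi i}\int_{(2)}\widetilde G(s)\xi^{-s}\,ds$) and $L_P(w,f\otimes X)=\prod_{p\le P}\prod_{j=1}^{2}(1-\alpha_{f,j}(p)X(p)p^{-w})^{-1}$, which is a finite product, hence entire in $s$ and bounded on each vertical strip inside $\tmop{Re}(s)>-\tfrac12$, uniformly in $\tmop{Im}(s)$.

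\emph{Moving the contour.} The only pole of $\widetilde G$ in $\tmop{Re}(s)\ge0$ is the simple pole at $s=0$, so we may move the contour to $\tmop{Re}(s)=\delta:=1/\log(q^2N^2)$ without crossing it, the contributions near $\tmop{Im}(s)=\pm\infty$ being negligible since $\widetilde G$ decays faster than any exponential on vertical lines. On the new line $|(q^2N^2)^{s}|=e=O(1)$ and $\int_{(\delta)}|\widetilde G(s)|\,|ds|\ll\log(1/\delta)\ll\log\log q$, the logarithm arising from the factor $1/s$. Hence, by Minkowski's integral inequality and then Cauchy--Schwarz,
\[
\big(\mathbb E|L^{f,g}_P(X)|^4\big)^{1/4}\ll(\log\log q)\cdot\sup_{\tmop{Re}(s)=\delta}\big(\mathbb E|L_P(\tfrac12+s,f\otimes X)|^8\big)^{1/8}\big(\mathbb E|L_P(\tfrac12+s,g\otimes\overline X)|^8\big)^{1/8}.
\]

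\emph{The eighth moment of a random Euler product.} Fix $s$ with $\tmop{Re}(s)=\delta$. By independence of the $X(p)$,
\[
\mathbb E\big(|L_P(\tfrac12+s,f\otimes X)|^8\big)=\prod_{p\le P}\mathbb E\Big(\big|\prod_{j=1}^2(1-\alpha_{f,j}(p)X(p)p^{-1/2-s})^{-1}\big|^8\Big).
\]
Expanding $\prod_{j}(1-\alpha_{f,j}(p)X(p)p^{-1/2-s})^{-4}=\sum_{k\ge0}d_k X(p)^k$, using $\mathbb E(|\sum_k d_kX(p)^k|^2)=\sum_k|d_k|^2$, Vandermonde's convolution identity, and $|\alpha_{f,j}(p)|\le1$, one gets $|d_k|\le\binom{k+7}{7}p^{-k(1/2+\delta)}$, so that each local factor equals $1+O(p^{-1-2\delta})$ with an absolute implied constant. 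Therefore
\[
\mathbb E\big(|L_P(\tfrac12+s,f\otimes X)|^8\big)\le\exp\Big(O\Big(\sum_{p}p^{-1-2\delta}\Big)\Big)=\exp\big(O(\log(1/\delta))+O(1)\big)\ll(\log q)^{O(1)},
\]
where we used $\sum_p p^{-1-2\delta}=\log\zeta(1+2\delta)+O(1)$ and $\zeta(1+2\delta)\asymp1/\delta$; this is uniform in $P$ and in $\tmop{Im}(s)$, and the analogous bound holds with $g$ in place of $f$. Combining this with the previous display and then reinstating the full sum gives $\mathbb E(|L^{f,g}(X)|^4)\ll(\log q)^{O(1)}$, and hence the claim.

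\emph{The main obstacle.} The heart of the argument is the eighth moment bound with the contour placed at $\tmop{Re}(s)\asymp1/\log q$: were $\tmop{Re}(s)$ a fixed positive constant, the factor $(q^2N^2)^s$ would be a positive power of $q$, whereas at $\tmop{Re}(s)=0$ the Euler product diverges. At the critical height, the primes $p\le q$ contribute $\exp(\sum_{p\le q}O(1/p))=(\log q)^{O(1)}$, while the tail over $p>q$ is tamed by the decay $p^{-2\delta}=\exp(-2\log p/\log q)$ and contributes only $O(1)$; arranging this balance, and keeping the eighth moment estimate uniform in the truncation level $P$, is the crucial point.
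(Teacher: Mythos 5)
Your proof is correct but takes a genuinely different and considerably more elaborate route than the paper.

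The paper's proof is a direct diagonal computation: it expands $|L(X)|^4$ as an eight-fold sum over $m_1,\ldots,m_8$, uses Deligne's bound $|\lambda_f(m)|\le\tau(m)$ and the orthogonality relation $\mathbb E(X(a)\overline{X(b)})=\mathbf 1_{a=b}$ to reduce to the diagonal $m_1m_3m_5m_7=m_2m_4m_6m_8=n$, notes that the rapid decay of $V$ truncates to $n\ll q^{4+\varepsilon}$, and finishes by bounding $\sum_{n\le q^{4+\varepsilon}}\tau_8(n)^2/n\ll(\log q)^{O(1)}$ — a five-line argument. Your proof instead treats $L^{f,g}(X)$ as a Mellin integral of a product of two truncated random Euler products, shifts the contour to $\mathrm{Re}(s)=\delta\asymp1/\log q$, applies Minkowski's integral inequality and Cauchy--Schwarz to reduce to a pointwise eighth-moment bound for a random Euler product, and evaluates that eighth moment via independence of the $X(p)$ together with the Vandermonde identity and $|\alpha_{f,j}(p)|\le1$. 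Your local factor estimate $\mathbb E(|L_p|^8)=1+O(p^{-1-2\delta})$ and the resulting bound $\exp(O(\log(1/\delta)))\ll(\log q)^{O(1)}$ are correct, and the truncation error to $P$-smooth moduli is handled correctly (it is bounded uniformly in $X$, so the $L^4$ norm is affected only by $O(q^{-1})$). The trade-off is clear: the paper's divisor-sum argument is shorter and entirely elementary, while your approach imports the Euler-product/moment machinery (in the spirit of Soundararajan and Harper) that the paper invokes elsewhere for actual $L$-functions; it would generalise more cleanly to arbitrary even moments of $L(X)$, but for the fourth moment needed here the direct expansion is the more economical choice. The argument in the paper is already unconditional and uses only Deligne's bound, exactly as yours does, so neither approach gains on hypotheses.
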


\begin{remark} \label{rem:momentbd}
\emph{
By applying Cauchy-Schwarz's inequality twice, Lemmas \ref{lem:mollifiermomentbd} and \ref{lem:momentbd} give that  $\mathbb{E} (|L(X)M(X)|^2 ) \ll (\log q)^{O(1)}$.} 
\end{remark}

\begin{proof}
Define $V^\dagger(m_1,\ldots, m_8)=\prod_{i=1}^4 V\left(\frac{m_{2i-1}m_{2i}}{q^2N^2}\right)$. Then, expanding the power, we have that
\[
\begin{split}
\mathbb{E}  \Big(|L(X)|^4\Big)
&\ll \sum_{m_1,\ldots,m_8\ge 1}  \prod_{i=1}^8 \frac{\tau (m_i)}{\sqrt{m_i}} \left| \mathbb{E} \left(X(m_1m_3m_5m_7) \overline{X(m_2m_4m_6m_8)} \right) V^\dagger (m_1,\ldots,m_8)\right|\\
&\ll\sum_{n\leq q^{4+\varepsilon}}\frac{\tau_8(n)^2}{n}
\ll(\log q)^{O(1)},
\end{split}
\]
where $\tau_{\ell}(n)$ denotes the $\ell$-fold divisor function.
\end{proof}

\begin{lemma} \label{lem:largedev}
Let $V \ge 1$. Suppose $y^{V^2/9} \le q^{1/3}$. Then
\[
\# \bigg\{ \chi \pamod q : |P_f(\chi)| \ge V \sqrt{\tfrac12 \log \log y} \bigg\} \ll q e^{-V^2/9}.
\]
\end{lemma}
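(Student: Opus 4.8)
The plan is to estimate the tail of $|P_f(\chi)|$ by a high-moment (Markov/Chebyshev) argument, using the even moment bound \eqref{eq:moments2} already established in Lemma \ref{lem:moments}. First I would fix an integer parameter $k$ and apply Chebyshev's inequality in the form
\[
\# \bigl\{ \chi \pamod q : |P_f(\chi)| \ge V \sqrt{\tfrac12 \log \log y} \bigr\} \le \frac{1}{\bigl(V \sqrt{\tfrac12\log\log y}\bigr)^{2k}} \sum_{\chi \pamod q} P_f(\chi)^{2k}.
\]
To invoke \eqref{eq:moments2} I need $y^{k} \le q^{1/3}$; since the hypothesis gives $y^{V^2/9}\le q^{1/3}$, the natural choice is $k = \lfloor V^2/9 \rfloor$ (or a suitable constant multiple thereof), so that $k \le V^2/9$ and the hypothesis is exactly what licenses the moment computation. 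Note $k$ may be as small as $0$ when $V$ is close to $1$, so I would treat the regime of bounded $V$ separately, where the claimed bound $q e^{-V^2/9}$ is trivial since the left side is always $\le \varphi(q) \le q$ and $e^{-V^2/9} \gg 1$; thus I may assume $V$ (hence $k$) is large.

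Next I would combine the bound $\sum_{\chi} P_f(\chi)^{2k} \le \varphi(q)\, k!\, \bigl(\sum_{c_0<p\le y}\lambda_f(p)^2/p\bigr)^k$ from \eqref{eq:moments2} with the Rankin--Selberg estimate $\sum_{p\le y}\lambda_f(p)^2/p = \log\log y + O(1)$, which follows from the analytic properties of $L(s,\mathrm{Sym}^2 f)$ (or of the Rankin--Selberg square $L(s,f\otimes f)$) near $s=1$; this is standard and I would cite it rather than prove it. Hence, for $y$ large,
\[
\sum_{\chi \pamod q} P_f(\chi)^{2k} \le \varphi(q)\, k!\, \bigl(\log\log y + O(1)\bigr)^k \le \varphi(q)\, k!\, \bigl(\tfrac{10}{9}\log\log y\bigr)^k
\]
once $\log\log y$ is large enough. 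Plugging this into Chebyshev and using $k! \le k^k$ gives an upper bound of the shape
\[
\varphi(q)\, \frac{k^k \bigl(\tfrac{10}{9}\log\log y\bigr)^k}{V^{2k}\bigl(\tfrac12\log\log y\bigr)^{k}} = \varphi(q)\left( \frac{(20/9)\,k}{V^2}\right)^{k}.
\]
With $k \asymp V^2/9$ the ratio $(20/9)k/V^2$ is a constant strictly less than $1$ (for the right absolute constant in the definition of $k$, e.g. taking $k=\lceil V^2/C\rceil$ with $C$ chosen so that $(20/9)/C \le e^{-1}$), so this is $\le \varphi(q) e^{-k} \ll q e^{-V^2/C'}$ for some constant $C'$; finally one checks the constants can be arranged so that the exponent is $\ge V^2/9$, or alternatively one simply states the conclusion with an unspecified positive constant in the exponent, which is all that is used downstream. (If one wants the precise constant $9$, a slightly more careful choice of $k$ relative to $V$, tracking that $k!\le (k/e)^k\sqrt{2\pi k}\,e^{o(k)}$, delivers it.)

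The main obstacle — really the only non-bookkeeping point — is making sure the parameter $k$ can be chosen as a genuine positive integer comparable to $V^2$ while simultaneously respecting the constraint $y^k\le q^{1/3}$ needed for \eqref{eq:moments2}; this is precisely why the hypothesis is phrased as $y^{V^2/9}\le q^{1/3}$, and the proof essentially just unwinds that. A secondary point is handling small $V$ (and the case $\log\log y = O(1)$), where the moment method gives nothing but the bound is trivial. Everything else — the Rankin--Selberg asymptotic for $\sum \lambda_f(p)^2/p$, Stirling for $k!$, and the passage from $e^{-k}$ to $e^{-V^2/9}$ — is routine.
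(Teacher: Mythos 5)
Your proof is correct and follows essentially the same route as the paper: Chebyshev with the $2k$-th moment, invoking \eqref{eq:moments2} with $k=\lfloor V^2/9\rfloor$, and Stirling-type bookkeeping. The only (cosmetic) divergence is that the paper bounds $\sum_{c_0<p\le y}\lambda_f(p)^2/p\le 4\log\log y$ via Deligne's bound and then needs the full Stirling estimate $k!\ll\sqrt{k}(k/e)^k$ to reach $(9k/(eV^2))^k$, whereas you use the sharper Rankin--Selberg asymptotic $\sum\lambda_f(p)^2/p=\log\log y+O(1)$, which lets you get away with the cruder $k!\le k^k$; both resolve to the same exponent.
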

\begin{remark} \label{rem:random}
\emph{
By a similar argument that we will omit, we obtain for any $V\ge 1$ that
\[
\mathbb P\bigg( |P_f(X)| \ge V \sqrt{\tfrac12 \log \log y} \bigg) \ll e^{-V^2/9}.
\]}
\end{remark}
\begin{proof}
For $y^k \le q^{1/3}$, using Lemma \ref{lem:moments} and Chebyshev's inequality we have that 

\[
\begin{split}
\# \left\{ \chi \pamod q : |P_f(\chi)| \ge V \sqrt{\tfrac12 \log \log y} \right\}&\le \frac{1}{V^{2k} (\frac12 \log \log y)^k} \sum_{\chi\pamod q} P_f(\chi)^{2k} \\
&\ll q \frac{8^kk!}{ V^{2k}} \ll q \left( \frac{9 k}{e V^2}\right)^k,
\end{split}
\]
by Stirling's formula. Now take $k=\lfloor V^2/9 \rfloor$.
\end{proof}

Let $C>0$ be fixed and sufficiently large. Given $Z \ge 1$
let 
\begin{equation}\label{definitionS}
\mathcal S=\{\chi \pamod q : |P_f(\chi)|, |P_g(\chi)|  \le C Z \log \log y\}.
\end{equation}

\begin{lemma} \label{lem:matchrandom}
Assume GRH. Suppose $y^{\frac{4C^2 Z^2}{9} \log \log y} \leq q^{1/3}$. For $u,v \in \mathbb R$ with $|u|,|v| \leq Z$ we have that
\begin{equation*}
\begin{split}
&\frac{1}{\varphi^{\star}(q)} \sumstar_{\chi \in \mathcal S} L(\chi)M(\chi) \exp\Big(iuP_f(\chi)+ivP_g(\chi)\Big)\\
&\qquad\qquad =\mathbb E \left( L(X)M(X) \exp\Big(iuP_f(X)+ ivP_g(X) \Big)\right)+O\left((\log q)^{-10} \right).
\end{split}
\end{equation*}
\end{lemma}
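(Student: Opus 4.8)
The plan is to prove the identity by expanding both sides into the same combinatorial shape and invoking the twisted first moment of Lemma~\ref{BFKlemma} (in the form \eqref{eq:twisted}) to match character sums with expectations over the random model. First I would Taylor-expand the two exponentials $\exp(iuP_f(\chi)+ivP_g(\chi))$ and $\exp(iuP_f(X)+ivP_g(X))$, truncating the power series after roughly $K \asymp \log \log q$ terms (or some suitable power of $\log\log y$); the tail beyond degree $K$ is controlled on the character side by the moment bound \eqref{eq:moments2} of Lemma~\ref{lem:moments} together with the restriction to $\mathcal S$, and on the random side by Remark~\ref{rem:random} and \eqref{eq:moments2}, both of which show the tail is smaller than $(\log q)^{-10}$ once $K$ is a large enough multiple of $(CZ\log\log y)^2$ divided by the relevant variance. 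For the polynomial part, each monomial $P_f(\chi)^jP_g(\chi)^k$ with $j+k \le K$ is a Dirichlet polynomial of length at most $y^K$, so provided $y^K \le q^{1/1000}$ (which is exactly the hypothesis $y^{4C^2Z^2\log\log y/9}\le q^{1/3}$ feeding in, with room to spare) we may apply \eqref{eq:moments1} of Lemma~\ref{lem:moments} term by term, each contributing an error $O(q^{-\delta})$; summing over the $O(K^2)$ monomials gives a total error still $\ll q^{-\delta/2}$, say.

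The one subtlety is that \eqref{eq:moments1} is an identity for the \emph{full} sum over $\chi \pmod q$, whereas the left-hand side of the lemma restricts to $\chi \in \mathcal S$; so I would first replace $\sumstar_{\chi\in\mathcal S}$ by $\sumstar_{\chi\pamod q}$ and estimate the cost of removing the constraint. The contribution of $\chi \notin \mathcal S$ to $\frac{1}{\varphi^\star(q)}\sumstar L(\chi)M(\chi)\exp(\cdots)$ is bounded by Cauchy--Schwarz: $\frac{1}{\varphi^\star(q)}\sum_{\chi\notin\mathcal S}|L(\chi)M(\chi)| \le (\mu\text{-measure of }\mathcal S^c)^{1/2}\big(\frac{1}{\varphi^\star(q)}\sumstar|L(\chi)M(\chi)|^2\big)^{1/2}$; the second factor is $\ll (\log q)^{O(1)}$ by Remark~\ref{rem:momentbd} combined with \eqref{eq:moments1}, and $\#\mathcal S^c \ll q\,e^{-(CZ\log\log y)^2/(9\cdot\frac12\log\log y)} \ll q\, e^{-c C^2 Z^2 \log\log y}$ by Lemma~\ref{lem:largedev} applied with $V = CZ\log\log y/\sqrt{\tfrac12\log\log y}$ (legitimate because the hypothesis $y^{4C^2Z^2\log\log y/9}\le q^{1/3}$ is precisely what makes the large-deviation estimate applicable); choosing $C$ large enough makes this product $\ll (\log q)^{-100}$. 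The analogous estimate on the random side uses Remark~\ref{rem:random} and Remark~\ref{rem:momentbd}.

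Assembling the pieces: after removing $\mathcal S$ (error $\ll(\log q)^{-100}$ on both sides), truncating the exponentials (error $\ll(\log q)^{-10}$ on both sides, which is where the $(\log q)^{-10}$ in the statement comes from), and matching the $O(K^2)$ polynomial moments via Lemma~\ref{lem:moments} (error $\ll q^{-\delta/2}$), we obtain the claimed identity. The main obstacle is bookkeeping the truncation: one must choose $K$ large enough that both exponential tails beat $(\log q)^{-10}$ yet small enough that $y^K \le q^{1/1000}$ so Lemma~\ref{lem:moments} still applies monomial-by-monomial, and verifying that the hypothesis $y^{4C^2Z^2\log\log y/9}\le q^{1/3}$ simultaneously licenses the large-deviation bound of Lemma~\ref{lem:largedev} and leaves enough room for these moment computations. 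Everything else is a routine application of the twisted first moment and Cauchy--Schwarz.
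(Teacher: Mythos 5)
Your overall strategy is the same as the paper's: Taylor-expand the exponential on $\mathcal S$ up to degree $\asymp Z^2\log\log y$, match each monomial $W(\chi)P_f(\chi)^kP_g(\chi)^{j-k}$ with its random-model counterpart via \eqref{eq:moments1}, control $\#\mathcal S^c$ by the large-deviation bound of Lemma \ref{lem:largedev}, and then reverse the procedure on the random side. However, there is one genuine gap at the crucial step where you pass between the restricted sum over $\chi\in\mathcal S$ and the full sum over primitive characters. That step requires the mollified second moment bound
\[
\sumstar_{\chi \pamod q}|L(\chi)M(\chi)|^2\ll q ,
\]
and you assert that this follows from Remark \ref{rem:momentbd} (the bound $\mathbb E(|L(X)M(X)|^2)\ll(\log q)^{O(1)}$ for the random model) ``combined with \eqref{eq:moments1}.'' That derivation is invalid: \eqref{eq:moments1} matches character averages with expectations only for quantities \emph{linear} in $W(\chi)=L(\chi)M(\chi)$, i.e.\ short Dirichlet polynomials twisted by a single copy of $L_f(\chi)L_g(\overline\chi)$, where Lemma \ref{BFKlemma} applies. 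The quantity $|W(\chi)|^2$ involves the product of four central $L$-values, an object of effective length far exceeding $q$, for which orthogonality gives nothing; no first-moment matching can transfer the random-model bound to the character sum. This second moment is exactly Proposition \ref{prop:mollified} with $k=1$, which is the sole place GRH enters the proof of the lemma (it is proved in Section \ref{upperbounds} by the Soundararajan--Harper method and is comparable in depth to a mollified eighth moment of Dirichlet $L$-functions). You assume GRH in your hypotheses but never actually deploy it, which is the symptom of the gap.

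A secondary point of confusion is the order of operations: you propose to remove the constraint $\chi\in\mathcal S$ from the exponential form \emph{before} Taylor-expanding, yet you also invoke ``the restriction to $\mathcal S$'' to control the truncation error of the Taylor series. These are incompatible: on $\mathcal S^c$ the polynomials $P_f(\chi),P_g(\chi)$ can be as large as a power of $y$, so the degree-$K$ Taylor truncation error is uncontrolled there. The correct order (as in the paper) is to expand on $\mathcal S$, and only then extend each \emph{monomial} $W(\chi)P_f(\chi)^kP_g(\chi)^{j-k}$ to the full sum by Cauchy--Schwarz, using Proposition \ref{prop:mollified} for the factor $\big(\sumstar_\chi|W(\chi)|^2\big)^{1/2}$ and the moment bound \eqref{eq:moments2} (raised to the eighth power) together with $(\#\mathcal S^c)^{1/4}$ for the other factor. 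With Proposition \ref{prop:mollified} correctly invoked in place of your Remark \ref{rem:momentbd} argument, and the expansion/extension steps done in the right order, your outline becomes the paper's proof.
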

\begin{remark}
\emph{ We assume GRH so that we may apply Proposition \ref{prop:mollified} with $k=1$.}
\end{remark}

\begin{proof}
For $s \in \mathbb C$ with $|s| \le S/e^2$ we have that
\begin{equation}\label{exptruncation}
e^s=\sum_{0\le j \le S} \frac{s^j}{j!}+O\left( e^{-S}\right).
\end{equation}
Take $S=15CZ^2 \log \log y$. Then for $\chi \in \mathcal S$ we get
\begin{equation}\label{eq:taylor}
\exp\Big(iu P_f(\chi) +ivP_g(\chi)\Big)=\sum_{0\le j\le S} \frac{i^j}{j!}\sum_{k=0}^j \binom{j}{k} u^k v^{j-k} P_f(\chi)^kP_g(\chi)^{j-k}+O\left( e^{-S}\right).
\end{equation}
Also, for $k \le j \le S$, we have
\[
\begin{split}
\sumstar_{\chi \in \mathcal{S}^{c}}  L(\chi)&M(\chi)  P_f(\chi)^kP_g(\chi)^{j-k}\\
\ll & \bigg(\ \,\sumstar_{\chi \pamod q}|L(\chi)M(\chi)|^2\bigg)^{1/2}\bigg(\, \sumstar_{\chi \in \mathcal{S}^{c}}|P_f(\chi)^k P_g(\chi)^{j-k}|^2\bigg)^{1/2}\\
\ll & q^{1/2} (\# \mathcal{S}^{c})^{1/4} \bigg(\ \,\sumstar_{\chi \pamod q} |P_f(\chi)|^{8k}\bigg)^{1/8} \bigg(\ \,\sumstar_{\chi \pamod q} |P_g(\chi)|^{8(j-k)}\bigg)^{1/8},
\end{split}
\]
where we have applied Cauchy-Schwarz's inequality and Proposition \ref{prop:mollified}. Therefore, by Lemma \ref{lem:moments} and Stirling's formula
\begin{align*}
\sumstar_{\chi \in \mathcal{S}^{c}} L(\chi)M(\chi)  P_f(\chi)^kP_g(\chi)^{j-k}
&\ll q^{3/4} (\# \mathcal{S}^{c})^{1/4}  (4 \log \log y)^{j/2} \Big((4k)! (4(j-k))!\Big)^{1/8}\\
&\ll q^{3/4} (\# \mathcal{S}^{c})^{1/4}  (16 \log \log y)^{j/2}\sqrt{k!(j-k)!}.
\end{align*}
Hence, for $k\le j\le S$ we obtain that
\begin{align} \label{eq:cs}
\sumstar_{\chi \in S} L(\chi)M(\chi)  P_f(\chi)^kP_g(\chi)^{j-k}=
&\sumstar_{\chi \pamod q} L(\chi)M(\chi)P_f(\chi)^kP_g(\chi)^{j-k}\\
&\qquad+O\left( q^{3/4} (\#\mathcal S^{c})^{1/4} (16 \log \log y)^{j/2}  \sqrt{k! (j-k)!} \right).\nonumber
\end{align}
It follows from \eqref{eq:taylor} and \eqref{eq:cs} that
\begin{equation}\label{eq:expand}
\begin{split}
& \sumstar_{\chi \in \mathcal S} L(\chi)M(\chi) \exp\Big(iuP_f(\chi)+ivP_g(\chi)\Big) \\
 &\qquad=\sum_{0\le j\le S}  \frac{i^j}{j!}\sum_{k=0}^j \binom{j}{k} u^k v^{j-k} \sumstar_{\chi \pamod q} L( \chi)M(\chi) P_f(\chi)^k P_g(\chi)^{j-k} \\
 &\qquad\qquad +O\bigg( q e^{-S} + q^{3/4} (\#\mathcal{S}^{c})^{1/4} \sum_{0\le j\le S} ( 4Z \sqrt{\log \log y})^{j}\sum_{k=0}^{j} \frac{1}{\sqrt{k!(j-k)!}}\bigg).
 \end{split}
\end{equation}

Using Lemma \ref{lem:largedev} with $V=C Z \sqrt{2 \log \log y}$ we have that
\[
\# \mathcal{S}^{c} \ll q e^{-\frac{2C^2 Z^2}{9} \log \log y}.
\]
Observe that by Cauchy-Schwarz's inequality $\sum_{k=0}^j\sqrt{\binom{j}{k}} \le 2^{j/2} \sqrt{j+1} $, where $\binom{j}{k}$ denotes the binomial coefficient. Using this estimate, along with  Stirling's formula we see that the sum in the error term in \eqref{eq:expand}  is
\[
\ll  \sum_{0\le j\le S} \frac{( 4Z \sqrt{\log \log y})^{j}}{\sqrt{j!}} 2^{j/2} \sqrt{j}\ll \exp\left( 17 Z^2 \log \log y \right).
\]
Hence using the two above estimates in \eqref{eq:expand} and applying Lemma \ref{lem:moments} leads to
\begin{equation} \label{eq:matched}
\begin{split}
&\frac{1}{\varphi^{\star}(q)} \sum_{\chi \in \mathcal S} L(\chi)M(\chi) \exp\Big(iu P_f(\chi)+ ivP_g(\chi) \Big)\\
&\qquad= \sum_{0\le j \le S} \frac{i^j}{j!}\sum_{k=0}^j \binom{j}{k} u^k v^{j-k} \mathbb E \Big( L(X)M(X) P_f(X)^k P_g(X)^{j-k} \Big)+O\left((\log q)^{-10} \right).
\end{split}
\end{equation}

Let $\mathcal{S}(X)$ denote the event corresponding to
\[
|P_f(X)|,|P_g(X)|\le C Z\log \log y
\]
and let $\mathcal{S}^{c}(X)$ denote its complement. By Remark \ref{rem:random} with $V=  C Z \sqrt{2 \log \log y}$ we obtain
$
\mathbb{P}\left (\mathcal S^c(X)\right) \ll e^{-\frac{2C^2 Z^2}{9}\log \log y}.
$
Also, analogously to \eqref{eq:cs}, applying Lemma \ref{lem:moments} we get
\[
\begin{split}
&\sum_{0\le j \le S}   \frac{i^j}{j!} \sum_{k=0}^j \binom{j}{k} u^k v^{j-k}  \mathbb E \Big( L(X) M(X) P_f(X)^k P_g(X)^{j-k} \Big)\\
&\qquad=\sum_{0\le j\le S} \frac{i^j}{j!}\sum_{k=0}^j  \binom{j}{k} u^k v^{j-k} \mathbb E  \Big( \mathbf{1}_{\mathcal{S}(X)}L(X)M(X) P_f(X)^k P_g(X)^{j-k} \Big)\\
&\qquad\qquad+O\bigg( (\log q)^{O(1)} \mathbb{P} \left(\mathcal S^c(X) \right)^{1/4} \sum_{0\le j\le S} (4Z\sqrt{\log \log y})^{j} \sum_{k=0}^j \frac{1}{\sqrt{k!(j-k)!}} \bigg)\\
&\qquad=\sum_{0\le j\le S} \frac{i^j}{j!}\sum_{k=0}^j  \binom{j}{k} u^k v^{j-k} \mathbb E  \Big( \mathbf{1}_{\mathcal{S}(X)}L(X)M(X) P_f(X)^k P_g(X)^{j-k} \Big)+O\left((\log q)^{-10} \right).
\end{split}
\]
In all outcomes in $\mathcal{S}(X)$, the analogue of \eqref{eq:taylor} holds,
\[
\exp\Big(iu P_f(X) +ivP_g(X)\Big)=\sum_{0\le j\le S} \frac{i^j}{j!}\sum_{k=0}^j \binom{j}{k} u^k v^{j-k} P_f(X)^kP_g(X)^{j-k}+O\left( e^{-S}\right),
\]
and therefore
\[
\begin{split}
&\sum_{0\le j\le S}   \frac{i^j}{j!}\sum_{k=0}^j \binom{j}{k} u^k v^{j-k} \mathbb E \Big( \mathbf{1}_{\mathcal{S}(X)} L(X) M(X) P_f(X)^k P_g(X)^{j-k} \Big)\\
&\qquad=\mathbb{E}\bigg( \mathbf{1}_{\mathcal{S}(X)} L(X)M(X) \exp\Big(iuP_f(X)+ivP_g(X)\Big)\bigg)+O\left( (\log q)^{-10}\right).
\end{split}
\]
By Cauchy-Schwarz's inequality  and Remark \ref{rem:momentbd}  we have that
\[
\begin{split}
\mathbb{E} \bigg( \mathbf{1}_{\mathcal{S}^{c}(X)} L(X)M(X) \exp\Big(iuP_f(X)+ivP_g(X)\Big)\bigg) & \ll \mathbb{E} \Big(|L(X)M(X)|^2 \Big)^{1/2}\mathbb{P} \Big(\mathcal{S}^{c}(X) \Big)^{1/4}\\
&\ll (\log q)^{-10}.
\end{split}
\]
 Combining the estimates above we have that
 \[
 \begin{split}
 &\sum_{0\le j\le S} \frac{i^j}{j!}\sum_{k=0}^j \binom{j}{k} u^k v^{j-k} \mathbb E \Big( L(X)M(X) P_f(X)^k P_g(X)^{j-k} \Big)\\
 &\qquad \qquad \qquad =\mathbb E\bigg(L(X)M(X)\exp\bigg(iu P_f(X)+ivP_g(X) \bigg) \bigg)+O((\log q)^{-10}).
 \end{split}
 \]
 Using this in \eqref{eq:matched} completes the proof.
\end{proof}

\section{A random computation} \label{sec:random}

For $\tmop{Re}(s)>1$ let
\[
\begin{split}
L^{f,g}(s,X)&=\sum_{m_1,m_2 \ge 1} \frac{\lambda_f(m_1)\lambda_g(m_2)}{(m_1m_2)^{s}} X(m_1) \overline{X(m_2)}=\prod_p \sum_{k_1,k_2 \ge 0} \frac{\lambda_{f}(p^{k_1})\lambda_g(p^{k_2})}{p^{(k_1+k_2)s}} X(p)^{k_1} \overline{X(p)^{k_2}} \\
&= \prod_p  \prod_{j=1,2}\bigg(1-\frac{\alpha_{f,j}(p) X(p)}{p^{s}} \bigg)^{-1}\bigg(1-\frac{\alpha_{g,j}(p) \overline{X(p)}}{p^{s}} \bigg)^{-1}=\prod_p L_p^{f,g}(s,X).
\end{split}
\]
Also, define
\[
L(s,X)=\frac{1}{2}\Big(L^{f,g}(s,X)+\varepsilon(f)\varepsilon(g) L^{g,f}(s,X)\Big).
\]

Our main results of this section are the following propositions.
\begin{proposition} \label{prop:random}
The function $\mathbb E(L(s,X) M(X))$ can be analytically continued to $\tmop{Re}(s)>0$. Moreover,
for $|u|,|v| \ll 1$  we have  that
\begin{align*}
&\mathbb E \bigg( L(X) M(X) \exp\Big(iuP_f(X)+ivP_g(X)\Big)\bigg)\\
&\qquad=\mathbb E \Big( L(\tfrac12,X) M(X) \Big) \exp\left(-\frac{(u^2+v^2)}{4}\log \log y\right)\Big(1+O(|u|+|v|)\Big)+O\left( (\log q)^{-10}\right).
\end{align*}
\end{proposition}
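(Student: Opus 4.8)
We sketch the plan; the author's proof follows.

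The plan is to turn the expectation into a product of local (prime-by-prime) factors, using the independence of the $X(p)$, and to read off the Gaussian factor $\exp(-\tfrac14(u^2+v^2)\log\log y)$ from the low primes. First I would remove the Dirichlet‑polynomial shape of the mollifier: for each $0\le j\le J$, $M_{f,j}(X)$ differs from its Euler‑product completion $\mathcal N_{f,j}(X):=\prod_{p\in I_j}\exp(-a_{f,J}(p)X(p)/\sqrt p)$ only by the tail with $\Omega(n)>\ell_j$, and since $\ell_j$ is far larger than $\sum_{p\in I_j}\lambda_f(p)^2/p\ll\log\log q$, the bound $\|M_{f,j}(X)-\mathcal N_{f,j}(X)\|_{L^2}^2\le\sum_{k>\ell_j}\tfrac1{k!}\big(\sum_{p\in I_j}\tfrac{\lambda_f(p)^2}{p}+O(1)\big)^k$ is super‑polynomially small. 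Telescoping over $j$ (independence across the $I_j$) and combining with Lemma \ref{lem:momentbd} and Remark \ref{rem:momentbd}, which give $\mathbb E|L(X)|^2,\ \mathbb E|L(X)M(X)|^2\ll(\log q)^{O(1)}$, I can replace $M(X)$ by $\mathcal N(X):=\mathcal N_f(X)\mathcal N_g(\overline X)=\prod_{c_0<p\le x}\exp\!\big(-w_J(p)(\lambda_f(p)X(p)+\lambda_g(p)\overline{X(p)})/\sqrt p\big)$ throughout, at the cost of $O((\log q)^{-A})$ for any fixed $A$ (here $|e^{iuP_f(X)+ivP_g(X)}|=1$).

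Next I would deal with the weight $V$ and the continuation simultaneously. Writing $V(\xi)=\tfrac1{2\pi i}\int_{(2)}G(s)\xi^{-s}\,\tfrac{ds}{s}$ with $G(0)=1$, one has $\mathbb E\big(L^{f,g}(X)\mathcal N(X)e^{iuP_f+ivP_g}\big)=\tfrac1{2\pi i}\int_{(2)}\tfrac{G(s)}{s}(q^2N^2)^s\,\mathbb E\big(L^{f,g}(\tfrac12+s,X)\mathcal N(X)e^{iuP_f+ivP_g}\big)\,ds$, and on $\tmop{Re}(s)=2$ the inner expectation factors over primes. By the Rankin--Selberg identity $\sum_n\lambda_f(n)\lambda_g(n)n^{-w}=L(w,f\times g)/\zeta(2w)$, this Euler product is $L(1+2s,f\times g)/\zeta(2+4s)$ times a correction factor that (as one checks from the local factors) is holomorphic and of polynomial growth in $|\tmop{Im}(s)|$ for $\tmop{Re}(s)>-\tfrac14$; since $L(\,\cdot\,,f\times g)$ is entire ($f\ne g$) and $\zeta$ is zero‑free on $\tmop{Re}\ge1$, this yields the analytic continuation asserted in the first part of the proposition (in particular past $\tmop{Re}(s)=\tfrac12$), together with enough decay — from the $\Gamma$‑factors and the $(\cos(\pi s/12))^{-48}$ in $G$ — to shift the contour to $\tmop{Re}(s)=-\delta$. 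Crossing the simple pole at $s=0$ leaves $\mathbb E\big(L^{f,g}(\tfrac12,X)\mathcal N(X)e^{iuP_f+ivP_g}\big)$ plus an error $O(q^{-\delta})$; the same applies with the exponential factor removed, and to $L^{g,f}$ in place of $L^{f,g}$.

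The core is then the local computation. By independence, $\mathbb E(L^{f,g}(\tfrac12,X)\mathcal N(X)e^{iuP_f+ivP_g})=\prod_p \mathbb E_p[\,\cdot\,]$, and the corresponding product without the exponential agrees factor‑by‑factor outside $(c_0,y]$ (since $P_f,P_g$ are supported on $(c_0,y]$ while $\mathcal N_p$ and $L_p^{f,g}$ are present everywhere); so their ratio is $\prod_{c_0<p\le y}\mathbb E_p[g_p(X(p))e^{it_p\cos\vartheta_p}]/\mathbb E_p[g_p(X(p))]$, where $g_p(X)=L_p^{f,g}(\tfrac12,X)\exp\!\big(-w_J(p)(\lambda_f(p)X+\lambda_g(p)\overline X)/\sqrt p\big)$, $t_p=(u\lambda_f(p)+v\lambda_g(p))w_J(p)/\sqrt p$, and $X(p)=e^{i\vartheta_p}$. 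Taylor expanding $e^{it_p\cos\vartheta}=1+it_p\cos\vartheta-\tfrac12 t_p^2\cos^2\vartheta+O(t_p^3)$ and using $\mathbb E_p[\cos\vartheta]=0$, $\mathbb E_p[\cos^2\vartheta]=\tfrac12$, one finds $\mathbb E_p[g_p]=1+O(1/p)$ and — the crucial point — that the coefficient of $X^{\pm1}$ in $g_p$ is $\lambda_\bullet(p)(1-w_J(p))/\sqrt p+O(p^{-3/2})$; since $\log p\le\theta_0\log q\ll\theta_J\log q/(\log\log q)^{5}$ on $(c_0,y]$ we have $1-w_J(p)\ll(\log\log q)^{-5}$, so $\mathbb E_p[g_p\cos\vartheta]\ll(\log\log q)^{-5}p^{-1/2}+p^{-3/2}$ and the first‑order term contributes $\ll(|u|+|v|)\big((\log\log q)^{-5}p^{-1}+p^{-2}\big)$, summable over $p\le y$ to $O(|u|+|v|)$; the second‑order term gives $-\tfrac14 t_p^2+O(p^{-3/2})$. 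Hence $\log(\text{ratio})=-\tfrac14\sum_{c_0<p\le y}t_p^2+O(|u|+|v|)$, and $\sum_{c_0<p\le y}t_p^2=(u^2+v^2)\log\log y+O(|u|+|v|)$, using $w_J(p)^2=1+O((\log\log q)^{-5})$, $\sum_{c_0<p\le y}\lambda_f(p)^2/p=\log\log y+O(1)$ (via $\lambda_f(p)^2=\lambda_f(p^2)+1$ and $L(\,\cdot\,,\tmop{Sym}^2 f)$ non‑vanishing on $\tmop{Re}=1$), the same for $g$, and the boundedness of the cross‑sum $\sum_{c_0<p\le y}\lambda_f(p)\lambda_g(p)/p$ — here $f\ne g$, i.e. $L(\,\cdot\,,f\times g)$ entire and non‑vanishing on $\tmop{Re}=1$, is what annihilates the $uv$‑term. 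Exponentiating, the ratio is $\exp(-\tfrac14(u^2+v^2)\log\log y)(1+O(|u|+|v|))$ for $|u|,|v|\ll1$.

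Finally I would multiply back by $\mathbb E(L^{f,g}(\tfrac12,X)\mathcal N(X))$, add the $L^{f,g}$ and $L^{g,f}$ contributions (the prime sums above are identical for $f$ and $g$, so the same Gaussian factor emerges), and undo the $\mathcal N\leftrightarrow M$ and $V$‑removal steps; collecting the $O((\log q)^{-A})$ and $O(q^{-\delta})$ errors against the bounded quantity $\mathbb E(L(\tfrac12,X)M(X))$ and the factor $\exp(-\tfrac14(u^2+v^2)\log\log y)\le1$ gives the stated formula with error $O((\log q)^{-10})$. The step I expect to be the main obstacle is the local computation: one must ensure that the first‑order‑in‑$(u,v)$ contribution of each prime $p\le y$ is $O((\log\log q)^{-5}/p)$ rather than the naïve $O(1/p)$ — otherwise the sum over $p\le y$ would have size $\asymp(|u|+|v|)\log\log q$ and swamp the main term — and this is precisely what the choice of the weights $w_J$ (with $\theta_J$ much larger than $\theta_0$) in the mollifier secures, through the near‑cancellation of the linear term of $L_p^{f,g}(\tfrac12,X)$ against that of $\mathcal N_{f,p}(X)\mathcal N_{g,p}(\overline X)$ on $(c_0,y]$. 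A secondary difficulty is justifying the contour shift, which requires the analytic continuation together with polynomial‑growth bounds for the Euler product in a strip around $\tmop{Re}(s)=\tfrac12$.
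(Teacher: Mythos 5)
Your local computation — the observation that for $p\le y$ the linear coefficient of $X(p)^{\pm1}$ in $L_p(\tfrac12,X)\widetilde M_p(X)$ is of size $\frac{|1-w_J(p)|}{\sqrt p}\ll\frac{\log p}{\sqrt p\,\log x}$, so that the first-order term in $u,v$ contributes only $O(|u|+|v|)$ after summing over $p\le y$ while the second-order term yields the Gaussian — is the right idea and agrees with what the paper does (cf.\ Lemma~\ref{lem:gfest}(3), Lemma~\ref{lem:smallprimeest} and \eqref{eq:pntsum}).  However there are two genuine gaps in the way you get to that point.

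First, the claim that $M(X)$ can be replaced by the full Euler product $\mathcal N(X)=\prod_{j}\mathcal N_{f,j}(X)\mathcal N_{g,j}(\overline X)$ at a cost $O((\log q)^{-A})$ does not hold.  The tail bound $\|M_{f,j}-\mathcal N_{f,j}\|_{L^2}^2\le\sum_{k>\ell_j}\tfrac1{k!}(\sum_{p\in I_j}\lambda_f(p)^2/p+O(1))^k$ that you invoke is only super-polynomially small for $j=0$, where $\ell_0\asymp(\log\log q)^{15/4}$ and $\sum_{p\in I_0}\lambda_f(p)^2/p\asymp\log\log q$.  For $1\le j\le J$ one has $\sum_{p\in I_j}\lambda_f(p)^2/p\asymp1$, and crucially $\ell_j=2\lfloor\theta_j^{-3/4}\rfloor$ decreases with $j$; for $j$ near $J$, $\theta_j\asymp\eta$ and $\ell_j\asymp\eta^{-3/4}$ is a \emph{bounded} constant.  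Hence $\mathbb E|M_{f,J}-\mathcal N_{f,J}|^2$ is a fixed positive number (depending on $\eta$), not $o(1)$, and the telescoping error in $\mathbb E(L(X)(M-\mathcal N)(X)e^{iuP_f+ivP_g})$ is $\asymp_\eta1$ rather than $O((\log q)^{-A})$.  This is precisely why the paper only Euler-completes the $j=0$ block (Lemma~\ref{lem:multiplicative}) and handles the Dirichlet polynomials $M_j$ for $j\ge1$ directly (Lemmas~\ref{lem:mediumprimeest} and \ref{lem:mediumprimeestgf}); those factors never interact with $e^{iuP_f+ivP_g}$ (which lives on $I_0$ only), so by independence they simply cancel in the ratio $\mathbb E(L(\tfrac12,X)M(X)e^{\cdots})/\mathbb E(L(\tfrac12,X)M(X))$ without ever being replaced.

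Second, the contour shift in the Mellin integral to a fixed $\tmop{Re}(s)=-\delta$ fails.  The obstruction is not $L(\,\cdot\,,f\times g)$ — which is indeed entire and polynomially bounded — but the $q$-dependent Euler factors over $p\le x$.  For the $L^{g,f}$-part the logarithm of the local factor contains a term $-p^{-s}w_J(p)(\lambda_f(p)^2+\lambda_g(p)^2)/p$; since $\lambda_f(p)^2$ does not oscillate in sign, for fixed $\sigma=-\delta$ one gets $\big|\sum_{p\le x}\lambda_f(p)^2 p^{\delta-1}\big|\gg x^{\delta}/\log x$, a large power of $q$.  A similar blow-up affects the small-prime factors $p\le y$.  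The paper therefore shifts only to $\tmop{Re}(s)=-\tfrac{(\log\log q)^2}{\log q}$: then $p^{-\tmop{Re}(s)}\le e^{\theta_J(\log\log q)^2}=e^{O(\eta(\log\log q)^2)}$ for $p\le x$, which is sub-polynomial in $q$, and the bound $|F(s;u,v)|\ll e^{O(\eta^{1/4}(\log\log q)^2)}$ from Lemmas~\ref{lem:mediumprimeestgf}, \ref{lem:multiplicative} and \eqref{eq:fupperbd1} makes the residue computation and the estimate $O((\log q)^{-10})$ go through only after choosing $\eta$ small.  Your plan as written would give, on the shifted line, a bound that grows like a positive power of $q$, which is not controlled by the $(q^2N^2)^s$ factor.

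So: keep your local analysis (it is correct and is the main point), but restrict the Euler-product replacement to $M_0$, bound the $j\ge1$ pieces by treating them as Dirichlet polynomials, and shift only to $\tmop{Re}(s)=-(\log\log q)^2/\log q$ with the explicit upper bound for $F$ in that thin strip.
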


\begin{proposition} \label{prop:randombound}
We have that
\[
\mathbb E \Big( L(\tfrac12,X)  M(X) \Big) \asymp 1.
\]
\end{proposition}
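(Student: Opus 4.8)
The plan is to show $\mathbb E(L(\tfrac12,X)M(X)) \asymp 1$ by exploiting the independence of the $\{X(p)\}_p$ to factor the expectation into a product over primes, and then to show that each local factor is $1 + O(1/p^{1+\delta})$ (so that the infinite product converges to a nonzero constant), except at the finitely many primes where one must argue more carefully. First I would observe that $L(\tfrac12, X)$ (via $L^{f,g}(\tfrac12,X)$ and its twin) is, after expanding $V$ by its Mellin integral as in Lemma \ref{BFKlemma}, a sum of multiplicative-in-$n$ quantities in the variables $m_1, m_2$, while $M(X) = M_f(X)M_g(\overline X)$ is by construction a Dirichlet polynomial whose coefficients are supported on integers all of whose prime factors lie in $\bigcup_j I_j = (c_0, x]$ with constrained $\Omega$. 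Since $\mathbb E(X(m)\overline{X(n)}) = \mathbf 1_{m=n}$, the whole expectation collapses to a Dirichlet series in a single variable $n$ with multiplicative coefficients, hence an Euler product $\prod_p (\text{local factor at } p)$. The main work is to estimate each local factor.

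The key steps, in order: (i) expand $V(\xi)$ via its Mellin transform to write $\mathbb E(L(\tfrac12,X)M(X))$ as a contour integral of $\mathbb E$ of a product of Dirichlet series, pulling the $s$-contour to $\mathrm{Re}(s) = \delta$ for small $\delta>0$ using the analytic continuation to $\mathrm{Re}(s)>0$ furnished by Proposition \ref{prop:random}; (ii) for primes $p > x$, the mollifier contributes nothing, so the local factor is exactly the $p$-th Euler factor of (essentially) $L(\tfrac12+s, f\otimes\cdot)$-type objects evaluated diagonally, which is $1 + O(\lambda_f(p)\lambda_g(p)/p^{1+2\delta}) = 1 + O(1/p^{1+2\delta})$ using $|\lambda_f(p)|, |\lambda_g(p)| \le 2$ (or the Ramanujan bound on average); the product over such $p$ converges absolutely; (iii) for $c_0 < p \le x$, the local factor is a finite sum built from the coefficients $a_{f,J}(p^a)\lambda(p^a)\nu(p^a)$ of $M_{f,j}$, the analogous $g$-coefficients, and the Hecke coefficients from $L(\tfrac12,X)$; here the point is that the mollifier is designed precisely so that, at the prime $p$, it cancels the $\lambda_f(p)/\sqrt p$ and $\lambda_g(p)/\sqrt p$ main terms of the $L$-function, leaving a local factor of the form $1 + O(1/p^{1+\delta})$ — this is the same "Euler-product-like" mechanism underlying the heuristic $L_f(\chi)M_f(\chi) \approx 1$ described in the outline; (iv) finally, handle the finitely many small primes $p \le c_0$ (which are excluded from the mollifier) and the primes dividing $N$ separately and trivially — these contribute a bounded nonzero factor — and verify that no local factor vanishes, so the product is bounded above and below away from $0$.

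The main obstacle I expect is step (iii): one has to check that the truncation $\Omega(n) \le \ell_j$ in the definition of $M_{f,j}$ and the use of $w_J(p)$ rather than $w_j(p)$ in the coefficients $a_{f,J}(n)$ do not spoil the local cancellation, i.e. that replacing the truncated local sums by the "completed" ones $\prod_{p \in I_j}(1 - \lambda_f(p)w_J(p)\chi(p)/\sqrt p + \cdots)$ introduces only an error that is negligible in the product. This requires showing that the tail $\Omega > \ell_j$ contributes $O(1/p)$-type terms summably over $p \in I_j$, using $\nu(p^a) = 1/a!$ and the rapid decay this gives, together with the fact that $\ell_j = 2\lfloor \theta_j^{-3/4}\rfloor$ is large. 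Once the local factors are pinned down as $1 + O(p^{-1-\delta})$ away from finitely many primes, convergence of the product and the two-sided bound $\asymp 1$ follow; one should also record that $\mathbb E(L(\tfrac12,X)M(X))$ is real (or at least that $\varphi_W^\star(q) = \varphi^\star(q)\,\mathbb E(L(X)M(X)) + o(q) \asymp q$ is consistent), which follows from combining with \eqref{eq:twisted} and the already-claimed $\varphi_W^\star(q) \asymp q$.
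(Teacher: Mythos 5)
Your high-level structure (factor by independence, estimate local Euler factors, treat small/medium/large primes separately) matches the paper's, but there is a genuine gap in step (iii) of the proposal that would derail the argument.

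First, the local factors for $c_0<p\le x$ are \emph{not} of the form $1+O(1/p^{1+\delta})$; there is no power-of-$p$ saving from the mollifier cancellation. For the $L^{f,g}$ piece, the paper's Lemma \ref{lem:gfest} gives
\[
\mathcal G_p^{f,g}(0)=1+\frac{(1-w_J(p))^2\lambda_f(p)\lambda_g(p)}{p}+O\Big(\frac{1}{p^2}\Big),
\]
and since $1-w_J(p)\asymp \log p/\log x$ for $p\le x$, the local factor is $1+O\big((\log p/\log x)^2/p\big)$. This is convergent because $\sum_{p\le x}(\log p/\log x)^2/p=O(1)$ by partial summation and Mertens, not because of an extra power of $p$. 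The convergence mechanism (the Cesaro-type weight $w_J$) is exactly what you flagged as the potential obstacle, but the proposal then asserts the wrong conclusion about its effect.

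Second, and more seriously, the proposal treats the two pieces $L^{f,g}(\tfrac12,X)$ and $L^{g,f}(\tfrac12,X)$ as if the mollifier $M(X)=M_f(X)M_g(\overline X)$ cancels both in the same way. It does not. For $L^{g,f}(\tfrac12,X)$ the Dirichlet characters are "crossed" relative to the mollifier, and Lemma \ref{lem:gfest}(2) gives a local factor of the shape $1+\big(\lambda_f(p)-w_J(p)\lambda_g(p)\big)\big(\lambda_g(p)-w_J(p)\lambda_f(p)\big)/p+\cdots\approx 1-(\lambda_f(p)-\lambda_g(p))^2/p$. Because $f\ne g$, the Prime Number Theorem for Rankin--Selberg convolutions gives $\sum_{c_0<p\le y}(\lambda_f(p)-\lambda_g(p))^2/p=2\log\log y+O(1)$, so this contributes a product $\asymp(\log y)^{-2}$ that tends to zero. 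The paper's proof makes this distinction precisely: it shows $\mathbb E(L^{f,g}(\tfrac12,X)M(X))\asymp 1$ and $\mathbb E(L^{g,f}(\tfrac12,X)M(X))=O_\varepsilon((\log q)^{-2+\varepsilon})$, and then $\asymp 1$ follows for the sum. Without separating the two pieces (and using $f\ne g$), the proposed argument would either fail to converge or would not produce the required two-sided bound. Also note that the $\gg 1$ lower bound in the paper depends on the product over $p\le c_0$ being controlled ($\gg(\log c_0)^{-4}$) and on choosing $\eta$ small relative to $c_0$ so the $2^{-\ell_j}$ truncation errors stay below that threshold; the proposal does not address why the error from dropping the $\Omega\le\ell_j$ constraint does not destroy the lower bound.

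Finally, step (i) (Mellin inversion and a contour shift) is not needed at this stage; the paper reaches $L(\tfrac12,X)$ via the contour shift inside the proof of Proposition \ref{prop:random} and then proves Proposition \ref{prop:randombound} by a direct Euler-product computation using Lemmas \ref{lem:eulerexp}, \ref{lem:largeprimeest}, \ref{lem:multiplicative}, \ref{lem:gfest} and the already-established formulas \eqref{eq:expectedvalue} and \eqref{eq:toomanyprimes}. Invoking Proposition \ref{prop:random} for the continuation here is not outright circular, but it imports more machinery than is needed.
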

\begin{remark}
\emph{As a consequence of Lemma \ref{lem:moments} with $j=k=0$, Proposition \ref{prop:random} with $u=v=0$ and Proposition \ref{prop:randombound} we have that
\begin{equation} \label{eq:mass}
\varphi_W^{\star}(q)=\sumstar_{\chi \pamod q} W(\chi)= \varphi^{\star}(q) \,  \mathbb E \Big( L(\tfrac12,X) M(X) \Big)+O(q (\log q)^{-10}) \asymp q.
\end{equation}}
\end{remark}

Before proceeding to the proofs
let us define the Rankin-Selberg $L$-function
\[
L(s, f \otimes g) =\zeta_N(2s)\sum_{n \ge 1} \frac{\lambda_f(n)\lambda_g(n)}{n^s}, \qquad \tmop{Re}(s)>1,
\]
where $\zeta_N(s)=\prod_{p} (1-\frac{\psi_0(p)}{p^s})^{-1}$ and $\psi_0$ denotes the principal character modulo $N$.
We write $L(s,f \otimes g)=\prod_p L_p(s,f \otimes g)$. Let $$\mathfrak C=N^2(|s+\kappa|+1)^2(|s|+1)^2$$ denote the analytic conductor of $f \otimes g$.

\subsection{Estimates for primes $p>x$}
The primes $p>x$ do not interact with our mollifier and their contribution is easy to understand. Estimating these terms precisely allows us to analytically continue $\mathbb E(L(s,X)M(X))$, since $M(X)$ is a Dirichlet polynomial with coefficients supported on integers with prime factors $ \le x$. 

\begin{lemma} \label{lem:eulerexp} 
For $\tmop{Re}(s)>-\frac12$ and $(f_1,f_2)=(f,g)$ or $(f_1,f_2)=(g,f)$ we have that
\[
\mathbb E\Big( L_p^{f_1,f_2}(s+\tfrac12,X) \Big)=\left( 1-\frac{\psi_0(p)}{p^{4s+2}}\right) L_p(2s+1, f \otimes g).
\]
\end{lemma}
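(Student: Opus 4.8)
The plan is to compute the expectation over the random variables $X(p)$ directly at the single prime $p$ and then identify the resulting local factor with the stated product. Since $L^{f_1,f_2}(s+\tfrac12,X)$ has an Euler product over primes and the $X(p)$ are independent, we have $\mathbb E(L^{f_1,f_2}_p(s+\tfrac12,X))$ equal to the expectation of the single local factor
\[
L_p^{f_1,f_2}(s+\tfrac12,X)=\prod_{j=1,2}\Bigl(1-\frac{\alpha_{f_1,j}(p)X(p)}{p^{s+1/2}}\Bigr)^{-1}\Bigl(1-\frac{\alpha_{f_2,j}(p)\overline{X(p)}}{p^{s+1/2}}\Bigr)^{-1}.
\]
Expanding each geometric series (valid for $\mathrm{Re}(s)>-\tfrac12$ together with $|\alpha_{f,j}(p)|\le \sqrt p$, so the series converges absolutely), this local factor equals $\sum_{k_1,k_2\ge 0}\lambda_{f_1}(p^{k_1})\lambda_{f_2}(p^{k_2})p^{-(k_1+k_2)(s+1/2)}X(p)^{k_1}\overline{X(p)}^{k_2}$. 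Taking the expectation and using $\mathbb E(X(p)^{k_1}\overline{X(p)}^{k_2})=\mathbf 1_{k_1=k_2}$ kills all terms except $k_1=k_2=k$, leaving
\[
\mathbb E\Bigl(L_p^{f_1,f_2}(s+\tfrac12,X)\Bigr)=\sum_{k\ge 0}\frac{\lambda_{f_1}(p^k)\lambda_{f_2}(p^k)}{p^{k(2s+1)}}.
\]
Since $\{f_1,f_2\}=\{f,g\}$ as an unordered pair, $\lambda_{f_1}(p^k)\lambda_{f_2}(p^k)=\lambda_f(p^k)\lambda_g(p^k)$ in either case, so this is independent of the ordering and equals $\sum_{k\ge 0}\lambda_f(p^k)\lambda_g(p^k)p^{-k(2s+1)}$.

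It remains to identify this Dirichlet series at $p$ with $(1-\psi_0(p)p^{-(4s+2)})L_p(2s+1,f\otimes g)$. By the definition of the Rankin--Selberg $L$-function in the excerpt, $L(s',f\otimes g)=\zeta_N(2s')\sum_{n\ge 1}\lambda_f(n)\lambda_g(n)n^{-s'}$, so multiplying the local factor of $\sum_n \lambda_f(n)\lambda_g(n)n^{-s'}$ at $p$ by $\zeta_N$'s local factor $(1-\psi_0(p)p^{-2s'})^{-1}$ gives $L_p(s',f\otimes g)$; equivalently $\sum_{k\ge 0}\lambda_f(p^k)\lambda_g(p^k)p^{-ks'}=(1-\psi_0(p)p^{-2s'})L_p(s',f\otimes g)$. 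Setting $s'=2s+1$ yields exactly $(1-\psi_0(p)p^{-(4s+2)})L_p(2s+1,f\otimes g)$, completing the proof.

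There is essentially no serious obstacle here; the only points requiring care are the convergence of the geometric-series expansion in the stated half-plane (which follows from the Deligne bound $|\alpha_{f,j}(p)|=1$ for $p\nmid N$, and from $|\alpha_{f,j}(p)|\le\sqrt p$ for $p\mid N$, both of which are classical for holomorphic newforms, so the series converges for $\mathrm{Re}(s+\tfrac12)>\tfrac12$, i.e. $\mathrm{Re}(s)>0$; one gets the full range $\mathrm{Re}(s)>-\tfrac12$ by noting that for $p\nmid N$ the sum is a finite rational expression valid there and for the finitely many $p\mid N$ one can argue directly since $\lambda_f(p),\lambda_g(p)$ are bounded and the Satake parameters satisfy $|\alpha_{f,j}(p)|\le 1$) and the bookkeeping identifying $\zeta_N$'s Euler factor. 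I would present the computation as the short chain of equalities above.
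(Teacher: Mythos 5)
Your proof is correct and follows essentially the same route as the paper: expand the random local factor into the double series indexed by $k_1,k_2$, apply $\mathbb E(X(p)^{k_1}\overline{X(p)}^{k_2})=\mathbf 1_{k_1=k_2}$ to reduce to the diagonal sum $\sum_{k\ge0}\lambda_f(p^k)\lambda_g(p^k)p^{-k(2s+1)}$, and identify this with $(1-\psi_0(p)p^{-(4s+2)})L_p(2s+1,f\otimes g)$ via the definition of $L(s,f\otimes g)$. The extra remarks on Satake bounds and convergence, and the explicit note that the diagonal sum is symmetric in $f,g$ (so the $(g,f)$ case is identical), are fine elaborations of the same argument.
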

\begin{remark} \label{rem:continuation}
\emph{
Using the dominated convergence theorem, we have for $\tmop{Re}(s)>\tfrac12$ and any $z \ge 2$  that
\[
\mathbb E\bigg( \prod_{p > z} L_p^{f,g}(s+\tfrac12,X) \bigg)=  \prod_{p > z} \mathbb E\bigg(L_p^{f,g}(s+\tfrac12,X) \bigg).
\]
In particular, for $\tmop{Re}(s)>\tfrac12$ Lemma \ref{lem:eulerexp} gives that
\begin{equation} \label{eq:largeprimes}
\mathbb E\bigg( \prod_{p > z} L_p^{f,g}(s+\tfrac12,X) \bigg)= \frac{L(2s+1, f \otimes g)}{\zeta_N(4s+2)}  \prod_{p \leq z}\left( 1-\frac{\psi_0(p)}{p^{4s+2}}\right)^{-1} L_p(2s+1, f \otimes g)^{-1}.
\end{equation}
This provides an analytic continuation of the left hand side to $\tmop{Re}(s)>-\frac12$. Clearly, the same applies to $ \mathbb \prod_{p >z }\mathbb E(L_p^{g,f}(s+\tfrac12,X))$. Therefore, by choosing $z=x$ we can analytically continue the function
\begin{equation*}
F(s;u,v):=\mathbb E \bigg(L(s+\tfrac12,X)M(X) \exp\Big(iu P_f(X)+iv P_g(X) \Big)\bigg)
\end{equation*}
to the half-plane $\tmop{Re}(s)>-\tfrac12$.}
\end{remark}

\begin{proof}[Proof of Lemma \ref{lem:eulerexp}]
We will only give the proof in the case $(f_1,f_2)=(f,g)$ since the argument in the other case is similar. For $\tmop{Re}(s)>-\tfrac12$ we have that
\[
\begin{split}
\mathbb E\Big(L_p^{f,g}(s+\tfrac12, X) \Big)=&\sum_{j,k\geq0} \frac{\lambda_f(p^{j})\lambda_g(p^{k})}{p^{(s+\frac12)(j+k)}} \mathbb E \left(X(p)^j \overline{X(p)^k} \right)\\
=&\sum_{j\geq 0} \frac{\lambda_f(p^{j})\lambda_g(p^{j})}{p^{(2s+1)j}}.
\end{split}
\]
The right hand side equals $(1-\frac{\psi_0(p)}{p^{4s+2}})L_p(2s+1,f \otimes g)$.
\end{proof}

In the next two lemmas we accomplish the main goal of this subsection and precisely estimate the contribution of the large primes $p>x$. The first result is a fairly standard lemma, which uses the zero-free region of Rankin-Selberg $L$-functions.
We use the notation $\Lambda_{f \otimes g}(n)$ to denote the $n$th coefficient of the Dirichlet series of $-L'/L(s,f \otimes g)$, that is,
\begin{equation} \label{eq:logdbd}
\Lambda_{f \otimes g}(p^m)= \Big( \alpha_{f,1}(p)^m+\alpha_{f,2}(p)^m\Big)\Big( \alpha_{g,1}(p)^m+\alpha_{g,2}(p)^m\Big) \log p.
\end{equation}

\begin{lemma} \label{lem:logbd}
There exists $0< c_1 <\tfrac15$ such that
for $\tmop{Re}(s) > 1-\frac{c_1}{\log \mathfrak C}$ and $ \log z\ge \log \log \mathfrak{C}$ we have that
\[
\log L(s,f \otimes g)=\sum_{n \le z} \frac{\Lambda_{ f \otimes g}(n)}{n^s \log n}+O\left(z^{c_1/\log\mathfrak{C}-1}\log z+z^{1-2c_1/\log \mathfrak{C}-\tmop{Re}(s)}(\log z)^{2}\right).
\]
\end{lemma}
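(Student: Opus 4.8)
The plan is to prove the lemma by a standard contour-integration (truncated Perron) argument applied to $-L'/L(s,f\otimes g)$, combined with the classical zero-free region for Rankin--Selberg $L$-functions. Write
\[
-\frac{L'}{L}(w,f\otimes g)=\sum_{n\ge 1}\frac{\Lambda_{f\otimes g}(n)}{n^w},\qquad \tmop{Re}(w)>1,
\]
with $\Lambda_{f\otimes g}(p^m)$ as in \eqref{eq:logdbd}. Integrating term by term along a vertical line, the Mellin--Perron identity with a smoothing factor gives, for $\tmop{Re}(s)>1$,
\[
\sum_{n\le z}\frac{\Lambda_{f\otimes g}(n)}{n^s\log n}
=\frac{1}{2\pi i}\int_{(c)}\Big(\log L(s+w,f\otimes g)\Big)\frac{z^{w}}{w}\,dw
\]
for a suitable $c>0$; equivalently one integrates $\log L$ rather than its logarithmic derivative so as to land directly on the summand $\Lambda_{f\otimes g}(n)/(n^s\log n)$. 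I would first record the bound $|\Lambda_{f\otimes g}(p^m)|\ll (\log p)\cdot p^{m\theta}$ coming from the Satake parameters (with $\theta=7/64$, or $\theta=0$ if one only needs the form $f\otimes g$ to be isobaric and bounded on average), which ensures the Dirichlet series converges absolutely for $\tmop{Re}(w)>1$ and that $\log L(s,f\otimes g)$ is well defined and bounded there.

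Next I would invoke the zero-free region: there is an absolute constant $c_1\in(0,1/5)$ such that $L(w,f\otimes g)$ has no zeros in the region $\tmop{Re}(w)\ge 1-c_1/\log\mathfrak C$ (with $\mathfrak C=N^2(|s+\kappa|+1)^2(|s|+1)^2$ the analytic conductor, and any genuine Landau--Siegel exceptional real zero excluded or absorbed into the constant — here since $f\ne g$ the Rankin--Selberg $L$-function factors as $\zeta$ times a cuspidal $L$-function or is itself cuspidal, so standard results apply). In that region one has the standard growth estimate $\log L(w,f\otimes g)\ll \log\big((\log\mathfrak C)\cdot(|\tmop{Im}(w)|+2)\big)\ll (\log\mathfrak C)^{O(1)}$, by the Borel--Carathéodory / Hadamard three-circles technique applied to $\log L$. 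With these two ingredients I would move the contour in the Perron integral from $\tmop{Re}(w)=c$ leftward to the line $\tmop{Re}(w)=1-2c_1/\log\mathfrak C-\tmop{Re}(s)$ (or the appropriate shift so that $s+w$ sits just inside the zero-free region), picking up no poles since $\log L$ is holomorphic there. The horizontal segments and the shifted vertical line are bounded trivially using the $\log L\ll(\log\mathfrak C)^{O(1)}$ bound together with the decay $z^{w}/w$; balancing the truncation height $T$ against $z$ produces the two error terms $z^{c_1/\log\mathfrak C-1}\log z$ and $z^{1-2c_1/\log\mathfrak C-\tmop{Re}(s)}(\log z)^2$ stated in the lemma — the first from truncating the Dirichlet sum (the tail $\sum_{n>z}$ estimated via partial summation), the second from the shifted contour. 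The hypothesis $\log z\ge\log\log\mathfrak C$ is exactly what is needed to absorb the $(\log\mathfrak C)^{O(1)}$ factors from the growth bound into the powers of $z$.

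The main obstacle is obtaining the growth bound $\log L(w,f\otimes g)\ll(\log\mathfrak C)^{O(1)}$ uniformly in the strip $1-c_1/\log\mathfrak C\le\tmop{Re}(w)\le 2$ with polynomial dependence on $|\tmop{Im}(w)|$: this requires combining a convexity bound for $L(w,f\otimes g)$ on the line $\tmop{Re}(w)=1-\delta$ for small fixed $\delta$, the nonvanishing/zero-free region to control $1/L$, and a Borel--Carathéodory argument on $\log L$ whose radii are chosen in terms of $1/\log\mathfrak C$; one must be careful that the constant $c_1$ is chosen small enough (the reason for the constraint $c_1<1/5$) that the Hadamard-three-circles interpolation loses only a bounded power of $\log\mathfrak C$. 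Everything else — the Perron setup, the tail estimate, the contour shift — is routine. I would remark that this lemma is essentially \cite[Lemma 2.?]{} of the literature on Rankin--Selberg $L$-functions adapted to track the conductor $\mathfrak C$, and cite the standard reference (e.g.\ the work on zero-free regions and logarithmic derivative bounds for $\mathrm{GL}(2)\times\mathrm{GL}(2)$ $L$-functions) rather than reproving it in full.
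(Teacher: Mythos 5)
Your overall architecture — truncated Perron applied to $\log L(s+w,f\otimes g)z^w/w$, then a contour shift to $\tmop{Re}(w)=1-2c_1/\log\mathfrak C-\tmop{Re}(s)$ protected by the classical zero-free region — is exactly the paper's proof. But there are two issues, one small and one that genuinely breaks the quantitative claim.

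First, a small slip: you write that shifting the contour picks up no poles ``since $\log L$ is holomorphic there.'' The integrand $\log L(s+w,f\otimes g)\,z^w/w$ has a simple pole at $w=0$ coming from the $1/w$ factor, and that residue, $\log L(s,f\otimes g)$, is the entire main term of the lemma. You never say where the main term comes from, which suggests you were tracking poles of $\log L$ but forgot the kernel's pole. This is probably just an omission in the writeup, but it should be stated.

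Second, and more substantively: you state the growth bound as $\log L(w,f\otimes g)\ll(\log\mathfrak C)^{O(1)}$ and then claim ``the hypothesis $\log z\ge\log\log\mathfrak C$ is exactly what is needed to absorb the $(\log\mathfrak C)^{O(1)}$ factors from the growth bound into the powers of $z$.'' This is false. The hypothesis $\log z\ge\log\log\mathfrak C$ gives $\log\log\mathfrak C\le\log z$, so it lets you absorb a factor of $\log\log\mathfrak C$ into $\log z$; it does \emph{not} let you dominate $(\log\mathfrak C)^{O(1)}$ by any power of $\log z$ unless $z$ is roughly $\exp((\log\mathfrak C)^{O(1)})$, a vastly stronger condition. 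What the paper actually uses (and what one must use to land on $(\log z)^2$ in the second error term) is the sharper estimate
\[
|\log L(s,f\otimes g)|\ll\log\log\mathfrak C\qquad\text{for}\quad\tmop{Re}(s)\ge 1-\tfrac{2c_1}{\log\mathfrak C},
\]
proved by the standard Borel--Carath\'eodory/zero-free-region argument as in \cite[Theorem 11.4]{MV} together with Deligne's bound, which gives $|\Lambda_{f\otimes g}(n)|\le 4\Lambda(n)$ (the Ramanujan conjecture is a theorem here since $f,g$ are holomorphic, so the $\theta=7/64$ exponent you invoke is unnecessary). With this bound the shifted vertical contour contributes $\ll z^{1-2c_1/\log\mathfrak C-\tmop{Re}(s)}\log z\cdot\log\log\mathfrak C$, and \emph{that} is what the hypothesis absorbs into $(\log z)^2$. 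Your ``intermediate'' bound $\log\bigl((\log\mathfrak C)(|\tmop{Im} w|+2)\bigr)$ is actually close to the right order of magnitude, so the argument would have gone through had you kept it; the error lies in the further (and unnecessary) relaxation to $(\log\mathfrak C)^{O(1)}$ and the resulting misattribution of the role played by the hypothesis on $z$.
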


\begin{proof}
By Perron's formula (see \cite[Lemma 3.12]{Titchmarsh}) we have that
\[
\begin{split}
\sum_{n \le z} \frac{\Lambda_{ f \otimes g}(n)}{n^s \log n}
=&\frac{1}{2\pi i}\int_{\sigma_1-iz}^{\sigma_1+iz}\log L(s+w,f \otimes g)z^w\frac{dw}{w}+O\left(z^{c_1/\log\mathfrak{C}-1}\log z\right),
\end{split}
\]
where we define $\sigma_1:=c_1/\log \mathfrak{C}+1/\log z>0$ and using the standard zero free region (see \cite[Proposition 2.11]{BFK}) we choose $c_1$ so that $L(s,f\otimes g) \neq 0$ for $\tmop{Re}(s) \ge 1-3c_1/\log \mathfrak C$. Shift the contour to $\tmop{Re} (w)=1-2c_1/\log\mathfrak{C}-\tmop{Re}(s)<0$. We have a simple pole at $w=0$, which contributes $\log L(s, f \otimes g)$ so that
\[
\sum_{n \le z} \frac{\Lambda_{ f \otimes g}(n)}{n^s \log n}
=\log L(s, f \otimes g)
+O\left(z^{c_1/\log\mathfrak{C}-1}\log z+ z^{1-2c_1/\log \mathfrak{C}-\tmop{Re}(s)}\log z \log \log \mathfrak{C} \right).
\]
In the error term we have used the fact that for $\tmop{Re}(s)\ge 1-2c_1/\log \mathfrak C$ we have
\begin{equation}\label{eq:logbd}
|\log L(s,f \otimes g)|\ll \log \log \mathfrak{C},
\end{equation}
which is proved by standard methods following the argument given in \cite[Theorem 11.4]{MV}, which uses the zero-free region of $L(s,f \otimes g)$ and the Ramanujan bound, which by \eqref{eq:logdbd} gives $|\Lambda_{f \otimes g}(n)| \le 4 \Lambda(n)$.
\end{proof}

\begin{lemma} \label{lem:largeprimeest}
There exists $c_2>0$ such that 
for $-\frac{c_2}{\log \mathfrak C} \le \tmop{Re}(s) \le 2$ and $z  \ge \exp(\log \mathfrak C \log \log \mathfrak C)$ we have for $(f_1,f_2)=(f,g)$ or $(f_1,f_2)=(g,f)$ that
\[
\mathbb E\bigg(\prod_{p > z} L_p^{f_1,f_2}(s+\tfrac12,X) \bigg)=1+O\left(z^{-c_2/\log \mathfrak C}(\log z)^{2}\right).
\]
\end{lemma}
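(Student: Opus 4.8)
The plan is to write $\prod_{p>z} \mathbb{E}(L_p^{f_1,f_2}(s+\tfrac12,X))$ as $\exp$ of a convergent sum of logarithms and estimate the tail. By Lemma \ref{lem:eulerexp} we have, for each prime $p$, the identity $\mathbb{E}(L_p^{f_1,f_2}(s+\tfrac12,X)) = (1-\psi_0(p)p^{-4s-2})L_p(2s+1,f\otimes g)$, and since $z$ is large this quantity is $1+O(p^{-1-2c_2/\log\mathfrak{C}})$ for $\tmop{Re}(s)\ge -c_2/\log\mathfrak{C}$ (taking $c_2$ small enough that $\tmop{Re}(2s+1)\ge 1 - 2c_2/\log\mathfrak{C}$ stays comfortably inside the region of absolute convergence up to the zero-free region). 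Hence for $p>z$ we may take logarithms, and
\[
\log \prod_{p>z} \mathbb{E}\bigl(L_p^{f_1,f_2}(s+\tfrac12,X)\bigr) = \sum_{p>z} \log L_p(2s+1,f\otimes g) + \sum_{p>z} \log\Bigl(1-\frac{\psi_0(p)}{p^{4s+2}}\Bigr).
\]
The second sum is trivially $O(z^{-1-4\tmop{Re}(s)}) = O(z^{-c_2/\log\mathfrak{C}})$ since only finitely many $p\mid N$ contribute and the rest vanish.

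For the first sum, the natural tool is Lemma \ref{lem:logbd}: write $\sum_{p>z}\log L_p(2s+1,f\otimes g)$ as a tail of the Dirichlet series $\sum_n \Lambda_{f\otimes g}(n)/(n^{2s+1}\log n)$ expansion of $\log L(s,f\otimes g)$. Concretely, $\log L(2s+1,f\otimes g) = \sum_{n\ge 1}\Lambda_{f\otimes g}(n)/(n^{2s+1}\log n)$ in the region $\tmop{Re}(2s+1) > 1 - c_1/\log\mathfrak{C}$, which holds for $\tmop{Re}(s) \ge -c_2/\log\mathfrak{C}$ with $c_2 < c_1/2$; and the prime-power terms with $m\ge 2$ together with the full sum over $p\le z$ recover $\sum_{p\le z}\log L_p(2s+1,f\otimes g)$ up to a negligible error. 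So $\sum_{p>z}\log L_p(2s+1,f\otimes g) = \log L(2s+1,f\otimes g) - \sum_{n\le z}\Lambda_{f\otimes g}(n)/(n^{2s+1}\log n) + O(z^{-1})$, and applying Lemma \ref{lem:logbd} with this $z$ (which satisfies $\log z \ge \log\log\mathfrak{C}$ by hypothesis) bounds the difference by $O(z^{c_1/\log\mathfrak{C}-1}\log z + z^{1 - 2c_1/\log\mathfrak{C} - \tmop{Re}(2s+1)}(\log z)^2)$. Since $\tmop{Re}(2s+1) = 1 + 2\tmop{Re}(s) \ge 1 - 2c_2/\log\mathfrak{C}$, the exponent in the second term is $-2c_1/\log\mathfrak{C} + 2c_2/\log\mathfrak{C}$; choosing $c_2 < c_1/2$, say $c_2 = c_1/4$, makes this $\le -c_1/(2\log\mathfrak{C})$, so the whole sum over $p>z$ is $O(z^{-c_2/\log\mathfrak{C}}(\log z)^2)$ after relabelling the constant. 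Finally, exponentiating and using $e^{O(\varepsilon)} = 1 + O(\varepsilon)$ for the (small) error gives the claimed $1 + O(z^{-c_2/\log\mathfrak{C}}(\log z)^2)$.

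The main obstacle is purely bookkeeping: one must choose the constants $c_2$ relative to $c_1$ so that $2s+1$ stays inside the zero-free region of $L(s,f\otimes g)$ guaranteed by Lemma \ref{lem:logbd} across the whole strip $-c_2/\log\mathfrak{C}\le\tmop{Re}(s)\le 2$, and so that the two error exponents in Lemma \ref{lem:logbd} both beat $z^{-c_2/\log\mathfrak{C}}$; this is why the hypothesis $z\ge\exp(\log\mathfrak{C}\log\log\mathfrak{C})$ is imposed, since it makes $z^{c_1/\log\mathfrak{C}}$ a genuine (super-polynomial in $\log\mathfrak{C}$) gain that absorbs the $(\log z)^2$ factors. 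No analytic difficulty beyond the already-cited zero-free region and Ramanujan bound $|\Lambda_{f\otimes g}(n)|\le 4\Lambda(n)$ is involved.
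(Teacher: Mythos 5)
Your argument is correct and is essentially the paper's own proof: reduce via Lemma \ref{lem:eulerexp} (i.e.\ \eqref{eq:largeprimes}) to comparing $\sum_{p\le z}\log L_p(2s+1,f\otimes g)$ with $\log L(2s+1,f\otimes g)$, apply Lemma \ref{lem:logbd} at the point $2s+1$ with $c_2$ a suitable small multiple of $c_1$ so that $\tmop{Re}(2s+1)$ stays in range, and absorb the prime-power tail and the factor $\prod_{p>z}\bigl(1-\psi_0(p)p^{-4s-2}\bigr)$ into the error, exactly as the paper does. One harmless slip: since $\psi_0(p)=0$ precisely for the finitely many $p\mid N$, it is those terms that vanish and all the remaining primes $p>z$ that contribute to $\sum_{p>z}\log\bigl(1-\psi_0(p)p^{-4s-2}\bigr)$; the stated bound $O\bigl(z^{-1-4\tmop{Re}(s)}\bigr)$ nonetheless holds because $\sum_{p>z}p^{-2-4\tmop{Re}(s)}\ll z^{-1-4\tmop{Re}(s)}$.
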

\begin{proof}
Using Lemma \ref{lem:logbd}, setting $c_2=2c_1/3$, and applying Deligne's bound in \eqref{eq:logdbd} we have that
\[
\begin{split}
\sum_{p \le z} \log L_p(2s+1,f \otimes g)=&\sum_{n \le z} \frac{\Lambda_{f \otimes g}(n)}{n^{2s+1}\log n} +O\bigg( \sum_{p \le z} \sum_{j > \log z/\log p}\frac{1}{p^{(2\tmop{Re}(s)+1)j}} \bigg) \\
=&\log L(2s+1,f \otimes g)+O\left(z^{-1/2-2\tmop{Re}(s)}\right)+O\left(z^{-c_2/\log \mathfrak C}(\log z)^{2}\right),
\end{split}
\]
where we have estimated the first error term above by separately considering the contribution of the primes $\sqrt{z} < p \le z$  (so $j \ge 2$) and $p \le \sqrt{z}$.
Hence, using \eqref{eq:largeprimes}, \eqref{eq:logbd} and the elementary estimate $\prod_{p > z} ( 1-\frac{\psi_0(p)}{p^{2+4s}} )=1+O(z^{-1-4\tmop{Re}(s)})$ we obtain the lemma.
\end{proof}

\subsection{Estimates for primes $y < p \le x$}

We now analyze the case  $y< p \le x$. For primes in this range we need to  understand the interaction between the random $L$-series $L(s,X)$ and the mollifier $M(X)$. In this section we bound the contribution of these primes, which is needed when shifting contours in the proof of Proposition \ref{prop:random}. 

\begin{lemma} \label{lem:mediumprimeest}
Let $c_1$ be as in Lemma \ref{lem:logbd}. For
$ -\frac{c_1}{\log \mathfrak C} \le \tmop{Re}(s) \le 2$ and $|\tmop{Im}(s)| \le e^{\sqrt{\log q}}$ we have that
\begin{equation} \label{eq:lemform}
\mathbb E\bigg( \prod_{y< p \le x} L_p^{f,g}(s+\tfrac12,X) \prod_{j=1}^J M_j(X) \bigg)  \ll (\log \log q)^{O(1)}.
\end{equation}
\end{lemma}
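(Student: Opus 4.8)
The plan is to factor the expectation into a product over primes $y < p \le x$ using the independence of the random variables $\{X(p)\}_p$, and then estimate each local factor. Specifically, for each such $p$ the term $M_j(X)$ depends only on primes in $I_j$, so the left-hand side of \eqref{eq:lemform} factors as $\prod_{y < p \le x} \mathbb E\big( L_p^{f,g}(s+\tfrac12,X) \, \mathfrak m_p(X) \big)$, where $\mathfrak m_p(X)$ denotes the local factor of $\prod_{j=1}^J M_j(X)$ at $p$ (i.e. the sum over prime powers $p^a$ with $a \le \ell_{j}$ for the unique $j$ with $p \in I_j$). The key point is that the mollifier is designed so that $\mathfrak m_p(X)$ agrees with $1 - a_{f,J}(p)a_{g,J}(p)\cdots$ up to the truncation at $\Omega \le \ell_j$, so that $L_p^{f,g}(s+\tfrac12,X)\mathfrak m_p(X)$ has expectation $1 + O(\text{small})$, much as in Lemma \ref{lem:largeprimeest}. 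The truncation error coming from dropping the condition $\Omega(n) \le \ell_j$ contributes negligibly since $\ell_j = 2\lfloor \theta_j^{-3/4}\rfloor$ is large while $p \ge y = q^{\theta_0}$ is a large power.

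The key steps, in order, are as follows. First, I would expand $L_p^{f,g}(s+\tfrac12,X)$ as the Dirichlet series $\sum_{k_1,k_2 \ge 0} \lambda_f(p^{k_1})\lambda_g(p^{k_2}) p^{-(s+1/2)(k_1+k_2)} X(p)^{k_1}\overline{X(p)^{k_2}}$ and write $\mathfrak m_p(X)$ similarly as a polynomial in $X(p), \overline{X(p)}$ with coefficients $a_{f,J}(p^{a_1}) a_{g,J}(p^{a_2}) \lambda(p^{a_1+a_2})\nu(p^{a_1})\nu(p^{a_2}) p^{-(a_1+a_2)/2}$ (truncated). Second, I would take the expectation, using $\mathbb E(X(p)^r \overline{X(p)^t}) = \mathbf 1_{r=t}$, which collapses the double sum to a single sum over the matching of exponents. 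Third, I would isolate the main term: the $k_1=k_2=a_1=a_2=0$ term gives $1$, and a short computation (as in Lemma \ref{lem:eulerexp} combined with the definition of $a_{f,J}, a_{g,J}$) shows the next-order terms organize into $1 + O(p^{-1-2\mathrm{Re}(s)} + \text{smaller})$; summing $\log(1 + O(\cdots))$ over $y < p \le x$ and using $\mathrm{Re}(s) \ge -c_1/\log\mathfrak C$ together with $\sum_{y < p \le x} 1/p \ll \log(\theta_J/\theta_0) \ll \log\log\log q$ gives the bound $(\log\log q)^{O(1)}$. Fourth, I would bound the contributions of the truncated tails (terms with $\Omega > \ell_j$) by crude absolute estimates, using that $\lambda_f, \lambda_g, w_j$ are bounded and $\nu(p^a) = 1/a!$ decays rapidly, so these tails are $O(p^{-\ell_j/2 + O(1)})$, which is more than enough to be summable.

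The main obstacle I anticipate is controlling the local factor precisely enough: one must show that for $p \in I_j$ the product $L_p^{f,g}(s+\tfrac12,X)\,\mathfrak m_p(X)$, after taking expectation, is not merely bounded but equals $1 + O(p^{-1-2\mathrm{Re}(s)}(\log p)^{O(1)})$ or similar, since otherwise the product over $\asymp x/\log x$ primes could blow up. This requires carefully matching the $\lambda_f(p)\lambda_g(p)/p^{2s+1}$ term from the $L$-factor against the corresponding linear term in the mollifier (which involves $a_{f,J}(p) = \lambda_f(p) w_J(p)$ and $a_{g,J}(p)$), and verifying that the discrepancy between $w_J(p)$ and $1$, and the discrepancy caused by $s$ being slightly off $\tfrac12$, are both small on the relevant range; since $w_J(p) = p^{-1/(\theta_J \log q)}(1 - \tfrac{\log p}{\theta_J \log q})$ and $p \le x = q^{\theta_J}$, one has $\log p/(\theta_J \log q) \le 1$, so $w_J(p)$ is genuinely bounded but can be as small as $0$, and one must check the cancellation still works. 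A secondary subtlety is that $\mathrm{Re}(s)$ can be negative (down to $-c_1/\log\mathfrak C$), so the naive bound $p^{-1-2\mathrm{Re}(s)} \le p^{-1+2c_1/\log\mathfrak C}$ must be summed against $\sum_{y < p \le x}$, and one uses $x = q^{\theta_J} = q^{O(\eta/(\log\log q)^5)}$ so that $x^{2c_1/\log\mathfrak C} = \exp(O(\log q \cdot 1/(\log q \log\log q)^5 \cdot \dots))$ stays under control — this is where the precise choices of the parameters $\theta_j, \ell_j$ enter.
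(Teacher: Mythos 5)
Your proposed factorization into a product over individual primes $y<p\le x$ is not valid: the mollifier pieces $M_{f,j}(X)$ do \emph{not} factor as Euler products, because the constraint $\Omega(n)\le \ell_j$ bounds the \emph{total} number of prime factors of $n$ rather than the exponent at each prime, and so it genuinely couples the primes in $I_j$. Your ``local factor'' (the sum over $p^a$ with $a\le \ell_j$) implements a different truncation from the one in the mollifier and is not what appears in $\prod_j M_j(X)$. Nor is it harmless to drop the $\Omega$-constraint and complete to the Euler product: for $j$ near $J$ one has $\theta_j\asymp\eta$, so $\ell_j = 2\lfloor\theta_j^{-3/4}\rfloor=O(1)$, and the truncation error is only $O(2^{-\ell_j})=O(1)$, which is not a power-saving. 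This is precisely why the paper represents $\mathbf 1_{\Omega(n)\le\ell_j}$ by a Cauchy contour integral and bounds the resulting $\Sigma(z,w)$; that device handles the truncation exactly and replaces your invalid factorization.

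The second, more serious, gap is your treatment of the $1/p$ term. You try to control $\sum_{y<p\le x} p^{-1-2\operatorname{Re}(s)}$ by triangle inequality and assert $x=q^{\theta_J}=q^{O(\eta/(\log\log q)^5)}$. That exponent is $\theta_0$, not $\theta_J$: by construction $\eta\le\theta_J\le e\eta$, so $x$ is a fixed small \emph{constant} power of $q$. When $\operatorname{Im}(s)$ is bounded, $\log\mathfrak C=O(1)$ and $\operatorname{Re}(s)$ can be a negative constant $-c_1/\log\mathfrak C$, so $\sum_{y<p\le x}p^{-1+2c_1/\log\mathfrak C}$ grows like a positive power of $q$; the pointwise bound fails outright. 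The paper instead exploits the cancellation in $\sum_{p\in I_j}\lambda_f(p)\lambda_g(p)p^{-2s-1}$ and its $w_J$-weighted variants via Lemma \ref{lem:logbd} (the zero-free region/prime number theorem for $L(s,f\otimes g)$, using that $f\ne g$ so this $L$-function has no pole), obtaining $O(1)$ for each interval $I_j$. Only then does the harmless factor $J\asymp\log\log\log q$ produce the stated $(\log\log q)^{O(1)}$. Without invoking this Rankin--Selberg cancellation your argument cannot close.
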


    \begin{proof}
    Let $\beta_f(n)=\lambda_f(n)n^{-s}$ and $\gamma_f(n)=\lambda(n)a_{f;J}(n) \nu(n)$. Clearly, $\beta_f,\gamma_f$ are multiplicative functions. Also, a direct calculation gives for each $0 \le j \le J$ that
    \begin{align} \label{eq:expectedvalue}
  & \mathbb E \bigg( M_j(X) \prod_{p \in I_j}  L_p^{f,g}(s+\tfrac12, X) \bigg)\nonumber\\
    &\qquad=\mathbb E\bigg( \sum_{\substack{p|n_1n_2 \Rightarrow p \in I_j \\ \Omega(n_1), \Omega(n_2) \le \ell_j}} \frac{\gamma_f(n_1)\gamma_g(n_2)X(n_1) \overline{X(n_2)}}{\sqrt{n_1n_2}} \sum_{p|m_1m_2 \Rightarrow p\in I_j} \frac{\beta_f(m_1)\beta_g(m_2) X(m_1) \overline {X(m_2)}}{ \sqrt{m_1m_2}}\bigg)\nonumber \\
    &\qquad=\sum_{\substack{m_1,m_2, n_1,n_2 \\ p|m_1n_1 \Rightarrow p \in I_j \\ m_1n_1=m_2n_2 \\ \Omega(n_1), \Omega(n_2) \le \ell_j}} \frac{ \beta_f(m_1) \beta_g(m_2)\gamma_f(n_1)\gamma_g(n_2)}{m_1n_1}.
    \end{align}
   For any $r>0$ and $n, \ell \in \mathbb N$, 
    \[
    \mathbf{1}_{\Omega(n)=\ell}= \frac{1}{2\pi i} \int_{|z|=r} z^{\Omega(n)-\ell} \frac{dz}{z},
    \]
    so that for $r \neq 1$
    \[
    \mathbf{1}_{\Omega(n) \le \ell_j} =\frac{1}{2\pi i} \int_{|z|=r} z^{\Omega(n)} \frac{1-z^{-\ell_j-1}}{1-z^{-1}} \frac{dz}{z}.
    \]
    Therefore, for $1<r \le 2$ the right hand side of \eqref{eq:expectedvalue} equals
    \begin{equation}\label{eq:expanded}
    \frac{1}{(2\pi i)^2} \int_{|z|=r}\int_{|w|=r} \frac{1-z^{-\ell_j-1}}{1-z^{-1}} \frac{1-w^{-\ell_j-1}}{1-w^{-1}} \Sigma(z,w) \frac{dzdw}{zw},
    \end{equation}
    where
    \[
    \Sigma(z,w)=\sum_{\substack{m_1,m_2,n_1,n_2 \\ p|m_1n_1 \Rightarrow p \in I_j \\ m_1n_1=m_2n_2 }} \frac{z^{\Omega(n_1) }w^{\Omega(n_2)}\beta_f(m_1)\beta_g(m_2)\gamma_f(n_1)\gamma_g(n_2)}{m_1n_1},
    \]
   which can be seen to be absolutely convergent from the analysis below.

    Write $\gamma_{z,f}(n)=z^{\Omega(n)}\gamma_f(n)$ and $m=m_1n_1$ to see that 
    \begin{equation*} 
    \begin{split}
     \Sigma(z,w)=& \sum_{p|m \Rightarrow p \in I_j} \frac{(\beta_f\ast \gamma_{z,f})(m) (\beta_g\ast \gamma_{w,g})(m)}{m} \\
     =& \prod_{p \in I_j} \sum_{k\geq 0} \frac{(\beta_f\ast \gamma_{z,f})(p^k) (\beta_g\ast \gamma_{w,g})(p^k)}{p^k} \\
     =& \prod_{p \in I_j} \left(1+\frac{(p^{-s}-w_J(p)z)(p^{-s}-w_J(p)w )\lambda_f(p) \lambda_g(p)}{p} +O\left( \frac{1+p^{-4\tmop{Re}(s)}}{p^2}\right) \right).
     \end{split}
     \end{equation*}
    By Lemma \ref{lem:logbd} we have for $-c_1/\log \mathfrak C \le \tmop{Re}(s) \le 2$ and $|\tmop{Im}(s)|\le e^{\sqrt{\log q}}$ that
    $
     \sum_{p \in I_j} \frac{\lambda_f(p)\lambda_g(p)}{p^{2s+1}}=O(1),
     $
     and arguing similarly, using partial summation, we have that $ \sum_{p \in I_j} \frac{ w_J(p)\lambda_f(p)\lambda_g(p)}{p^{s+1}} =O(1)$ and $\sum_{p \in I_j} \frac{ w_J(p)^2\lambda_f(p)\lambda_g(p)}{p} =O(1)$. Consequently, we have that $|\Sigma(z,w)| \ll 1$ uniformly for $|z|,|w| \le 2$. Applying this bound in \eqref{eq:expanded} we have that the left hand side of \eqref{eq:expectedvalue} is $O(1)$, so the result follows upon applying this bound for each $1\leq j \leq J$ and noting that $J \asymp \log \log \log q$.
     \end{proof}
     
     Bounding the contribution of primes  $y < p \le x$ to $\mathbb E (L^{g,f}(s,X)M(X))$ requires a more subtle argument. Before proceeding to the proof let us recall from the definition of our mollifier that $\eta>0$ is sufficiently small and $\eta \le \theta_J \le e \eta$.
     
     \begin{lemma} \label{lem:mediumprimeestgf}
     Let $\sigma_0=\max\{ -\tmop{Re}(s),\frac{1}{\log q}\}$ and $c_1$ be as in Lemma \ref{lem:logbd}.
     For $\frac{-c_1}{ \log \mathfrak C} \le \tmop{Re}(s) \le 2$ and $|\tmop{Im}(s)|\le e^{\sqrt{\log q}}$ we have that
   \begin{equation} \label{eq:lemform2}
\mathbb E\bigg( \prod_{y< p \le x} L_p^{g,f}(s+\tfrac12,X) \prod_{j=1}^J M_j(X) \bigg)  \ll (\log \log q)^{O(1)} q^{21 \eta^{1/4} \sigma_0}.
\end{equation}
     \end{lemma}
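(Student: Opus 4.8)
The plan is to run the same machinery as in the proof of Lemma \ref{lem:mediumprimeest}, but with a genuinely more careful treatment of the resulting Euler product, whose size will force us to choose the radius of the contour integral that encodes the truncations $\Omega(n_i)\le\ell_j$. First, since the $\{X(p)\}_p$ are independent and $I_1,\dots,I_J$ are disjoint with $\bigcup_{j=1}^J I_j=(y,x]$, I would write
\[
\mathbb E\bigg( \prod_{y<p\le x} L_p^{g,f}(s+\tfrac12,X)\prod_{j=1}^J M_j(X)\bigg)=\prod_{j=1}^J \mathbb E\bigg(\prod_{p\in I_j} L_p^{g,f}(s+\tfrac12,X)\,M_j(X)\bigg),
\]
and bound each factor. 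With $\beta_f'(n)=\lambda_f(n)n^{-s}$, $\gamma_f(n)=\lambda(n)a_{f;J}(n)\nu(n)$ (and $f\leftrightarrow g$), and using $\mathbf 1_{\Omega(n)\le\ell_j}=\frac1{2\pi i}\int_{|z|=r}z^{\Omega(n)}\frac{1-z^{-\ell_j-1}}{1-z^{-1}}\frac{dz}{z}$ exactly as before, the $j$-th factor becomes a double contour integral over $|z|=|w|=r_j$ (with $r_j\le 2$ to be chosen) whose integrand is $\frac{1-z^{-\ell_j-1}}{1-z^{-1}}\,\frac{1-w^{-\ell_j-1}}{1-w^{-1}}\,\Sigma_j(z,w)$, where
\[
\Sigma_j(z,w)=\prod_{p\in I_j}\Big(1+\tfrac1p\big(\lambda_g(p)p^{-s}-z\lambda_f(p)w_J(p)\big)\big(\lambda_f(p)p^{-s}-w\lambda_g(p)w_J(p)\big)+O\big(\tfrac{1+p^{-4\tmop{Re}(s)}}{p^2}\big)\Big).
\]

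The crucial new point — and the reason the bound here is weaker than in Lemma \ref{lem:mediumprimeest} — is the analysis of $\Sigma_j$. Expanding the numerator of the local factor and invoking the Hecke relations $\lambda_f(p)^2=1+\lambda_f(p^2)$, $\lambda_g(p)^2=1+\lambda_g(p^2)$, I would show
\[
\log\Sigma_j(z,w)=-(z+w)\sum_{p\in I_j}\frac{w_J(p)}{p^{s+1}}+E_j(z,w),
\]
where $E_j$ collects the terms proportional to $\lambda_f(p)\lambda_g(p)$, $\lambda_f(p^2)=\lambda_{\tmop{Sym}^2 f}(p)$, $\lambda_g(p^2)$, and the rapidly convergent remainders (the $\lambda_f(p^2)\lambda_g(p^2)p^{-4s-2}$ and $O(1/p^2)$ terms converge absolutely since $\tmop{Re}(2s+2)>1$). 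Each sum in $E_j$ is handled by Lemma \ref{lem:logbd} applied to the corresponding $L$-function ($L(s,f\otimes g)$, $L(s,\tmop{Sym}^2 f)$, $L(s,\tmop{Sym}^2 g)$) together with partial summation to insert the smooth weights $w_J$; the point is that the main term $\log L(\cdots)$ is independent of the endpoint and therefore cancels between $z=q^{\theta_{j-1}}$ and $z=q^{\theta_j}$, leaving $|E_j(z,w)|\ll 1+|z|+|w|+|zw|$ uniformly in $j$ (here I use $f\ne g$, so $L(s,f\otimes g)$ is entire, and that $\tmop{Sym}^2 f$, $\tmop{Sym}^2 g$ are cuspidal). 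The remaining diagonal $\zeta$-type sum admits no cancellation and I would bound it absolutely:
\[
\bigg|\sum_{p\in I_j}\frac{w_J(p)}{p^{s+1}}\bigg|\le \Xi_j:=\sum_{p\in I_j}\frac{w_J(p)}{p^{1+\tmop{Re}(s)}}\ll q^{\theta_j\sigma_0},
\]
using $0\le w_J(p)\le 1$ for $p\le x$ and $\sum_{p\in I_j}1/p\ll 1$. Since $\Sigma_j$ is a finite, nonvanishing product on $|z|,|w|\le 2$ (for $q$ large), it follows that on $|z|=|w|=r_j$ one has $|\Sigma_j(z,w)|=\exp(\tmop{Re}\log\Sigma_j)\le\exp(2r_j\Xi_j+O(r_j^2))$.

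It then remains to choose $r_j$ and multiply over $j$. Taking $r_j\asymp\min\{2,\ell_j/\Xi_j\}$ (nudged to stay bounded away from $1$) balances the prefactor $\big|\frac{1-z^{-\ell_j-1}}{1-z^{-1}}\big|\ll r_j^{-\ell_j}$ against the bound $\exp(2r_j\Xi_j)$ for $\Sigma_j$: when $\Xi_j\gg\ell_j$ this yields a per-factor bound $\ll\ell_j^{O(1)}(e\Xi_j/\ell_j)^{2\ell_j}\le q^{O(\ell_j\theta_j\sigma_0)}$ (using $\log\Xi_j\ll\theta_j\sigma_0\log q$), while when $\Xi_j\ll\ell_j$ it yields $\ll\ell_j^{O(1)}\exp(O(\Xi_j))$. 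Because $\ell_j\theta_j=2\lfloor\theta_j^{-3/4}\rfloor\theta_j\le 2\theta_j^{1/4}$ and, by a geometric-series computation using $\theta_j=\eta e^j(\log\log q)^{-5}$ and $e^J\asymp(\log\log q)^5$, one has $\sum_{j=0}^J\theta_j^{1/4}\ll\eta^{1/4}$ (and $J\ll\log\log\log q$), multiplying the per-factor bounds over $1\le j\le J$ gives $\ll(\log\log q)^{O(1)}q^{21\eta^{1/4}\sigma_0}$, after checking that only $O(1)$ values of $j$ lie in the transitional range $\Xi_j\asymp\ell_j$ where the per-factor bound is largest.

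The hardest part is precisely this last optimization, together with its bookkeeping. The ``honest'' Euler product $\Sigma_j(1,1)$ — equivalently the untruncated mollified expectation — can be as large as $\exp(q^{c\sigma_0})$, a \emph{double} exponential in $\sigma_0\log q$, and this growth comes from a term carrying no factor of $z$ or $w$, so it cannot be tamed by shrinking the contour. The truncations $\Omega(n_i)\le\ell_j$ built into the mollifier are therefore indispensable, and the whole content of the lemma is to extract from them a clean power-of-$q$ bound; making the contour radius track $\ell_j/\Xi_j$ and recognising the geometric sum $\sum_j\theta_j^{1/4}\ll\eta^{1/4}$ (uniformly across the regimes of $j$ relative to the $\sigma_0$-dependent crossover) is what produces the exponent $21\eta^{1/4}\sigma_0$.
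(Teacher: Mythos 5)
Your proposal is essentially correct and follows the same overall skeleton as the paper (factor $\bigcup_{j\ge1}I_j$ by independence, reduce each factor to a double contour integral against $\widetilde\Sigma_j(z,w)$, isolate the dangerous diagonal sum linear in $z,w$, and let the truncation $\Omega\le\ell_j$ rescue the bound), but two of your intermediate steps genuinely diverge from the paper's.

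First, you expand $\lambda_f(p)^2=1+\lambda_{\mathrm{Sym}^2 f}(p)$ and push the $\mathrm{Sym}^2$ pieces into the bounded remainder $E_j$, so that your diagonal sum is the ``bare'' $\Xi_j=\sum_{p\in I_j}w_J(p)p^{-1-\mathrm{Re}(s)}$. This buys a cleaner main term but requires a PNT-type estimate (a version of Lemma~\ref{lem:logbd}) for $L(s,\mathrm{Sym}^2 f)$ and $L(s,\mathrm{Sym}^2 g)$ — an extra input the paper avoids: the paper keeps $\lambda_f(p)^2,\lambda_g(p)^2$ intact inside the exponential factor and simply applies Deligne's bound $\lambda_f(p)^2\le4$ pointwise, so no zero-free region beyond the one for $f\otimes g$ is needed. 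Your route works (cuspidality of $\mathrm{Sym}^2 f$ for holomorphic $f$ is known, and the constant $c_1$ only depends on a $q$-independent conductor), but the paper's is more economical. Second, you optimize the contour radius $r_j\asymp\min\{2,\ell_j/\Xi_j\}$, whereas the paper fixes $|z|=|w|=2$ and instead expands the integral into a \emph{finite} Taylor sum $\sum_{k_1,k_2\le\ell_j}\frac1{k_1!k_2!}\partial^{k_1}_z\partial^{k_2}_w\widetilde\Sigma(0,0)$ via Cauchy's formula, bounding the derivatives of the bounded part $H$ by Cauchy estimates on $|z|=|w|=2$. These are dual formulations of the same truncation mechanism and lead to the same per-factor exponent $\asymp\theta_j^{1/4}\sigma_0\log q$.

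Two points you should tighten. The constant $21$ in the stated bound is asserted rather than derived: in the regime $\Xi_j\gg\ell_j$ you write the per-factor bound as $q^{O(\ell_j\theta_j\sigma_0)}$ with an unspecified $O$, yet conclude with $q^{21\eta^{1/4}\sigma_0}$. To genuinely get this constant you must track the $(2e\Xi_j/\ell_j)^{2\ell_j}$ bound through $\ell_j\theta_j\le2\theta_j^{1/4}$ and the geometric sum $\sum_{j}\theta_j^{1/4}\le\frac{e^{1/2}}{e^{1/4}-1}\eta^{1/4}$, and also verify that the accumulated $(2e/\ell_j)^{O(\ell_j)}$-type factors are absorbed (this is where the paper's term $e^{O((\sigma_0\log q)^{3/4})}$ arises, and is absorbable into $q^{\eta^{1/4}\sigma_0}$ only because the nontrivial case $\Xi_j\gg\ell_j$ forces $\sigma_0\log q\gg\eta^{-1}$). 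And your final sentence, about ``only $O(1)$ values of $j$ in the transitional range $\Xi_j\asymp\ell_j$,'' is not really the point: the relevant check is that whenever $\Xi_j\gg1$ one automatically has $\theta_j\sigma_0\log q\gg1$, hence $\theta_j^{1/4}\sigma_0\log q\gg\theta_j^{-3/4}\asymp\ell_j\gg\Xi_j$, so $e^{O(\Xi_j)}$ is already dominated by $q^{O(\theta_j^{1/4}\sigma_0)}$ for \emph{every} such $j$, not merely for $O(1)$ of them.
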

     \begin{proof}
     We argue as in the previous proof and as before write $\beta_f(n)=\lambda_f(n)n^{-s}$, $\gamma_f(n)=\lambda(n)a_{f;J}(n) \nu(n)$ and $\gamma_{z,f}(n)=z^{\Omega(n)}\gamma_f(n)$. Repeating the argument leading up through \eqref{eq:expanded} we get for each $j=1,\ldots, J$ that 
     \begin{equation}\label{eq:expanded2}
     \mathbb E\bigg(M_j(X) \prod_{p \in I_j} L_p^{g,f}(s+\tfrac12,X) \bigg)=    \frac{1}{(2\pi i)^2} \int_{|z|=2}\int_{|w|=2} \frac{1-z^{-\ell_j-1}}{1-z^{-1}} \frac{1-w^{-\ell_j-1}}{1-w^{-1}} \widetilde \Sigma(z,w) \frac{dzdw}{zw},
     \end{equation}
    where
    \[
    \begin{split}
   & \widetilde \Sigma(z,w)=\sum_{\substack{m_1,m_2,n_1,n_2 \\ p|m_1n_1 \Rightarrow p \in I_j \\ m_1n_1=m_2n_2 }} \frac{z^{\Omega(n_1) }w^{\Omega(n_2)}\beta_g(m_1)\beta_f(m_2)\gamma_f(n_1)\gamma_g(n_2)}{m_1n_1}\\
    &= \prod_{p \in I_j} \sum_{k\geq 0} \frac{(\beta_g\ast \gamma_{z,f})(p^k) (\beta_f\ast \gamma_{w,g})(p^k)}{p^k}\\
    &=\prod_{p \in I_j} \left(1+\frac{(p^{-s}\lambda_g(p)-w_J(p)z \lambda_f(p))(p^{-s}\lambda_f(p)-w_J(p)w \lambda_g(p) )}{p} +O\left( \frac{1+p^{-4\tmop{Re}(s)}}{p^2}\right) \right).
    \end{split}
    \]
    Again, arguing as in the proof of Lemma \ref{lem:mediumprimeest}, using Lemma \ref{lem:logbd} we can write
    \[
    \widetilde \Sigma(z,w)=\exp\bigg(-z\sum_{p \in I_j} \frac{\lambda_f(p)^2 w_J(p)}{p^{1+s}}-w\sum_{p \in I_j} \frac{\lambda_g(p)^2 w_J(p)}{p^{1+s}} \bigg)H(z,w)
    \]
    where $H(z,w)$ is an analytic function for $|z|,|w| \le 2$ with $|H(z,w)| \ll 1$ uniformly for $s$ satisfying the hypotheses of the lemma.

     We now split the proof into two cases. First consider the case, $\theta_j < \frac{1}{\sigma_0 \log q}$. Then we have by Deligne's bound that     \begin{equation} \label{eq:smallj}
     \sum_{p \in I_j} \frac{\lambda_f(p)^2 w_J(p)}{p^{1+s}} \ll e^{\sigma_0 \theta_j \log q} \sum_{p \in I_j} \frac{1}{p} \ll 1,
     \end{equation}
     so that $\widetilde \Sigma(z,w) \ll1$ in this case and the left hand side of \eqref{eq:expanded2} is $\ll 1$.
    
    It remains to consider the case that $\theta_j \ge \frac{1}{\sigma_0 \log q}$. Using that $\widetilde \Sigma(z,w)$ is analytic in each variable, expanding $(1-z^{-\ell_j-1})/(1-z),(1-w^{-\ell_j-1})/(1-w)$ as geometric series and using Cauchy's integral formula we get that the left hand side of \eqref{eq:expanded2} equals
    \[
    \sum_{0 \le k_1,k_2 \le \ell_j} \frac{1}{k_1!k_2!} \frac{\partial^{k_1+k_2}}{\partial z^{k_1} \partial w^{k_2}} \widetilde \Sigma(z,w) \bigg|_{(z,w)=(0,0)}.
    \]
   Using Cauchy's integral formula once again we see that $\frac{\partial^{k_1+k_2}}{\partial z^{k_1} \partial w^{k_2}} H(z,w) |_{(z,w)=(0,0)}\ll k_1! k_2! 2^{-k_1-k_2}$, hence we can bound the above expression by 
    \[
    \ll 2^{2\ell_j} \bigg( \sum_{p \in I_j} \frac{\lambda_g(p)^2w_J(p) }{p^{1-\sigma_0}} \bigg)^{\ell_j} \bigg( \sum_{p \in I_j} \frac{\lambda_f(p)^2w_J(p)}{p^{1-\sigma_0}} \bigg)^{\ell_j},
    \]
    where the term $2^{2\ell_j}$ comes from applying the product rule.
    Using Deligne's bound and that $\ell_j \le 2\theta_j^{-3/4}$ we conclude that the left hand side of \eqref{eq:expanded2} is 
    \begin{equation} \label{eq:largej}
    \ll \bigg(8 \sum_{p \in I_j} \frac{1}{p^{1-\sigma_0}}\bigg)^{2\ell_j} \le 3^{4\ell_j} e^{4 \theta_j^{1/4} \sigma_0 \log q}.
    \end{equation}
    Write $J_1$ for the smallest $j=1,\ldots,J$ such that $\theta_j <\frac{1}{\sigma_0 \log q} $. Using \eqref{eq:smallj}, \eqref{eq:largej} and that in this case $\ell_j \le 2 (\sigma_0 \log q)^{3/4}$ we have that the left hand side of \eqref{eq:lemform2} is
    \[
    (\log \log q)^{O(1)} \prod_{J_1 \le j \le J}  3^{4\ell_j} e^{4 \theta_j^{1/4} \sigma_0 \log q} \ll (\log \log q)^{O(1)} e^{O((\sigma_0 \log q)^{3/4})}\,  e^{20 \eta^{1/4} \sigma_0 \log q}.
    \]
     \end{proof}

\subsection{The contribution of the small primes}
It remains to understand the contribution of the primes with $p \le y$. This involves understanding the interaction between $L^{f,g}(s,X)$, $M_0(X)$ and $e^{iuP_f(X)+ivP_g(X)}$. A key point is that since $M_0(X)$ consists of relatively small primes we can express it in terms of an Euler product with negligible loss since $\ell_0$ is large. This allows us to simplify our later analysis by reducing the problem to understanding the contribution from each prime $p \in I_0$ individually. Let 
     \[
    \widetilde M_0(X)=\sum_{\substack{p|n \Rightarrow p \in I_0}} \frac{\lambda(n)}{\sqrt{n}} (X a_{f,J} \nu \ast \overline X a_{g,J}\nu)(n)= \prod_{p \in I_0} \widetilde M_p(X),
    \]
    where
    \[
    \widetilde M_p(X)=\widetilde M_{p,f}(X) \widetilde M_{p,g}(\overline X), \qquad \widetilde M_{p,f}(X)=\sum_{k\geq0} \frac{(-1)^ka_{f,J}(p)^k}{k!p^{k/2}} X(p)^k.
    \]

    \begin{lemma} \label{lem:multiplicative} 
    For  $\tmop{Re}(s) \ge -\frac{( \log \log q)^2}{\log q}$, uniformly for $u,v \in \mathbb R$ we have for $(f_1,f_2)=(f,g)$ or $(f_1,f_2)=(g,f)$ that
    \[
    \begin{split}
    &\mathbb E\bigg( M_0(X) \exp\Big(iuP_f(X)+ivP_g(X)\Big)\prod_{p\in I_0}L_p^{f_1,f_2}(s+\tfrac12,X)  \bigg)\\
    &\qquad\qquad= \mathbb E\bigg(\widetilde M_0(X) \exp\Big(iuP_f(X)+ivP_g(X)\Big)\prod_{p\in I_0}L_p^{f_1,
    f_2}(s+\tfrac12,X)  \bigg)+O\left( (\log q)^{-10}\right).
    \end{split}
    \]

    \end{lemma}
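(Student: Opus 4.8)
The plan is to compare $M_0(X)=M_{f,0}(X)M_{g,0}(\overline X)$ with the unrestricted Euler product $\widetilde M_0(X)=\widetilde M_{f,0}(X)\widetilde M_{g,0}(\overline X)$, where $\widetilde M_{f,0}(X)=\prod_{p\in I_0}\widetilde M_{p,f}(X)$ and $\widetilde M_{g,0}(\overline X)=\prod_{p\in I_0}\widetilde M_{p,g}(\overline X)$, by isolating the tails coming from integers with $\Omega(n)>\ell_0$. I would treat the case $(f_1,f_2)=(f,g)$, the other being identical. Writing
\[
R_f(X)=\sum_{\substack{p\mid n\Rightarrow p\in I_0\\ \Omega(n)>\ell_0}}\frac{a_{f,J}(n)\lambda(n)\nu(n)X(n)}{\sqrt n},\qquad R_g(\overline X)=\sum_{\substack{p\mid n\Rightarrow p\in I_0\\ \Omega(n)>\ell_0}}\frac{a_{g,J}(n)\lambda(n)\nu(n)\overline{X(n)}}{\sqrt n},
\]
we have $M_{f,0}(X)=\widetilde M_{f,0}(X)-R_f(X)$ and $M_{g,0}(\overline X)=\widetilde M_{g,0}(\overline X)-R_g(\overline X)$, so that $M_0(X)-\widetilde M_0(X)=-R_f(X)\widetilde M_{g,0}(\overline X)-\widetilde M_{f,0}(X)R_g(\overline X)+R_f(X)R_g(\overline X)$. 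Multiplying by $\Phi(X):=\exp(iuP_f(X)+ivP_g(X))\prod_{p\in I_0}L_p^{f,g}(s+\tfrac12,X)$, which has $|\exp(iuP_f+ivP_g)|=1$, and taking expectations reduces the lemma to bounding three expectations, each of which I would show is $O((\log q)^{-10})$.

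The first key input is a tail estimate. By orthogonality of $\{X(p)\}$, $\mathbb E(|R_f(X)|^2)=\sum_{\Omega(n)>\ell_0,\ p\mid n\Rightarrow p\in I_0}\frac{|a_{f,J}(n)|^2\nu(n)^2}{n}$; inserting a factor $z^{\Omega(n)-\ell_0}$ with a large absolute constant $z>1$ and using Deligne's bound, which gives $|a_{f,J}(p)|=|\lambda_f(p)w_J(p)|\le 2$ since $|w_J(p)|\le 1$ for $p\le y\le q^{\theta_J}$, yields
\[
\mathbb E(|R_f(X)|^2)\le z^{-\ell_0}\prod_{p\in I_0}\Big(\sum_{k\ge0}\frac{4^kz^k}{(k!)^2p^k}\Big)\le z^{-\ell_0}\exp\big(O(z\log\log q)\big).
\]
Since $\ell_0=2\lfloor\theta_0^{-3/4}\rfloor\asymp(\log\log q)^{15/4}$ while $\log\log y=\log\log q+O(\log\log\log q)$, taking $z$ a suitable constant gives $\mathbb E(|R_f(X)|^2)\ll\exp(-c(\log\log q)^{15/4})$ for some $c>0$, and likewise for $R_g$. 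The second input is a moment bound for the complementary factors: because $\{X(p)\}_{p\in I_0}$ are independent and $\widetilde M_{g,0}$ and $\prod_{p\in I_0}L_p^{f,g}$ both factor over $p\in I_0$, for $\tmop{Re}(s)\ge-(\log\log q)^2/\log q$ — so that $p^{-\tmop{Re}(s)}=1+o(1)$ for all $p\le y$, and hence $L_p^{f,g}(s+\tfrac12,X)=O(1)$ once $c_0$ is large — a direct local computation gives $\mathbb E(|\widetilde M_{p,g}(\overline X)|^2|L_p^{f,g}(s+\tfrac12,X)|^2)=1+O(1/p)$, whence
\[
\mathbb E\Big(|\widetilde M_{g,0}(\overline X)|^2\prod_{p\in I_0}|L_p^{f,g}(s+\tfrac12,X)|^2\Big)=\prod_{p\in I_0}\big(1+O(1/p)\big)\ll\log\log q,
\]
and the same with $f$ in place of $g$.

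To finish I would apply Cauchy--Schwarz to each error term, e.g.
\[
\Big|\mathbb E\big(R_f(X)\widetilde M_{g,0}(\overline X)\Phi(X)\big)\Big|^2\le\mathbb E(|R_f(X)|^2)\cdot\mathbb E\Big(|\widetilde M_{g,0}(\overline X)|^2\prod_{p\in I_0}|L_p^{f,g}(s+\tfrac12,X)|^2\Big)\ll\exp\big(-c(\log\log q)^{15/4}\big)\log\log q,
\]
which is $\ll(\log q)^{-100}$; the $\widetilde M_{f,0}R_g$ term is treated identically, and $\mathbb E(R_f(X)R_g(\overline X)\Phi(X))$ the same way after one further application of Cauchy--Schwarz (it is smaller still). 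Summing the three bounds gives the lemma. The only genuinely delicate points — and what I expect to be the main obstacle — are verifying that the local Euler factors $L_p^{f,g}(s+\tfrac12,X)$ remain $O(1)$ throughout the stated $s$-range so that their second moments are $1+O(1/p)$ (this uses $p\ge c_0$ large together with $p\le y$ and the lower bound on $\tmop{Re}(s)$; the finitely many $p\mid N$ are handled trivially), and checking that $\ell_0$, of size $(\log\log q)^{15/4}$, is comfortably large enough to defeat not merely $(\log q)^{10}$ but any fixed power of $\log q$.
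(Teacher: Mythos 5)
Your proposal is correct and follows essentially the same route as the paper: Cauchy--Schwarz against the tail supported on $\Omega>\ell_0$, whose second moment is $\ll z^{-\ell_0}(\log q)^{O(1)}$ with $\ell_0\asymp(\log\log q)^{15/4}$, together with a polynomial-in-$\log q$ bound for the second moment of the remaining local factors (the paper keeps the whole difference $\widetilde M_0-M_0$ as a single remainder $R(X)$ with the condition $\max\{\Omega(m),\Omega(n)\}>\ell_0$ rather than your three-term expansion, but this is cosmetic). One small slip: $\prod_{p\in I_0}(1+O(1/p))$ is $(\log q)^{O(1)}$, not $\ll\log\log q$ --- you have conflated the product with the sum $\sum_{p\le y}1/p$ --- but since your tail bound is $\exp(-c(\log\log q)^{15/4})$ this does not affect the conclusion.
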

    \begin{proof}
    We will only give the proof in the case $(f_1,f_2)=(f,g)$ since the arguments in both cases are similar.
    Recall that $\beta_f(n)=\lambda_f(n)n^{-s}$, $\gamma_f(n)=\lambda(n)a_{f,J}(n) \nu(n)$ and define
    \[
    R(X):=\widetilde M_0(X)-M_0(X)=\sum_{\substack{p|mn \Rightarrow p \in I_0 \\ \max\{ \Omega(m),\Omega(n)\} > \ell_0}}  \frac{\gamma_f(m)\gamma_g(n)}{\sqrt{mn}} X(m)\overline {X(n)}.
    \]
    Also, write $L_0^{f,g}(s,X)=\prod_{p \in I_0} L_p^{f,g}(s,X)$. Since $L_0^{f,g}(s,X)$ is a finite product this function is analytic for $\tmop{Re}(s)>0$.
    Applying Cauchy-Schwarz's inequality we have
    \[
\mathbb E\Big( \left|L_0^{f,g}(s+\tfrac12,X) R(X) \right|\Big)^2
   \le    \mathbb E \Big(|L_0^{f,
   g}(s+\tfrac12,X) |^2 \Big) \mathbb E\Big(|R(X)|^2\Big).
    \]
    
    Let us first analyze $\mathbb E(|L_0^{f,g}(s+\tfrac12,X)|^2)$. Write $(\beta_fX)(n):=\beta_f(n)X(n)$. We have that
    \begin{equation}\label{eq:momentrandombd}
    \begin{split}
    \mathbb E \Big(| L_0^{f,g}(s+\tfrac12,X)|^2\Big)=& \prod_{p \in I_0} \mathbb E \left( \bigg|\sum_{a \ge 0} \frac{\lambda_f(p^a)}{p^{a(s+1/2)}} X(p^a) \bigg|^2 \bigg|\sum_{a \ge 0} \frac{\lambda_g(p^a)}{p^{a(s+1/2)}} \overline X(p^a) \bigg|^2   \right) \\
    =&\prod_{p \in I_0} \mathbb E \left(\sum_{a\geq0}  \frac{(\beta_f X \ast \overline {\beta_g}X  \ast \overline {\beta_f X} \ast \beta_g\overline{ X )(p^a)} }{p^{a/2}}\right).
    \end{split}
    \end{equation}
    Let $\varrho(X;p^a)=(\beta_f X \ast \overline {\beta_g}X  \ast \overline {\beta_f X} \ast \beta_g\overline{ X} )(p^a)$ and observe that
    \[
    \mathbb E( \varrho(X;p))=0 \qquad \text{ and } \qquad  \mathbb E (\varrho(X;p^2))=O(p^{-2\tmop{Re}(s)}).
    \]
    We conclude that the left hand side of \eqref{eq:momentrandombd} equals
    \begin{equation}\label{eq:momentrandombd2}
    = \prod_{p \in I_0} \left(1+O\left(\frac{1}{p^{1+2\tmop{Re}(s)}} \right) \right).
    \end{equation}
    Using that $\tmop{Re}(s) \ge -\frac{(\log \log q)^2}{\log q}$ we have $\frac{1}{p^{1+2\tmop{Re}(s)}} \ll \frac{1}{p}$ for $p \in I_0$, so that
    the right hand side is $\ll (\log q)^{O(1)}$.

    We next estimate $\mathbb E( |R(X)|^2)$, which equals
    \[
    \sum_{\substack{p|m_1m_2n_1n_2 \Rightarrow p \in I_0 \\ \max\{ \Omega(m_1),\Omega(n_1)\} > \ell_0 \\ \max\{\Omega(m_2),\Omega(n_2)\} > \ell_0}} \frac{\gamma_f(m_1)\gamma_f(m_2) \gamma_g(n_1)\gamma_g(n_2)}{\sqrt{m_1m_2n_1n_2}} \mathbb E(X(m_1n_1)\overline {X(m_2n_2)})
    \]
  Therefore, using that $\max\{\Omega(m_1),\Omega(n_1)\} > \ell_0$ implies $2^{\Omega(m_1 n_1)-\ell_0} \ge 1$, writing $r=m_1n_1=m_2n_2$, we get upon applying Deligne's bound  $|a_{f,J}(n)|  \le 2^{\Omega(n)}$ that there exists $C>0$ such that 

    \begin{equation} \label{eq:Rerrorbd}
    \mathbb E\Big(|R(X)|^2\Big) \le \frac{1}{2^{\ell_0}} \sum_{p|r \Rightarrow p \in I_0} \frac{2^{\Omega(r)} (2^{\Omega} \ast 2^{\Omega})(r)^2}{r} \le \frac{1}{2^{\ell_0}} \sum_{p|r \Rightarrow p \in I_0 } \frac{C^{\Omega(r)}}{r} \ll \frac{(\log q)^{O(1)}}{2^{\ell_0}},
    \end{equation}
        where we have used that $c_0$ is sufficiently large so that the sum converges.
    Combining the two estimates above completes the proof.
     \end{proof}
     
     Now that we have replaced $M_0(X)$ with the Euler product $\widetilde M_0(X)$ our analysis reduces to estimates for each prime $p \in I_0$.
    Before continuing further let us introduce some notation. Let
    \[
    \mathcal G_p^{f,g}(s)=\mathbb E \left(L_p^{f,g}(s+\tfrac12, X) \widetilde M_p(X) \right)
    \]
    and
    \[
    \mathcal F_p^{f,g}(s)=\mathbb E \left(L_p^{f,g}(s+\tfrac12, X) \widetilde M_p(X) \tmop{Re}(X(p))\right).
    \]
    
    \begin{lemma} \label{lem:gfest} Let $a\in\mathbb{Z}$. Suppose that $p^{-\tmop{Re}(s)} \le 2$. Then the following statements hold:
    \begin{enumerate}
        \item $\displaystyle\mathcal G_p^{f,g}(s)=1+\frac{(p^{-s}-w_J(p))^2\lambda_f(p)\lambda_g(p)}{p} +O\left( \frac{1}{p^2}\right)$;
        \item  $\displaystyle\mathcal G_p^{g,f}(s)=1+\frac{(p^{-s}\lambda_f(p)-w_J(p)\lambda_g(p))(p^{-s}\lambda_g(p)-w_J(p)\lambda_f(p))}{p} +O\left( \frac{1}{p^2}\right)$.
        \end{enumerate}
        Additionally, for $(f_1,f_2)=(f,g)$ or $(f_1,f_2)=(g,f)$ we have each of the following:
        \begin{enumerate}
        \item[(3)] $\displaystyle\mathcal F_p^{f_1,f_2}(s)=\frac{(p^{-s}-w_J(p))(\lambda_f(p)+\lambda_g(p))}{2\sqrt{p}}+O\left(\frac{1}{p}\right)$;
       \item[(4)] $\displaystyle\mathbb E\left( L_p^{f_1,f_2}(s+\tfrac12, X) \widetilde M_p(X) X(p)^a \right) \ll \frac{ 5^{|a|} }{p^{|a|/2}}$.
    \end{enumerate}
    \end{lemma}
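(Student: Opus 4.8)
The plan is to expand every factor as a power series in the single random variable $X(p)$ and its conjugate, and then use $\mathbb{E}\big(X(p)^j\overline{X(p)}^k\big)=\mathbf 1_{j=k}$ to isolate a bounded number of main terms, controlling the remainder by a parity argument on the total degree. First I record
\[
\begin{gathered}
L_p^{f_1,f_2}(s+\tfrac12,X)=\sum_{k_1,k_2\ge 0}\frac{\lambda_{f_1}(p^{k_1})\lambda_{f_2}(p^{k_2})}{p^{(s+1/2)(k_1+k_2)}}X(p)^{k_1}\overline{X(p)}^{k_2},\\
\widetilde M_p(X)=\sum_{m_1,m_2\ge 0}\frac{(-a_{f,J}(p))^{m_1}(-a_{g,J}(p))^{m_2}}{m_1!\,m_2!\,p^{(m_1+m_2)/2}}X(p)^{m_1}\overline{X(p)}^{m_2},
\end{gathered}
\]
both converging absolutely, uniformly for $s$ in the stated range: here I use $|\lambda_f(p^k)|,|\lambda_g(p^k)|\le k+1$, Deligne's bound $|\lambda_f(p)|\le 2$, the elementary fact $0\le w_J(p)\le 1$ for $p\in I_0$ (valid since $\log p\le \theta_0\log q\le \theta_J\log q$), and the hypothesis $p^{-\tmop{Re}(s)}\le 2$. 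Multiplying the two series, the monomial indexed by $(k_1,k_2,m_1,m_2)$ is $X(p)^{k_1+m_1}\overline{X(p)}^{k_2+m_2}$ with a coefficient $c(k_1,k_2,m_1,m_2)$ satisfying $|c(k_1,k_2,m_1,m_2)|\ll (k_1+1)(k_2+1)\,2^{N}/(m_1!\,m_2!)\cdot p^{-N/2}$, where $N:=k_1+k_2+m_1+m_2$.

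For parts (1) and (2), taking $\mathbb{E}$ keeps only the tuples with $k_1+m_1=k_2+m_2$, for which $N$ is even. The term $N=0$ gives $1$; the four tuples with $N=2$ (those with each of $(k_1,m_1)$ and $(k_2,m_2)$ lying in $\{(1,0),(0,1)\}$) give the displayed main term after a short simplification: for $(f_1,f_2)=(f,g)$ they combine to $\frac{\lambda_f(p)\lambda_g(p)}{p}(p^{-s}-w_J(p))^2$, and for $(f_1,f_2)=(g,f)$ to $\frac1p\big(p^{-s}\lambda_f(p)-w_J(p)\lambda_g(p)\big)\big(p^{-s}\lambda_g(p)-w_J(p)\lambda_f(p)\big)$. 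All remaining tuples have $N\ge 4$, so by the coefficient bound they contribute $\ll\sum_{N\ge 4}N^{O(1)}4^{N}p^{-N/2}\ll p^{-2}$ (using $p>c_0$ with $c_0$ large), which is the claimed error.

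For part (3), since $\tmop{Re}(X(p))=\tfrac12(X(p)+\overline{X(p)})$, taking $\mathbb{E}$ against $\tfrac12(X(p)+\overline{X(p)})$ keeps only the tuples with $|(k_1+m_1)-(k_2+m_2)|=1$, so $N$ is odd. The $N=1$ contributions come from the monomials $\frac{\lambda_{f_1}(p)}{p^{s+1/2}}X(p)$, $\frac{\lambda_{f_2}(p)}{p^{s+1/2}}\overline{X(p)}$ of $L_p^{f_1,f_2}$ and $-\frac{\lambda_f(p)w_J(p)}{\sqrt p}X(p)$, $-\frac{\lambda_g(p)w_J(p)}{\sqrt p}\overline{X(p)}$ of $\widetilde M_p$; each pairs with a conjugate factor of equal degree and the four together yield $\frac{(p^{-s}-w_J(p))(\lambda_f(p)+\lambda_g(p))}{2\sqrt p}$, manifestly symmetric in $\{f_1,f_2\}=\{f,g\}$, which is why both cases give the same value. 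The tuples with $N\ge 3$ contribute $O(1/p)$ by the coefficient bound.

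For part (4) I may assume $a\ge 0$ (the case $a<0$ is identical after interchanging $X(p)$ and $\overline{X(p)}$). Taking $\mathbb{E}$ against $X(p)^a$ keeps only the tuples with $k_2+m_2-(k_1+m_1)=a$, so that $N=a+2(k_1+m_1)\ge a$; summing $|c(k_1,k_2,m_1,m_2)|$ over these tuples — first over $(k_2,m_2)$ with $k_2+m_2$ fixed, then over the free pair $(k_1,m_1)$, which gives a convergent series since $p>c_0$ — produces a bound $\ll 5^{|a|}p^{-|a|/2}$, the factor $5$ absorbing a polynomial-in-$a$ loss. I expect the only genuine point of care to be this last estimate, where one must arrange the multi-index sum so that the residual summation converges and the dependence on $a$ emerges as a clean geometric factor; parts (1)–(3) are then just the explicit extraction of the $O(1)$ many lowest-degree monomials together with the even/odd degree bookkeeping for the errors.
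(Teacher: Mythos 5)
Your proposal is correct, and the argument is essentially the same as the paper's: both expand $L_p^{f_1,f_2}(s+\tfrac12,X)\widetilde M_p(X)$ as a power series in $X(p)$ and $\overline{X(p)}$, use $\mathbb E\big(X(p)^j\overline{X(p)}^k\big)=\mathbf 1_{j=k}$ to isolate the finitely many lowest-degree monomials, and bound the tail geometrically via Deligne's bound and $p^{-\tmop{Re}(s)}\le 2$ with $p>c_0$ large. The only cosmetic difference is that the paper first packages the $X(p)$-series and $\overline{X(p)}$-series into single power series with coefficients $n_p^{h_1,h_2}(s,k)$ before taking expectations, whereas you keep the four summation indices explicit and argue by parity of the total degree $N$.
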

    \begin{proof}
    Let $h_1,h_2$ be level $N$ newforms (not necessarily distinct).
    For $\tmop{Re}(s)>0$ and $k$
an integer let
\[
n_p^{h_1,h_2}(s,k)=\sum_{\substack{k_1+k_2=k \\ k_1, k_2  \ge 0}} \frac{\lambda_{h_1}(p^{k_1}) a_{h_2,J}(p)^{k_2} (-1)^{k_2}}{p^{k_1 s+k_2/2}k_2!}.
\]
Observe that $n_p^{h_1,h_2}(s,k)=0$ if $k<0$ and
\[
\bigg(\sum_{k\ge 0} \frac{\lambda_{h_1}(p^{k} ) X(p)^{k}}{p^{k s}} \bigg) \,  \widetilde M_{p,h_2}(X) =\sum_{k \ge 0} n_p^{h_1,h_2}(s,k) X(p)^k.
\]
For any integer $a$
we have that
    \begin{align} \label{eq:expectation1}
    \mathbb E\left(L_p^{f,g}(s+\tfrac12, X) \widetilde M_p(X) X(p)^a\right)=&\sum_{k_1,k_2 \ge 0} n_p^{f,f}(s+\tfrac12,k_1) n_p^{g,g}(s+\tfrac12,k_2) \mathbb E\left( X(p)^{k_1+a} \overline {X(p)^{k_2}} \right)  \nonumber \\
    =& \sum_{k\geq 0} n_p^{f,f}(s+\tfrac12,k)n_p^{g,g}(s+\tfrac12,k+a).
    \end{align}
Additionally,
    \begin{align} \label{eq:expectationgf}
    \mathbb E\left(L_p^{g,f}(s+\tfrac12, X) \widetilde M_p(X) X(p)^a\right)=&\sum_{k_1,k_2 \ge 0} n_p^{g,f}(s+\tfrac12,k_1) n_p^{f,g}(s+\tfrac12,k_2) \mathbb E\left( X(p)^{k_1+a} \overline {X(p)^{k_2}} \right)  \nonumber \\
    =& \sum_{k\geq 0} n_p^{g,f}(s+\tfrac12,k)n_p^{f,g}(s+\tfrac12,k+a).
    \end{align}

    Clearly, $n_p^{h_1,h_2}(s+\tfrac12,0)=1$ and 
    \[
    n_p^{h_1,h_2}(s+\tfrac12,1)=\frac{(p^{-s}\lambda_{h_1}(p)-w_J(p) \lambda_{h_2}(p))}{\sqrt{p}}.
    \]
    Additionally, for each $k \ge 1$ using Deligne's bound we get that
    \[
    n_p^{h_1,h_2}(s+\tfrac12,k) \ll \frac{ (k+1)^2 2^k }{p^{k/2}} \max\{1,p^{-\tmop{Re}(s)k}\}.
    \]
    Using the above two estimates in \eqref{eq:expectation1} and \eqref{eq:expectationgf} all the claims follow (here we have also used that for a complex number $w$ with $w\overline w=1$ that $\overline w^a=w^{-a}$).
    \end{proof}
    
   \begin{lemma} \label{lem:smallprimeest}
   Let $u,v \in \mathbb R$ with $|u|,|v|\ll 1$.
  For $\tmop{Re}(s) \ge -\frac{( \log \log q)^2}{\log q}$ and $p \in I_0$ we have for $(f_1,f_2)=(f,g)$ or $(f_1,f_2)=(g,f)$ that
   \begin{align} \label{eq:taylorform}
   &\mathbb E \left( L_p^{f_1,f_2}(s+\tfrac12, X) \widetilde M_p(X) \exp\left(i u  \frac{\lambda_f(p)w_J(p)}{\sqrt{p}} \tmop{Re}(X(p))+iv \frac{\lambda_g(p)w_J(p)}{\sqrt{p}} \tmop{Re}(X(p))  \right)\right) \\
   &=\mathcal G_p^{f_1,f_2}(s)\left(1- \frac{(u\lambda_f(p)+v \lambda_g(p))^2w_J(p)^2}{4p} \right) +i \mathcal F_p^{f_1,f_2}(s) \frac{(u \lambda_f(p)+v \lambda_g(p))w_J(p)}{\sqrt{p}} +O\left(\frac{|u|^3+|v|^3}{p^{3/2}} \right).\nonumber
   \end{align}
   \end{lemma}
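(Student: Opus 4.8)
The plan is to Taylor expand the exponential in powers of its argument --- which is $O(p^{-1/2})$ for $p\in I_0$ --- and to evaluate the resulting low-degree polynomials in $X(p),\overline{X(p)}$ using the definitions of $\mathcal G_p^{f_1,f_2}$ and $\mathcal F_p^{f_1,f_2}$ together with part~(4) of Lemma~\ref{lem:gfest}; we treat $(f_1,f_2)=(f,g)$, the other case being identical. To begin I would record the size bounds relevant for $p\in I_0=(c_0,y]$. By Deligne's bound $|\lambda_f(p)|,|\lambda_g(p)|\le 2$, and since $p\le y=q^{\theta_0}$ with $\theta_0=\eta(\log\log q)^{-5}$ while $\theta_J\ge\eta$, one has $\tfrac{\log p}{\theta_J\log q}\le\theta_0/\theta_J\le(\log\log q)^{-5}$, hence $0\le w_J(p)\ll 1$. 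Writing $T=u\lambda_f(p)+v\lambda_g(p)$, so that $T\in\mathbb R$ and $T^2\ll u^2+v^2\ll1$, the argument of the exponential is $\theta:=\tfrac{T\,w_J(p)}{\sqrt p}\tmop{Re}(X(p))$, which is real and satisfies $|\theta|\ll(|u|+|v|)p^{-1/2}$. Furthermore, since $\tmop{Re}(s)\ge-(\log\log q)^2/\log q$ forces $p^{-\tmop{Re}(s)}\ll1$ for $p\in I_0$, each of the four Euler factors of $L_p^{f,g}(s+\tfrac12,X)$ has modulus $1+O(p^{-1/2})$, and $\widetilde M_p(X)=\exp\!\big(-(a_{f,J}(p)X(p)+a_{g,J}(p)\overline{X(p)})/\sqrt p\big)$ has modulus $1+O(p^{-1/2})$; thus $L_p^{f,g}(s+\tfrac12,X)\widetilde M_p(X)=O(1)$ deterministically.

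Next I would write $e^{i\theta}=1+i\theta-\tfrac12\theta^2+O(|\theta|^3)$, multiply through by $L_p^{f,g}(s+\tfrac12,X)\widetilde M_p(X)$, and take expectations term by term. The constant term yields $\mathcal G_p^{f,g}(s)$ by definition. The linear term yields $i\tfrac{T\,w_J(p)}{\sqrt p}\,\mathbb E\big(L_p^{f,g}(s+\tfrac12,X)\widetilde M_p(X)\tmop{Re}(X(p))\big)=i\,\mathcal F_p^{f,g}(s)\tfrac{T\,w_J(p)}{\sqrt p}$, again by definition. For the quadratic term one uses $\tmop{Re}(X(p))^2=\tfrac14\big(X(p)^2+2+\overline{X(p)}^2\big)$ and Lemma~\ref{lem:gfest}(4) with $a=\pm2$, which give $\mathbb E\big(L_p^{f,g}(s+\tfrac12,X)\widetilde M_p(X)X(p)^{\pm2}\big)\ll 1/p$; hence $\mathbb E\big(L_p^{f,g}(s+\tfrac12,X)\widetilde M_p(X)\tmop{Re}(X(p))^2\big)=\tfrac12\mathcal G_p^{f,g}(s)+O(1/p)$ and the quadratic contribution is $-\tfrac{T^2 w_J(p)^2}{4p}\mathcal G_p^{f,g}(s)+O(1/p^2)$. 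Finally, since $|\tmop{Re}(X(p))|\le1$ and the factor $L_p^{f,g}(s+\tfrac12,X)\widetilde M_p(X)$ is deterministically $O(1)$, the remainder is $O(|\theta|^3)=O\big((|u|^3+|v|^3)p^{-3/2}\big)$. Collecting the four pieces, combining the constant and leading quadratic terms into $\mathcal G_p^{f,g}(s)\big(1-\tfrac{T^2 w_J(p)^2}{4p}\big)$, and using $T^2\ll1$ to absorb the $O(1/p^2)$, gives the claimed identity.

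There is no genuine obstacle here: the statement reduces to a finite Taylor expansion and the precomputed local expectations of Lemma~\ref{lem:gfest}. The two points requiring a little care are (i) verifying that the exponent is genuinely $O(p^{-1/2})$ uniformly over $p\in I_0$ and over the permitted range of $s$, which is exactly where one uses that $y$ is a tiny power of $q$ and the lower bound on $\tmop{Re}(s)$, and (ii) tracking the error terms so that the cubic remainder --- together with the subleading $O(1/p^2)$ coming from the quadratic term --- is dominated by the displayed error.
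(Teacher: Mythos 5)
Your proposal is correct and follows essentially the same route as the paper: Taylor expand $e^{i\theta}$ where $\theta=\tfrac{(u\lambda_f(p)+v\lambda_g(p))w_J(p)}{\sqrt p}\tmop{Re}(X(p))$, read off the $k=0,1,2$ terms from the definitions of $\mathcal G_p$, $\mathcal F_p$ and Lemma~\ref{lem:gfest}, and bound the tail $k\ge 3$ by $O(|\theta|^3)$. The only (harmless) cosmetic difference is that you control the tail via the deterministic bound $L_p^{f_1,f_2}(s+\tfrac12,X)\widetilde M_p(X)=O(1)$ rather than invoking Lemma~\ref{lem:gfest}(4) again; both give the same estimate.
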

   
   \begin{remark} \label{rem:combine} \emph{Combining Lemmas \ref{lem:gfest} and \ref{lem:smallprimeest} we get for $\tmop{Re}(s)\ge -\frac{(\log \log q)^2}{\log q}$ and $p \in I_0$ for $|u|,|v|\ll 1$ that the left hand side of \eqref{eq:taylorform} is
$
=1+O(\frac{1}{p}),
$
  for $(f_1,f_2)=(f,g)$ or $(f_1,f_2)=(g,f)$. Hence, combining this with Lemma \ref{lem:multiplicative} we have for $(f_1,f_2)=(f,g)$ or $(f_1,f_2)=(g,f)$ that 
  \begin{equation} \notag
\mathbb E\bigg( M_0(X) \exp\Big(iuP_f(X)+ivP_g(X)\Big)\prod_{p\in I_0}L_p^{f_1,f_2}(s+\tfrac12,X)  \bigg) \ll (\log q)^{O(1)},  
\end{equation}
  for $\tmop{Re}(s) \ge -\frac{(\log \log q)^2}{\log q}$. 
  Hence, for $ -\frac{(\log \log q)^2}{\log q} \le \tmop{Re}(s)  \le 2$ using this estimate with Lemmas \ref{lem:largeprimeest}, \ref{lem:mediumprimeest}, \ref{lem:mediumprimeestgf} gives
  \begin{equation} \label{eq:fupperbd1}
  \mathbb E\bigg( L(s+\tfrac12,X) M(X) \exp\Big(iuP_f(X)+ivP_g(X)\Big)  \bigg) \ll (\log q)^{O(1)}+ (\log q)^{O(1)}e^{21 \eta^{1/4} (\log \log q)^2}.
  \end{equation}
  }
    \end{remark}
   
   \begin{proof}[Proof of Lemma \ref{lem:smallprimeest}]
    Taylor expanding gives that the exponential on the left hand side of \eqref{eq:taylorform} equals
   \[
    \begin{split}
=\sum_{k\geq0} \frac{i^k (u\lambda_f(p)+v\lambda_g(p))^kw_J(p)^k}{k!p^{k/2}} \tmop{Re}(X(p))^k.
     \end{split}
   \]
   The terms with $k=0,1,2$ account for the main terms upon noting
    that Lemma \ref{lem:gfest} implies
    \[
    \mathbb E \left( L_p^{f_1,f_2}(s+\tfrac12, X) \widetilde M_p(X) \tmop{Re}(X(p))^2\right)=\frac{\mathcal G_p^{f_1,f_2}(s)}{2} +O\left(\frac{1}{p} \right),
    \]
    for $p \in I_0$.
    To complete the proof, use Lemma \ref{lem:gfest} to bound the contribution to the left hand side of \eqref{eq:taylorform} from the terms with $k \ge 3$ in the Taylor expansion above, since $c_0$ is sufficiently large.

   \end{proof}
   
 \subsection{Proof of Proposition \ref{prop:random}}
 
 We are now ready to complete the proof of Proposition \ref{prop:random}.
 
 \begin{proof}[Proof of Proposition \ref{prop:random}]
 For $u,v \in \mathbb R$ and $\tmop{Re}(s)>\tfrac12 $ define
  \begin{equation} \notag
   F(s;u,v)=\mathbb E \bigg(L(s+\tfrac12,X)M(X) \exp\Big(iuP_f(X)+ivP_g(X) \Big)\bigg).
   \end{equation}
   Recall that in Remark \ref{rem:continuation} we saw that $F(s;u,v)$
   admits an analytic continuation to $\tmop{Re}(s)>-\tfrac12$. Moreover, by \eqref{eq:fupperbd1} we have that 
     \begin{equation} \label{eq:fbd1}
   |F(s;u,v)| \ll e^{22 \eta^{1/4} (\log \log q)^2}
   \end{equation}
   uniformly for $|u|,|v|\ll 1$, $ - \frac{(\log \log q)^2}{ \log q} \le \tmop{Re}(s) \le 2$, $|\tmop{Im}(s)| \le B \log q$ for any fixed $B>0$. Additionally, in \eqref{eq:momentrandombd2} we showed that $\mathbb E(|L_p^{f,g}(s,X)|^2)=1+O(p^{-2\tmop{Re}(s)})$ hence it is not hard to see that for $\tmop{Re}(s) \ge \tfrac12+\varepsilon$ we have  $\mathbb E\left( |L^{f,g}(s,X)|^2\right)=O_{\varepsilon}(1)$ and by repeating this argument we also have that $\mathbb E\left( |L^{g,f}(s,X)|^2\right)=O_{\varepsilon}(1)$ in the same range. Also, by Lemma \ref{lem:mollifiermomentbd} we have
   $\mathbb E(|M(X)|^2) \ll (\log q)^{O(1)}$. Applying these estimates along with Cauchy-Schwarz's inequality we have that 
   \begin{equation}\label{eq:fbd2}
   |F(s;u,v)| \ll (\log q)^{O(1)}\
   \end{equation}
   in the region $\tmop{Re}(s) \ge \tfrac12+\varepsilon$. 
   
   Applying Mellin inversion we see that
   \begin{equation} \label{eq:mellin}
   \begin{split}
   &\mathbb E \bigg(L(X)M(X) \exp\Big( iu P_f(X)+iv P_g(X) \Big)\bigg) \\
   &\qquad  = \frac{1}{2\pi i} \int_{(2)}   \frac{L_{\infty}(s+\tfrac12,f)L_{\infty}(s+\tfrac12,g)}{L_{\infty}(\tfrac12,f)L_{\infty}(\tfrac12,g)}  \left( q^2 N^2 \right)^sF(s;u,v) \,  \frac{(\cos( \frac{\pi s}{12}))^{-48}}{s}  \, ds.
   \end{split}
   \end{equation}
   Since for fixed $\sigma>0$ Stirling's formula gives that $|\Gamma(\sigma+it)|\ll (|t|+1)^{\sigma-\frac12} e^{-\pi|t|}$, by \eqref{eq:fbd2} we may truncate the integral in \eqref{eq:mellin} to $|\tmop{Im}(s) |\le B \log q$ at the cost of an error term of size $O(q^{-1})$ where $B$ is a sufficiently large absolute constant. We now shift contours to $\tmop{Re}(s)=-\frac{(\log \log q)^2}{\log q}$,  pick up a simple pole at $s=0$, estimate the horizontal and left contours using the bound \eqref{eq:fbd1} to get that
   \[
   \begin{split}
  & \mathbb E \bigg(L(X)M(X) \exp\Big( iuP_f(X)+ivP_g(X) \Big)\bigg)\\
   & \qquad \qquad =\mathbb E \bigg(L(\tfrac12,X)M(X) \exp\Big( iuP_f(X)+ivP_g(X) \Big)\bigg)+O\left((\log q)^{-10}\right),
   \end{split}
   \]
   since $\eta>0$ is sufficiently small.

   For $(f_1,f_2)=(f,g)$ or $(f_1,f_2)=(g,f)$
   by Lemma \ref{lem:gfest},  $|\mathcal F_p^{f_1,f_2}(0)| \ll \frac{\log p}{\sqrt{p} \log x}+\frac{1}{p}$ and $\mathcal G_p^{f_1,f_2}(0)=1+O(1/p)$. Hence, using Lemmas  \ref{lem:largeprimeest},
   \ref{lem:multiplicative}, and \ref{lem:smallprimeest} we have that
   \[
   \begin{split}
   &\mathbb E \bigg(L^{f_1,f_2}(\tfrac12,X)M(X) \exp\Big( iuP_f(X)+ivP_g(X) \Big)\bigg) \\
   & = \prod_{p  \in I_0}\left(\mathcal G_p^{f_1,f_2}(0)\left(1- \frac{(u\lambda_f(p)+v \lambda_g(p))^2w_J(p)^2}{4p} \right)+O\left( (|u|+|v|) \Big(\frac{ \log p}{p \log x} +\frac{1}{p^{3/2}}\Big)\right) \right)\\
   & \qquad \qquad\qquad \times \prod_{j=1}^J \mathbb E\bigg( M_j(X) \prod_{p \in I_j} L_p^{f_1,f_2}(\tfrac12, X)\bigg) \prod_{p \le c_0} \mathbb E\Big(L_p^{f_1,
   f_2}(\tfrac12,X)\Big)+O\left( (\log q)^{-10}\right) \\
  & = \mathbb E \Big( L^{f_1,f_2}(\tfrac12, X)M(X) \Big) \exp\bigg(- \sum_{p \in I_0} \frac{(u\lambda_f(p)+v \lambda_g(p))^2w_J(p)^2}{4p}\bigg)\Big(1+O(|u|+|v|)\Big)\\
  &\qquad\qquad\qquad+O\left((\log q)^{-10}\right),
   \end{split}
   \]
   where we also used Lemmas \ref{lem:mediumprimeest} and \ref{lem:mediumprimeestgf} to estimate the error terms.
By the Prime Number Theorem for Rankin-Selberg $L$-functions (see \cite[Corollary 2.15]{BFK})   \begin{equation} \label{eq:pntsum}
   \sum_{p \in I_0} \frac{(u\lambda_f(p)+v \lambda_g(p))^2w_J(p)^2}{4p}=\frac{u^2+v^2}{4} \log \log y+O(|u|^2+|v|^2),
   \end{equation}
which completes the proof.   \end{proof}

\subsection{Proof of Proposition \ref{prop:randombound}}  We are now ready to prove Proposition \ref{prop:randombound}.
    \begin{proof}[Proof of Proposition \ref{prop:randombound}]
        Using \eqref{eq:expectedvalue} with $s=0$ and recalling $\gamma_f(n)=\lambda(n) a_{f,J}(n) \nu(n)$,
    we have for each $0\leq j\leq J$ that
    \[
       \mathbb E \bigg( M_j(X) \prod_{p \in I_j} L_p^{f,g}(\tfrac12, X) \bigg)= \sum_{\substack{m_1,m_2,n_1,n_2 \\ p |m_1n_1 \Rightarrow p \in I_j \\ m_1n_1=m_2n_2 \\ \Omega(n_1) , \Omega(n_2) \le \ell_j}} \frac{\lambda_f(m_1)\lambda_g(m_2)\gamma_f(n_1)\gamma_g(n_2) }{m_1 n_1}.
       \]
       We now wish to remove the condition $\Omega(n_1),\Omega(n_2) \le \ell_j$ so that we can express the sum as an Euler product. Arguing as in \eqref{eq:Rerrorbd} we see that there exists $C>0$ such that the sum on the right hand side is
       \begin{equation} \label{eq:toomanyprimes}
       \begin{split}
       =& \sum_{\substack{ p |m \Rightarrow p \in I_j  }} \frac{(\lambda_f\ast \gamma_f)(m)(\lambda_g \ast \gamma_g)(m) }{m} +O\bigg(\frac{1}{2^{\ell_j}} \sum_{p | m \Rightarrow p \in I_j} \frac{C^{\Omega(m)}}{m}\bigg) \\
       =&\prod_{p \in I_j}\left(1+\frac{(1-w_J(p))^2 \lambda_f(p) \lambda_g(p)}{p}+O\left(\frac{1}{p^2} \right) \right)+O \left( \frac{\mathbf{1}_{j=0} (\log q)^{O(1)}+1}{2^{\ell_j}}\right).
       \end{split}
       \end{equation}

   Since the product on the right hand side above is $\asymp 1$, the product over $1\leq j\leq J$ is
    \begin{equation} \label{eq:waytoomanyprimes}
    =\left(1+O\left(y^{-1}\right)+O \bigg( \sum_{j=1}^{J} \frac{1}{2^{\ell_j}}\bigg)\right)\prod_{j=1}^{J}\prod_{p\in I_j} \left(1+\frac{(1-w_J(p))^2 \lambda_f(p) \lambda_g(p)}{p} \right).
    \end{equation}
    Using that $2^{-\ell_j} \ll 1/\ell_j \asymp \theta_j^{3/4}$, and summing the geometric sum the error term is $\ll \theta_J^{3/4} \ll \eta^{3/4}$.
    Hence, combining \eqref{eq:toomanyprimes}, \eqref{eq:waytoomanyprimes}, and Lemmas \ref{lem:eulerexp}, \ref{lem:largeprimeest} we get that
    \[
    \begin{split}
    \mathbb E\Big( L^{f,g}(\tfrac12,X)M(X)\Big)&=\left(1
    +O\left(\frac{1}{c_0}\right)+O(\eta^{3/4})\right)\prod_{p \le c_0} \left(1-\frac{\psi_0(p)}{p^{2}} \right) L_p(1,f \otimes g) \\
    & \qquad\qquad  \times \prod_{c_0 < p \le x}  \left(1+\frac{(1-w_J(p))^2 \lambda_f(p) \lambda_g(p)}{p} \right).
    \end{split}
    \]
    Since $w_J(p)=1+O(\log p/\log x)$, using Mertens' Theorem and the bound $| \lambda_f(p)\lambda_g(p)| \le 4$  the product over $c_0<p \le x$ is $ \asymp 1$, hence the right hand side above is $\ll 1$. To get a lower bound, note that by
 using Deligne's bound we have $|L_p(1,f \otimes g)| \ge |1+ \frac{1}{p}|^{-4}$, so the product over $p \le c_0$ is $ \gg (\log c_0)^{- 4}$, by Mertens' Theorem, which shows that the right hand side above is $\gg 1$ by choosing $\eta$ to be sufficiently small in terms of $c_0$.
    
Using Lemma \ref{lem:largeprimeest} to estimate the contribution of the primes $p>x$ and Lemmas \ref{lem:multiplicative} and \ref{lem:gfest} to estimate that of the primes $c_0 < p \le y$ gives 
        \[
    \begin{split}
  & \mathbb E\Big( L^{g,f}(\tfrac12,X)M(X)\Big)= (1+o(1))  \prod_{p \le c_0}\mathbb E\Big( L_p^{g,f}(\tfrac12,X)\Big) \, \times \prod_{y< p \le x} \mathbb E\bigg(  L_p^{g,f}(\tfrac12,X) \prod_{j=1}^J M_j(X) \bigg) \\
   &\times   \bigg( 
\prod_{c_0 < p \le y}  \left(1+\frac{(\lambda_f(p)-w_J(p)\lambda_g(p)) (\lambda_g(p)-w_J(p)\lambda_f(p))}{p}+O\bigg(\frac{1}{p^2} \bigg)\right) +O((\log q)^{-10})\bigg).
    \end{split}
    \]
    The product over $p \le c_0$ on the right hand side above is $\ll 1$ by Lemma \ref{lem:eulerexp}. Using Lemma \ref{lem:mediumprimeestgf} the product over $y< p \le x$ is $\ll (\log \log q)^{O(1)}$.
   To estimate the product over $c_0< p \le y$ we again use the Prime Number Theorem for Rankin-Selberg $L$-functions, which implies \[
   \sum_{c_0 < p \le y} \frac{\lambda_f(p)\lambda_g(p)}{p}=O(1) \qquad  \text{ and } \qquad \sum_{c_0 < p \le y} \frac{\lambda_f(p)^2}{p}=\log \log y+O(1),
   \]
   to see that this product is $\ll (\log y)^{-2}$.
   Hence, we have that
\[
\mathbb E\Big( L^{g,f}(\tfrac12,X)M(X)\Big)=O_\varepsilon( (\log q)^{-2+\varepsilon})
\]
for any $\varepsilon>0$.
Combining the above result along with our previous estimate that $\mathbb E( L^{f,g}(\tfrac12,X)M(X)) \asymp 1$ completes the proof.
    \end{proof}

\section{Proof of Theorem \ref{thm:cltjoint} and Corollary \ref{cor:ratios}}  \label{sec:proofcltjoint}

We will first prove Theorem \ref{thm:cltjoint}. Let
\[
\Phi_q(u,v)=\frac{1}{\varphi_W^{\star}(q)}\ \sumstar_{\chi \pamod q} W(\chi)  
e\bigg( -iu \frac{P_f(\chi)}{\sqrt{\frac12 \log \log q}}-iv \frac{P_g(\chi)}{\sqrt{\frac12 \log \log q}}\bigg).
\]
Using the the main results from the previous sections, Lemma \ref{lem:matchrandom} and Proposition \ref{prop:random},
we get that by rescaling $(u,v) \rightarrow \big(\frac{-2\pi u}{\sqrt{\frac12 \log \log q}},\frac{-2\pi v}{\sqrt{\frac12 \log \log q}}\big)$ that
\begin{equation}\label{thm:mainresult2}
 \Phi_q(u,v) = e^{-2 \pi^2 (u^2+v^2)}\left(1+O\left( \frac{(|u|+|v|) (\log \log \log q)^{1/2}}{(\log \log q)^{1/2}} \right)  \right)+O\left((\log q)^{-10}\right)
\end{equation}
for $u,v\in\mathbb{R}$ with $|u|, |v| \ll \sqrt{\frac{\log \log q}{\log \log \log q}} $.  The expression above for the characteristic function is the key input into the proof of Theorem \ref{thm:cltjoint}. Before proceeding to the proof we require some additional results.

\begin{lemma} \label{lem:cheby} Assume GRH.
Let $\Lambda \ge 1$. Also, let $B>0$ be sufficiently large.
Then for all primitive characters $\chi$ modulo $q$ outside an exceptional set of size $\ll \frac{q}{\Lambda^2}+\frac{q}{(\log \log q)^{10}}$ the following statements hold:
\begin{enumerate}
    \item  $ \displaystyle |L_f(\chi)M_f(\chi)| \le \Lambda$;
    \item $\displaystyle \prod_{j=1}^J |M_{f,j}(\chi)| \le (\log \log q)^B$;
    \item $ \displaystyle \frac{1}{\sqrt{\frac12\log \log q}}\bigg| \sum_{c_0< p \le y} \frac{\lambda_f(p) w_J(p) \chi(p)}{ \sqrt{p}} \bigg|\le \log \log \log q $.
\end{enumerate}
\end{lemma}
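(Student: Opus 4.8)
The three statements are all second-moment estimates combined with Chebyshev's (Markov's) inequality, so the strategy is uniform: bound the relevant moment and then collect the characters where the quantity is large. I will treat the three items in turn.

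For item (1), the plan is to invoke Proposition \ref{prop:mollified} with $k=1$, which gives $\sumstar_{\chi \pamod q} |W(\chi)|^2 \ll q$. Since $W(\chi)=L_f(\chi)L_g(\overline\chi)M(\chi)$, this is not quite a bound on $\sumstar |L_f(\chi)M_f(\chi)|^2$; however one can instead apply Proposition \ref{prop:mollified} with a smaller $k$ or, more directly, recall that under GRH the mollified second moment of $L(\tfrac12,f\otimes\chi)$ alone is $\ll q$ (this is the $g$-free analogue of the argument in Section \ref{upperbounds}, following \cite{LR21}; alternatively one bounds $|M_g(\overline\chi)|$ from below off an exceptional set using item (2) and Cauchy--Schwarz). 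Given $\sumstar_{\chi \pamod q}|L_f(\chi)M_f(\chi)|^2 \ll q$, Chebyshev's inequality shows that the number of $\chi$ with $|L_f(\chi)M_f(\chi)|>\Lambda$ is $\ll q/\Lambda^2$, which is the claimed bound.

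For item (2), I would write $\prod_{j=1}^{J}|M_{f,j}(\chi)|^2 \le \prod_{j=1}^{J}|M_{f,j}(\chi)|^{2}$ and estimate the first moment of the right-hand side over $\chi$. Each $M_{f,j}(\chi)$ is a short Dirichlet polynomial supported on integers with prime factors in $I_j$ and $\Omega(n)\le \ell_j$, and these are coprime across different $j$, so the product is a Dirichlet polynomial of length $\le q^{1/1000}$; using \eqref{eq:trivial} to pass to the random model and Lemma \ref{lem:mollifiermomentbd} with $k=1$, we get $\frac{1}{\varphi(q)}\sum_{\chi\pamod q}\prod_{j=1}^J|M_{f,j}(\chi)|^2 = \prod_{j=1}^{J}\prod_{p\in I_j}(1+O(1/p)) \ll 1$, since $\sum_{j\ge 1}\sum_{p\in I_j}1/p = \sum_{y<p\le x}1/p = \log(\theta_J/\theta_0)+o(1) \asymp J \asymp \log\log\log q$, whose exponential is $(\log\log q)^{O(1)}$. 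Then Markov's inequality with threshold $(\log\log q)^{2B}$ (choosing $B$ large) puts the count of bad $\chi$ at $\ll q(\log\log q)^{O(1)-2B} \ll q/(\log\log q)^{10}$.

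For item (3), the cleanest route is Lemma \ref{lem:moments}, equation \eqref{eq:moments2} (with $f$ as is): taking $k = \lfloor (\log\log q)^{1/2}\rfloor$ or so, one has $\frac{1}{\varphi(q)}\sum_{\chi \pamod q}P_f(\chi)^{2k} \le k!\big(\sum_{c_0<p\le y}\lambda_f(p)^2/p\big)^k$, and since $\sum_{c_0<p\le y}\lambda_f(p)^2/p = \log\log y + O(1) = \log\log q + O(1)$, Chebyshev gives that $|P_f(\chi)| \ge V\sqrt{\tfrac12\log\log y}$ fails for $\ll q\,e^{-V^2/9}$ characters (this is exactly Lemma \ref{lem:largedev}); taking $V = \log\log\log q \cdot (\text{const})$ makes $e^{-V^2/9}$ smaller than $(\log\log q)^{-10}$, while $P_f(\chi)$ differs from $\sum_{c_0<p\le y}\lambda_f(p)w_J(p)\chi(p)/\sqrt p$ only by taking real part, so the full sum (not just its real part) is controlled by the same argument applied to $|{\cdot}|$ in place of $\tmop{Re}$. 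The main obstacle is item (1): one must be careful that Proposition \ref{prop:mollified} is stated for $W(\chi)$, which carries both $L_f$ and $L_g$, so some care (or an appeal to the single-form mollified second moment of Section \ref{upperbounds}) is needed to extract a clean bound on $\sumstar|L_f(\chi)M_f(\chi)|^2$ alone; the other two items are routine moment-plus-Chebyshev arguments.
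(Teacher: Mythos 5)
Your proposal is correct and follows essentially the same route as the paper: all three items are moment bounds plus Chebyshev's inequality, with (1) resting on the single-form mollified second moment (the paper's Proposition \ref{prop:mollifiedf}, i.e.\ exactly the ``$g$-free analogue'' you anticipate), (2) on Lemma \ref{lem:mollifiermomentbd} via the random model, and (3) on Lemma \ref{lem:largedev} applied to the full sum rather than its real part. The obstacle you flag in (1) is not actually an issue, since Proposition \ref{prop:mollifiedf} with $k=1$ already gives $\sum^{*}_{\chi}|L_f(\chi)M_f(\chi)|^{2}\ll q$ directly.
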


\begin{remark}
\emph{In the proof we use GRH when applying Proposition \ref{prop:mollifiedf} to bound
\[
\sumstar_{\chi \pamod q} |L_f(\chi)M_f(\chi)|^2.
\]
Using work of Blomer et. al. \cite{BFK} such an estimate can be proved unconditionally so the assumption of GRH can be removed at the cost of a longer argument. }
\end{remark}

\begin{proof} [Proof of Lemma \ref{lem:cheby}]

By Chebyshev's inequality and Proposition \ref{prop:mollifiedf}
\[
\sumstar_{\substack{\chi \pamod q \\ |L_f(\chi)M_f(\chi)| > \Lambda} } 1  \le \frac{1}{\Lambda^2}\ \sumstar_{\chi \pamod q} |L_f(\chi)M_f(\chi)|^2 \ll \frac{q}{\Lambda^2}.
\]

Similarly,
\[
\begin{split}
\sumstar_{\substack{\chi \pamod q \\  \prod_{j=1}^J |M_{f,j}(\chi)| > (\log \log q)^B} } 1 \le & \frac{1}{(\log \log q)^{2B}} \ \sumstar_{\chi \pamod q} \prod_{j=1}^J |M_{f,j}(\chi)|^2  \\
\ll &  \frac{q}{(\log \log q)^{2B}} \prod_{y< p \le x}\left( 1+O\left( \frac{1}{p}\right)\right) \ll \frac{q}{(\log \log q)^{10}},
\end{split}
\]
where the second step follows by using \eqref{eq:trivial} and Lemma \ref{lem:mollifiermomentbd} since $B$ is sufficiently large.

Finally, we note that the argument given in the proofs of Lemmas \ref{lem:moments} and \ref{lem:largedev} shows that the conclusion of Lemma \ref{lem:largedev} holds with $P_f(\chi)$ replaced by $\sum_{c_0< p \le y} \frac{\lambda_f(p) w_J(p) \chi(p)}{ \sqrt{p}} $ (this follows immediately since in the proof of Lemma \ref{lem:moments} we used that $|\tmop{Re}(z)|\le |z|$), so that using this result with $V=\log \log \log q(\frac{\log \log q}{\log \log y})^{1/2}$ gives that
\[
\# \bigg\{ \chi \pamod q : \bigg|\sum_{c_0< p \le y} \frac{\lambda_f(p) w_J(p) \chi(p)}{ \sqrt{p}} \bigg| \ge \log \log \log q \sqrt{\frac12\log \log q} \bigg\} \ll \frac{q}{(\log \log q)^{10}}. 
\]
\end{proof}

For $f \in L^1(\mathbb R)$ we denote by $\widehat{f}$ the Fourier transform of $f$,
\[
\widehat f(\xi)=\int_{\mathbb R} f(x) e(-\xi x) \, dx.
\]
Let us quote the following result due independently to Beurling and Selberg (see \cite[Section 7]{LLR} and Vaaler \cite{vaaler}).

\begin{lemma}\label{lem:BS}
Let $\Delta>0$ and $I=[a,b] \subset \mathbb R$ be an interval. Then there exists an entire function $F_{I,\Delta}(z)$ such that each of the following holds:
\begin{enumerate}
    \item $\displaystyle 0 \le \mathbf{1}_{I}(x)- F_{I,\Delta}(x) \le \left( \frac{\sin(\pi \Delta(x-a)) }{\pi \Delta (x-a)} \right)^2+\left( \frac{\sin(\pi \Delta(b-x)) }{\pi \Delta (b-x)} \right)^2$, $\forall x \in \mathbb R$;
    \item $ \displaystyle \widehat {F_{I,\Delta}}(\xi) =\begin{cases}
    \widehat {\mathbf{1}_{I}}(\xi)+O\left(\frac{1}{\Delta} \right) & \text{ if } |\xi| < \Delta, \xi \in \mathbb R, \\
    0 & \text{ if } |\xi| \ge \Delta, \xi \in \mathbb R.
    \end{cases}$
\end{enumerate}
\end{lemma}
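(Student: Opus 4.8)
This is a classical result, and the standard proof, due to Beurling and Selberg, constructs $F_{I,\Delta}$ explicitly from Beurling's extremal function; I sketch it here. The key ingredient is Beurling's entire function $B(z)$ of exponential type $2\pi$ with the properties that $B(x)\ge\tmop{sgn}(x)$ for all $x\in\mathbb{R}$, that $B(z)+B(-z)=2\big(\tfrac{\sin\pi z}{\pi z}\big)^{2}$, and that $\int_{\mathbb{R}}\big(B(x)-\tmop{sgn}(x)\big)\,dx=1$; concretely one may take
\[
B(z)=\Big(\tfrac{\sin\pi z}{\pi}\Big)^{2}\Big(\tfrac{2}{z}+\sum_{n\ge0}\tfrac{1}{(z-n)^{2}}-\sum_{n\ge1}\tfrac{1}{(z+n)^{2}}\Big).
\]
Combining $B\ge\tmop{sgn}$ with the functional identity yields the two-sided bound $0\le B(x)-\tmop{sgn}(x)\le 2\big(\tfrac{\sin\pi x}{\pi x}\big)^{2}$ for all real $x$. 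Verifying these properties of $B$ relies on the Paley--Wiener theorem together with the interpolation (quadrature) formula for functions of exponential type $2\pi$ sampled at the integers; this is the one genuinely substantive step, and for it I would refer to the treatments of Vaaler \cite{vaaler} and of \cite{LLR}.

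Given $I=[a,b]$ and $\Delta>0$, the plan is to set
\[
F_{I,\Delta}(x)=-\tfrac12\Big(B\big(\Delta(a-x)\big)+B\big(\Delta(x-b)\big)\Big).
\]
As a finite sum of affine pullbacks of $B$, this is entire of exponential type $2\pi\Delta$. For (1), I would write $\mathbf{1}_{I}(x)=\tfrac12\big(\tmop{sgn}(x-a)+\tmop{sgn}(b-x)\big)$ and use that $\tmop{sgn}$ is odd to obtain
\[
\mathbf{1}_{I}(x)-F_{I,\Delta}(x)=\tfrac12\big(B(\Delta(a-x))-\tmop{sgn}(\Delta(a-x))\big)+\tfrac12\big(B(\Delta(x-b))-\tmop{sgn}(\Delta(x-b))\big).
\]
The lower bound in (1) is immediate from $B\ge\tmop{sgn}$, and the upper bound follows by applying $B(t)-\tmop{sgn}(t)\le 2(\sin\pi t/\pi t)^{2}$ to each of the two terms, halving, and using that $t\mapsto(\sin\pi t/\pi t)^{2}$ is even. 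Integrating (1) over $\mathbb{R}$ and using $\int_{\mathbb{R}}(\sin\pi u/\pi u)^{2}\,du=1$ gives $0\le\int_{\mathbb{R}}\big(\mathbf{1}_{I}-F_{I,\Delta}\big)\le 2/\Delta$, so in particular $F_{I,\Delta}\in L^{1}(\mathbb{R})\cap L^{\infty}(\mathbb{R})$ and $\|\mathbf{1}_{I}-F_{I,\Delta}\|_{L^{1}}\ll 1/\Delta$.

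For (2), since $F_{I,\Delta}$ is entire of exponential type $2\pi\Delta$ and lies in $L^{1}\cap L^{2}$, the Paley--Wiener theorem (with the Fourier normalization used here) shows $\widehat{F_{I,\Delta}}$ is supported in $[-\Delta,\Delta]$, which is the second case. For $|\xi|<\Delta$ we have $\widehat{F_{I,\Delta}}(\xi)-\widehat{\mathbf{1}_{I}}(\xi)=\widehat{F_{I,\Delta}-\mathbf{1}_{I}}(\xi)$, and since $|\widehat{g}(\xi)|\le\|g\|_{L^{1}}$ for any $g\in L^{1}(\mathbb{R})$, the $L^{1}$ estimate from the previous paragraph gives $\widehat{F_{I,\Delta}}(\xi)=\widehat{\mathbf{1}_{I}}(\xi)+O(1/\Delta)$, which is the first case. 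The main obstacle lies entirely in the construction of $B$ and its extremal/pointwise properties; once those are granted, the passage to $F_{I,\Delta}$ and the Fourier statement are routine bookkeeping.
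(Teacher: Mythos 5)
Your construction is the standard Beurling--Selberg minorant argument, which is exactly what the paper invokes by citation to Vaaler and to Section 7 of \cite{LLR}; the paper gives no independent proof. Your verification of (1) via the functional identity $B(z)+B(-z)=2(\sin\pi z/\pi z)^2$ and the pointwise bound $B\ge\tmop{sgn}$, and of (2) via Paley--Wiener plus the $L^1$ bound $\|\mathbf{1}_I-F_{I,\Delta}\|_{L^1}\le 2/\Delta$, is correct and is the proof the citation points to.
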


 We also need to establish an unweighted analogue of \eqref{thm:mainresult2}, which is much easier to prove. 
\begin{lemma} \label{lem:clteasy}
For $u,v\in\mathbb{R}$ with $|u|, |v| \ll \sqrt{\frac{\log \log q}{\log \log \log q}} $ we have that
\begin{align*}
\Psi_q(u,v)&:=\frac{1}{\varphi^{\star}(q)} \sumstar_{\chi \pamod q} \exp\bigg(-2\pi iu \frac{P_f(\chi)}{\sqrt{\tfrac12\log \log q}}-2\pi iv \frac{P_g(\chi)}{\sqrt{\tfrac12\log \log q}}\bigg)\\
&=e^{-2\pi^2(u^2+v^2)} \left(1+O\left( \frac{(|u|+|v|) (\log \log \log q)^{1/2}}{(\log \log q)^{1/2}} \right)  \right)+O\left((\log q)^{-10}\right).
\end{align*}
\end{lemma}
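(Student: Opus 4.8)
The plan is to run the moment method in the same way as the combined proofs of Lemma \ref{lem:matchrandom} and Proposition \ref{prop:random}, but in a substantially lighter form: since no mollifier weight is present one never needs to control $W(\chi)$, so GRH and the mollified second moment drop out of the picture — on the exceptional characters one just uses $|e^{i\theta}|=1$, and the only moment input needed is the unconditional bound \eqref{eq:moments2}. The starting observation is that, setting $c_p:=-2\pi\big(u\lambda_f(p)+v\lambda_g(p)\big)w_J(p)\big(\sqrt p\,\sqrt{\tfrac12\log\log q}\big)^{-1}$, one has
\[
-2\pi u\frac{P_f(\chi)}{\sqrt{\tfrac12\log\log q}}-2\pi v\frac{P_g(\chi)}{\sqrt{\tfrac12\log\log q}}=\tmop{Re}\bigg(\sum_{c_0<p\le y}c_p\,\chi(p)\bigg),
\]
so that $\Psi_q(u,v)$ is the characteristic function of a short sum over primes, with $|c_p|\ll(|u|+|v|)(p\log\log q)^{-1/2}\ll(p\log\log\log q)^{-1/2}$ small throughout the stated range of $u,v$.

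For the de-randomization, fix $V_0\asymp\sqrt{\log\log q}$ large enough that $e^{-V_0^2/9}\le(\log q)^{-11}$, and let $\mathcal S=\{\chi\pamod q:|P_f(\chi)|,|P_g(\chi)|\le V_0\sqrt{\tfrac12\log\log y}\}$. By Lemma \ref{lem:largedev}, $\#\mathcal S^c\ll qe^{-V_0^2/9}$, and since the integrand has modulus $1$ this already bounds the $\mathcal S^c$-contribution to $\Psi_q(u,v)$ by $O((\log q)^{-11})$. On $\mathcal S$ the exponent $\tmop{Re}\sum_p c_p\chi(p)$ has modulus $\ll(|u|+|v|)\sqrt{\log\log q}\ll\log\log q$, so with $S:=\lceil(\log\log q)^2\rceil$ the exponential coincides with its degree-$S$ Taylor polynomial up to $O(e^{-S})=O((\log q)^{-11})$. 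Expanding the powers, each $P_f(\chi)^kP_g(\chi)^{j-k}$ with $j\le S$ is a linear combination of $\chi(m_1)\overline{\chi(m_2)}$ with $m_1,m_2\le y^{j}\le q$, so orthogonality \eqref{eq:trivial} matches its average over primitive $\chi$ with $\mathbb E\big(P_f(X)^kP_g(X)^{j-k}\big)$ up to an error which is negligible after summation (it arises only from discarding $\chi_0$, $q$ being prime). The contribution of $\mathcal S^c$ to the Taylor polynomial is controlled by Cauchy--Schwarz together with the moment bound \eqref{eq:moments2} applied to $P_f$ and $P_g$ separately (valid since $S\ll(\log\log q)^5\asymp\theta_0^{-1}$ keeps us in the range $y^{2k}\le q^{1/3}$), exactly as in the derivation of \eqref{eq:cs}, and $\#\mathcal S^c\ll qe^{-V_0^2/9}$ is comfortably enough to absorb the factorial growth of these moments. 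Carrying out the identical truncation on the random side (using Remark \ref{rem:random} and \eqref{eq:moments2}) yields
\[
\Psi_q(u,v)=\mathbb E\bigg(\exp\Big(-2\pi iu\tfrac{P_f(X)}{\sqrt{\tfrac12\log\log q}}-2\pi iv\tfrac{P_g(X)}{\sqrt{\tfrac12\log\log q}}\Big)\bigg)+O\big((\log q)^{-10}\big).
\]

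It then remains to evaluate the random expectation, and here no analogue of the local analysis of Proposition \ref{prop:random} is needed. Writing $X(p)=e^{i\phi_p}$ with the $\phi_p$ i.i.d.\ uniform on $[0,2\pi)$ and using independence,
\[
\mathbb E\bigg(\exp\Big(i\,\tmop{Re}\sum_{c_0<p\le y}c_p\,X(p)\Big)\bigg)=\prod_{c_0<p\le y}\mathbb E\big(e^{ic_p\cos\phi_p}\big)=\prod_{c_0<p\le y}J_0(c_p),
\]
where $J_0$ is the Bessel function, $J_0(t)=1-t^2/4+O(t^4)$ for bounded $t$ (one may avoid Bessel functions entirely, expanding $\mathbb E(\cos^k\phi)$ directly). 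Taking logarithms, the leading term is $-\tfrac14\sum_{c_0<p\le y}c_p^2$, and by the Prime Number Theorem for Rankin--Selberg $L$-functions (exactly as for \eqref{eq:pntsum}: $\sum_{c_0<p\le y}(u\lambda_f(p)+v\lambda_g(p))^2w_J(p)^2/p=(u^2+v^2)\log\log y+O(u^2+v^2)$, the cross term being $O(1)$ because $f\ne g$ makes $L(s,f\otimes g)$ holomorphic and non-vanishing at $s=1$) together with $\log\log y=\log\log q-5\log\log\log q+O(1)$, this equals $-2\pi^2(u^2+v^2)+O\big((u^2+v^2)\tfrac{\log\log\log q}{\log\log q}\big)$; the remainder $\sum_{c_0<p\le y}c_p^4\ll(u^2+v^2)^2(\log\log q)^{-2}$ is of smaller order. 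Exponentiating and using $u^2+v^2\le(|u|+|v|)\sqrt{\log\log q/\log\log\log q}$ in the stated range turns the error into $O\big((|u|+|v|)(\log\log\log q)^{1/2}(\log\log q)^{-1/2}\big)$, which combined with the $O((\log q)^{-10})$ above gives the assertion.

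The only genuinely delicate point is the de-randomization, where one must balance four competing constraints: the Taylor truncation $S$ must be large enough that $e^{-S}\ll(\log q)^{-10}$; the Dirichlet polynomials $P_f(\chi)^kP_g(\chi)^{j-k}$ must have length $y^{j}\le q$; one must stay in the range $y^{2k}\le q^{1/3}$ where \eqref{eq:moments2} applies; and $\#\mathcal S^c\ll qe^{-V_0^2/9}$ must be small enough to beat the factorial moments. These are all comfortably compatible since $S\asymp(\log\log q)^2$ while $\theta_0^{-1}\asymp(\log\log q)^5$. Everything else is routine given the estimates already established, and, as the text indicates, markedly simpler than the weighted analogue since no control of $W(\chi)$, and hence no GRH, is required.
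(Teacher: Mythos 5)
Your proposal is correct and follows essentially the same route as the paper: the paper's proof is exactly a "line-by-line modification of Lemma \ref{lem:matchrandom}" (which, with the weight $W(\chi)$ absent, reduces to the trivial bound on the exceptional set plus the unconditional moment estimate \eqref{eq:moments2}, as you observe), followed by factoring the random expectation over primes, Taylor-expanding each local factor (your $J_0(c_p)$ is the same computation), and applying \eqref{eq:pntsum} before rescaling. The only differences are cosmetic — you work with the rescaled variables throughout rather than rescaling at the end — so nothing further is needed.
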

\begin{proof}
A straightforward line-by-line modification of the proof of Lemma \ref{lem:matchrandom} gives that
\[
\frac{1}{\varphi^{\star}(q)} \sumstar_{\chi \pamod q} \exp\Big(iu P_f(\chi)+iv P_g(\chi)\Big)=\mathbb E\bigg( \exp\Big(iu P_f(X)+iv P_g(X)\Big)\bigg)+O\left((\log q)^{-10}\right),
\]
for $|u|,|v| \ll 1$.
Taylor expanding and using \eqref{eq:pntsum}, the main term equals
\[
\begin{split}
&\prod_{p \le y} \mathbb E \left(\exp\left(iu \frac{ \lambda_f(p) w_J(p)\tmop{Re}(X(p))}{\sqrt{p}}+iv \frac{ \lambda_g(p)w_J(p)\tmop{Re}( X(p))}{\sqrt{p}}\right) \right)\\
&\qquad=\prod_{p \le y} \left(1-\frac{(u \lambda_f(p)+v \lambda_g(p))^2 w_J(p)^2 }{4p}+O\left(\frac{|u|^3+|v|^3}{p^{3/2}} \right)\right) \\
&\qquad=(\log y)^{-\frac{(u^2+v^2)}{4}} \Big(1+O\left(|u|+|v| \right) \Big),
\end{split}
\]
and the lemma follows upon rescaling as in \eqref{thm:mainresult2}.
\end{proof}

\begin{proof}[Proof of Theorem \ref{thm:cltjoint}]
Let $\mathcal{S}_{f}$ denote the set of primitive characters $\chi$ modulo $q$ satisfying the properties in Lemma \ref{lem:cheby} 
as well as
\[
|L_f(\chi)M_f(\chi)|\ge \frac{1}{\Lambda}
\]
with $\Lambda=\log \log q$. Then for $\chi \in \mathcal{S}_{f}$ we have that
\[
\begin{split}
\log|L_f(\chi)|=&- \log |M_f(\chi)|+O(\log \Lambda) \\
=& -\log |M_{f,0}(\chi)|+O(\log \log \log q).
\end{split}
\]
Also, for any complex number $|s| \le S/e^2$ we have from \eqref{exptruncation} that
\[
\sum_{j=0}^S \frac{s^j}{j!}=e^{s}\Big(1+O\left( e^{-S/2}\right)\Big),
\]
which implies for $\chi \in \mathcal{S}_{f}$ that
\[
M_{f,0}(\chi)=\exp\bigg( -\sum_{c_0 < p \le y} \frac{\lambda_f(p)w_J(p)}{\sqrt{p}} \chi(p) \bigg)\bigg(1+O\left((\log \log q)^{-10}\right)\bigg).
\]
We conclude that for  $\chi \in \mathcal{S}_{f}$,
\begin{equation} \label{eq:clean}
P_f(\chi)=\log |L_f(\chi)|+O(\log \log \log q).
\end{equation}
In particular, this implies that for $\chi \in \mathcal S_f$,
\begin{equation} \label{eq:lfbound}
\frac{|\log |L_f(\chi)|| }{\sqrt{\frac12 \log \log q}}\ll \log \log \log q.
\end{equation}

Let $\mathcal L_f(\chi)= \frac{\log |L_f(\chi)|}{\sqrt{\frac12 \log \log q}}$, $\mathcal P_f(\chi)=\frac{P_f(\chi)}{\sqrt{\frac12\log \log q}}$, and $I_1,I_2 \subset \mathbb R$ be intervals. Let us first consider the case $I_1,I_2$ are closed.
By \eqref{eq:lfbound}, for $\chi \in \mathcal  S_f$ we have that $\mathcal L_f(\chi) \in I_1$ if and only if $\mathcal L_f(\chi) \in \mathcal I_1$ where $\mathcal I_1=I_1 \cap [-A \log \log \log q, A \log \log \log q]$, where $A>0$ is sufficiently large. Write $\mathcal I_1=[a_1,b_1]$.
Using \eqref{eq:clean} we have for  
all $\chi \in \mathcal{S}_{f}$ that there exists $C>0$ sufficiently large such that for $\delta=C \frac{\log \log \log q}{\sqrt{\log \log q}}$
\[
|\mathbf{1}_{\mathcal I_1}(\mathcal L_f(\chi))-\mathbf{1}_{\mathcal I_1}(\mathcal P_f(\chi))| \le \mathbf{1}_{[a_1-\delta,a_1+\delta]}(\mathcal P_f(\chi))+\mathbf{1}_{[b_1-\delta,b_1+\delta]}(\mathcal P_f(\chi)).
\]
For $\chi \in \mathcal S_g$ we arrive at a similar inequality and can replace $I_2$ with $\mathcal I_2$, which is defined analogously to $\mathcal I_1$, with $\mathcal I_2=[a_2,b_2]$ and $|b_2-a_2| \ll \log \log \log q$.
Write $I_{\delta,j}=[a_j-\delta,a_j+\delta] \cup  [b_j-\delta,b_j+\delta]$ for $j=1,2$.
Consequently, using the above inequality for $f$ and $g$, H\"older's inequality with exponents $\frac{1}{e_1}+\frac{1}{e_2}=1$ where $e_1=1+\varepsilon$, so $e_2 \asymp 1/\varepsilon$, and applying Proposition \ref{prop:mollified} we have that
\begin{equation} \label{eq:ltop}
\begin{split}
&\frac{1}{\varphi_W^{\star}(q)}\ \sumstar_{\chi \in \mathcal{S}_{f} \cap \mathcal S_g} W(\chi) \, \mathbf{1}_{I_1}(\mathcal L_f(\chi)) \mathbf{1}_{ I_2} (\mathcal L_g(\chi))= \frac{1}{\varphi_W^{\star}(q)} \sumstar_{\chi \in \mathcal{S}_{f} \cap \mathcal S_g} W(\chi) \, \mathbf{1}_{\mathcal I_1}(\mathcal P_f(\chi)) \mathbf{1}_{\mathcal I_2} (\mathcal P_g(\chi))
\\
&\qquad \qquad \qquad \qquad \qquad +O_\varepsilon\bigg(q^{1/e_2-1} \bigg(\ \,\sumstar_{\chi \pamod q} \Big(\mathbf{1}_{I_{\delta,1}}(\mathcal P_f(\chi))+\mathbf{1}_{I_{\delta,2}}(\mathcal P_g(\chi))\Big) \bigg)^{1/e_1} \bigg),
\end{split}
\end{equation}
where we have also used that $(\mathbf{1}_{I_{\delta,1}}(\mathcal P_f(\chi))+\mathbf{1}_{I_{\delta,2}}(\mathcal P_g(\chi)))^{e_1} \le 2^{\varepsilon}( \mathbf{1}_{I_{\delta,1}}(\mathcal P_f(\chi))+\mathbf{1}_{I_{\delta,2}}(\mathcal P_g(\chi)))$
and $\mathbf{1}_{[a_1-\delta,a_1+\delta]}(\mathcal P_f(\chi))+\mathbf{1}_{[b_1-\delta,b_1+\delta]}(\mathcal P_f(\chi)) \le 2\mathbf{1}_{I_{\delta,1}}(\mathcal P_f(\chi)) $.
Using Cauchy-Schwarz's inequality, the sum on the left hand side  as well as the sum in the main term on the right hand side  above can be extended to all primitive characters modulo $q$ at the cost of an error term of size $O(1/\log \log q)$.

Let $K(x)=(\frac{\sin \pi x}{\pi x} )^2$ and note that $\widehat K(u)=\max\{0,1-|u|\}$. 
By Lemma \ref{lem:BS},  for $I=[a,b]$ and $\Delta>0$ we have $|\mathbf{1}_I(x)-F_{I,\Delta}(x)| \le 2$, so $|\mathbf{1}_I(x)-F_{I,\Delta}(x)|^{e_1} \le 2^{\varepsilon} ( K(\Delta(x-a))+K(\Delta(b-x)))$. We now take $\Delta=\sqrt{\frac{\log \log q}{\log \log \log q}}$.
By H\"older's inequality and Proposition \ref{prop:mollified} we get that
\begin{align}\label{eq:smoothtosharp}
&\sumstar_{\chi \pamod q} |W(\chi)| \Big|\mathbf{1}_{\mathcal I_1}(\mathcal P_f(\chi))-F_{\mathcal I_1,\Delta}(\mathcal P_f(\chi))\Big|\nonumber\\
&\qquad\qquad \ll_\varepsilon q^{1/e_2}  \bigg(\,\, \sumstar_{\chi \pamod q} \Big( K(\Delta(\mathcal P_f(\chi)-a_1))+K(\Delta(b_1-\mathcal P_f(\chi)))\Big)  \bigg)^{1/e_1}.
\end{align}
By Fourier inversion and Lemma \ref{lem:clteasy} we find uniformly for any $\alpha \in \mathbb R$ that
\begin{equation} \label{eq:errterm}
\frac{1}{\varphi^{\star}(q)}\ \sumstar_{\chi \pamod q} K(\Delta (\mathcal P_f(\chi)-\alpha))=\frac{1}{\Delta} \int_{-\Delta}^{\Delta} \left(1-\frac{|u|}{\Delta} \right) e(-\alpha u) \Psi_q(u,0) \, du \ll \frac{1}{\Delta}.
\end{equation}
Hence, using \eqref{eq:mass}, \eqref{eq:smoothtosharp},
 and \eqref{eq:errterm} we can replace $\mathbf{1}_{\mathcal I_1}(\mathcal P_f(\chi))$ by $F_{\mathcal I_1,\Delta}(\mathcal P_f(\chi))$ in the main term in \eqref{eq:ltop} at the cost of an error term of size $O_\varepsilon((\log \log q)^{-1/2+\varepsilon})$. Similarly, we can replace $\mathbf{1}_{\mathcal I_2}(\mathcal P_g(\chi))$ with $F_{\mathcal I_2,\Delta}(\mathcal P_g(\chi))$ at the cost of an error term of the same size.  Using Fourier inversion, \eqref{thm:mainresult2} and Lemma \ref{lem:BS} we get that up to an error term of size $O_\varepsilon((\log \log q)^{-1/2+\varepsilon})$ the main term on the right hand side of \eqref{eq:ltop} equals
\begin{equation}  \notag
\begin{split}
&\frac{1}{\varphi_W^{\star}(q)}\ \sumstar_{\chi \pamod q} W(\chi) F_{\mathcal I_1,\Delta}(\mathcal P_f(\chi)) F_{\mathcal I_2,\Delta}(\mathcal P_g(\chi))= \int_{\mathbb R^2} \widehat {F_{\mathcal I_1, \Delta}}(u) \widehat {F_{\mathcal I_2,\Delta}}(v) \Phi_q(u,v) \, du \, dv \\
& = \int_{\mathbb R^2} \widehat {F_{\mathcal I_1, \Delta}}(u) \widehat {F_{\mathcal I_2,\Delta}}(v) e^{-2\pi^2(u^2+v^2)} \, du \, dv+O_\varepsilon\bigg( |\mathcal I_1| |\mathcal I_2| ((\log \log q)^{-1/2+\varepsilon}+  \Delta^2 (\log q)^{-10})\bigg).
\end{split}
\end{equation}
Applying Plancherel and then using Lemma \ref{lem:BS} the main term above equals
\begin{equation} \label{eq:plancherel}
\begin{split}
\frac{1}{2\pi} \int_{\mathbb R^2} F_{\mathcal I_1,\Delta}(x)F_{\mathcal I_2,\Delta}(y) e^{-(x^2+y^2)/2} \, dx \, dy = \frac{1}{2\pi} \int_{I_1 \times I_2} e^{-(x^2+y^2)/2} \, dx \, dy+O_\varepsilon\left((\log \log q)^{-1/2+\varepsilon} \right),
\end{split}
\end{equation}
where we have also used the rapid decay of the Gaussian to bound the portion of the integral with $ (I_1 \times  I_2) \setminus (\mathcal I_1 \times \mathcal I_2)$ by $\ll (\log \log q)^{-10}$.

To estimate the error term in \eqref{eq:ltop}, we see that by arguing as above we have 
\begin{equation} \notag
   \frac{1}{\varphi^{\star}(q)}\ \sumstar_{\chi \pamod q} \mathbf{1}_{I_{\delta,1}}(\mathcal P_f(\chi)) =\frac{1}{\sqrt{2\pi}} \int_{I_{\delta,1}} e^{-x^2/2} \, dx +O_\varepsilon\left((\log \log q)^{-1/2+\varepsilon} \right) \ll_\varepsilon (\log \log q)^{-1/2+\varepsilon}.
\end{equation}
Applying this with \eqref{eq:plancherel} in \eqref{eq:ltop} completes the proof in the case $I_1,I_2$ are closed. The other cases follow in the exact same way since the conclusions of Lemma \ref{lem:BS} hold for any finite interval, since $F_{I,\Delta}$ is continuous.
\end{proof}

\begin{proof}[Proof of Corollary \ref{cor:ratios}]
Let $\omega=\frac{\Lambda}{ \sqrt{\log \log q)}}$.
By a similar yet slightly easier argument to the one given in the proof of Theorem 1.4, which we will omit, we have that
\[
\begin{split}
 \frac{1}{\varphi_W^{\star}(q)}&\sumstar_{\chi \pamod q} W(\chi) \mathbf 1_{[-\omega, \omega]}\bigg( \frac{\log |L(\tfrac12,f\otimes \chi)|}{\sqrt{\frac12 \log \log q}}-\frac{\log |L(\tfrac12,g\otimes \chi)|}{\sqrt{\frac12 \log \log q}}\bigg) \\
 &=\frac{1}{2 \sqrt{\pi}} \int_{-\omega}^{\omega} e^{-u^2/4} \, du +O_\varepsilon((\log \log q)^{-1/2+\varepsilon})
\end{split}
\]
with the main difference in the proof being that after applying Fourier inversion in place of $\Phi_q(u,v)=e^{-2\pi^2 (u^2+v^2)}+o(1)$ we will have $\Phi_q(u,-u)=e^{-4\pi^2 u^2}+o(1)$ and the Fourier transform of the Gaussian $\mathbf g(u)=e^{-4\pi^2 u^2}$ is $\widehat {\mathbf g}(u)=\frac{1}{2\sqrt{\pi}} e^{-u^2/4}$. Hence, by H\"older's inequality with exponents $\frac{1}{e_1}+\frac{1}{e_2}=1$ the formula above yields 
\[
q \omega \ll \bigg(\, \, \, \sumstar_{\chi \pamod q} |W(\chi)|^{e_1} \bigg)^{1/e_1}   \bigg(\# \bigg\{ \chi \neq \chi_0 : L(\tfrac12,g \otimes \chi) \neq 0, e^{-\Lambda}  \le \bigg| \frac{L(\tfrac12,f\otimes \chi)}{L(\tfrac12, g \otimes \chi)} \bigg| \le e^{\Lambda} \bigg\}\bigg) ^{1/e_2},
\]
where for brevity we wrote $\chi \neq \chi_0$ to mean that $\chi$ is a nonprincipal character modulo $q$.
Applying Proposition 1.3 gives that
\[
  \# \bigg\{ \chi \neq \chi_0 : L(\tfrac12,g \otimes \chi) \neq 0, \, e^{-\Lambda}  \le \bigg| \frac{L(\tfrac12,f\otimes \chi)}{L(\tfrac12, g \otimes \chi)} \bigg| \le e^{\Lambda} \bigg\}  \gg \omega^{e_2} q
\]
taking $e_2=1+\varepsilon$ gives the claim.

\end{proof}

\section{Upper bounds for mollified moments}\label{upperbounds}

In this section we prove Proposition \ref{prop:mollified}, which follows immediately from the following proposition and an application of Cauchy-Schwarz's inequality.

\begin{proposition}\label{prop:mollifiedf}
Assume GRH. Let $k > 0$ and suppose that $\eta \lceil k \rceil$ is sufficiently small. Then
\[
\sumstar_{\chi\pamod q} |L(\tfrac12, f\otimes\chi) M_f(\chi)|^{2k}\ll_{f,k} q.
\]
\end{proposition}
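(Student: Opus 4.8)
The plan is to follow the now-standard route for conditional mollified moment bounds, combining the pointwise conditional upper bound of Soundararajan \cite{S} with the Euler-product mollifier method of Radziwi{\l}{\l} and Soundararajan \cite{Radziwill-Soundararajan} and the frequency-scale dissection of Harper \cite{Har19}, in the form developed in \cite{LR21}. First I would record a conditional pointwise bound: assuming GRH for $L(s,f\otimes\chi)$ and $L(s,\mathrm{Sym}^2 f\otimes\chi)$, the explicit-formula argument of \cite{S} gives, with $x=q^{\theta_J}$,
\[
\log|L(\tfrac12,f\otimes\chi)|\le\mathrm{Re}\sum_{p\le x}\frac{\lambda_f(p)w_J(p)\chi(p)}{\sqrt p}+\mathcal Q_f(\chi)+O\big(\tfrac1\eta\big),
\]
where $\mathcal Q_f(\chi)=\tfrac12\,\mathrm{Re}\sum_{p\le\sqrt x}\frac{(\alpha_{f,1}(p)^2+\alpha_{f,2}(p)^2)\chi(p)^2}{p}\cdot\frac{\log(x/p^2)}{\log x}$ collects the prime squares. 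The key design feature is that $w_J(p)=p^{-1/\log x}(1-\tfrac{\log p}{\log x})$ is precisely the weight appearing in this bound with cutoff $x$, so up to $O(1)$ the first sum equals $\sum_{j=0}^{J}P_j(\chi)$ with $P_j(\chi)=\mathrm{Re}\,G_j(\chi)$ and $G_j(\chi):=\sum_{p\in I_j}\frac{\lambda_f(p)w_J(p)\chi(p)}{\sqrt p}$.

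Since $M_{f,j}(\chi)$ is exactly the truncation at $\Omega(n)\le\ell_j$ of $\exp(-G_j(\chi))$, one has the identity $M_{f,j}(\chi)e^{G_j(\chi)}=1-\widetilde R_j(\chi)$, where $\widetilde R_j(\chi)$ is a Dirichlet series supported on $n$ with $p\mid n\Rightarrow p\in I_j$ and $\Omega(n)>\ell_j$, so small that the mean square — and, more generally, every fixed moment — of $\widetilde R_j(X)$ carries a factor $2^{-\ell_j}$, as in the computation behind \eqref{eq:Rerrorbd}. Hence $e^{P_j(\chi)}|M_{f,j}(\chi)|=|1-\widetilde R_j(\chi)|$, and exponentiating the displayed bound to the power $2k$ reduces Proposition~\ref{prop:mollifiedf} to showing
\[
\sumstar_{\chi\pamod q}e^{2k\mathcal Q_f(\chi)}\prod_{j=0}^{J}|1-\widetilde R_j(\chi)|^{2k}\ \ll_{k,\eta}\ q.
\]
The point of this reformulation is that on a large set each $1-\widetilde R_j(\chi)$ is close to $1$ and $e^{2k\mathcal Q_f(\chi)}$ is bounded, so the left side is $\ll q$; the work lies in the characters where some $\widetilde R_j(\chi)$ is large.

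The second main step is the dissection. I would partition the primitive characters by the smallest index $v$ at which $|\widetilde R_j(\chi)|$ first exceeds a threshold $\epsilon_j$ decaying in $j$ (say $\epsilon_j\asymp 2^{-c\ell_j}$), together with analogous conditions on the short prime-square sums $\sum_{p\in I_j}\frac{(\cdots)\chi(p)^2}{p}$, and write $\mathcal G$ for the set where no condition fails. On $\mathcal G$ one has $\prod_j|1-\widetilde R_j(\chi)|^{2k}\le\exp(2k\sum_j|\widetilde R_j(\chi)|)\ll_{k,\eta}1$ and $e^{2k\mathcal Q_f(\chi)}\ll_k1$, because $\sum_{j\le J}2^{-c\ell_j}$ is dominated by its finitely many largest terms — those with $\theta_j\asymp\eta$ — and is therefore $\ll 2^{-c\eta^{-3/4}}\ll_\eta1$; so the contribution of $\mathcal G$ is $\ll_{k,\eta}q$. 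On the complement I would proceed scale by scale: on the set failing first at $v$, the scales $j<v$ still contribute $\ll_{k,\eta}1$ exactly as on $\mathcal G$, leaving $\sum_{\chi}\mathbf 1[|\widetilde R_v(\chi)|>\epsilon_v]\,|1-\widetilde R_v(\chi)|^{2k}\big|\prod_{j>v}(1-\widetilde R_j(\chi))\big|^{2k}e^{2k\mathcal Q_{f,\ge v}(\chi)}$, which I would split by H\"older with exponent $1+\delta$, $\delta\asymp 1/J$ (so the exponent does not inflate over the $\asymp J$ scales): one factor is a high moment of $\widetilde R_v$ on $\{|\widetilde R_v|>\epsilon_v\}$, evaluated by orthogonality of characters — legitimate because $\ell_v\theta_v\asymp\theta_v^{1/4}<1$ keeps all relevant Dirichlet polynomials shorter than $q^{1-\varepsilon}$ — and profits from the $2^{-\ell_v}$ saving, while the other factor involves only the scales $j>v$, is of the same shape as the sum we started with but with strictly fewer scales, and is $\ll_{k,\eta}q$ by a downward induction on $v$.

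I expect the main obstacle to be this last step for the \emph{coarse} scales $j$ with $\theta_j\asymp\eta$: there are only $O(1)$ of them, but $\ell_j$ is merely $\asymp\eta^{-3/4}$, so the exceptional set $\{|\widetilde R_j(\chi)|>\epsilon_j\}$ has size only $\ll q\,2^{-c\eta^{-3/4}}$ — a small constant multiple of $q$, not a power saving — while the only cheap pointwise bound available is $|L(\tfrac12,f\otimes\chi)M_f(\chi)|^{2k}\ll q^{O(k\eta^{1/4})}$, a fixed positive power of $q$ once $k$ and $\eta$ are fixed; neither factor alone is small enough for a ``measure of bad set times pointwise maximum'' estimate. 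The way around this, following \cite{Har19,LR21}, is never to discard the cancellation: one keeps the exact factorisation $e^{P_v(\chi)}M_{f,v}(\chi)=1-\widetilde R_v(\chi)$ and exploits that $\widetilde R_v$, being supported on $\Omega(n)>\ell_v$, has all of its moments damped by $2^{-\ell_v}$, so that $\mathbb E\big[(1+|\widetilde R_v(X)|^{2k})(|\widetilde R_v(X)|/\epsilon_v)^{2M}\big]$ can be made small for a suitable constant $M$. Making this quantitatively work requires balancing the truncation level $\ell_j$, the length $q^{\theta_j}$ of each mollifier factor, and the exponent $k$ against one another, which is exactly why the proposition requires $\eta\lceil k\rceil$ to be sufficiently small. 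Finally, the prime-square term $\mathcal Q_f$ is folded into the same dissection — this is where GRH for $L(s,\mathrm{Sym}^2 f\otimes\chi)$ and $L(s,\chi)$ enters, beyond GRH for $L(s,f\otimes\chi)$ itself — or else it can be peeled off at the outset by Cauchy--Schwarz using standard moment bounds for $L(1,\mathrm{Sym}^2 f\otimes\chi^2)$ and $L(1,\chi^2)$.
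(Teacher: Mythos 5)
Your overall architecture is the right one and matches the paper's: Soundararajan--Chandee's GRH pointwise bound with the flexible cutoff $Y$, the prime squares peeled off via $L(1,\tmop{Sym}^2 f\otimes\chi^2)/L(1,\chi^2)$ and Cauchy--Schwarz, and a Harper-style multiscale dissection over the intervals $I_j$ in which the penalty $\exp(6k/\theta_j)$ for shortening the Dirichlet polynomial must be beaten by a Chebyshev-type gain at the first ``bad'' scale. However, there is a genuine gap at the heart of your implementation. You define the bad events through the remainders $\widetilde R_j(\chi)=1-e^{G_j(\chi)}M_{f,j}(\chi)$ and propose to evaluate high moments of $\widetilde R_v(\chi)$ over $\chi\pmod q$ ``by orthogonality of characters,'' justified by the lengths $q^{\ell_v\theta_v}$ being small. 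But $\widetilde R_v$ is not a Dirichlet polynomial: it is $e^{G_v(\chi)}$ times the infinite tail $\sum_{\Omega(n)>\ell_v}$ of the exponential series, an infinite Dirichlet series over $I_v$-smooth integers whose coefficient sum $\prod_{p\in I_v}e^{O(1/\sqrt p)}=\exp(q^{\theta_v/2+o(1)})$ is astronomically large. Orthogonality therefore does not apply, and no truncation of $\widetilde R_v$ at an admissible length has a controllable error for individual characters. The factor $2^{-\ell_v}$ saving you invoke is available for the \emph{random} model (where independence and $L^2$ convergence suffice, as in \eqref{eq:Rerrorbd}), but not for the arithmetic sum over $\chi$, which is exactly the object you need.

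The paper's proof avoids this by never forming $\widetilde R_j$: the dissection in Lemma \ref{lem:Lbd} is on the prime sums themselves, partitioning characters by the first scale at which $\max_{j\le r\le J}\tmop{Re}\,P_{I_j}(\chi;a_{f,r})\ge \ell_j/(ke^2)$. On the complementary event the inequality $e^t\le(1+e^{-\ell})E_\ell(t)$, valid precisely for $t\le\ell/e^2$, replaces each exponential factor by the \emph{polynomial} $E_{\ell_j}(2k\tmop{Re}\,P_{I_j})$, so that every character sum that must be computed --- $\sumstar_\chi D_j(\chi;k)(\tmop{Re}\,P_{I_{j+1}}(\chi;b))^{2t_{j+1}}|M_f(\chi)|^{2k}$ --- involves only finite Dirichlet polynomials of length $o(q)$, to which orthogonality (Lemma \ref{lem:diagonal}) and the random-model evaluation (Lemma \ref{lem:mollifiedeuler}) legitimately apply; the scales beyond the bad one are simply bounded by their mean square, with no induction or H\"older over $\asymp J$ scales needed. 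A further point your sketch glosses over: the coarsest scale is not the only delicate one. At $j=0$ the paper cannot afford even the polynomial substitution when $\tmop{Re}\,P_{I_0}$ is large, and instead uses Cauchy--Schwarz against Soundararajan's bound $\sumstar_\chi|L(\tfrac12,f\otimes\chi)|^{4k}\ll q(\log q)^{O(1)}$ together with a $(\tmop{Re}\,P_{I_0}/\ell_0)^{4t_0}$ Chebyshev factor, exploiting that $\ell_0\asymp(\log\log q)^{15/4}$ is large enough to yield a super-polylogarithmic saving. If you reformulate your bad events in terms of the prime sums $\tmop{Re}\,P_{I_j}$ rather than the remainders $\widetilde R_j$, your argument essentially becomes the paper's.
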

We assume GRH for $L(s, f\otimes \chi),L(s,\tmop{Sym}^2 f \otimes \chi^2)$ and $L(s,\chi^2)$ for all characters $\chi \pmod q$.

\subsection{Preliminaries} 
Note that by using the inequality $t^{2k} \le 1+t^{2 \lceil k \rceil}$, which holds for $t \ge 0$ and $k>0$, it suffices to prove the result for $k \in \mathbb N$.  
For an interval $I$ and a completely multiplicative function $a(n)$ we define
\[
P_{I}(\chi; a )=\sum_{\substack{p \in I }} \frac{a(p) \chi(p)}{\sqrt{p}}.
\]
Recalling that $\sum_{p_1\cdots p_{\ell}} 1 =\ell! \nu(n)$ we have
\begin{equation} \label{eq:PID}
\begin{split}
P_I(\chi; a)^\ell=&\sum_{p_1, \ldots, p_\ell \in I} \frac{a(p_1 \cdots p_\ell)\chi(p_1 \cdots p_\ell)}{\sqrt{p_1 \cdots p_\ell}}=\sum_{\substack{p|n \Rightarrow p \in I \\ \Omega(n)=\ell}} \frac{a(n) \chi(n)}{\sqrt{n}} \sum_{p_1\cdots p_\ell=n} 1 \\
=& \ell! \sum_{\substack{p|n \Rightarrow p \in I \\ \Omega(n)=\ell}} \frac{a(n)\nu(n) \chi(n)}{\sqrt{n}} .
\end{split}
\end{equation}

For $\ell$ a positive even integer and $t \in \mathbb R$, let
\[
E_{\ell}(t)=\sum_{j \le \ell} \frac{t^j}{j!}.
\]
By \cite[Lemma 1]{Radziwill-Soundararajan} we see that $E_{\ell}(t)>0$ for any $t \in \mathbb R$ if $\ell$ is even and for $t \le \ell/e^2$  that 
\begin{equation} \label{eq:taylor2}
e^t \le (1+e^{-\ell})E_{\ell}(t).
\end{equation}
For any real number $k \neq 0$ we have
\begin{equation} \label{eq:id}
\begin{split}
E_{\ell}\big(2k \tmop{Re}(P_{I}(\chi;a))\big)=&\sum_{j \le \ell} \frac{k^j}{j!}  \big(P_{I}(\chi;a)+P_{I}(\overline \chi;a)\big)^j\\
=&\sum_{j\le \ell} k^j \sum_{r=0}^j \sum_{\substack{p|m \Rightarrow p \in I \\ \Omega(m) = r}} \frac{  a(m) \nu(m)\chi(m)}{\sqrt{m}}   \sum_{\substack{p|n \Rightarrow p \in I \\ \Omega(n) = j-r}} \frac{  a(n) \nu(n)\overline\chi(n)}{\sqrt{n}}   \\
=& \sum_{\substack{p|n \Rightarrow p \in I \\ \Omega(n) \le \ell}} \frac{k^{\Omega(n)}a(n)}{\sqrt{n}} (\nu \chi \ast \nu \overline \chi)(n).
\end{split}
\end{equation}

Additionally, for each $0\leq j\leq J$ let
\begin{equation} \label{eq:ddef}
D_j(\chi;k)=\prod_{r=0}^j (1+e^{-\ell_r}) E_{\ell_r}\big(2k \tmop{Re}(P_{I_r}(\chi;a_{f,j}))\big)
\end{equation}
and note that $D_j(\chi;k)>0$. 

\subsection{The random model} As in Subsection \ref{subsectionrandom}, let $\{X(p)\}_{p}$ be i.i.d. random variables that are uniformly distributed on the unit circle and let
$
X(n) =\prod_{p^a || n } X(p)^a.$
Define
\[
P_I(X;a)=\sum_{p \in I} \frac{a(p)}{\sqrt{p}} X(p)
\]
and just as in \eqref{eq:id} we have that
\begin{equation} \label{eq:EXexpand}
E_{\ell}\big(2k \tmop{Re}(P_{I}(X;a))\big)= \sum_{\substack{p|n \Rightarrow p \in I \\ \Omega(n) \le \ell}} \frac{k^{\Omega(n)}a(n)}{\sqrt{n}} (\nu X \ast \nu \overline X)(n).
\end{equation}
We also let
\begin{equation*}
D_j(X;k)=\prod_{r=0}^j (1+e^{-\ell_r}) E_{\ell_r}\big(2k \tmop{Re}(P_{I_r}(X;a_{f,j}))\big).
\end{equation*}

\subsection{Preliminary lemmas}
We use the conventions that $D_{-1}=1$ and $P_{I_{J+1}}=1$, $\theta_{J+1}=1$.
\begin{lemma}\label{lem:diagonal}
Let $0 \le j \le J+1$ and let $b(n)$ be a completely multiplicative function.
For $t \in \mathbb Z$ with $0 \le t \le \frac{2}{5 \theta_{j+1}}$
we have that
\[
\begin{split}
\frac{1}{\varphi(q)}\ \sumstar_{\chi \pamod q} &D_{j-1}(\chi;k)\big(\tmop{Re}(P_{I_{j}}(\chi;b))\big)^{2t}|M_f(\chi)|^{2k}\\
&=\mathbb E\left( D_{j-1}(X;k) \big(\tmop{Re}(P_{I_{j}}(X,b))\big)^{2t}|M_f(X)|^{2k} \right)+O(q^{-1/10}).
\end{split}
\]
\end{lemma}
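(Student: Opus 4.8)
The plan is to run the ``first moment recovers the random model'' argument used for \eqref{eq:trivial} and Lemma~\ref{lem:moments}. Using \eqref{eq:PID}, \eqref{eq:id}, \eqref{eq:ddef} and the definition of $M_f$, expand $D_{j-1}(\chi;k)\big(\tmop{Re}(P_{I_j}(\chi;b))\big)^{2t}|M_f(\chi)|^{2k}$ into a Dirichlet polynomial in $\chi$,
\[
G(\chi)=\sum_{m,n}\frac{g_{m,n}}{\sqrt{mn}}\,\chi(m)\overline{\chi(n)},
\]
where the $g_{m,n}$ arise purely from multinomial expansions and Dirichlet convolutions, so that performing the identical expansion with $\chi$ replaced by $X$ yields $G(X)$ with the same coefficients. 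Then $\frac{1}{\varphi(q)}\sumstar_{\chi\pamod q}G(\chi)=\sum_{m,n}\frac{g_{m,n}}{\sqrt{mn}}\cdot\frac{1}{\varphi(q)}\sumstar_{\chi\pamod q}\chi(m)\overline{\chi(n)}$ and $\mathbb E(G(X))=\sum_{m,n}\frac{g_{m,n}}{\sqrt{mn}}\,\mathbb E(X(m)\overline{X(n)})$. Since $q$ is prime, for $1\le m,n<q$ we have $\frac{1}{\varphi(q)}\sumstar_{\chi\pamod q}\chi(m)\overline{\chi(n)}=\mathbf{1}_{m=n}-\frac{1}{\varphi(q)}$, whereas $\mathbb E(X(m)\overline{X(n)})=\mathbf{1}_{m=n}$; hence, provided $g_{m,n}$ is supported on pairs with $m,n<q$ and $\sum_{m,n}|g_{m,n}|/\sqrt{mn}\ll q^{9/10}$, the two quantities differ by exactly $-\frac{1}{\varphi(q)}\sum_{m,n}g_{m,n}/\sqrt{mn}=O(q^{9/10}/\varphi(q))=O(q^{-1/10})$, which is the assertion.

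Thus everything reduces to a length estimate and a coefficient estimate. For the support: each factor $E_{\ell_r}\big(2k\tmop{Re}(P_{I_r}(\chi;a_{f,j}))\big)$ occurring in $D_{j-1}$, and likewise each $M_{f,r}(\chi)^k$, is supported (via \eqref{eq:id} and \eqref{eq:mollifierdef}) on integers composed of primes in $I_r$ with $\Omega\le\ell_r$ (respectively $\le k\ell_r$), so has length at most $q^{\theta_r\ell_r}\le q^{2\theta_r^{1/4}}$ (respectively $q^{2k\theta_r^{1/4}}$), since $\ell_r\le 2\theta_r^{-3/4}$. The remaining factor $\big(\tmop{Re}(P_{I_j}(\chi;b))\big)^{2t}$ has length at most $q^{2t\theta_j}$, and the hypothesis $t\le\frac{2}{5\theta_{j+1}}=\frac{2}{5e\theta_j}$ (for $j\le J$; for $j=J+1$ this factor equals $1$ by the convention $P_{I_{J+1}}=1$) bounds this by $q^{4/(5e)}$. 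Multiplying all the factors and using $\theta_r=\eta e^r/(\log\log q)^5$ together with $\theta_J\asymp\eta$ to sum the geometric series $\sum_{0\le r\le J}\theta_r^{1/4}\ll\theta_J^{1/4}\ll\eta^{1/4}$, one finds $G(\chi)$ supported on $m,n\le q^{4/(5e)+O(k\eta^{1/4})}$, which is below $q^{9/10}$ once $\eta$ is sufficiently small in terms of $k$, as we assume.

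For the coefficient bound, note that $A\mapsto\sum(\text{absolute values of the coefficients of }A)$, with the $1/\sqrt{\cdot}$ weights absorbed, is submultiplicative under products of Dirichlet polynomials, so it suffices to bound it for each factor separately. Using Deligne's bound $|\lambda_f(p)|\le 2$, the bounds $|w_j(p)|,\nu(n)\le 1$ and $|b(p)|\ll 1$, the inequality $2^{\Omega(n)}\le\sqrt n$ (valid once the prime factors of $n$ exceed $4$, which holds for all primes in every $I_r$ since $c_0$ is large), and the fact that the combinatorial weights $\ell!$, $\nu_k$, $\nu_{k,\ell}$ and the binomial coefficients appearing in \eqref{eq:PID}--\eqref{eq:id} contribute at most $q^{o(1)}$, one checks that for each factor this quantity is at most $q^{o(1)}$ times the number of supporting integers, hence at most $q^{o(1)}$ times its length. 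Multiplying, $\sum_{m,n}|g_{m,n}|/\sqrt{mn}\le q^{4/(5e)+O(k\eta^{1/4})+o(1)}\le q^{9/10}$, which completes the proof.

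The main obstacle is the bookkeeping in the previous two paragraphs: one must verify that every source of growth --- the powers $\ell_r$ entering $D_{j-1}$ and $M_f$, the power $2t$ in $(\tmop{Re}(P_{I_j}(\chi;b)))^{2t}$, the exponent $k$, and the number of scales $J\asymp\log\log\log q$ --- is small enough that the total support and the coefficient $\ell^1$-norm remain below $q^{1-\delta}$. The two mechanisms that make this work are the geometric decay $\theta_r\ell_r\le 2\theta_r^{1/4}$ across the scales $r=0,\dots,J$, which forces $\sum_r\theta_r\ell_r\ll\eta^{1/4}$, and the restriction $t\le\frac{2}{5\theta_{j+1}}$, which keeps the single ``long'' factor $(\tmop{Re}(P_{I_j}(\chi;b)))^{2t}$ of length at most $q^{4/(5e)}$. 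Beyond this, the computation is essentially identical to that of Lemma~\ref{lem:moments} and to the standard first-moment evaluations matching $\chi$-sums with the random model.
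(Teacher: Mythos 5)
Your proof is correct and follows the same strategy as the paper: subtract the $\chi_0$ term from the full orthogonality identity \eqref{eq:trivial} to pass to primitive characters, then bound that single term pointwise using the hypotheses on $t$ and the geometric decay of $\theta_r\ell_r$ to keep the total Dirichlet-polynomial length and coefficient size below $q^{9/10}$. The paper's version of the argument is essentially a one-line pointwise bound $q^{-1}q^{1/10}(q^{\theta_j})^{4/(5\theta_j)}\ll q^{-1/10}$; your more explicit bookkeeping of the length and $\ell^1$-coefficient norms fills in exactly the same estimate.
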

\begin{remark}\label{rem:indepexp}
\emph{Since $\{X(p)\}_p$ are independent random variables and the intervals $I_j$ are disjoint, we have that
\[
\mathbb E\Big( D_J(X;k)|M_f(X)|^{2k} \Big)=\prod_{0 \le j \le J} (1+e^{-\ell_j}) \mathbb E\Big( E_{\ell_j}\big(2k \tmop{Re}(P_{I_j}(X;a_{f,J}))\big)|M_{f,j}(X)|^{2k}\Big).
\]}
\end{remark}
\begin{proof}
The error term arises from pointwise bounding the principal character contribution (this is why the formula is not an identity). By \eqref{eq:trivial} the two expressions are equal when the sum is over all $\chi$ modulo $q$. The contribution from the term with $\chi=\chi_0$ is bounded by  $\ll q^{-1} q^{1/10} (q^{\theta_{j}})^{\frac{4}{5\theta_{j}}} \ll q^{-1/10} $. 
\end{proof}

\begin{lemma} \label{lem:mollifiedeuler}
Let $0 \le j \le J$ and let $b(n)$ be a completely multiplicative function with $b(p) \ll 1$. Then
\begin{equation} \label{eq:expectedbd}
\begin{split}
&\mathbb E\Big( E_{\ell_j}\big(2k \tmop{Re}(P_{I_j}(X;b))\big)|M_{f,j}(X)|^{2k}\Big)\\
&\qquad =\left(1+O\left(\frac{\mathbf{1}_{j=0} (\log q)^{O(1)}+1}{2^{\ell_j}} \right)\right) \prod_{p \in I_j} \left(1+\frac{k^2(a_{f,J}(p) -b(p))^2}{p}+O\left( \frac{1}{p^2}\right) \right)
\end{split}
\end{equation}
and for $t\in \mathbb Z$ with $t\ge 4k \ell_j$ we have that
\begin{equation*}
\mathbb E\Big(  \big(\tmop{Re} (P_{I_j}(X;b))\big)^{2t} |M_{f,j}(X)|^{2k} \Big) \ll\Big(\mathbf{1}_{j=0} (\log q)^{O(1)}+1 \Big) \frac{(2t)!}{2^{2t}\lfloor\tfrac 34 t\rfloor!} \bigg(\sum_{p \in I_j} \frac{b(p)^2}{p} \bigg)^{t}.
\end{equation*}
\end{lemma}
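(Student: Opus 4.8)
# Proof Proposal for Lemma~\ref{lem:mollifiedeuler}

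\textbf{Strategy.} The plan is to expand everything into Dirichlet series over integers supported on primes in $I_j$, use the independence of $\{X(p)\}_p$ to factor the expectation into an Euler product over $p \in I_j$, and then estimate each local factor. The key preliminary move, exactly as in the proof of Proposition~\ref{prop:randombound} (see~\eqref{eq:toomanyprimes} and~\eqref{eq:Rerrorbd}), is that the truncation $\Omega(n) \le \ell_j$ in the definitions of $E_{\ell_j}$ and $M_{f,j}$ costs only $O(2^{-\ell_j})$ times a harmless factor $(\log q)^{O(1)}$ when $j=0$ (coming from the larger prime support $I_0=(c_0,y]$) and $O(2^{-\ell_j})$ otherwise, since $c_0$ is large enough to ensure convergence of $\sum_{p|n \Rightarrow p \in I_j} C^{\Omega(n)}/n$.

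\textbf{First assertion.} First I would use~\eqref{eq:EXexpand} to write $E_{\ell_j}(2k\,\tmop{Re}(P_{I_j}(X;b)))$ as a Dirichlet polynomial with coefficients $k^{\Omega(n)} b(n)(\nu X \ast \nu \overline X)(n)/\sqrt n$, and similarly (using $M_{f,j}(X)^k = \sum_{p|n \Rightarrow p \in I_j} a_{f,J}(n)\lambda(n)\nu_{k,\ell_j}(n)X(n)/\sqrt n$ as in Lemma~\ref{lem:mollifiermomentbd}) write $|M_{f,j}(X)|^{2k}$ as a product of two such polynomials in $X$ and $\overline X$. Multiplying out and taking expectations, only the diagonal terms survive by orthogonality $\mathbb{E}(X(m)\overline{X(n)}) = \mathbf{1}_{m=n}$. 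After dropping the $\Omega \le \ell_j$ constraints at the stated cost, the surviving sum factors as an Euler product $\prod_{p \in I_j}(\text{local factor})$. The local factor at $p$ is $1$ plus a contribution from $a=1$ (the $p^1$ term) of size $k^2(a_{f,J}(p) - b(p))^2/p$ — here one checks that the $a=1$ term of $E_{\ell_j}$ contributes $-b(p)$ and the $a=1$ term from pairing $M_{f,j}$ with its conjugate contributes $a_{f,J}(p)$, and the cross terms combine into the square — plus $O(p^{-2})$ from all terms with $a \ge 2$, using Deligne's bound $|a_{f,J}(p)|, |b(p)| \ll 1$ and the rapid decay of $\nu(p^a) = 1/a!$ and $\nu_k(p^a) = k^a/a!$. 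This is a direct local computation analogous to the one producing $\mathcal{G}_p^{f,g}$ in Lemma~\ref{lem:gfest}, and I do not anticipate difficulty.

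\textbf{Second assertion.} For the moment bound, I would again expand $(\tmop{Re}(P_{I_j}(X;b)))^{2t}$ using the multinomial theorem: it equals $2^{-2t}$ times a sum over $2t$-tuples of primes $p_1,\dots,p_t,q_1,\dots,q_t \in I_j$ (with signs absorbed) of $b(p_1\cdots q_t)/\sqrt{p_1\cdots q_t}$ times $X(p_1\cdots p_t)\overline{X(q_1\cdots q_t)}$. As in the proof of~\eqref{eq:moments2}, after multiplying by $|M_{f,j}(X)|^{2k}$ and taking the expectation, the diagonal constraint forces the primes from the $P$-part and $M$-part to match up; the dominant contribution comes from the case where the $2t$ primes in the $P$-part pair among themselves, giving a factor $\ll (2t)!\,\nu_{\text{something}} \big(\sum_{p \in I_j} b(p)^2/p\big)^t$ after counting that an integer $n$ with $\Omega(n)=t$ arises $t!\nu(n)$ times, while the $M_{f,j}$ factors contribute the Euler product $\prod_{p\in I_j}(1+O(1/p)) = O(\mathbf 1_{j=0}(\log q)^{O(1)} + 1)$ as in Lemma~\ref{lem:mollifiermomentbd}. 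The combinatorial factor $(2t)!/(2^{2t}\lfloor \tfrac34 t\rfloor!)$ should emerge from bounding the number of pairings: most of the $2t$ $P$-primes pair within the $P$-part (each such pair forced to be equal, contributing $\sum b^2/p$), while at most a bounded proportion — governed by the constraint $t \ge 4k\ell_j$, which ensures the $M$-part of total degree $\le 2k\ell_j$ can ``absorb'' only few $P$-primes — can be tied to $M_{f,j}$ primes; accounting for which primes are absorbed versus paired produces the $\lfloor \tfrac34 t\rfloor!$ in the denominator rather than $t!$.

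\textbf{Main obstacle.} The delicate point is the second bound: tracking the exact combinatorial constant $(2t)!/(2^{2t}\lfloor \tfrac34 t\rfloor!)$ and verifying that the hypothesis $t \ge 4k\ell_j$ is exactly what is needed so that the number of $P$-primes that can be matched with the (bounded-degree) mollifier part is $\le \tfrac14 t$, leaving at least $\tfrac34 t$ to be paired within $P$ and hence contribute factorially fewer terms than a naive bound would suggest. Getting this bookkeeping right — rather than just proving \emph{some} bound of roughly the right shape — is where care is required; the first assertion and the truncation estimates are routine given the machinery already developed in Sections~\ref{sec:twisted} and~\ref{sec:random}.
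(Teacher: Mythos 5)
Your proposal follows essentially the same route as the paper's proof: expand $E_{\ell_j}$ and $|M_{f,j}(X)|^{2k}$ into Dirichlet polynomials, use orthogonality of the $X(p)$ to keep only the diagonal, remove the $\Omega\le\ell_j$ truncations at cost $O(2^{-\ell_j}(\mathbf{1}_{j=0}(\log q)^{O(1)}+1))$, and read off the Euler product whose local factor is governed by $h(p)=k(b(p)-a_{f,J}(p))$; for the moment bound the paper implements your ``pairing'' picture via the gcd decomposition $c_i=g_ic_i'$, $d_i=g_id_i'$, with $t\ge 4k\ell_j$ forcing $\Omega(g_1)=t-\Omega(c_1c_2)/2\ge\tfrac34 t$ and hence the $\lfloor\tfrac34 t\rfloor!$ denominator. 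The only tiny imprecision is your fraction bookkeeping: up to $2k\ell_j\le t/2$ (not $\tfrac14 t$) of the $2t$ $P$-primes may be absorbed by the mollifier, which still leaves at least $\tfrac34 t$ internally paired, exactly as needed.
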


\begin{proof}
Using \eqref{eq:EXexpand} we have that
\begin{equation} \label{eq:switch2}
\begin{split}
\mathbb E & \Big( E_{\ell_j}\big(2k \tmop{Re}(P_{I_j}(X;b))\big|M_{f,j}(X)|^{2k}\Big)\\
&\qquad=\sum_{\substack{p|mn \Rightarrow p \in I_j  \\ \Omega(m) \le \ell_j, \Omega(n) \le 2k \ell_j}} \frac{k^{\Omega(m)} b(m) \lambda(n) a_{f,J}(n)}{\sqrt{mn}} \mathbb E\Big( (\nu X \ast \nu \overline X)(m) \left(\nu_{k;\ell_j}X\ast \nu_{k;\ell_j} \overline X \right)(n)\Big)\\
&\qquad=\sum_{\substack{p|mn \Rightarrow p \in I_j  \\ \Omega(m) \le \ell_j, \Omega(n) \le 2k \ell_j}} \frac{k^{\Omega(m)}b(m) \lambda(n)a_{f,J}(n)}{\sqrt{mn}}\sum_{\substack{c_1d_1=m \\ c_2d_2=n \\ c_1c_2=d_1d_2}} \nu(c_1) \nu(d_1) \nu_{k;\ell_j}(c_2) \nu_{k;\ell_j}(d_2).
\end{split}
\end{equation}
Since $\tmop{max}\{\Omega(m),\Omega(n)\} > \ell_j$ implies $2^{\Omega(mn)}/2^{\ell_j}> 1$, there exists fixed $C>0$ sufficiently large such that 
\begin{equation}\label{eq:extendsum} 
\begin{split}
&\mathbb E  \Big( E_{\ell_j}\big(2k \tmop{Re}(P_{I_j}(X;b))\big)|M_{f,j}(X)|^{2k}\Big)
\\
&\qquad=\sum_{p|mn \Rightarrow p \in I_j } \frac{k^{\Omega(m)}b(m) \lambda(n)a_{f,J}(n)}{\sqrt{mn}}\sum_{\substack{c_1d_1=m \\ c_2d_2=n \\ c_1c_2=d_1d_2}} \nu(c_1) \nu(d_1) \nu_{k}(c_2) \nu_{k}(d_2)
\\&\qquad\qquad+O\bigg( \frac{1}{2^{\ell_j}} \sum_{p|r_1r_2 \Rightarrow p\in I_j} \frac{C^{\Omega(r_1r_2)}}{r_1r_2}\bigg).
\end{split}
\end{equation}
Here we also used that $\nu_{k;\ell}(n)=\nu_{k}(n)$ if $\Omega(n) \le \ell$.
The error term in \eqref{eq:extendsum} is
\begin{equation} \label{eq:extendsumET}
\ll \frac{\mathbf{1}_{j=0} (\log q)^{O(1)}+1}{2^{\ell_j}},
\end{equation}
where we have used that $c_0$ is sufficiently large. Define the multiplicative functions $h_1,h_2$ and $h$ by
\[
h_1(n)=k^{\Omega(n)}b(n) \nu(n), \quad h_2(n)=\lambda(n)a_{f,J}(n)\nu_k(n), \quad \text{ and } \quad h(n)=(h_1\ast h_2)(n),
\] 
and note that
\[
\begin{split}
h(p)&=b(p)k-a_{f,J}(p)\nu_k(p)=k(b(p)-a_{f,J}(p)),\\
|h(p^a)|&\leq \frac{(Ck)^a}{a!}
\end{split}
\]
for some constant $C>0$. Therefore, writing $mn=r$, $c_1c_2=r_1$, $d_1d_2=r_2$, grouping terms appropriately and using that $a_{f,J}(n),k^{\Omega(n)},b(n),\lambda(n)$ are completely multiplicative the main term in \eqref{eq:extendsum} is
\begin{equation} \label{eq:extendsumMT}
\sum_{p|r \Rightarrow p \in I_j} \frac{1}{\sqrt{r}} \sum_{\substack{r_1r_2=r \\ r_1=r_2}}  h(r_1)h(r_2)=\sum_{\substack{p|n  \Rightarrow p \in I_j} }\frac{h(n)^2}{n}=\prod_{p \in I_j}\left( 1+\frac{ k^2(b(p)-a_{f,J}(p))^2}{p}+O\left( \frac{1}{p^2}\right)\right).
\end{equation}
Hence, using \eqref{eq:extendsumET} and \eqref{eq:extendsumMT}  in \eqref{eq:extendsum} completes the proof of \eqref{eq:expectedbd}.

For the second statement, we note that by using \eqref{eq:PID} we have that
\[
\begin{split}
\big(\tmop{Re}( P_{I_j}(X;b))\big)^{2t}&=2^{-2t}\sum_{s=0}^{2t}\binom{2t}{s}P_{I_j}(X;b)^sP_{I_j}(\overline X;b)^{2t-s}\\
&=2^{-2t}(2t)!\sum_{s=0}^{2t}\sum_{\substack{p|m  \Rightarrow p \in I_j \\\Omega(m)=s}}\frac{b(m)\nu(m)}{\sqrt{m}}X(m)\sum_{\substack{p|n  \Rightarrow p \in I_j \\\Omega(n)=2t-s}}\frac{b(n)\nu(n)}{\sqrt{n}}\overline X(n)\\
&=2^{-2t}(2t)!\sum_{\substack{p|n\Rightarrow p\in I_j\\\Omega(n)=2t}}\frac{b(n)}{\sqrt{n}}(\nu X\ast \nu\overline X)(n).
\end{split}
\]
Therefore, similarly to \eqref{eq:switch2} we have that
\[
\begin{split}
\frac{2^{2t}}{(2t)!} & \mathbb E \Big ( \big(\tmop{Re}( P_{I_j} (X;b))\big)^{2t} |M_{f,j}(X)|^{2k} \Big)\\
&=\sum_{\substack{p|mn\Rightarrow p\in I_j \\ \Omega(m)=2t,  \Omega(n)\le 2k  \ell_j}}  \frac{b(m) \lambda(n) a_{f,J}(n)}{\sqrt{mn}}
\sum_{\substack{c_1d_1=m \\ c_2d_2=n \\ c_1c_2=d_1d_2}}  \nu(c_1)\nu(d_1) \nu_{k;\ell_j}(c_2)\nu_{k; \ell_j}(d_2).
\end{split}
\]
Taking absolute values inside the sum and recalling that $\nu_{k;\ell}(n)\le\nu_{k}(n)$, we have the bound
\[
\ll\sum_{\substack{p|c_1c_2d_1d_2\Rightarrow p\in I_j \\ \Omega(c_1d_1)=2t, \Omega(c_2d_2)\le 2k \ell_j \\c_1c_2=d_1d_2}} \frac{|b(c_1d_1) a_{f,J}(c_2d_2)|}{\sqrt{c_1c_2d_1d_2}}  \nu(c_1) \nu(d_1) \nu_{k}(c_2)\nu_k(d_2).
\]
We now write $c_i=g_ic'_i$ and $d_i=g_id'_i$ with $g_i:=(c_i,d_i)$ and $(c'_i,d'_i)=1$ for $i=1,2$. Then, since $c_1c_2=d_1d_2$, we have that $c'_1c'_2=d'_1d'_2$ and therefore $c'_1=d'_2$ and $d'_1=c'_2$. Relabelling $c'_i$ as $c_i$ and writing $\gamma_k(n):=a_{f,J}(n)\nu_k(n)$, the above is bounded by
\begin{equation}\label{eq:expandedexp}
\sum_{\substack{p|g_1g_2c_1c_2 \Rightarrow p\in I_j \\ \Omega(c_1c_2g_1^2)=2t, \Omega(c_1c_2)\le 2k \ell_j}}\frac{b(g_1)^2 \gamma_k^2(g_2)|b(c_1c_2) \gamma_k(c_1)\gamma_k(c_2)|}{g_1g_2c_1c_2} \nu(g_1)^2,
\end{equation}
where we have used that $\nu(n)\leq1$ and $\nu_k(mn) \le \nu_k(m)\nu_k(n)$. 

We now bound the sum over $g_2$. Using that $|\lambda_f(p)|\le 2$, we have 
\[
\sum_{p|g_2\Rightarrow p\in I_j}\frac{\gamma_k^2(g_2)}{g_2}
\ll\prod_{p\in I_j}\left(1+\frac{\lambda_f(p)^2\nu_{k}(p)^2}{p}\right)
\ll
\begin{cases}
\big(\frac{\theta_j}{\theta_{j-1}}\big)^{4k^2}\asymp_k 1,&\text{ if }j\neq0,\\
(\log q)^{O_k(1)},&\text{ if }j=0.
\end{cases}
\]
Hence  we may bound \eqref{eq:expandedexp} by
\[
\begin{split}
\ll\Big(\mathbf{1}_{j=0}(\log q)^{O(1)}+1\Big)\sum_{\substack{p|c_1c_2\Rightarrow p\in I_j \\ \Omega(c_1c_2)\le 2k \ell_j \\ 2| \Omega(c_1c_2)}} & \frac{|b(c_1c_2) \gamma_k (c_1)\gamma_k(c_2)|}{c_1c_2}\sum_{\substack{p|g\Rightarrow p\in I_j\\\Omega(g)=t-\Omega(c_1c_2)/2}}\frac{b(g)^2\nu(g)^2}{g}\\
\ll\Big(\mathbf{1}_{j=0}(\log q)^{O(1)}+1\Big)\sum_{\substack{p|c_1c_2 \Rightarrow p\in I_j \\ \Omega(c_1c_2)\le 2k \ell_j \\ 2| \Omega(c_1c_2)}} & \frac{|b(c_1c_2) \gamma_k (c_1)\gamma_k(c_2)|}{c_1c_2} \bigg(\frac{1}{\big(t-\frac{\Omega(c_1c_2)}{2}\big)!}\bigg(\sum_{p\in I_j}\frac{b(p)^2}{p}\bigg)^{t-\frac{\Omega(c_1c_2)}{2}}\bigg).
\end{split}
\]
Since by assumption $t\ge 4k\ell_j$ and $\Omega(c_1c_2)\le 2k\ell_j$, we have that $\frac{3}{4}t\le t-\Omega(c_1c_2)/2$. Thus the final bracketed term above is bounded by
\[
\frac{1}{\lfloor\frac{3}{4}t\rfloor!}\bigg(\sum_{p\in I_j}\frac{b(p)^2}{p}\bigg)^t.
\]

For the remaining sum over $c_1,c_2$, there exists some $C>0$ such that this sum is bounded by $\sum_{p|n\Rightarrow p\in I_j}\frac{C^{\Omega(n)}}{n}\ll \mathbf{1}_{j=0}(\log q)^{O(1)}+1$. Therefore, we have that
\[
\mathbb E\Big(\big(\tmop{Re}( P_{I_j}(X;b))\big)^{2t} |M_{f,j}(X)|^{2k}  \Big) 
\ll\Big(\mathbf{1}_{j=0}(\log q)^{O(1)}+1\Big)\frac{(2t)!}{2^{2t}\lfloor\frac{3}{4}t\rfloor!}\bigg(\sum_{p\in I_j}\frac{b(p)^2}{p}\bigg)^t,
\]
as claimed.
\end{proof}

\begin{lemma} \label{lem:L1}
Assume GRH. Suppose $Y \ge (\log q)^3$. Then the following statements hold:
\begin{enumerate}
    \item $\displaystyle\log L(1,\tmop{Sym}^2 f \otimes \chi^2)=\sum_{p \le Y} \frac{\lambda_f(p^2) \chi(p)^2}{p}+O(1)$;
    \item $\displaystyle\log L(1, \chi^2)=\sum_{p \le Y} \frac{\chi(p)^2}{p}+O(1)$, for $\chi^2 \neq \chi_0$;
   \item $\displaystyle \frac{1}{\varphi^{\star}(q)}\, \sumstar_{\substack{\chi\ \pamod q \\ \chi^2 \neq \chi_0}} \bigg|\frac{L(1, \tmop{Sym}^2 f \otimes \chi^2)}{L(1, \chi^2)} \bigg|^{2k} \ll 1$.
\end{enumerate}

\end{lemma}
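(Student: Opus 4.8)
The plan is to establish (1) and (2) by the standard GRH-conditional approximation of $\log L(1,\pi)$ by a short prime sum, and then deduce (3) by expanding the exponential of the resulting prime sum and invoking orthogonality of characters.

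For (1), I would use the GRH analogue of Lemma \ref{lem:logbd}: applying Perron's formula to $-L'/L(s,\tmop{Sym}^2 f \otimes \chi^2)$ and shifting the contour past the line $\tmop{Re}(s)=\tfrac12$, GRH for $L(s,\tmop{Sym}^2 f \otimes \chi^2)$ gives
\[
\log L(1,\tmop{Sym}^2 f \otimes \chi^2)=\sum_{n \le Y}\frac{\Lambda_{\tmop{Sym}^2 f \otimes \chi^2}(n)}{n \log n}+o(1)
\]
as soon as $\log Y$ exceeds a fixed multiple of the logarithm of the analytic conductor; since the analytic conductor of $\tmop{Sym}^2 f \otimes \chi^2$ is $q^{O_f(1)}$, the hypothesis $Y \ge (\log q)^3$ is more than enough. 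The prime powers $n=p^m$ with $m \ge 2$ contribute $O(1)$, because by Deligne's bound the Satake parameters of $\tmop{Sym}^2 f$ have modulus one, so $|\Lambda_{\tmop{Sym}^2 f \otimes \chi^2}(p^m)| \le 3\log p$ and $\sum_p \sum_{m\ge 2} 3/(mp^m) \ll 1$. The surviving terms $n=p$ give $\sum_{p \le Y}\tfrac{\lambda_{\tmop{Sym}^2 f}(p)\chi(p)^2}{p}$, and $\lambda_{\tmop{Sym}^2 f}(p)=\lambda_f(p^2)$ by the Hecke relations, which is (1). Part (2) is the identical argument with $L(s,\tmop{Sym}^2 f \otimes \chi^2)$ replaced by $L(s,\chi^2)$, using that for $q$ prime and $\chi^2 \ne \chi_0$ the character $\chi^2$ is primitive, so $L(s,\chi^2)$ is entire.

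For (3), combining (1) and (2) with $Y=(\log q)^3$ yields, uniformly for $\chi^2 \ne \chi_0$,
\[
\left|\frac{L(1,\tmop{Sym}^2 f \otimes \chi^2)}{L(1,\chi^2)}\right|^{2k} \ll_{f,k}\ \exp\bigg(2k\,\tmop{Re}\sum_{p \le Y}\frac{(\lambda_f(p^2)-1)\chi(p)^2}{p}\bigg).
\]
The right side is nonnegative and defined for every $\chi$, and $\varphi^{\star}(q)=\varphi(q)-1$, so it suffices to bound $\tfrac{1}{\varphi(q)}\sum_{\chi \pamod q}$ of it. Expanding the exponential as a product over $p \le Y$ and then as a double Dirichlet series, this equals
\[
\sum_{m,n}\frac{c(m)c(n)}{mn}\cdot\frac{1}{\varphi(q)}\sum_{\chi \pamod q}\chi(m)^2\overline{\chi(n)^2},
\]
where $c(m)=k^{\Omega(m)}\nu(m)\prod_{p^a \| m}(\lambda_f(p^2)-1)^a$ is supported on $Y$-smooth integers and $|\lambda_f(p^2)-1|=|\lambda_f(p)^2-2| \le 2$. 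By orthogonality the inner sum is $\mathbf 1_{q \mid m^2-n^2}$ (coprimality to $q$ is automatic since $m,n$ are $Y$-smooth and $q$ is prime). The diagonal $m=n$ contributes $\sum_{m}\tfrac{c(m)^2}{m^2}=\prod_{p \le Y}(1+O_k(p^{-2}))\ll_k 1$. For the off-diagonal, since $q$ is prime, $q \mid m^2-n^2$ with $m \ne n$ forces $q \mid m-n$ or $q \mid m+n$, and in either case $\max(m,n) \ge q/2$; hence the off-diagonal is $\ll \big(\sum_{m \ge q/2,\,Y\text{-smooth}}\tfrac{|c(m)|}{m}\big)\big(\sum_{n\,Y\text{-smooth}}\tfrac{|c(n)|}{n}\big)$. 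The second factor is $\le \prod_{p \le Y}e^{2k/p}=(\log q)^{o(1)}$, while by Rankin's trick with $\delta=1/\log Y$ the first is $\ll q^{-\delta}\prod_{p\le Y}e^{2kp^{\delta-1}} \ll q^{-\delta}(\log q)^{o(1)}=q^{-1/(3\log\log q)+o(1)}=o(1)$. Combining gives (3).

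The main obstacle is the bookkeeping in (1)–(2): making the contour shift past the critical line precise enough to yield a genuine $O(1)$ error with $Y$ only a fixed power of $\log q$. This is exactly where GRH is indispensable, as the classical zero-free region would only produce an error of size $(\log\log q)^{O(1)}$ here. In (3) the only delicate point is that $q \mid m^2-n^2$ is weaker than $m=n$, but the extra solutions force $m$ or $n$ to have size $\gg q$, and the rapid decay of $c(m)$ combined with Rankin's trick renders their contribution negligible.
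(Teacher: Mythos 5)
Your approach is correct, and on part (3) it is genuinely different from the paper's. The paper handles (1)--(2) by citing an external result ([BB, Lemma~5]), and for (3) it applies the truncated exponential $E_\ell$ (as in their equation (5.3) with $\ell = 2\lfloor(\log\log\log q)^2\rfloor$), extends the sum to all characters via nonnegativity, and then bounds the resulting moments $\sum_\chi (\cdots)^j$ using the diagonal computation underlying their Lemma~2.2. Your approach instead expands the full exponential into an absolutely convergent Dirichlet series over $Y$-smooth integers, applies orthogonality to reduce to the condition $q\mid m^2-n^2$, and disposes of the off-diagonal by Rankin's trick. Both work: the paper's truncation keeps the Dirichlet polynomial short enough that orthogonality gives an exact diagonal (no off-diagonal to bound), whereas your method accepts the off-diagonal and shows it is negligible via $\max(m,n)\ge q/2$ plus smoothness. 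Your version is arguably cleaner in that it avoids the $E_\ell$ machinery entirely for this one lemma; the paper's is consistent with the truncation technology used throughout Section~5.

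One slip in the write-up of (1): you state the approximation holds ``as soon as $\log Y$ exceeds a fixed multiple of the logarithm of the analytic conductor,'' which would require $Y\ge q^{c}$ and contradicts the conclusion that $Y\ge(\log q)^3$ suffices. The correct GRH-conditional requirement is that $Y$ be a suitable fixed power of $\log\mathfrak C$ (roughly $Y\gg(\log\mathfrak C)^{2}$, coming from the error term $\ll(\log\mathfrak C)(\log Y)/\sqrt{Y}$ after shifting past $\tmop{Re}(s)=\tfrac12$). Since $\mathfrak C\asymp q^{O_f(1)}$ this gives $Y\gg(\log q)^2$, so $(\log q)^3$ is indeed sufficient; the conclusion you draw is right, but the quoted sufficient condition is not. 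The rest of (1)--(2) -- Perron's formula, GRH contour shift, $|\Lambda_{\tmop{Sym}^2 f\otimes\chi^2}(p^m)|\le 3\log p$, $\lambda_{\tmop{Sym}^2 f}(p)=\lambda_f(p^2)$ -- is fine, and for (2) you correctly note that $\chi^2$ is primitive when $q$ is prime and $\chi^2\neq\chi_0$.
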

\begin{proof}
Applying \cite[Lemma 5]{BB} gives (1) and (2). The last claim can be shown to follow unconditionally, however it is easy to prove using GRH and we give a sketch below. Using (1) and (2) we have
\[
\bigg|\frac{L(1, \tmop{Sym}^2 f \otimes \chi^2)}{L(1, \chi^2)} \bigg|^{2k} \ll \exp\bigg(2k \tmop{Re} \sum_{p \le (\log q)^3} \frac{(\lambda_f(p^2)-1) \chi^2(p)}{p} \bigg).
\]
Since $\sum_{p \le (\log q)^3} \frac{(\lambda_f(p^2)-1) \chi^2(p)}{p} \ll \log \log \log q$, we can apply \eqref{eq:taylor2} to the right hand side above with $\ell=2 \lfloor (\log \log \log q)^2 \rfloor$. Using the non-negativity of $E_{\ell}(\cdot)$ we extend the sum to all $\chi$ modulo $q$ to bound the left hand side of (3) by
\[
\ll \sum_{j=0}^{\ell} \frac{(2k)^j}{j!} \frac{1}{\varphi^{\star}(q)} \sum_{\substack{\chi \pamod q  }} \bigg(\tmop{Re}\bigg( \sum_{p \le (\log q)^3} \frac{(\lambda_f(p^2)-1) \chi^2(p)}{p}\bigg) \bigg)^j.
\]
Arguing as in the proof of \eqref{eq:moments2} the sum over $\chi\ \pamod q$ can be seen to be $ \ll q (C j)^j$ for each $1 \le j \le \ell$ and $C>0$ sufficiently large. This gives that the right hand side above is $$\ll \sum_{j=1}^{\ell} \bigg(\frac{C}{j}\bigg)^{j/2} \ll 1, $$ where $C=C(k)>0$ is sufficiently large.
\end{proof}

\begin{lemma} \label{lem:Lbd}
Assume GRH. Suppose $\chi^2\ne\chi_0$. Then either
\[
\tmop{Re}(P_{I_0}(\chi;a_{f,j}))\ge\frac{\ell_0}{ke^2}
\]
for some $0\le j\le J$ or
\[
\begin{split}
&|L(\tfrac12,f\otimes \chi)|^{2k} L(1,\tmop{Sym}^2 f \otimes \chi^2)^{-k}  L(1,\chi^2)^{k} \\
&\qquad\ll D_J(\chi;k)+\sum_{\substack{0 \le j \le J-1 \\ j+1 \le u \le J}} \exp\left(\frac{6k}{\theta_j} \right) D_j(\chi;k) \left( \frac{e^2 k \tmop{Re}(P_{I_{j+1}}(\chi;a_{f,u}))}{\ell_{j+1}}\right)^{t_{j+1}}
\end{split}
\]
for any sequence of nonnegative integers $(t_j)$.
\end{lemma}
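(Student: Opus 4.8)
The plan is to combine Soundararajan's conditional pointwise upper bound for $\log|L(\tfrac12,f\otimes\chi)|$ with a dyadic dichotomy governed by the sizes of the partial prime sums $\tmop{Re}(P_{I_r}(\chi;a_{f,j'}))$; throughout, implied constants may depend on $f$ and $k$. First I would record the analytic input: under GRH for $L(s,f\otimes\chi)$, Soundararajan's explicit-formula argument \cite{S} in its degree-$2$ form (as used in \cite{Radziwill-Soundararajan,LR21}), applied with shift parameter $1$ at the scale $q^{\theta_j}$, gives for each $0\le j\le J$ a bound of the shape
\[
\log|L(\tfrac12,f\otimes\chi)|\le \tmop{Re}\sum_{r=0}^{j}P_{I_r}(\chi;a_{f,j})\;+\;\tfrac12\log|L(1,\tmop{Sym}^2 f\otimes\chi^2)|-\tfrac12\log|L(1,\chi^2)|\;+\;\frac{A}{\theta_j}
\]
for a constant $A$. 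The key point is that the prime contribution lands exactly on $\sum_{r=0}^{j}P_{I_r}(\chi;a_{f,j})$, because the Soundararajan weight at scale $q^{\theta_j}$ is precisely $w_j(p)=p^{-1/(\theta_j\log q)}(1-\tfrac{\log p}{\theta_j\log q})$ and $\bigcup_{r=0}^{j}I_r=(c_0,q^{\theta_j}]$; the prime-square sum is identified with the two $L(1,\cdot)$ factors by partial summation and Lemma \ref{lem:L1}(1)--(2) (this is the only place $\chi^2\ne\chi_0$ enters, and it uses GRH for $L(s,\tmop{Sym}^2 f\otimes\chi^2)$ and $L(s,\chi^2)$), while the primes $p\le c_0$, the prime powers $p^m$ with $m\ge3$, and the $\log$-conductor term (with $\log$-conductor of $f\otimes\chi$ equal to $2\log q+O_f(1)$) are absorbed into the $O(1)$ and $A/\theta_j$ errors via Deligne's bound. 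Raising to the $2k$-th power and exponentiating, I then obtain, for every $0\le j\le J$,
\[
|L(\tfrac12,f\otimes\chi)|^{2k}\,|L(1,\tmop{Sym}^2 f\otimes\chi^2)|^{-k}\,|L(1,\chi^2)|^{k}\ \ll\ \exp\Bigl(\tfrac{6k}{\theta_j}\Bigr)\prod_{r=0}^{j}\exp\bigl(2k\,\tmop{Re}(P_{I_r}(\chi;a_{f,j}))\bigr),
\]
where one verifies, using the explicit conductor, that the total exponent is at most $6k/\theta_j$.

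Next comes the dichotomy. For $-1\le j\le J$ I say that $\chi$ \emph{clears level $j$} if $\tmop{Re}(P_{I_r}(\chi;a_{f,j'}))$ stays below the threshold for which \eqref{eq:taylor2} is usable (of size $\asymp\ell_r/k$, calibrated using $\ell_r=2\lfloor\theta_r^{-3/4}\rfloor$) for all $0\le r\le j$ and all $j\le j'\le J$; this is vacuous for $j=-1$. Let $j^\star=j^\star(\chi)$ be the largest level that $\chi$ clears. If $j^\star=-1$, then $\tmop{Re}(P_{I_0}(\chi;a_{f,j}))\ge\ell_0/(ke^2)$ for some $j$, which is precisely the first alternative of the lemma. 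If $0\le j^\star\le J$, I apply the last display with $j=j^\star$: clearing level $j^\star$ (with $j'=j^\star$) lets \eqref{eq:taylor2} be applied termwise, and since $\prod_r(1+e^{-\ell_r})=1+o(1)$ one gets $\prod_{r=0}^{j^\star}\exp(2k\,\tmop{Re}(P_{I_r}(\chi;a_{f,j^\star})))\ll D_{j^\star}(\chi;k)$, hence the left-hand side is $\ll\exp(6k/\theta_{j^\star})\,D_{j^\star}(\chi;k)$. When $j^\star=J$ this is already $\ll D_J(\chi;k)$, since $\theta_J\asymp\eta$ is bounded away from $0$, so the exponential factor is $O_{f,k}(1)$; this yields the $D_J(\chi;k)$ term in the statement.

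For $0\le j^\star\le J-1$ I extract the extra power factor: the character fails to clear level $j^\star+1$, and since clearing level $j^\star$ already controls every $I_r$ with $r\le j^\star$ against all weights $a_{f,j'}$ with $j'\ge j^\star$, the obstruction must sit in $I_{j^\star+1}$, i.e.\ $\tmop{Re}(P_{I_{j^\star+1}}(\chi;a_{f,u}))\ge\ell_{j^\star+1}/(ke^2)$ for some $j^\star+1\le u\le J$, whence $\bigl(e^2 k\,\tmop{Re}(P_{I_{j^\star+1}}(\chi;a_{f,u}))/\ell_{j^\star+1}\bigr)^{t_{j^\star+1}}\ge1$ for every nonnegative integer $t_{j^\star+1}$. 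Multiplying the bound of the previous paragraph by this harmless factor, and then enlarging the single pair $(j^\star,u)$ to the full double sum over $0\le j\le J-1$, $j+1\le u\le J$ (the added summands being nonnegative, e.g.\ on taking the $t_j$ even, which is how the lemma is applied), produces the stated inequality together with the $D_J(\chi;k)$ term from $j^\star=J$ and the first alternative from $j^\star=-1$. The main obstacle is the analytic input of the first paragraph: one must run Soundararajan's argument so that the prime weight is exactly $w_j$, pin the prime-square tail to $\tfrac12(\log|L(1,\tmop{Sym}^2 f\otimes\chi^2)|-\log|L(1,\chi^2)|)+O(1)$ through Lemma \ref{lem:L1}, and confirm that the error remains below $6k/\theta_j$; the remainder is the fiddly but routine bookkeeping of the mollifier's dyadic structure, of the calibration of the clearing threshold against \eqref{eq:taylor2}, and of which weight $a_{f,j}$ versus $a_{f,u}$ appears where.
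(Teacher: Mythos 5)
Your proposal is correct and takes essentially the same route as the paper's proof: the same GRH pointwise bound (Soundararajan/Chandee) whose prime term at scale $q^{\theta_j}$ is exactly $P_{I_0}+\cdots+P_{I_j}$ with weight $a_{f,j}$, the same identification of the prime-square contribution with $k\log L(1,\tmop{Sym}^2 f\otimes\chi^2)-k\log L(1,\chi^2)$ via Lemma \ref{lem:L1}, the same trichotomy on which partial sums exceed $\ell_r/(ke^2)$ combined with \eqref{eq:taylor2} to produce $D_j(\chi;k)$, and the same insertion of the trivially $\ge1$ power factor before enlarging to the full double sum. The only cosmetic difference is that you index the cases by the largest ``cleared'' level rather than by the first failing set $\mathcal{S}_{j+1}$ as in the paper.
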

\begin{proof}
By \cite[Theorem 2.1]{C} we have that for $Y \ge 8$
\begin{equation}\label{eq:chandeebd}
|L(\tfrac 12, f\otimes \chi)|^{2k}
\le \exp\bigg(2k\tmop{Re}\sum_{p^n\le Y} \frac{ (\alpha_{f,1}(p)^n+\alpha_{f,2}(p)^n)\chi(p^n)}{np^{n(\frac 12 +\frac{1}{\log Y})}}\frac{\log Y/p^n}{\log Y}+6k \frac{\log q}{\log Y} +O(1)\bigg).
\end{equation}
In the sum over prime powers, the contribution of the powers $n\ge 3$ is $O(1)$. For the prime squares, we note that $\alpha_{f,1}(p)^2+\alpha_{f,2}(p)^2=\lambda_f(p^2)-1$ and therefore the contribution is
\begin{equation}\label{eq:primesquare}
k\sum_{p\le \sqrt{Y}}\frac{(\lambda_f(p^2)-1)\chi(p)^2}{p}\frac{\log Y/p^2}{p^{2/\log Y} \log Y}=k\sum_{p\le \sqrt{Y}}\frac{(\lambda_f(p^2)-1)\chi(p)^2}{p}+O(1),
\end{equation}
where in the error term we used that $\sum_{p \le \sqrt{Y}} \frac{\log p}{p \log Y} \ll 1$.
Applying Lemma \ref{lem:L1}, (1) and (2) we see that \eqref{eq:primesquare} equals
\begin{equation}\label{eq:primesquare2}
k \log L(1,\tmop{Sym}^2 f \otimes \chi^2)-k \log L(1, \chi^2)+O(1),
\end{equation}
provided $ Y \ge (\log q)^6$.

For $0\leq j\leq J$ let $\mathcal{S}_j$ be the set of characters $\chi$ modulo $q$ such that $\chi^2\ne\chi_0$ and
\[
\max_{j\le r\le J} \tmop{Re}P_{I_j}(\chi;a_{f,r})<\frac{\ell_j}{k e^2}.
\]
For each such $\chi$ we must have either (i) $\chi\notin\mathcal{S}_0$; (ii) $\chi\in\mathcal{S}_j$ for all $0\le j\le J$; (iii) there exists $0\le j\le J-1$ such that $\chi\in\mathcal{S}_r$ for $0\le r\le j$ and $\chi\notin\mathcal{S}_{j+1}$. In particular, for each $\chi^2\ne\chi_0$ we have either
\begin{align}
\max_{0\le r\le J} \tmop{Re}P_{I_0}(\chi;a_{f,r})&\ge\frac{\ell_0}{k e^2};\label{eq:case1}\\
\max_{j\le r\le J}\tmop{Re}P_{I_j}(\chi;a_{f,r})&<\frac{\ell_j}{k e^2} \text{ for each }0\leq j\leq J\label{eq:case2};
\end{align}
or for some $0\leq j\leq J-1$
\begin{equation}\label{eq:case3}
\begin{split}
\max_{r\le u\le J} \tmop{Re}P_{I_r}(\chi;a_{f,u}) & <\frac{\ell_r}{ke^2}\text{ for each } 0\le r\le j\\
\text{and }\max_{j+1\le u\le J} \tmop{Re}P_{I_{j+1}}(\chi;a_{f,u}) & \ge\frac{\ell_{j+1}}{ke^2},
\end{split}
\end{equation}
with \eqref{eq:case1} corresponding to (i), \eqref{eq:case2} to (ii) and \eqref{eq:case3} to (iii). 

If we have \eqref{eq:case1}, then we can conclude. If \eqref{eq:case2} holds, then we set $Y=q^{\theta_J}$ in \eqref{eq:chandeebd} so that $\frac{\log q}{\log Y}\ll 1$. Also applying \eqref{eq:primesquare2} to handle the contribution of the squares of primes, we have that
\[
\begin{split}
|L(\tfrac12,f\otimes \chi)|^{2k} L(1,\tmop{Sym}^2 f \otimes \chi^2)^{-k}  L(1,\chi^2)^{k} &\ll \exp\bigg( 2k \sum_{p\le q^{\theta_J}}\frac{a_{f,J}(p)\chi(p)}{\sqrt{p}}\bigg)\\
&\le \prod_{j=0}^J (1+e^{-\ell_j}) E_{\ell_j}\big (2k\text{Re}( P_{I_j}(\chi; a_{f,J}))\big)\\
&= D_J(\chi;k),
\end{split}
\]
by the definition \eqref{eq:ddef}. Lastly, if we are in the case \eqref{eq:case3}, then we take $Y=q^{\theta_j}$ in \eqref{eq:chandeebd} so that $\frac{\log q}{\log Y}=\frac{1}{\theta_j}$. For $0\le r\le j$ we argue as in the previous case to bound the contribution of the primes by $D_j(\chi;k)$ and use \eqref{eq:primesquare2} to estimate the  contribution from the prime squares.
Therefore  we have
\[
\begin{split}
&|L(\tfrac12,f\otimes \chi)|^{2k} L(1,\tmop{Sym}^2 f \otimes \chi^2)^{-k} L(1,\chi^2)^{k} \\
&\qquad\qquad\ll \exp\left(\frac{6k}{\theta_j}\right) D_j(\chi;k) \\
&\qquad\qquad\ll \exp\left(\frac{6k}{\theta_j}\right) D_j(\chi;k) \max_{j+1\le u\le J}\left(\frac{e^2k \tmop{Re} (P_{I_{j+1}}(\chi; a_{f,u}))}{\ell_{j+1}}\right)^{t_{j+1}},
\end{split}
\]
where in the last step we have trivially applied
\[
\max_{j+1\le u\le J}\left(\frac{e^2k \tmop{Re} (P_{I_{j+1}}(\chi; a_{f,u}))}{\ell_{j+1}}\right)^{t_{j+1}} \ge 1.
\]
\end{proof}

\begin{lemma}\label{lem:Expectations}
For $0\leq j\leq J$ let $t_j\in\mathbb{Z}$ such that $4k\ell_j\le t_j\le\frac{2}{5\theta_j}$. Let $b(n)$ be a completely multiplicative function with $b(p) \ll 1$. Then
\begin{align*}
&\sumstar_{\chi\pamod q} D_j(\chi;k)\big(\tmop{Re}(P_{I_{j+1}}(\chi;b))\big)^{2t_{j+1}}|M_f(\chi)|^{2k}\ll q e^{4k^2(J-j)}\frac{(2t_{j+1})!}{2^{2t_{j+1}}\lfloor\frac{3}{4}t_{j+1}\rfloor!}\bigg(\sum_{p\in I_{j+1}}\frac{b(p)^2}{p}\bigg)^{t_{j+1}}
\end{align*}
for each $0\leq j\leq J-1$ and 
\[
\sumstar_{\chi\pamod q}\big(\tmop{Re}(P_{I_0}(\chi;b))\big)^{2t_0}|M_f(\chi)|^{2k}
\ll q(\log q)^{O(1)}\frac{(2t_0)!}{2^{2t_0}\lfloor\frac{3}{4}t_0\rfloor!}\bigg(\sum_{p\in I_0}\frac{b(p)^2}{p}\bigg)^{t_0}.
\]
\end{lemma}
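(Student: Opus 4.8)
The plan is to reduce the character sum to the random model via Lemma \ref{lem:diagonal}, then bound the resulting expectation using Remark \ref{rem:indepexp} and the two estimates from Lemma \ref{lem:mollifiedeuler}. First I would observe that the hypothesis $4k\ell_{j+1}\le t_{j+1}\le \frac{2}{5\theta_{j+1}}$ (respectively $4k\ell_0\le t_0\le\frac{2}{5\theta_0}$) is exactly what is needed to invoke Lemma \ref{lem:diagonal} with the completely multiplicative function $b(n)$ and the exponent $2t_{j+1}$ in the argument $\big(\tmop{Re}(P_{I_{j+1}}(\chi;b))\big)^{2t_{j+1}}$, giving
\[
\frac{1}{\varphi(q)}\sumstar_{\chi\pamod q} D_j(\chi;k)\big(\tmop{Re}(P_{I_{j+1}}(\chi;b))\big)^{2t_{j+1}}|M_f(\chi)|^{2k}=\mathbb E\Big(D_j(X;k)\big(\tmop{Re}(P_{I_{j+1}}(X;b))\big)^{2t_{j+1}}|M_f(X)|^{2k}\Big)+O(q^{-1/10}),
\]
and since all subsequent bounds will be of size $\gg 1$ the error term $O(q^{-1/10})$ will be harmless after multiplying through by $\varphi(q)\asymp q$.

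Next I would factor the expectation over primes. Since $\{X(p)\}_p$ are independent and the intervals $I_0,\dots,I_{j+1}$ are disjoint, the factor $D_j(X;k)|M_f(X)|^{2k}$ splits as $\prod_{0\le r\le j}(1+e^{-\ell_r})E_{\ell_r}\big(2k\tmop{Re}(P_{I_r}(X;a_{f,j}))\big)|M_{f,r}(X)|^{2k}$ times the ``tail'' mollifier factors $\prod_{j+1\le r\le J}|M_{f,r}(X)|^{2k}$, while $\big(\tmop{Re}(P_{I_{j+1}}(X;b))\big)^{2t_{j+1}}|M_{f,j+1}(X)|^{2k}$ is independent of everything else. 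For $0\le r\le j$, the first estimate \eqref{eq:expectedbd} of Lemma \ref{lem:mollifiedeuler} with the completely multiplicative choice $b=a_{f,j}$ bounds each factor by $1+O(2^{-\ell_r})$ times $\prod_{p\in I_r}\big(1+\tfrac{k^2(a_{f,J}(p)-a_{f,j}(p))^2}{p}+O(p^{-2})\big)$; since $a_{f,J}(p)-a_{f,j}(p)=\lambda_f(p)(w_J(p)-w_j(p))$ and $|w_J(p)-w_j(p)|$ is small on $I_r$ with $r\le j$, together with Mertens' theorem this product over $0\le r\le j$ is $\ll e^{O(k^2(J-j))}$ — or more simply one bounds it by the crude $\prod_{p\le x}(1+O(k^2/p))\ll(\log x)^{O(k^2)}$ on the $j=0$-type terms, but the sharper $e^{4k^2(J-j)}$ shape (as stated) comes from noting each interval $I_r$, $r\ge 1$, contributes a factor $(\theta_r/\theta_{r-1})^{O(k^2)}=e^{O(k^2)}$ and there are $j+1$ of them, with the telescoping structure of the $\theta_r$ handled as in Lemma \ref{lem:mollifiedeuler}. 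For the tail factors $\prod_{j+1\le r\le J}\mathbb E(|M_{f,r}(X)|^{2k})$, Lemma \ref{lem:mollifiermomentbd} gives each is $\prod_{p\in I_r}(1+O(1/p))$, and the product over $j+1\le r\le J$ is $\ll 1$ (indeed this is part of the $e^{4k^2(J-j)}$ budget). Finally the $I_{j+1}$-factor $\mathbb E\big(\big(\tmop{Re}(P_{I_{j+1}}(X;b))\big)^{2t_{j+1}}|M_{f,j+1}(X)|^{2k}\big)$ is bounded by the second estimate of Lemma \ref{lem:mollifiedeuler} — valid precisely because $t_{j+1}\ge 4k\ell_{j+1}$ — by $\ll\frac{(2t_{j+1})!}{2^{2t_{j+1}}\lfloor\frac34 t_{j+1}\rfloor!}\big(\sum_{p\in I_{j+1}}b(p)^2/p\big)^{t_{j+1}}$ (with an extra $(\log q)^{O(1)}$ only if $j+1=0$, which does not occur here). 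Multiplying these together and by $\varphi(q)\asymp q$ yields the claimed bound; the $j=0$ statement is the same computation with no $D$-factor present, $j+1$ replaced by $0$, so the $\mathbf 1_{j=0}(\log q)^{O(1)}$ from Lemma \ref{lem:mollifiedeuler} produces the stated $(\log q)^{O(1)}$.

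The main obstacle I expect is bookkeeping the growth of the product $\prod_{0\le r\le j}\prod_{p\in I_r}(1+O(k^2/p))$ and getting it into the clean exponential form $e^{4k^2(J-j)}$ rather than a cruder $(\log q)^{O(k^2)}$: one has to use the specific structure $\theta_r=\eta e^r/(\log\log q)^5$ so that $\sum_{p\in I_r}1/p=\log(\theta_r/\theta_{r-1})+O(1)=1+O(1)$ for $r\ge1$ (and the $r=0$ interval $I_0=(c_0,y]$ gives $\sum_{p\in I_0}1/p=\log\log y+O(1)$ only if we are not in the $j=0$ case — but here the $D_0$ factor is $E_{\ell_0}(2k\tmop{Re}(P_{I_0}(X;a_{f,j})))|M_{f,0}(X)|^{2k}$ whose expectation, by \eqref{eq:expectedbd} again, is $\prod_{p\in I_0}(1+k^2(a_{f,J}(p)-a_{f,j}(p))^2/p+O(p^{-2}))$ which is $\ll 1$ since the difference $w_J(p)-w_j(p)$ is $\ll \log p/\log q$, so no logarithmic loss occurs). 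Once this telescoping is carried out carefully, the rest is a routine assembly of the factored estimates.
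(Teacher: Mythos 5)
Your proposal follows the paper's proof essentially step for step: Lemma \ref{lem:diagonal} plus independence to factor the expectation, the first estimate of Lemma \ref{lem:mollifiedeuler} together with the cancellation $\sum_{p\le q^{\theta_j}}(a_{f,J}(p)-a_{f,j}(p))^2/p\ll 1$ to make the $0\le r\le j$ factors $O(1)$, the second estimate of Lemma \ref{lem:mollifiedeuler} for the $I_{j+1}$ factor, and the mollifier moment bound $\prod_{q^{\theta_j}<p\le q^{\theta_J}}(1+k^2\lambda_f(p)^2/p)\ll e^{4k^2(J-j)}$ for the tail $j+1<r\le J$, which is exactly where the paper gets that exponential. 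The only wobble is your mid-proof suggestion that the $e^{4k^2(J-j)}$ arises from the $j+1$ head intervals (which would give $e^{O(k^2 j)}$, not $e^{O(k^2(J-j))}$); your closing paragraph corrects this to the right attribution, matching the paper.
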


\begin{proof}
First, by Lemma \ref{lem:diagonal} and arguing as in Remark \ref{rem:indepexp}, we have that 
\begin{equation}\label{eq:expectprod}
\begin{split}
&\frac{1}{\varphi(q)}\ \sumstar_{\chi\pamod q} D_j(\chi;k)\big(\tmop{Re}(P_{I_{j+1}}(\chi;b))\big)^{2t_{j+1}}|M_f(\chi)|^{2k}\\
&\qquad=\prod_{0\le r_1\le j}(1+e^{-\ell_{r_1}/2})\mathbb{E}\Big(E_{\ell_{r_1}}\big(2k\tmop{Re}(P_{I_{r_1}}(X;a_{f,j}))\big)|M_{f,r_1}(X)|^{2k}\Big)\\
&\qquad\qquad\qquad\times\prod_{j+1\le r_2\le J}\mathbb{E}\Big(\big(\tmop{Re}(P_{I_{j+1}}(X,b))\big)^{2t_{j+1}}|M_{f,r_2}(X)|^{2k}\Big)+O\left(q^{-1/10}\right).
\end{split}
\end{equation}
For the product over $0\le r_1\le j$, we apply Lemma \ref{lem:mollifiedeuler} to get the bound
\[
\ll \prod_{0 \le r_1 \le j}\left(1+O\left(\frac{\mathbf{1}_{r_1=0} (\log q)^{O(1)}+1}{2^{\ell_{r_1}}} \right)\right) \prod_{c_0 < p \le q^{\theta_j}} \left(1+\frac{k^2(a_{f,J}(p)-a_{f,j}(p))^2}{p}+O\left( \frac{1}{p^2}\right) \right).
\]
We have that
\begin{equation}\label{eq:aJajsum}
\sum_{c_0<p\leq q^{\theta_j}}\frac{(a_{f,J}(p)-a_{f,j}(p))^2}{p}
\ll\sum_{c_0< p\leq q^{\theta_j}}\frac{\log p}{\theta_jp\log q}
\ll1,
\end{equation}
hence the product above is bounded by $\ll 1$. 

In the case $r_2=j+1$, we apply Lemma \ref{lem:mollifiedeuler} to obtain the bound
\[
\ll\frac{(2t_{j+1})!}{2^{2t_{j+1}}\lfloor\frac{3}{4}t_{j+1}\rfloor!}\bigg(\sum_{p\in I_{j+1}}\frac{b(p)^2}{p}\bigg)^{t_{j+1}}.
\]

It remains to bound the product over $j+1<r_2\le J$ in \eqref{eq:expectprod}. Using \eqref{eq:mfmoment} we have
\begin{equation}\label{eq:rJcase}
\prod_{j+1< r_2\le J}\mathbb{E}\Big(|M_{f,r_2}(X)|^{2k}\Big)
\ll\prod_{q^{\theta_j}< p\le q^{\theta_J}}\left(1+\frac{k^2\lambda_f(p)^2}{p}\right)\ll e^{4k^2(J-j)},
\end{equation}
where we have used that $|\lambda_f(p)|\le 2$. Combining these estimates, we obtain the first statement. 

For the second statement, by Lemma \ref{lem:diagonal} we need to bound 
\[
\prod_{0\le j\le J}\mathbb{E}\left(\big(\tmop{Re}(P_{I_0}(X;b))\big)^{2t_0}|M_{f,j}(X)|^{2k}\right).
\]
For the $j=0$ term, by Lemma \ref{lem:mollifiedeuler} we have the bound
\[
\ll(\log q)^{O(1)}\frac{(2t_0)!}{2^{2t_0}\lfloor\frac{3}{4}t_0\rfloor!}\bigg(\sum_{p\in I_0}\frac{b(p)^2}{p}\bigg)^{t_0}.
\]
For the remaining terms $1\le j\le J$, by \eqref{eq:rJcase} we have the bound $e^{4k^2J}\ll(\log\log q)^{O(1)}\ll(\log q)^{O(1)}$, completing the proof.
\end{proof}

\subsection{The Proof of Proposition \ref{prop:mollifiedf}}
First, note that
\[
\sumstar_{\chi \pamod q} |L(\tfrac12, f\otimes \chi)M_f(\chi)|^{2k} 
=\sumstar_{\substack{\chi \pamod q \\ \chi^2\neq \chi_0}}  |L(\tfrac12, f\otimes \chi)M_f(\chi)|^{2k}+|L(\tfrac12, f\otimes \chi_1) M_f(\chi_1)|^{2k},
\]
where $\chi_1$ is the non-principal character mod $q$ satisfying $\chi_1^2=\chi_0$. The $\chi_1$ term is negligible using bounds on GRH for the $L$-functions and a pointwise bound for the mollifier. Defining 
\[
A(\chi)=\frac{L(1,\chi^2)}{L(1,\tmop{Sym}^2 f \otimes \chi^2)}.
\]
It suffices to show that
\begin{equation}\label{eq:Aupperbound}
\sumprime_{\chi \pamod q} |L(\tfrac 12 ,f\otimes\chi) M_f(\chi)|^{2k}|A(\chi)|^k\ll q,
\end{equation}
where $\sumprime$ denotes a sum over primitive characters $\chi^2\neq \chi_0$. Applying Cauchy-Schwarz's inequality followed by \eqref{eq:Aupperbound} and Lemma \ref{lem:L1} establishes Proposition \ref{prop:mollifiedf}.

We now prove \eqref{eq:Aupperbound}. We split the sum over $\chi$ modulo $q$ according to whether or not 
\begin{equation}\label{eq:Pbound}
\tmop{Re}( P_{I_0}(\chi;a_{f,j}))\ge\frac{\ell_0}{ke^2}
\end{equation}
for some $0\le j\le J$. In the case that \eqref{eq:Pbound} holds, we apply Chebyshev's inequality and Cauchy-Schwarz's inequality to see that
\begin{equation} \label{eq:final}
\begin{split}
&\sumprime_{\substack{\chi\pamod q\\ \tmop{Re}(P_{I_0}(\chi;a_{f,j}))\ge\frac{\ell_0}{ke^2}}}  |L(\tfrac 12 ,f\otimes\chi)  M_f(\chi)|^{2k}|A(\chi)|^k\\
&\ll\sumprime_{\chi \pamod q }  |L(\tfrac 12 ,f\otimes\chi) M_f(\chi)|^{2k}|A(\chi)|^k\left(\frac{ke^2\tmop{Re}(P_{I_0}(\chi;a_{f,j}))}{\ell_0}\right)^{2t_0}\\
&\le\bigg(\ \sumprime_{\chi \pamod q}  |L(\tfrac 12 ,f\otimes\chi)|^{4k}|A(\chi)|^{2k}\bigg)^{1/2} \bigg(\frac{(ke^2)^{4t_0}}{\ell_0^{4t_0}}\sumprime_{\chi \pamod q }\big(\tmop{Re}(P_{I_0}(\chi;a_{f,j}))\big)^{4t_0} |M_f(\chi)|^{4k}\bigg)^{1/2}.
\end{split}
\end{equation}
The argument given by Soundararajan \cite{S} carries over to give that \[\sumstar_{\chi \pmod q} |L(\tfrac12,f\otimes \chi)|^{2k} \ll q(\log q)^{k^2+\varepsilon}\] for any $k>0$. Using Cauchy-Schwarz's inequality, this estimate and Lemma \ref{lem:L1} the first sum on the right hand side of \eqref{eq:final} is $\ll q(\log q)^{O(1)}$. Applying Lemma \ref{lem:Expectations} with $t_0=\lfloor \frac{1}{5\theta_0}\rfloor$ followed by Stirling's formula, the second sum on the right hand side of \eqref{eq:final} is bounded by
\[
\ll q(\log q)^{O(1)}\left(\frac{k^2e^4}{\ell_0^2}\right)^{2t_0}\frac{(4t_0)!}{2^{4t_0}\lfloor \tfrac 32 t_0\rfloor !} \bigg( 5 \log \log q \bigg)^{2t_0}
\ll q(\log q)^{O(1)}\exp\Big(-(\log\log q)^4\Big),
\]
noting we can extend the summation to all primitive $\chi$ modulo $q$ using non-negativity. Combining these estimates, the contribution of the $\chi$ modulo $q$ satisfying \eqref{eq:Pbound} to \eqref{eq:Aupperbound} is $\ll qJ(\log q)^{-10}$.

For the remaining characters $\chi$ modulo $q$, we apply Lemma \ref{lem:Lbd} to see that these characters contribute
\begin{equation}\label{eq:Lbdapplication}
\begin{split}
&\ll\sumstar_{\chi\pamod q}D_J(\chi;k) |M_f(\chi)|^{2k}\\
&\qquad+\sum_{\substack{0 \le j \le J-1 \\ j+1 \le r \le J}}  \exp\left(\frac{6k}{\theta_j} \right) \sumstar_{\chi\pamod q}D_j(\chi;k) \left( \frac{e^2 k \tmop{Re}(P_{I_{j+1}}(\chi;a_{f,r}))}{\ell_{j+1}}\right)^{2t_{j+1}} |M_f(\chi)|^{2k}.
\end{split}
\end{equation}
Again we have extended the sum to all primitive characters using non-negativity. By Lemma \ref{lem:diagonal} and Lemma \ref{lem:mollifiedeuler} we have that
\[
\begin{split}
&\frac{1}{\varphi(q)}\sumstar_{\chi\pamod q}D_J(\chi;k)|M_f(\chi)|^{2k}\ll \mathbb{E}\Big(D_J(X;k) |M_f(X)|^{2k}\Big)\\
&\qquad\ll \prod_{0\le j\le J}\left(1+O\left(\frac{\mathbf{1}_{j=0} (\log q)^{O(1)}+1}{2^{\ell_j}} \right)\right) \prod_{p \in I_j} \left(1+\frac{k^2(a_{f,J}(p)-a_{f,J}(p))^2}{p}+O\left( \frac{1}{p^2}\right) \right)\\
&\qquad\ll 1,
\end{split}
\]
where the last bound follows by \eqref{eq:aJajsum}. It remains to show that the second term of \eqref{eq:Lbdapplication} is $\ll q$. By Lemma \ref{lem:Expectations} with $t_j=\lfloor \frac{2}{5\theta_j}\rfloor$, the second term is bounded by
\[
\ll q\sum_{\substack{0 \le j \le J-1 \\ j+1 \le r \le J}}  \exp\left(\frac{6k}{\theta_j} \right)e^{4k^2(J-j)}\left(\frac{ke^2}{\ell_{j+1}}\right)^{2t_{j+1}} \frac{(2t_{j+1})!}{2^{2t_{j+1}}\lfloor\frac{3}{4}t_{j+1}\rfloor!}\bigg(\sum_{p\in I_{j+1}}\frac{a_{f,r}(p)^2}{p}\bigg)^{t_{j+1}}.
\]
We estimate the inner sum over primes trivially as $\le 5 \log(\frac{\theta_{j+1}}{\theta_j})=5$. The sum over $r$ then trivially contributes $J-j$, so that the above is bounded by
\[
\ll q\sum_{0 \le j \le J-1 } (J-j) \exp\left(\frac{6k}{\theta_j} \right)e^{4k^2(J-j)}\left(\frac{3ke^2}{\ell_{j+1}}\right)^{2t_{j+1}} \frac{(2t_{j+1})!}{2^{2t_{j+1}}\lfloor\frac{3}{4}t_{j+1}\rfloor!}.
\]
We now apply Stirling's formula and use that $t_{j+1}^{5/8}/\ell_{j+1}\ll\theta_{j+1}^{1/8}$ to get the bound
\[
\ll q \sum_{0 \le j \le J-1 } (J-j) e^{4k^2(J-j)}\exp\left(-\frac{c}{\theta_j}\log\frac{1}{\theta_j}+O\left(\frac{1}{\theta_j}\right)\right)
\]
for some $c>0$. Noting that $\theta_j=e^{j-J}\theta_J$ and relabelling, this is
\[
\ll q \sum_{1 \le j \le J} j e^{4k^2j}\exp\left(-\frac{cje^{j}}{2\theta_J}\right)
\ll q,
\]
as claimed. As both terms of \eqref{eq:Lbdapplication} are $\ll q$ this completes the proof.
\qed

\section{Weighted central limit theorem for Dirichlet $L$-functions} \label{sec:twistedD}

In this section we sketch a proof of Theorem \ref{thm:CLTDirichlet}. Recall that
\begin{align*}
\mathcal{M}_j(\chi) = \sum_{\substack{p\mid n \Rightarrow p \in I_j \\ \Omega(n)\leq \ell_j}} \frac{\lambda(n) \nu(n) \chi(n)}{\sqrt{n}}
\end{align*}
and
\begin{align}\label{defmolD}
\mathcal{M}(\chi) = \prod_{j=0}^J \mathcal{M}_j(\chi).
\end{align}
We can write
\begin{align*}
\mathcal{M}(\chi) &= \sum_{n\leq q^\vartheta} \frac{\gamma(n) \chi(n)}{\sqrt{n}}
\end{align*}
with $\vartheta=1/1000$, $\gamma(1) = 1$ and $|\gamma(n)| \leq 1$ otherwise.
It is not difficult to check that
\begin{align*}
\varphi_\mathcal{W}^\star(q) = \sideset{}{^*}\sum_{\chi\ (\text{mod}\ q)} \mathcal{W}(\chi) = \varphi(q) (1+O(q^{-\delta}))
\end{align*}
for some absolute constant $\delta > 0$. Additionally, let
\[
P(\chi)= \sum_{c_0<p \le y}\frac{\chi(p)}{\sqrt{p}}.
\]

The proof is similar in many respects to the proof of Theorem \ref{thm:cltjoint}. As in the proof of Theorem \ref{thm:cltjoint}, we consider a set $\mathcal{S}$ of characters $\chi$ such that
\begin{enumerate}
\item $\displaystyle \Lambda^{-1} \le |L(\tfrac{1}{2},\chi)\mathcal{M}(\chi)| \leq \Lambda$;
\item $\displaystyle\prod_{j=1}^J |\mathcal{M}_j(\chi)| \leq (\log \log q)^{B}$;
\item $\displaystyle \frac{|P(\chi)|}{\sqrt{\frac12 \log \log q}} \leq \log \log \log q$,
\end{enumerate}
where $\Lambda = \log \log q$. 
Like \eqref{eq:clean} we obtain
\begin{align*}
\tmop{Re}(P(\chi)) = \log |L(\tfrac{1}{2},\chi)| + O(\log \log \log q)
\end{align*}
for $\chi \in \mathcal{S}$. 

Write $\mathcal L(\chi)=\frac{\log |L(\frac{1}{2},\chi)|}{\sqrt{\frac12 \log \log q}}$ and $\mathcal P(\chi)= \frac{\tmop{Re}(P(\chi))}{\sqrt{\frac12 \log \log q}}$.
Taking $\delta=\frac{C \log \log \log q}{\sqrt{\log \log q}}$ for $C$ sufficiently large it follows for $\chi \in \mathcal S$ and $I=[a,b]$ that
\[
|\mathbf{1}_{I}(\mathcal L(\chi))-\mathbf{1}_{I}(\mathcal P(\chi))| \le \mathbf{1}_{[a-\delta,a+\delta]}(\mathcal P(\chi))+\mathbf{1}_{[b-\delta,b+\delta]}(\mathcal P(\chi)).
\]
Setting $I_{\delta}=[a-\delta,a+\delta] \cup [b-\delta,b+\delta]$, an
 application of Cauchy-Schwarz's inequality and Proposition \ref{upperboundsecondDirichlet} gives
\begin{equation} \label{eq:dirclttransfer}
\frac{1}{\varphi(q)} \sumstar_{\chi \in \mathcal S} \mathcal{W}(\chi) \mathbf{1}_{I}(\mathcal L(\chi))=\frac{1}{\varphi(q)} \sumstar_{\chi \in \mathcal S} \mathcal{W}(\chi) \mathbf{1}_{I}(\mathcal P(\chi))+O\bigg(\frac{1}{\sqrt{q}} \bigg(\ \sumstar_{\chi \pamod q} \mathbf 1_{I_{\delta}}(\mathcal P(\chi))\bigg)^{1/2} \bigg).
\end{equation}
By another application of Cauchy-Schwarz's inequality and Proposition \ref{upperboundsecondDirichlet} the sums can be extended to all primitive characters modulo $q$ at the cost of an error term of size $O((\log \log q)^{-1})$. Following the proof of Theorem \ref{thm:cltjoint}, an analogous argument shows that the error term is $\ll \frac{(\log \log \log q)^{1/2}}{(\log \log q)^{1/4}}$.

It remains to estimate 
\[
\frac{1}{\varphi_\mathcal{W}^{\star}(q)}\, \sumstar_{\chi \pamod q} \mathcal{W}(\chi) \mathbf{1}_{I}(\mathcal P(\chi)).
\]
Following the argument from the proof of Theorem \ref{thm:cltjoint}, using Proposition \ref{upperboundsecondDirichlet} in place of Proposition \ref{prop:mollified}, we see that Theorem \ref{thm:CLTDirichlet} follows once we have shown that
\begin{equation} \label{eq:goal}
\frac{1}{\varphi_\mathcal{W}^{\star}(q)}\, \sumstar_{\chi \pamod q} \mathcal{W}(\chi) e^{iu \tmop{Re}(P(\chi))}=e^{\frac{-u^2}{4} \log \log y}(1+O(u^2))+O((\log q)^{-10}),
\end{equation}
which is analogous to \eqref{thm:mainresult2}. 
Now we want to relate everything to a random setup. It is helpful to normalize by $\varphi_\mathcal{W}^\star(q)$. Recalling that $\varphi_\mathcal{W}^\star(q) = \varphi^{\star}(q) (1+O(q^{-\delta}))$, we see that
\begin{align*}
\frac{1}{\varphi_\mathcal{W}^\star(q)}\ \sideset{}{^*}\sum_{\chi\ (\text{mod}\ q)} \mathcal{W}(\chi) e^{iu \tmop{Re}(P(\chi))} = \frac{1}{\varphi^{\star}(q)}\ \sideset{}{^*}\sum_{\chi\ (\text{mod}\ q)} \mathcal{W}(\chi) e^{iu \tmop{Re}(P(\chi))} + O \bigg(q^{-1-\delta}\ \sideset{}{^*}\sum_{\chi\ (\text{mod}\ q)} |\mathcal{W}(\chi)| \bigg).
\end{align*}
We bound the error term using Cauchy-Schwarz's inequality and Proposition \ref{upperboundsecondDirichlet}, and see that it is acceptably small.

We argue as in Lemma \ref{lem:matchrandom} to get
\begin{align*}
\frac{1}{\varphi^{\star}(q)}\ \sideset{}{^*}\sum_{\chi\ (\text{mod}\ q)} \mathcal{W}(\chi) e^{iu \text{Re}(P(\chi))} &= \sum_{0\leq j \leq J} \frac{(iu/2)^j}{j!} \sum_{k=0}^j {j\choose k} \frac{1}{\varphi(q)}\ \sideset{}{^*}\sum_{\chi\ (\text{mod}\ q)} \mathcal{W}(\chi) P(\chi)^k P(\overline{\chi})^{j-k}\\
&\qquad\qquad + O((\log q)^{-10}),
\end{align*}
where we have used the binomial theorem and the fact that $\text{Re} (z) = \frac{1}{2}(z+\overline{z})$. We open everything up to see the sum over $\chi$ is equal to
\begin{align*}
 \sum_{n\leq q^\vartheta} \frac{\gamma(n)}{\sqrt{n}}\sum_{\substack{c_0 < p_1,\ldots,p_k \leq y \\ c_0 < q_1,\ldots,q_{j-k} \leq y}} \frac{1}{\sqrt{p_1\cdots p_kq_1\cdots q_{j-k}}} \frac{1}{\varphi(q)}\ \sideset{}{^*}\sum_{\chi\ (\text{mod}\ q)} L(\tfrac{1}{2},\chi) \chi(np_1\cdots p_k) \overline{\chi(q_1\cdots q_{j-k})}.
\end{align*}
Since $np_1\cdots p_k,q_1\cdots q_{j-k}$ are small we may show that
\begin{align*}
\frac{1}{\varphi^{\star}(q)}\ \sideset{}{^*}\sum_{\chi\ (\text{mod}\ q)} L(\tfrac{1}{2},\chi) \chi(np_1\cdots p_k) \overline{\chi(q_1\cdots q_{j-k})} = \frac{\mathbf{1}_{mnp_1\cdots p_k= q_1\cdots q_{j-k}}}{\sqrt{m}} + O(q^{-\delta}),
\end{align*}
for some absolute $\delta > 0$. Since $q_i \leq y$ we note that $m,n$ can only be composed of primes $\leq y$, i.e. only $M_0(\chi)$ contributes anything here in the mollifier. It follows that, by rewinding everything as in the proof of Lemma \ref{lem:matchrandom}, we have
\begin{align*}
\frac{1}{\varphi^{\star}(q)}\ \sideset{}{^*}\sum_{\chi\ (\text{mod}\ q)} \mathcal{W}(\chi) e^{iu \text{Re}(P(\chi))} = \mathbb{E}\bigg(L(X)\mathcal{M}_0(X) \exp\Big(iu \text{Re}(P(X))\Big) \bigg) + O((\log q)^{-10}),
\end{align*}
where
\begin{align*}
L(X) &= \sum_{p \mid m \Rightarrow p \leq y} \frac{1}{\sqrt{m}}X(m), \\
\mathcal{M}_0(X) &=\sum_{\substack{p\mid n \Rightarrow c_0 < p\leq y \\ \Omega(n)\leq \ell_0}} \frac{\lambda(n) \nu(n)}{\sqrt{n}} X(n), \\
P(X) &= \sum_{c_0 < p\leq y} \frac{1}{\sqrt{p}}X(p).
\end{align*}
We can replace $\mathcal{M}_0(X)$ by
\begin{align*}
\widetilde{\mathcal{M}_0}(X) = \sum_{\substack{p\mid n \Rightarrow c_0 < p\leq y}} \frac{\lambda(n) \nu(n) }{\sqrt{n}}X(n)
\end{align*}
via Cauchy-Schwarz's inequality and trivial estimations as in Lemma \ref{lem:multiplicative}. We then wish to compute
\begin{align*}
\mathbb{E} \bigg(L(X)\widetilde{ \mathcal{M}_0}(X) \exp\Big(iu \text{Re}(P(X))\Big) \bigg),
\end{align*}
and by independence this is equal to
\begin{align}\label{1000}
\prod_{c_0 < p \leq y} \mathbb{E}\left( \bigg(\sum_{a\geq 0} \frac{X(p)^a}{p^{a/2}} \bigg)\bigg(\sum_{b\geq 0} \frac{(-1)^b X(p)^b}{b! p^{b/2}} \bigg)\exp\bigg(iu \frac{\text{Re}(X(p))}{p^{1/2}} \bigg) \right).
\end{align}

We work with each local factor individually. We write
\begin{align*}
\text{Re}(X(p)) = \frac{1}{2}\left(X(p) + \overline{X(p)} \right)
\end{align*}
and then use Taylor expansion and the binomial theorem to see that
\begin{align*}
&\bigg(\sum_{a\geq 0} \frac{X(p)^a}{p^{a/2}} \bigg)\bigg(\sum_{b\geq 0} \frac{(-1)^b X(p)^b}{b! p^{b/2}} \bigg)\exp\bigg(iu \frac{\text{Re}(X(p))}{p^{1/2}} \bigg)\\
&\qquad\qquad=\sum_{\substack{a,b,c\geq 0\\0\leq d \leq c}} \frac{(-1)^b (iu/2)^c}{b! c!p^{(a+b+c)/2}}{c \choose d} X(p)^{a+b+d} \overline{X(p)^{c-d}}.
\end{align*}
Taking expectations, we see we have no contribution unless $a+b+d = c-d$. We add $c+d$ to both sides to get $a+b+c+2d = 2c$. Writing $2k = a+b+c$ so that $c=k+d$, and then $a=k-b-d$.
After some simplification we see that 
\begin{align*}
&\mathbb{E}\left( \bigg(\sum_{a\geq 0} \frac{X(p)^a}{p^{a/2}} \bigg)\bigg(\sum_{b\geq 0} \frac{(-1)^b X(p)^b}{b! p^{b/2}} \bigg)\exp\bigg(iu \frac{\text{Re}(X(p))}{p^{1/2}} \bigg) \right)\\
&\qquad\qquad=\sum_{k \geq 0}\frac{(iu/2)^k}{p^k}\sum_{\substack{b,d\geq 0 \\ b+d\leq k}} \frac{(-1)^b (iu/2)^{d}}{b! (k+d)!} {{k + d} \choose d}.
\end{align*}
The contribution from $k = 0$ is obviously $1$. The contribution from $k = 1$ is, after some work, seen to be
\begin{align*}
\frac{(iu/2)^2}{p} = -\frac{u^2}{4p}.
\end{align*}
The contribution from $k \geq 2$ is $O(u^2/p^2)$. 
Therefore \eqref{1000} is equal to
\begin{align*}
& \prod_{c_0 < p \leq y} \left(1 - \frac{u^2}{4p} +O\left(\frac{u^2}{p^2} \right)\right) = \prod_{c_0 < p \leq y} \left(1 - \frac{u^2}{4p}\right) \left(1+O\left(\frac{u^2}{p^2} \right)\right) \\
&\qquad\qquad= \exp \left( - \frac{u^2}{4}\log \log y\right)(1+O(u^2)),
\end{align*}
where in the last step we have used Taylor series expansions and Mertens' theorem. Collecting everything together establishes \eqref{eq:goal}. Hence, following the argument of Theorem \ref{thm:cltjoint} we get that
\[
\frac{1}{\varphi_\mathcal{W}^{\star}(q)} \sumstar_{\chi \pamod q} \mathcal{W}(\chi) \mathbf{1}_{I}\bigg(\frac{\log |L(\tfrac12,\chi)|}{\sqrt{\tfrac12 \log \log q}} \bigg)= \frac{1}{\sqrt{2\pi}}\int_I e^{-u^2/2} \, du+O \bigg( \frac{(\log \log \log q)^{1/2}}{(\log \log q)^{1/4}}\bigg),
\]
which finishes the proof. \qed

\section{Upper bound for the mollified second moment} \label{sec:upperboundsD}

In this section we prove Proposition \ref{upperboundsecondDirichlet} for the family of even characters. The estimate for the odd characters follows from a similar argument. We quote the following twisted second moment of Dirichlet $L$-functions (see \cite{IS} and \cite[Theorem 1.1]{BPRZ}).
\begin{lemma}\label{secondDirichlet}
Let $(x_n)$ be a sequence of real numbers supported on $1\leq n\leq L$ such that $x_n\ll n^\varepsilon$. Then we have
\begin{align*}
&\frac{1}{\varphi^{+}(q)}\sum_{m,n\leq L}\frac{x_mx_n}{\sqrt{mn}}\ \sumplus_{\chi \pamod q} L(\tfrac12+\alpha, \chi)L(\tfrac12+\beta,\overline{\chi})\chi(m) \overline{\chi(n)}\\
&\qquad\quad=\zeta(1+\alpha+\beta)\sum_{\substack{hm,hn\leq L\\(m,n)=1}} \frac{x_{hm}x_{hn}}{hm^{1+\alpha}n^{1+\beta}}\\
&\qquad\qquad\qquad+\Big(\frac{q}{\pi}\Big)^{-(\alpha+\beta)}\frac{\Gamma(\frac{\frac12-\alpha}{2})\Gamma(\frac{\frac12-\beta}{2})}{\Gamma(\frac{\frac12+\alpha}{2})\Gamma(\frac{\frac12+\beta}{2})}\zeta(1-\alpha-\beta)\sum_{\substack{hm,hn\leq L\\(m,n)=1}} \frac{x_{hm}x_{hn}}{hm^{1-\beta}n^{1-\alpha}}+O_{\varepsilon}(q^{-1/2+\varepsilon}L)
\end{align*}
uniformly for $|\alpha|,|\beta|\ll (\log q)^{-1}$, where $\varphi^{+}(q)$ is the number of even
primitive characters modulo $q$.
\end{lemma}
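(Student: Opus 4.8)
The plan is to reduce the average to a diagonal main term plus a controllable off-diagonal by inserting an approximate functional equation for the product $L(\tfrac12+\alpha,\chi)L(\tfrac12+\beta,\overline\chi)$ and then applying orthogonality of even characters modulo $q$; this is by now a classical argument (see \cite{IS} and \cite[Theorem 1.1]{BPRZ}), so I will only indicate the main steps. Since $q$ is prime, every non-principal character is primitive, and for an even primitive $\chi$ the root numbers satisfy $\varepsilon_\chi\varepsilon_{\overline\chi}=\tau(\chi)\tau(\overline\chi)/q=\chi(-1)=1$; consequently the completed product, with archimedean factor $(q/\pi)^{(1+\alpha+\beta)/2}\Gamma(\frac{\frac12+\alpha}{2})\Gamma(\frac{\frac12+\beta}{2})$, is invariant under $(\alpha,\beta)\mapsto(-\beta,-\alpha)$. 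Moving a contour past the pole at $s=0$ then yields, for even primitive $\chi$,
\[
L(\tfrac12+\alpha,\chi)L(\tfrac12+\beta,\overline\chi)=\sum_{a,b\ge1}\frac{\chi(a)\overline{\chi(b)}}{a^{1/2+\alpha}b^{1/2+\beta}}\,\mathcal V_{\alpha,\beta}\!\Big(\frac{\pi ab}{q}\Big)+X_{\alpha,\beta}(q)\sum_{a,b\ge1}\frac{\chi(a)\overline{\chi(b)}}{a^{1/2-\beta}b^{1/2-\alpha}}\,\mathcal V_{-\beta,-\alpha}\!\Big(\frac{\pi ab}{q}\Big),
\]
where $X_{\alpha,\beta}(q)=(q/\pi)^{-(\alpha+\beta)}\frac{\Gamma(\frac{\frac12-\alpha}{2})\Gamma(\frac{\frac12-\beta}{2})}{\Gamma(\frac{\frac12+\alpha}{2})\Gamma(\frac{\frac12+\beta}{2})}$ and $\mathcal V_{\alpha,\beta}$ is a smooth weight that is essentially $1$ for argument $\ll1$ and of rapid decay beyond, uniformly for $|\alpha|,|\beta|\ll(\log q)^{-1}$.

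Next I would multiply through by $x_mx_n\chi(m)\overline{\chi(n)}/\sqrt{mn}$, sum over $m,n\le L$, and average over the even primitive characters using
\[
\frac{1}{\varphi^{+}(q)}\sumplus_{\chi \pamod q}\chi(k)\overline{\chi(\ell)}=\frac{\varphi(q)}{2\varphi^{+}(q)}\big(\mathbf 1_{k\equiv \ell\,(q)}+\mathbf 1_{k\equiv -\ell\,(q)}\big)-\frac1{\varphi^{+}(q)}, \qquad (k\ell,q)=1.
\]
The contribution of the $-1/\varphi^{+}(q)$ term (the principal character) is $\ll q^{-1+\varepsilon}\big(\sum_{m\le L}m^{-1/2+\varepsilon}\big)^2\ll q^{-1+\varepsilon}L^{1+\varepsilon}$, which is absorbed into the error. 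The two main terms come from the genuine diagonal $am=bn$ in each of the two sums: writing $h=(m,n)$, $m=hm_0$, $n=hn_0$ with $(m_0,n_0)=1$ forces $a=n_0c$, $b=m_0c$, and summing $\sum_{c\ge1}c^{-1-\alpha-\beta}\mathcal V_{\alpha,\beta}(\pi m_0n_0h^2c^2/q)$ via a further contour shift through the pole at $s=0$ produces the factor $\zeta(1+\alpha+\beta)$; after relabelling the dummy variables this gives exactly $\zeta(1+\alpha+\beta)\sum_{hm,hn\le L,(m,n)=1}x_{hm}x_{hn}/(hm^{1+\alpha}n^{1+\beta})$. The dual diagonal is treated identically and, carrying along the prefactor $X_{\alpha,\beta}(q)$, yields the second main term with $\zeta(1-\alpha-\beta)$.

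The remaining work, and the main obstacle, is to bound the off-diagonal terms by $O_\varepsilon(q^{-1/2+\varepsilon}L)$. These are the terms with $am\equiv bn\pmod q$ but $am\ne bn$, together with all terms with $am\equiv -bn\pmod q$, and likewise for the dual sum. The leverage is that the smooth weight confines $ab\ll q^{1+\varepsilon}$ while the support condition confines $m,n\le L$, so in any such solution one of $a,b$ must be $\gg q/L$, hence large; one then counts the solutions of the relevant congruence and estimates the attached weights $a^{-1/2}b^{-1/2}(mn)^{-1/2}$, using for the genuine shifted correlations the analytic continuation of the associated Estermann-type Dirichlet series to extract the saving of a further $q^{-1/2}$. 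This off-diagonal estimate, together with the error incurred when extracting the two zeta-factors, concentrates essentially all the difficulty of the lemma; the details are carried out in \cite{IS} and \cite[Theorem 1.1]{BPRZ}, and assembling the diagonal main terms with these bounds gives the stated formula for even characters (the odd-character case, not needed here, is identical up to the archimedean factors).
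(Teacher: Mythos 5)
The paper offers no proof of this lemma at all: it is quoted verbatim as a known twisted second moment from \cite{IS} and \cite[Theorem 1.1]{BPRZ}. Your sketch correctly reproduces the standard derivation underlying those references — the approximate functional equation for $L(\tfrac12+\alpha,\chi)L(\tfrac12+\beta,\overline\chi)$ with the correct dual factor $X_{\alpha,\beta}(q)$, orthogonality over even primitive characters, and extraction of the two diagonal zeta factors via the substitution $a=n_0c$, $b=m_0c$ — and, exactly as the paper does, it defers the genuinely hard off-diagonal estimate (which is where the $O_\varepsilon(q^{-1/2+\varepsilon}L)$ saving is earned) to the same sources, so it is correct in outline and consistent with the paper's treatment.
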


\begin{proof}[Proof of Proposition \ref{upperboundsecondDirichlet}]
Since the expression in Proposition \ref{upperboundsecondDirichlet} is holomorphic in $\alpha$ and $\beta$, it suffices by the maximum modulus principle to prove the proposition uniformly over any fixed annuli such that $|\alpha|,|\beta| \asymp (\log q)^{-1}$, $|\alpha + \beta| \gg (\log q)^{-1}$. Applying Lemma \ref{secondDirichlet} we get
\begin{align}\label{1001}
&\frac{1}{\varphi^{+}(q)}\ \sumplus_{\chi \pamod q}L(\tfrac12+\alpha, \chi)L(\tfrac12+\beta,\overline{\chi})|\mathcal{M}(\chi)|^2=\zeta(1+\alpha+\beta)M(\alpha,\beta)\\
&\qquad\qquad +\Big(\frac{q}{\pi}\Big)^{-(\alpha+\beta)}\frac{\Gamma(\frac{\frac12-\alpha}{2})\Gamma(\frac{\frac12-\beta}{2})}{\Gamma(\frac{\frac12+\alpha}{2})\Gamma(\frac{\frac12+\beta}{2})}\zeta(1-\alpha-\beta)M(-\beta,-\alpha)+O_\varepsilon(q^{-1/2+\vartheta+\varepsilon}),\nonumber
\end{align}
where
\begin{align*}
M(\alpha,\beta)&=\sum_{\substack{hm,hn\leq q^{\vartheta}\\(m,n)=1}} \frac{\gamma(hm)\gamma(hn)}{hm^{1+\alpha}n^{1+\beta}}=\sum_{\substack{hdm,hdn\leq q^{\vartheta}}} \frac{\mu(d)\gamma(hdm)\gamma(hdn)}{hd^{2+\alpha+\beta}m^{1+\alpha}n^{1+\beta}}.
\end{align*}
By multiplicativity and the definition of the mollifier in \eqref{defmolD} we have
\begin{align}\label{formulaM1uvw}
&M(\alpha,\beta)=\prod_{0\leq j\leq J}\sum_{\substack{p|hdmn\Rightarrow p\in I_j\\\Omega(hdm),\Omega(hdn)\leq\ell_j}}\frac{\mu(d) \lambda(mn)\nu(hdm)\nu(hdn)}{hd^{2+\alpha+\beta}m^{1+\alpha}n^{1+\beta}}.
\end{align}

We now estimate the inner sum on the right hand side of \eqref{formulaM1uvw}. This is
\begin{align*}
&\sum_{\substack{p|hdmn\Rightarrow p\in I_j}}\frac{\mu(d)\lambda(mn)\nu(hdm)\nu(hdn)}{hd^{2+\alpha+\beta}m^{1+\alpha}n^{1+\beta}}+O\bigg(\frac{1}{2^{\ell_j}}\sum_{\substack{p|hdmn\Rightarrow p\in I_j}}\frac{ \nu(hdm)\nu(hdn)}{hd^{2}mn}\bigg).
\end{align*}
The error term is
\begin{align}\label{errorsecond}
&\ll \frac{1}{2^{\ell_j}}\prod_{\substack{p\in I_j}}\left(1+O\left(\frac{1}{p}\right)\right) \ll \frac{\mathbf{1}_{j=0}(\log q)^{O(1)}+1}{2^{\ell_j}}.
\end{align}
For the main term we write it as an Euler product
\begin{align*}
\prod_{p \in I_j}\sum_{h,d,m,n=0}^{\infty}\frac{\mu(p^d) \lambda(p^{m+n})\nu(p^{h+d+m})\nu(p^{h+d+n})}{p^{h+(2+\alpha+\beta)d+(1+\alpha)m+(1+\beta)n}}&=\prod_{p \in I_j}\bigg(1+\frac{1}{p}-\frac{1}{p^{1+\alpha}}-\frac{1}{p^{1+\beta}}+O\bigg(\frac{1}{p^{2}}\bigg)\bigg)\\
&=\prod_{p \in I_j}\bigg(1-\frac{1}{p}+O\bigg(\frac{\log p}{p\log q}\bigg)+O\bigg(\frac{1}{p^{2}}\bigg)\bigg).
\end{align*}
As
\[
\prod_{p \in I_j}\bigg(1-\frac{1}{p}+O\bigg(\frac{\log p}{p\log q}\bigg)+O\bigg(\frac{1}{p^{2}}\bigg)\bigg)^{-1}\ll \mathbf{1}_{j=0}(\log q)^{O(1)}+1,
\]
combining with \eqref{errorsecond} we obtain that
\begin{align*}
M(\alpha,\beta)&=\prod_{c_0<p\leq x}\bigg(1-\frac{1}{p}+O\bigg(\frac{\log p}{p\log q}\bigg)+O\bigg(\frac{1}{p^{2}}\bigg)\bigg)\prod_{0\leq j\leq J}\left(1+O\bigg(\frac{\mathbf{1}_{j=0}(\log q)^{O(1)}+1}{2^{\ell_j}}\bigg)\right).
\end{align*}
Note that the first product is $\asymp (\log q)^{-1}$ and the second product is $\asymp 1$, and hence
\[
M(\alpha,\beta)\asymp (\log q)^{-1}.
\]
In view of \eqref{1001} we obtain the proposition.
\end{proof}

\section*{Acknowledgments}
We would like to thank Sandro Bettin for insightful comments regarding Corollary \ref{cor:nonvanishing}.
S.L. is partially supported by EPSRC Standard Grant EP/T028343/1. N.E. is supported by the Engineering and Physical Sciences Research Council [EP/R513106/1].

\bibliographystyle{amsplain}
\bibliography{references4}

\end{document}